\setlist[enumerate,1]{label={(\roman*)}}
\setlist[enumerate,2]{label={(\alph*)}}
\DeclareMathOperator{\iso}{\mathsf{iso}}
\DeclareMathOperator{\sep}{\mathsf{sep}}
\DeclareMathOperator{\Rb}{\mathbb{R}}
\DeclareMathOperator{\Cb}{\mathbb{C}}
\DeclareMathOperator{\Xc}{\mathcal{X}}
\DeclareMathOperator{\Oc}{\mathcal{O}}
\newcommand{\Cc}{\mathcal{C}}
\newcommand{\MF}{\mathfrak{F}}
\DeclareMathOperator{\Hom}{\mathsf{Hom}}
\DeclareMathOperator{\CO}{\mathcal{O}}
\DeclareMathOperator{\stab}{\mathrm{Stab}}
\DeclareMathOperator{\Nb}{\mathbb{N}}
\renewcommand{\lim}{\mathsf{lim}}
\newcommand\varto[1]{\mathrel{\hbox to #1pt{\rightarrowfill}}}
\def\isoto{\stackrel{\sim}{\longrightarrow}}
\DeclareMathOperator{\id}{id}
\DeclareMathOperator{\Mod}{\mathsf{Mod}}
\DeclareMathOperator{\Loc}{\mathsf{Loc}}
\DeclareMathOperator{\Lie}{Lie}
\DeclareMathOperator{\Ab}{\mathbb{A}}
\DeclareMathOperator{\Zb}{\mathbb{Z}}
\DeclareMathOperator{\F}{\mathcal{F}}
\DeclareMathOperator{\image}{im}
\DeclareMathOperator{\Gal}{Gal}
\DeclareMathOperator{\Aut}{\mathsf{Aut}}
\DeclareMathOperator{\uAut}{\underline{\mathsf{Aut}}}
\DeclareMathOperator{\GL}{GL}
\DeclareMathOperator{\M}{{\mathsf{M}}}
\DeclareMathOperator{\Bc}{\mathcal{B}}
\newcommand{\Mc}{\mathcal{M}}
\DeclareMathOperator{\Tr}{\mathsf{Tr}}
\DeclareMathOperator{\Fr}{Fr}
\DeclareMathOperator{\Pc}{\mathcal{P}}
\DeclareMathOperator{\Spec}{\mathsf{Spec}}
\DeclareMathOperator{\Sym}{Sym}
\DeclareMathOperator{\End}{\mathsf{End}}
\DeclareMathOperator{\Oo}{\mathcal{O}}
\DeclareMathOperator{\ord}{ord}
\DeclareMathOperator{\Yc}{\mathcal{Y}}
\DeclareMathOperator{\CIC}{\mathcal{IC}}
\DeclareMathOperator{\Qb}{\mathbb{Q}}
\newcommand{\BA}{{\mathbb{A}}}
\newcommand{\BC}{{\mathbb{C}}}
\newcommand{\BF}{{\mathbb{F}}}
\newcommand{\BG}{{\mathbb{G}}}
\newcommand{\BH}{{\mathbb{H}}}
\newcommand{\BL}{{\mathbb{L}}}
\newcommand{\BM}{{\mathbb{M}}}
\newcommand{\BN}{{\mathbb{N}}}
\newcommand{\BP}{{\Pc}}
\newcommand{\BQ}{{\mathbb{Q}}}
\newcommand{\BR}{{\mathbb{R}}}
\newcommand{\BZ}{{\mathbb{Z}}}
\newcommand{\Bw}{\mathbbm{w}}
\newcommand{\CF}{{\mathcal F}}
\newcommand{\CG}{{\mathcal G}}
\newcommand{\CL}{{\mathcal L}}
\newcommand{\CM}{{\mathcal M}}
\newcommand{\CT}{{\mathcal T}}
\newcommand{\CV}{{\mathcal V}}
\newcommand{\CX}{{\mathcal X}}
\DeclareMathOperator{\Cent}{Cent}
\DeclareMathOperator{\mcf}{CF}
\DeclareMathOperator{\Var}{Var}
\DeclareMathOperator{\im}{Im}
\newcommand{\BPS}{\mathcal{BPS} }
\DeclareMathOperator{\Log}{Log}
\DeclareMathOperator{\Ext}{Ext}
\newcommand{\Dr}[1][r]{\mathbb{D}_F^{1/#1}}
\DeclareMathOperator{\Dinfty}{\mathbb{D}_F^{1/\infty}}
\newcommand{\defeq}{\colonequals}
\let\into\hookrightarrow
\theoremstyle{definition}
\newtheorem{definition}{Definition}[section]
\newtheorem{construction}[definition]{Construction}
\newtheorem{rmk}[definition]{Remark}
\theoremstyle{plain}
\newtheorem{theorem}[definition]{Theorem}
\newtheorem{proposition}[definition]{Proposition}
\newtheorem{corollary}[definition]{Corollary}
\newtheorem{lemma}[definition]{Lemma}
\newtheorem{example}[definition]{Example} 
\newtheorem{claim}[definition]{Claim}
\newtheorem{situation}[definition]{Situation}
\begin{document}
\title{Twisted points of quotient stacks, integration and BPS-invariants\let\thefootnote\relax\footnotetext{M.G. was supported by an NSERC discovery grant and an Alfred P. Sloan
fellowship. D.W. was supported by the Swiss National Science Foundation [No. 196960]. P.Z. was supported by the ERC in the form of Eva Viehmann’s Consolidator Grant 770936: Newton Strata,
by the Deutsche Forschungsgemeinschaft (DFG) through the Collaborative Research Centre TRR
326 ”Geometry and Arithmetic of Uniformized Structures”, project number 444845124 and by the LOEWE professorship in Algebra, project number LOEWE/4b//519/05/01.002(0004)/87.
\\
} }

\author[1]{Michael Groechenig\thanks{\url{michael.groechenig@utoronto.ca}}}
\author[2]{Dimitri Wyss\thanks{\url{dimitri.wyss@epfl.ch}}}
\author[3]{Paul Ziegler\thanks{\url{ziegler@mathematik.tu-darmstadt.de}}}
\affil[1]{Department of Mathematics, University of Toronto}
\affil[2]{\'Ecole Polytechnique F\'ed\'erale de Lausanne}
\affil[3]{TU Darmstadt}

\renewcommand\Authands{ and }
\maketitle 
\abstract{We study $p$-adic manifolds associated with twisted points of quotient stacks $\Xc = [U/G]$ and their quotient spaces $\pi:\Xc \to X$. We prove several structural results about the fibres of $\pi$ and derive in particular a formula expressing $p$-adic integrals on $X$ in terms of the cyclotomic inertia stack of $\Xc$, generalizing the orbifold formula for Deligne-Mumford stacks. 

We then apply our formalism to moduli problems associated to hereditary abelian categories with symmetric Euler pairing, and show that their refined BPS-invariants are computed locally on the coarse moduli space by a $p$-adic integral. As a consequence we recover the $\chi$-independence of these invariants for $1$-dimensional sheaves on del Pezzo surfaces previously proven by Maulik--Shen. Along the way we derive a new formula for the plethystic logarithm on the $\lambda$-ring of functions on $k$-linear stacks, which might be of independent interest.}
\tableofcontents

\section{Introduction}

This paper is the third in a series of articles by the authors devoted to $p$-adic integration for moduli spaces. Let $F/\Qb_p$ be a $p$-adic local field with ring of integers denoted by $\Oc_F$. Given a smooth variety $Y/\Oc_F$ endowed with the action of a finite \'etale group scheme $G \circlearrowright Y$ of order prime to $p$ such that the quotient $X=Y/G$ can be defined, the theory of $p$-adic integration constructs a canonical measure $\mu_{orb}$ on $X(\Oc_F)$ such that the following formula holds:
\begin{equation}\label{eqn:quot}
\mu_{orb}\big(X(\Oc_F)\big) = \sum_{\phi} \sum_{y\in Y^{\phi}} \frac{q^{-w(y,\phi)}}{|C(\phi)|},
\end{equation}
where $\phi$ ranges over a set of representatives of conjugacy classes of Galois-equivariant homomorphisms $\widehat{\mu} \to G$ and $w(y,\phi)$ is a positive rational number computed in terms of the inertial action on the tangent space of $y$. This formula appeared first in the context of motivic integration in work of Denef--Loeser \cite{DL2002} and Yasuda \cite{Ya06}. A proof of the $p$-adic version was given in \cite{GWZ20a}.

This formula can serve as a powerful tool combining arithmetic and geometric methods. In \cite{GWZ20b,GWZ20a}, $p$-adic integration was applied to moduli spaces of Higgs bundles, to prove the Hausel--Thaddeus Conjecture \cite{HT03} and to reprove the Langlands--Shelstad Fundamental Lemma (Ng\^o's theorem \cite{MR2856372}) by a similar strategy, which allowed us to circumvent the use of perverse sheaves. Formula \eqref{eqn:quot} plays a central role in these proofs. For the moduli space of Higgs bundles, the fix loci $Y^{\phi}$ arising in Formula \ref{eqn:quot} admit a natural description as moduli spaces of co-endoscopic groups, an observation which lies at the heart of the $p$-adic integration strategy to prove the Fundamental Lemma.

We mention that another example where this formula allows one to relate arithmetic and geometric phenomena is provided by Yasuda--Wood's article \cite{wood2015mass}. They apply a generalisation of this formula (to wild actions) to reprove the \emph{mass formulae} of Serre and Bhargava. Their strategy exploits the birational equivalence of the quotient stack $[\Ab^{2n}_{\Oc_F}/S_n]$ with the Hilbert scheme of $n$-points $(\Ab^2_{\Oc_F})^{[n]}$.

The precursors \cite{GWZ20b,GWZ20a} were restricted in scope to finite group quotients, or equivalently, to smooth Deligne-Mumford stacks and their coarse moduli spaces. The purpose of the present paper is to explore $p$-adic integration in a more general setting comprising (singular) GIT quotients of smooth varieties by reductive groups. In fact, we consider the more general geometric framework (see Situation \ref{sit:good}) of a global quotient stack $\Xc = [U/G]$ with a morphism 
$\pi: \Xc \to X$,
to an algebraic space $X$ relative to $\Spec(\Oc_F)$. We require this morphism to be birational and to satisfy several assumptions. Our first main result states that there is a natural measure $\mu_{orb}$ on $X(\Oc_F)$, whose volume can be computed in terms of the special fibre $\Xc_k$ of the stack. 

\begin{theorem}[\ref{inbase}]\label{inthm} The generating series $\sum_{r\geq 1} \sum_{y \in I_{\mu_r}\Xc(k)} \frac{q^{-w(y)}}{|\Aut(y)(k)|}T^r$ is a rational function $Q_X(T)$ of degree $0$ and we have
\begin{equation}\label{introeq}
\int_{X(\Oc_F)} \mu_{orb} = -\lim_{T\to \infty} Q_X(T).
\end{equation}
\end{theorem}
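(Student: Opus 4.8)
The plan is to adapt the strategy of \cite{GWZ20a}: exhibit $X(\Oc_F)$ as a disjoint union of measurable pieces indexed by twisted points of the special fibre $\Xc_k$, compute the $\mu_{orb}$-volume of each piece by an orbifold change-of-variables formula, and then reorganize the resulting (individually divergent) sum over all twisted sectors into the generating function $Q_X(T)$. For Step 1, since $\pi$ is birational it restricts to an isomorphism over a dense open of $X$, so an $\Oc_F$-point $x\colon\Spec\Oc_F\to X$ lifts over the generic point $\Spec F$ to a unique point of $\Xc$. The stabilisers of $\Xc=[U/G]$ need not be proper ($G$ reductive), so this lift need not extend over $\Oc_F$; using the structural results on the fibres of $\pi$ from Situation \ref{sit:good} together with a valuative-criterion argument, one shows it does extend after a totally ramified base change $\Oc_{F_r}/\Oc_F$ of well-defined minimal degree $r=r(x)$, as a $\mu_r$-equivariant map $\Spec\Oc_{F_r}\to U$, i.e.\ a map $[\Spec\Oc_{F_r}/\mu_r]\to\Xc$. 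Restricting to the closed point yields a map $B\mu_r\to\Xc_k$, i.e.\ a twisted point $\bar y(x)$; minimality of $r$ forces $\mu_r\to\Aut(\bar y(x))$ to be injective, so $\bar y(x)$ lies in the ``primitive'' locus $J_r\subseteq I_{\mu_r}\Xc$ on which $\mu_r$ acts faithfully. This partitions $X(\Oc_F)$ into measurable pieces $X_{\bar y}$ indexed by $\coprod_{d\geq 1}J_d(k)$.

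The crux is Step 2: for $\bar y\in J_d(k)$ one must prove $\mu_{orb}(X_{\bar y})=q^{-w(\bar y)}/|\Aut(\bar y)(k)|$. This is the generalisation of the Denef--Loeser/Yasuda transformation rule to the (possibly singular, reductive) setting. The ingredients are an étale-local Luna-slice description $\Xc\cong[N/H]$ near the underlying point, with $H$ the reductive stabiliser, $N$ a linear $H$-representation and $\mu_d\hookrightarrow H$; the explicit description of $\mu_{orb}$ (obtained in the preceding sections) in terms of the chosen relative orbifold top form / dualising datum of $\Xc\to X$; and a Jacobian computation comparing coordinates on $X$ (the affine quotient $N/\!\!/H$) with $\mu_d$-equivariant coordinates on $N$ over $\Oc_{F_d}$, out of which $w(\bar y)$ emerges as the age of $\mu_d$ on the (virtual) tangent space at $\bar y$ and the factor $|\Aut(\bar y)(k)|^{-1}$ comes from the residual gerbe. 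Summing over the partition gives $\int_{X(\Oc_F)}\mu_{orb}=\sum_{d\geq 1}\sum_{\bar y\in J_d(k)}q^{-w(\bar y)}/|\Aut(\bar y)(k)|$, and one checks this converges absolutely (the ages grow and each $J_d$ is bounded).

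For Step 3, since a homomorphism $\mu_r\to\Aut(\bar y_0)$ factors uniquely through the surjection onto its image $\mu_d$, one has $I_{\mu_r}\Xc=\bigsqcup_{d\mid r}J_d$, and $w$ and $|\Aut|$ depend only on the exact order; hence $a_r:=\sum_{\bar y\in I_{\mu_r}\Xc(k)}q^{-w(\bar y)}/|\Aut(\bar y)(k)|=\sum_{d\mid r}b_d$ with $b_d:=\sum_{\bar y\in J_d(k)}q^{-w(\bar y)}/|\Aut(\bar y)(k)|$, so that $Q_X(T)=\sum_{r\geq 1}a_rT^r=\sum_{d\geq 1}b_d\,T^d/(1-T^d)$. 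Rationality and degree-$0$-ness of this expression is the one genuinely infinite piece of bookkeeping: it follows from the fact that the counts $b_d$ are built, stratum by stratum of $\Xc_k$, from the numbers of conjugacy classes of embeddings $\mu_d\hookrightarrow H$ weighted by $q^{-\mathrm{age}}$ and inverse centraliser orders, and that the Dirichlet-type sums $\sum_{d\mid n}(\text{such counts})$ are quasi-polynomial in $n$ — the prototype being $\sum_{d\mid n}\phi(d)=n$ — which is exactly the input packaged by the plethystic-logarithm formula on the $\lambda$-ring of functions on $\Xc_k$. Granting this, each term $T^d/(1-T^d)\to -1$ as $T\to\infty$, and interchanging this limit with the absolutely convergent sum yields $-\lim_{T\to\infty}Q_X(T)=\sum_{d\geq 1}b_d=\int_{X(\Oc_F)}\mu_{orb}$, as claimed.

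The decisive difficulty is Step 2: establishing the orbifold change-of-variables formula when $X$ is a singular good moduli space and $G$ is reductive — in particular handling the non-properness of stabilisers (which forces the ramified twisted arcs) and matching the measure-theoretic Jacobian with the age exponent on the nose. Step 1 (the canonical twisted lift and measurability of the pieces) is the second pressure point and relies essentially on the structure theory of the fibres of $\pi$; the rationality in Step 3 is essentially formal once the $\lambda$-ring/plethystic reformulation from the earlier part of the paper is in place.
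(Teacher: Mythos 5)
Your proposal does not follow the paper's strategy and has a genuine gap at its foundation. In Step 1 you claim that each $x\in X(\Oc_F)^\sharp$ has a \emph{unique} lift to a twisted arc $[\Spec\Oc_{F_r}/\mu_r]\to\Xc$ once the minimal $r=r(x)$ is chosen, which would give a partition of $X(\Oc_F)$ indexed by primitive twisted points. This is false in the setting of the theorem: precisely because $\Xc$ is a non-separated Artin stack, the valuative criterion you invoke supplies existence but \emph{not} uniqueness, and the fibre $\pi_r^{-1}(x)$ already has several points for the minimal $r$ in general. The paper's own example $\Xc=[\BA^2/\BG_m]$ (weights $(1,-1)$) over $X=\BA^1$ already illustrates this: for a point $x$ specialising to $0$, the integral lifts correspond to lattice choices and form a nontrivial polytope in the building, so there is no canonical one. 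Consequently your pieces $X_{\bar y}$ overlap, your Step 2 identity $\mu_{orb}(X_{\bar y})=q^{-w(\bar y)}/|\Aut(\bar y)(k)|$ is asserted for sets that do not partition $X(\Oc_F)$, and the sum $\sum_d b_d$ strictly overcounts $\int_{X(\Oc_F)}\mu_{orb}$. Your Step 3 compounds this: the ansatz $Q_X(T)=\sum_d b_d\,T^d/(1-T^d)$ with limit $-\sum_d b_d$ only reproduces the Deligne--Mumford formula (the paper's Example \ref{DMcase}), not the Artin case. Also, rationality of $Q_X(T)$ in the paper has nothing to do with the plethystic-logarithm $\lambda$-ring formalism (that is used later for the BPS application); it is a purely combinatorial Ehrhart statement.

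What the paper actually does, and what your route cannot reach without the missing idea: it works on the twisted arc spaces $\Xc(\Dr)^\sharp$ themselves (not on subsets of $X(\Oc_F)$), proves the stacky change-of-variables formula there (Theorem \ref{orbiorbi}: $\mu_{orb}(e_r^{-1}(y))=q^{-w(y)}/|\Aut(y)(k)|$ for the specialisation map $e_r\colon\Xc(\Dr)^\sharp\to I_{\mu_r}\Xc(k)$), and then controls the overcounting by identifying $\bigcup_r\pi_r^{-1}(x)$ with the rational points of a bounded convex polytope in the Bruhat--Tits building of $G$ (Theorems \ref{Polytopes}, \ref{conhul}), relying on $S$-completeness for convexity. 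The generating series $\sum_r|\pi_r^{-1}(x)|T^r$ is then an Ehrhart series of a rational polytope, hence rational of the form $g(T)/(1-T^N)^{d+1}$ with $\lim_{T\to\infty}=-1$ by Stanley's theorem; integrating this identity over $X(\Oc_F)^\sharp$ (Lemma \ref{locco}, which also supplies a uniform $N$ so the limit can be exchanged with the integral) yields the theorem. Your Step 2 Luna-slice/Jacobian computation is in spirit close to the proof of Theorem \ref{orbiorbi} and is a reasonable plan for that ingredient, but it must be run on $\Xc(\Dr)^\sharp$, not on $X(\Oc_F)$; the decisive missing piece in your proposal is the building-polytope description of the fibres and the resulting Ehrhart-series handling of the overcounting.
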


In the main body of this article, this result will be established more generally, including the integration of a natural class of functions. We refer the reader to Theorem \ref{inbase} for more details. In Example \ref{DMcase}, it is shown that for $\Xc$ a smooth DM-stack, Theorem \ref{inthm} recovers Formula \eqref{eqn:quot} for tame finite group quotients.

\subsubsection*{Applications: gerbes, intersection cohomology \& Donaldson-Thomas theory}

To demonstrate the utility of this formula, we consider moduli problems $\Mc_{\Cc}$ of objects in abelian categories $\Cc$, which we assume to be of homological dimension one and to be \emph{symmetric}; a class of importance in Donaldson-Thomas theory \cite{MR19,Me15,BDNKP25}. Examples include representations of symmetric quivers, vector bundles on curves and (meromorphic) Higgs bundles on a curve. There are further assumptions needed for which we refer the reader to Section \ref{msds}.

We denote by $\alpha$ the gerbe measuring the obstruction to the existence of a universal family on $\Mc_{\Cc}$. Using the identification $\mathsf{Br}(F) = \Qb/\Zb$ given by the Hasse invariant and the Euler pairing $(\cdot,\cdot)$ on $\CM_{\Cc}$, one associates with $\alpha$ an almost everywhere defined function $f_{\tilde{\alpha}}$ taking values in complex roots of unity. The integral of this function can be explicitly evaluated in terms of intersection cohomology $IH^*$, leading to an important analogue of \eqref{eqn:quot}:

\begin{theorem}\eqref{icint}\label{thm:main2}
For any connected component $\CM_{\gamma}\subset \CM_{\Cc}$ such that the adequate moduli space $\CM_{\gamma} \to \M_\gamma$ is birational, we have
\[
- q_F^{-(\gamma,\gamma)+1} \int_{\M_{\gamma}(\Oc_F)} f_\alpha = \sum_{i \in \Zb} (-1)^i \Tr\big( \Fr, IH^i(\M_{\gamma},\Qb_{\ell})\big )
\]
\end{theorem}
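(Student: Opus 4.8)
The plan is to deduce the formula from Theorem~\ref{inbase}, applied to the adequate moduli space morphism $\pi:\CM_\gamma\to\M_\gamma$, and then to identify the resulting generating series over the cyclotomic inertia stack with intersection cohomology. The first step is routine given the constructions of Section~\ref{msds}: one checks that $\pi$ fits the framework of Situation~\ref{sit:good} required by Theorem~\ref{inbase}, and that the almost-everywhere-defined, root-of-unity-valued function $f_\alpha$ on $\M_\gamma(\Oc_F)$ is admissible for its function-enhanced version, being governed by a locally constant function $f_{\tilde{\alpha}}$ on $\coprod_{r\geq 1}I_{\mu_r}\CM_\gamma$ which records, via the Hasse invariant $\mathsf{Br}(F)=\Qb/\Zb$ and the Euler pairing, how the $\mu_r$-structure at a twisted point pairs with the class $\alpha$. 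Writing $Q_{\M_\gamma}(T)=\sum_{r\geq 1}T^r\sum_{y\in I_{\mu_r}\CM_\gamma(k)}q^{-w(y)}f_{\tilde{\alpha}}(y)\,|\Aut(y)(k)|^{-1}$, Theorem~\ref{inbase} then yields
\[
q_F^{-(\gamma,\gamma)+1}\int_{\M_\gamma(\Oc_F)}f_\alpha \;=\; -\lim_{T\to\infty} Q_{\M_\gamma}(T),
\]
and one notes that the prefactor is exactly $q_F^{\dim\M_\gamma}$: homological dimension one and symmetry of $(\cdot,\cdot)$ force a generic object $E$ --- which has $\End(E)=k$ by the birationality hypothesis --- to satisfy $\dim\Ext^1(E,E)=1-(\gamma,\gamma)$, so that $\dim\M_\gamma=1-(\gamma,\gamma)$.

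Next I would unwind the inner sums. A $k$-point of $I_{\mu_r}\CM_\gamma$ is an object $E$ of class $\gamma$ equipped with a $\Zb/r$-grading $E=\bigoplus_{j\in\Zb/r}E_j$, $\gamma=\sum_j\gamma_j$; its automorphism group, the inertial weight $w(y)$ --- read off from the $\Zb/r$-action on $\Ext^1(E,E)=\bigoplus_{j,j'}\Ext^1(E_j,E_{j'})$, with the summand $(j,j')$ carrying weight $j-j'$ --- and the value $f_{\tilde{\alpha}}(y)$ all decompose along the grading. Hereditarity of $\Cc$ gives $\dim\Ext^1(E_j,E_{j'})=\dim\Hom(E_j,E_{j'})-(\gamma_j,\gamma_{j'})$, and symmetry of $(\cdot,\cdot)$ makes the fractional weights attached to the dual summands $\Ext^1(E_j,E_{j'})$ and $\Ext^1(E_{j'},E_j)$ add to $1$, so that $w(y)$ telescopes to an expression in the $(\gamma_j,\gamma_{j'})$ and the $\dim\Hom(E_j,E_{j'})$ alone; the same symmetry governs $f_{\tilde{\alpha}}(y)$ through the $\Qb/\Zb$-valued pairing attached to $\alpha$. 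These identities exhibit $Q_{\M_\gamma}(T)$ as the degree-$\gamma$ part of a plethystic exponential in the $\lambda$-ring of functions on $k$-linear stacks, built from a primitive series; applying the new formula for the plethystic logarithm $\Log$ established along the way, its primitive degree-$\gamma$ component can be computed locally on $\M_\gamma$. Combining this with the cohomological-integrality input developed in Section~\ref{msds} --- and invoking birationality of $\pi$, which is precisely the condition forcing the relevant BPS sheaf on $\M_\gamma$ to be the full intersection complex $\IC_{\M_\gamma}$ rather than a perverse sheaf supported on a proper closed subvariety --- the primitive component equals, after the specialisation implicit in the limit, $\sum_i(-1)^i\Tr(\Fr,IH^i(\M_\gamma,\Qb_\ell))$. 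Since $Q_{\M_\gamma}(T)$ is a rational function of degree $0$ by Theorem~\ref{inbase}, the limit $T\to\infty$ exists and extracts precisely this coefficient, the sign absorbing the minus on the left; this is the claimed equality.

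I expect the passage from the twisted-sector sum to intersection cohomology to be the main obstacle. On one side stand the $p$-adic ``stringy'' weights $w(y)$, defined through ages of inertial actions on $\Ext^1$ and with no manifest enumerative content; on the other, the plethystic-exponential shape of Donaldson--Thomas series and the sheaf-theoretic substance of cohomological integrality. Bridging these demands, first, the bookkeeping lemma rearranging the twisted-sector sum while faithfully tracking the root-of-unity twist $f_{\tilde{\alpha}}$ --- resting on the Hasse-invariant identification together with symmetry of $(\cdot,\cdot)$ --- and, second, the arithmetic incarnation of cohomological integrality for symmetric hereditary categories, together with the identification of the primitive BPS sheaf with $\IC_{\M_\gamma}$ under the birationality hypothesis. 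These constitute the technical heart; granting them, the reduction via Theorem~\ref{inbase} and the extraction of the limit are formal.
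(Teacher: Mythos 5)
Your proposal reproduces the paper's strategy exactly: apply Theorem \ref{inbase} to turn the integral into a generating series over cyclotomic inertia, reorganize this via the new plethystic-logarithm identity of Theorem \ref{plid} into the BPS-counting function $\pi_!\MF$ (Lemma \ref{intbps}), and then invoke the arithmetic version of cohomological integrality $\BPS \leftrightarrow \CIC_{\M_\gamma}$ (Proposition \ref{bpsic}, via \cite{Me15,DM20} and a spreading-out argument) to conclude. The only point where you are a bit loose is in attributing the $\BPS = \CIC$ identification to the birationality hypothesis on $\pi$ directly; in the paper this identification is imported from Meinhardt and Davison--Meinhardt and then spread out over a dense open in the base, while birationality serves primarily to put $\CM_\gamma \to \M_\gamma$ in Situation \ref{sit:good} and to pin down $\dim\M_\gamma = -(\gamma,\gamma)+1$.
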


The proof of this formula relies on Donaldson-Thomas theory, which identifies intersection cohomology with BPS-cohomology in this case \cite{Me15,DM20}. The latter satisfies by definition a recursive relation involving plethystic operations. Using Theorem \ref{inthm} we verify the same relation for the left-hand side in a $\lambda$-ring of \emph{counting functions} that is expressly introduced for this purpose in Section \ref{pikls}.

Combining Theorem \ref{thm:main2} with \cite{COW21}, which follows the strategy of \cite{GWZ20a} to prove the Hausel--Thaddeus Conjecture, one obtains a new proof of a $\chi$-independence result of Maulik--Shen for the moduli space $\M_{\beta,\chi}$ of $1$-dimensional sheaves on a del Pezzo surface with curve class $\beta$ and Euler characteristic $\chi$. 

\begin{theorem}\cite[Theorem 0.1]{MS20} \label{thm:intro-chi}For $k= \BC$ and $\chi,\chi' \in \BZ$ there exists an isomorphism of graded vector spaces between the intersection cohomology groups
\[ IH^*(\M_{\beta,\chi}) \cong IH^*(\M_{\beta,\chi'}),  \]
respecting the perverse filtrations. There also exists a possibly different isomorphism $IH^*(\M_{\beta,\chi}) \cong IH^*(\M_{\beta,\chi'})$ respecting the Hodge filtrations.  
\end{theorem}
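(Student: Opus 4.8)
The plan is to deduce Theorem~\ref{thm:intro-chi} from Theorem~\ref{thm:main2} by following the strategy that underlies the $p$-adic proof of the Hausel--Thaddeus conjecture, now with intersection cohomology in place of ordinary cohomology. First I would observe that Theorem~\ref{thm:main2} expresses, for each $\chi$, the quantity $\sum_i (-1)^i \Tr(\Fr, IH^i(\M_{\beta,\chi},\Qb_\ell))$ as $-q_F^{-(\gamma,\gamma)+1}\int_{\M_{\beta,\chi}(\Oc_F)} f_{\alpha}$, where $\gamma$ encodes the curve class $\beta$ and Euler characteristic $\chi$. The self-intersection $(\gamma,\gamma)$ depends only on $\beta$ (it is computed from the Euler pairing, which is insensitive to the rank-zero-sheaf Euler characteristic), so the prefactor $q_F^{-(\gamma,\gamma)+1}$ is the same for $\chi$ and $\chi'$. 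Thus it suffices to compare the two $p$-adic integrals. The key geometric input is that the adequate moduli spaces $\M_{\beta,\chi}$ and $\M_{\beta,\chi'}$, for $\chi,\chi'$ both coprime to the relevant degree (or after a base change that trivializes the gerbe), are related by a Hecke-type correspondence: there is a common space fibering over both, compatibly with the Hitchin-type maps to the base of supports $|\beta|$, and this correspondence is a birational-over-the-base isomorphism that matches the orbifold measures and the functions $f_\alpha$ fibrewise. This is exactly the situation handled in \cite{COW21} following \cite{GWZ20a}: one uses that the spectral/support fibration identifies the two moduli spaces over a dense open of the base with torsors under the same abelian group scheme (compactified Jacobians of the spectral curves), up to a twist by a gerbe whose class is the difference of the two $\alpha$'s, and one shows this twist contributes trivially to the integral because the relevant character sum over the torsor vanishes unless the gerbe is trivial — the standard ``vanishing of oscillating sums'' argument.

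Concretely, the steps are: (i) reduce to a single geometric component $\M_\gamma$ on each side and verify the birationality hypothesis of Theorem~\ref{thm:main2} for $\M_{\beta,\chi}$ over the space $|\beta|$ of curves in the linear system (this uses that for a generic, reduced, irreducible curve the compactified Jacobian is a fine moduli space and the stack is a gerbe over it, hence birational to its adequate moduli space); (ii) apply Theorem~\ref{thm:main2} to both $\chi$ and $\chi'$ to reduce the cohomological comparison to $\int_{\M_{\beta,\chi}(\Oc_F)} f_{\alpha} = \int_{\M_{\beta,\chi'}(\Oc_F)} f_{\alpha'}$; (iii) compute both integrals by integrating over the base $|\beta|(\Oc_F)$ and then over the fibres, identifying the fibre integrals with weighted point-counts of compactified Jacobians twisted by the gerbe classes $\tilde\alpha$, $\tilde\alpha'$; (iv) show the two twisted fibre-counts agree for every point of $|\beta|(\Oc_F)$, which reduces — by the abelian (torsor) structure of the compactified Jacobian and Fourier analysis on it — to the statement that the difference $\tilde\alpha - \tilde\alpha'$ pairs trivially against the relevant torsor, or equivalently that it lies in the image of the ``constant'' part of the Brauer group; this last point is where one invokes the explicit description of the obstruction gerbe for $1$-dimensional sheaves on a surface and the fact that tensoring by a fixed line bundle (the Hecke move relating $\chi$ and $\chi'$) shifts the universal family by a Brauer class pulled back from the base. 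Finally, (v) recover the statement about isomorphisms of graded vector spaces respecting the perverse (resp. Hodge) filtrations: the $p$-adic identity gives equality of $IH$-Poincaré polynomials (point-counts over all finite fields, via spreading out and the Weil conjectures for $IH$), and the refinement to an abstract isomorphism compatible with the perverse filtration is imported directly from the geometry of the support map — the perverse filtration on $IH^*(\M_{\beta,\chi})$ is the one associated with the map to $|\beta|$, and the Hecke correspondence, being an isomorphism over a dense open of $|\beta|$ and proper over it, induces an isomorphism of the associated perverse Leray filtrations by the decomposition theorem; the Hodge-theoretic refinement follows since the correspondence is algebraic over $\BC$, so the resulting $IH$-isomorphism is a morphism of mixed Hodge structures, hence strictly compatible with the Hodge filtration.

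The main obstacle I expect is step~(iv): controlling the gerbe class $\tilde\alpha$ and showing that the passage from $\chi$ to $\chi'$ changes it only by a class that is invisible to the $p$-adic integral. In the Higgs-bundle setting of \cite{GWZ20a,GWZ20b} this was the heart of the matter (the ``duality of Prym varieties'' and the careful analysis of how the Hecke modification twists the universal gerbe), and the present case of $1$-dimensional sheaves on a del Pezzo surface requires the analogous analysis of the autoduality of compactified Jacobians of integral spectral curves (after \cite{MRV}-type results) together with an explicit computation of the Brauer obstruction. A secondary subtlety is checking the technical hypotheses of Situation~\ref{sit:good} and of Theorem~\ref{thm:main2} — in particular birationality of $\M_\gamma \to \M_\gamma$ and the good-moduli-space conditions — uniformly in $\chi$; this should be routine given the smoothness of $\M_{\beta,\chi}$ (as a moduli of stable sheaves on a del Pezzo surface with the expected dimension) but must be verified.
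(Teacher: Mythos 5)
Your high-level strategy — reduce via Theorem~\ref{thm:main2} to a $\chi$-independence statement for $p$-adic integrals, and obtain that statement from the spectral/Hecke analysis of \cite{COW21} — is the correct plan, and it matches the paper's reduction in broad outline. You also correctly identify $(\gamma,\gamma)=-\beta^2$ as being $\chi$-independent. However, your proposal diverges from the paper at the crucial step of passing from arithmetic equality to a cohomological isomorphism, and that is where it has a real gap.

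You propose to use the Hecke correspondence (an isomorphism of moduli spaces over a dense open of $B=|\beta|$) to directly induce an isomorphism of the perverse Leray filtrations on $IH^*$, ``by the decomposition theorem''. This does not follow: a correspondence that is an isomorphism only over a dense open locus of the base does not, by itself, induce an isomorphism of the full pushforward complexes $Rh_{\beta,\chi,*}\CIC_{\M_{\beta,\chi}}$ — one must control what happens over the complement, where the compactified Jacobians degenerate. This is precisely the content of Maulik--Shen's support theorem, which is the heart of their original proof and is not a formal consequence of the decomposition theorem. The paper instead sidesteps this entirely: it establishes (Theorem~\ref{thm:chi-ind}) that the pointwise counting functions $h_{\beta,\chi,!}\BPS_{\beta,\chi}$ on $B$ agree at \emph{every} point of $B$ over every finite field — not just generically — using the $p$-adic integral formula and the $\chi$-independence from \cite[Theorem~1.2.2]{COW21} (after a careful reconciliation of the sign conventions relating $e^{2\pi i g_\beta\alpha}$ and $e^{2\pi i\tilde\alpha}$). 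It then invokes purity of the pushed-forward intersection complexes (Gabber, Deligne) and Chebotarev density to upgrade the equality of trace functions to an isomorphism $Rh_{\beta,\chi,*}\CIC_{\M_{\beta,\chi}}\simeq Rh_{\beta,\chi',*}\CIC_{\M_{\beta,\chi'}}$ in the derived category over $\BC$; the perverse filtration compatibility is then immediate by applying perverse truncation functors to isomorphic objects. Likewise, for the Hodge filtration the paper does not appeal to algebraicity of a correspondence (which would again require a global isomorphism) but instead uses purity to reduce to equality of $E$-polynomials, which it derives from the $\chi$-independence of counting functions via Katz's theorem. You should replace the Hecke-correspondence step in (v) by this Chebotarev/purity argument, and the Hodge step by the Katz/$E$-polynomial argument, both of which rely only on the pointwise arithmetic equality you have already reduced to.

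A secondary remark: in step (iv) you propose to reprove the $\chi$-independence of the $p$-adic integral by Fourier analysis on compactified Jacobians; in the paper this is imported wholesale as \cite[Theorem~1.2.2]{COW21}, with the only new ingredient being the elementary sign computation matching $g_\beta\alpha$ with the modified gerbe function $\tilde\alpha$. Your intuition for why \cite{COW21} works is accurate, but it is not re-derived here.
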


If one applies Theorem \ref{thm:main2} to the moduli space of semi-stable vector bundles on a curve one recovers an observation made by Kiem-Li \cite{KL04} regarding the relation between intersection cohomology and stringy Hodge numbers, see \ref{KL}.

\subsubsection*{Methods: twisted $p$-adic points \& Bruhat-Tits buildings}

Let us recall the geometric setting \ref{sit:good} of a global quotient stack $\Xc = [U/G]$ with a morphism 
$\pi: \Xc \to X$,
to an algebraic space $X$ relative to $\Spec(\Oc_F)$. We further require the existence of an open dense $W \subset X$ such that the base change $\Xc_W = \Xc \times_X W \to W$ is an equivalence, and thus $\pi$ is reminiscent to the map to a coarse moduli space of semistable objects. In this analogy, the subset $W$ corresponds to a subset of the stable locus.

The proof of Formula \eqref{introeq} exploits structural properties of the map $\Xc(\Oc_F) \to X(\Oc_F)$. As will be stated in Theorem \ref{bapro} below, this map is almost everywhere a locally measure-preserving topological covering. In order to solve the over-counting problem caused by the finite fibres of this covering, it is necessary to embed $\Xc(\Oc_F)$ inside an even bigger covering of $r$-twisted points, $\Xc(\Dr) \to X(\Oc_F)$. Here, $\Dr$ is defined to be as the root stack of $\mathbb{D}_F=\Spec \Oc_F$, obtained by formally adjoining an $r$-th root to the divisor given by the ideal $\mathfrak{m}_F \subset \Oc_F$. This is a tame stack, containing $\Spec F$ as an open dense subscheme and $B\mu_r = [\Spec k / \mu_r]$ as a closed subscheme.

Under the assumptions above, there is a subset $\Xc(\Dr)^\sharp \subset \Xc(\Dr)$, $r \geq 1$, with the complement being of measure zero, admitting the structure of a $p$-adic manifold. We show the following fundamental properties:

\begin{theorem}[\ref{prop:r-finite}, \ref{conhul}]  \label{bapro}
\begin{enumerate}
\item[(a)]\label{parta} The map
\[\pi_r:\Xc(\Dr)^\sharp \to  X(\Oc_F)^\sharp, \]
is étale with finite fibres.
\item[(b)]\label{partb} Assume $\Xc / \Oc_F$ is $S$-complete and fix a presentation $\Xc = [U/G]$ with $G$ a general linear group. Then, for any $x \in  X(\Oc_F)^\sharp$ we can identify $ \cup_{r\geq 1} \pi_r^{-1}(x)$ with the rational points of a bounded convex region inside the Bruhat-Tits building of $G$. 
\end{enumerate}
\end{theorem}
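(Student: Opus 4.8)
The plan is to pass to the lattice-chain description of vector bundles on the root stack $\Dr$ and to recognise the resulting combinatorial datum inside the Bruhat--Tits building of $G=\GL_n$.

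\textbf{Setting up the dictionary.}
Fix $x\in X(\Oc_F)^\sharp$ and let $x_F\in X(F)$ be its generic point; built into the $\sharp$-hypotheses is that $x_F$ lies in the open $W$ over which $\pi$ is an isomorphism, so there is a unique $\xi\in\Xc(F)$ above $x_F$, and since $H^1(F,\GL_n)=0$ we may represent $\xi$ by some $u\in U(F)$, canonical up to $G(F)$. A point of $\pi_r^{-1}(x)$ is a morphism $\Dr\to\Xc=[U/G]$ over $x\colon\mathbb{D}_F\to X$ --- a morphism to the algebraic space $X$ factors through the coarse space $\mathbb{D}_F$ of $\Dr$ automatically --- that is, a $G$-bundle $\mathcal P$ on $\Dr$ together with a section $\sigma$ of $\mathcal P\times^GU$ restricting to $\xi$ over $\Spec F$. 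By the equivalence between quasi-coherent sheaves on $\Dr$ and $\tfrac1r\Zb$-filtered $\Oc_F$-modules, a $G$-bundle on $\Dr$ together with a trivialisation of its generic fibre is the same as a periodic lattice chain $\Lambda_\bullet=(\Lambda_j)_{j\in\tfrac1r\Zb}$ in $F^n$, decreasing with $\Lambda_{j+1}=\varpi\Lambda_j$ for a uniformizer $\varpi$, equivalently a point of $\mathcal B(G)$ with coordinates in $\tfrac1r\Zb$ (in any apartment containing it). Choosing the generic trivialisation compatibly with $u$, we thereby identify $\bigcup_{r\geq1}\pi_r^{-1}(x)$ with the set of those $p\in\mathcal B(G)$ with rational coordinates for which $u$, read off in the frame of $\Lambda_\bullet(p)$, extends to an integral section over all of $\Dr$.

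\textbf{Part (b): convexity and boundedness.}
Fix a $G$-equivariant closed embedding of the affine scheme $U$ into a finite-dimensional $G$-representation $E$. The extension condition on $p$ unwinds to the requirement that the image $w_u$ of $u$ in $E\otimes F$ lie in the lattice $\Lambda^E_0(p)\subseteq E\otimes F$ induced on $E$ by $\Lambda_\bullet(p)$; decomposing $w_u$ along a weight basis of $E$, this is a finite system of inequalities $\mathsf{val}\big(w_u^{(j)}\big)\geq\ell_j(p)$, each $\ell_j$ affine on every apartment of $\mathcal B(G)$. Hence the admissible locus $R_x\subseteq\mathcal B(G)$ is a finite intersection of closed half-apartments: it is closed and geodesically convex, and $\bigcup_r\pi_r^{-1}(x)$ is precisely its set of rational points. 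It remains to see that $R_x$ is bounded, and this is where $S$-completeness of $\Xc$ is used. A nonempty unbounded closed convex subset of $\mathcal B(G)$ contains a geodesic ray, hence a non-trivial cocharacter $\lambda$ of $G$ along which $u$ extends indefinitely --- i.e.\ $\lim_{t\to\infty}\lambda(\varpi)^t u$ remains integral. Such a degeneration yields a morphism out of the $S$-completeness test stack $\overline{\mathrm{ST}}_{\Oc_F}=[\Spec(\Oc_F[s,t]/(st-\varpi))/\mathbb{G}_m]$, equivalently in the limit a $\mathbb{G}_m$-fixed point of $\Xc$, compatible with the composite to $X$; $S$-completeness forces this degeneration to be the trivial one, which contradicts $x\in X(\Oc_F)^\sharp$ together with the birationality of $\pi$. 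I expect this boundedness step to be the main obstacle, being the only point at which the local combinatorics of the building must be reconciled with the global hypotheses on $\pi$; matching up the valuative criterion with the affine functions $\ell_j$ is where the bookkeeping is delicate.

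\textbf{Part (a): \'etale with finite fibres.}
Since $X(\Oc_F)^\sharp$ and $\Xc(\Dr)^\sharp$ are $p$-adic manifolds, by the $p$-adic inverse function theorem it suffices to show that the differential of $\pi_r$ at each $y\in\Xc(\Dr)^\sharp$ over $x$ is an isomorphism. Deformation theory identifies this differential with the map induced on $\mathbb{H}^0$ by $\mathbb{R}\Gamma(\Dr,y^*\mathbb{T}_{\Xc/\Oc_F})\to\mathbb{R}\Gamma(\Dr,y^*\pi^*\mathbb{T}_{X/\Oc_F})$, whose fibre is $\mathbb{R}\Gamma(\Dr,y^*\mathbb{T}_{\Xc/X})$. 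Birationality of $\pi$ makes $\mathbb{T}_{\Xc/X}$ supported away from $W$, so its pullback along $y$ is supported on the closed point $B\mu_r\hookrightarrow\Dr$ (the generic point of $y$ landing in $W$ by the $\sharp$-hypotheses); there it is governed by the inertia of $\Xc$ along the closed point, which the $\sharp$-hypotheses constrain to be finite \'etale of order prime to $p$. Computing via the affine coarse space $\mathbb{D}_F$ and the exactness of $\mu_r$-invariants then shows that $\mathbb{R}\Gamma(\Dr,y^*\mathbb{T}_{\Xc/X})$ vanishes in degrees $\leq1$, so $d\pi_r$ is an isomorphism. For finiteness of $\pi_r^{-1}(x)$, note that $u$ forces a two-sided bound on the admissible periodic chains of period $r$: the inequalities $\ell_j(p)$ bound them from one side, while on the dense open where $\pi$ is an isomorphism the point $x$ already pins down $\xi$ with bounded denominators (again by the $\sharp$-conditions), bounding them from the other; only finitely many periodic chains of period $r$ satisfy both. (When $\Xc$ is in addition $S$-complete this also follows at once from the boundedness of $R_x$ established in part (b).)
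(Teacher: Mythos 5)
Your proposal misfires at several precise points, and in each case the failure mode is the very issue the paper's proof is designed to handle.

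\textbf{Part (a).} The assertion that ``the inertia of $\Xc$ along the closed point \ldots the $\sharp$-hypotheses constrain to be finite \'etale of order prime to $p$'' is false. The $\sharp$-condition only controls the generic point of $y$ (it lands in $W$); it imposes nothing on the closed fibre, where $\Xc$ can have positive-dimensional stabilizers (e.g.\ $\GL_n$ for the moduli stacks of objects treated in \S\ref{msds}). Moreover, the paper explicitly does \emph{not} assume $p\nmid r$ (see the opening of Section~\ref{tpb}), so ``exactness of $\mu_r$-invariants'' fails: $\mu_{r,k}$ is non-reduced when $p\mid r$ and taking invariants is not exact. The paper sidesteps all of this by never computing cohomology on $\Dr$. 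Instead it realises $\Xc(\Dr)^{\sharp}$ as $(\Xc(\Oc_L)^{\sharp})^{\Gamma}$ for a finite Galois extension $L/F$ (Proposition~\ref{defi:r-points}), proves \'etaleness, properness and finiteness of $\pi\colon\Xc(\Oc_F)^{\sharp}\to X(\Oc_F)^{\sharp}$ directly (Propositions~\ref{prop:etale}, \ref{prop:proper}, \ref{prop:finite} --- the last using discreteness of $G(F)/G(\Oc_F)$, not any bound from the building), and then transports these properties through $\Gamma$-fixed points via Lemmas~\ref{lemma:Gamma-etale} ff.\ and Proposition~\ref{prop:Gamma-finite}. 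Your finiteness sketch (``only finitely many periodic chains of period $r$ satisfy both'') does not substitute for the compactness/discreteness argument: the ``other side'' bound is not supplied by $\xi$ having ``bounded denominators.''

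\textbf{Part (b).} You ``fix a $G$-equivariant closed embedding of the affine scheme $U$'' --- but $U$ is only separated in Situation~\ref{sit:good}, and in Situation~\ref{AffineCoverSit} it is merely covered by finitely many $T$-invariant affine opens $U_i$. Consequently the region you cut out is, in general, a finite \emph{union} of convex polytopes, one for each $U_i$ containing $\tilde x$, and that union need not be convex. This is exactly what Theorem~\ref{Polytopes} proves (finite union of rational polytopes) and exactly why the paper needs $S$-completeness \emph{for convexity} in Theorem~\ref{conhul}. Your allocation of $S$-completeness to the boundedness step is also off: the test stack $\overline{ST}_{\Oc_F}$ has two $\Oc_F$-points over $\ast$ and tests whether a map from its complement of $0$ extends, which is precisely what the paper uses (Lemma~\ref{SomeHomCons} plus Hartogs on $V$) to show the geodesic between two points of the image is again in the image. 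A geodesic \emph{ray} to infinity does not produce a map from $\overline{ST}_{\Oc_F}\setminus\{0\}$; the degeneration you describe is a $\Theta$-type condition, not an $S$-completeness one, and does not ``contradict $x\in X(\Oc_F)^{\sharp}$.'' The paper instead deduces boundedness from the already-established finiteness of fibres (Proposition~\ref{prop:r-finite}): if the polytope were unbounded, $\pi_r^{-1}(x)$ for the $r=N$ of Theorem~\ref{Polytopes} would be infinite, a contradiction.

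Your lattice-chain/building dictionary matches the paper's Construction~\ref{TorsEmb}/\ref{EmbCons} and Theorem~\ref{NormVBEq} in spirit, so that part of the translation is sound; the gaps are in the two places where the real work happens.
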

\begin{figure} 
\centering
\includegraphics[height=3cm]{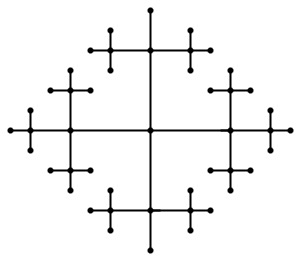}
\caption{The product of this graph with $\Rb$ is the extended Bruhat-Tits building of $\GL_2(\Qb_3)$.}
\end{figure}

The second assertion is crucial for solving the aforementioned over-counting problem. As shown by part (b) of the result above, the cardinality of a fibre $N_r = |\pi_r^{-1}(x)|$ agrees with the number of $\Zb[\frac{1}{r}]$-points in a convex polytope $P$ in the Bruhat-Tits building of $G/F$. The generating series 
\[
E_P(T)=\sum_{r \geq 1} N_r T^r
\]
is known as the Ehrhart series and can be shown to be the Taylor series expansion of a rational function $E_P(T) \in \Qb(T)$ with a single pole in $T=1$. Furthermore, it is a well-known principle in combinatorics that the limit $\lim_{T\to \infty} E_P(T)$ agrees with $-1$. Heuristically exchanging limits, analytic continuation and integration, one sees that the volume $X(\Oc_F)$ can be computed in the following way:
\[
\mu_{orb}\big(X(\Oc_F)\big) = - \lim_{T\to\infty} Q_X(T),\]
where $Q_X(T)$ is the analytic continuation of $ \sum_{r \geq 1} \mu_{orb}(\Xc(\Dr)) T^r$.

The proof of Theorem \ref{inthm} combines this strategy with a volume formula for Artin stacks \ref{orbiorbi}, to obtain a formula for the volume of $\Xc(\Dr)^\sharp$ for a fixed $r\geq 1$.   

\subsubsection*{Relation to previous work}

To the best of our knowledge, Theorem \ref{inthm} is the first result of its kind applying to moduli spaces of Artin stacks. We are not aware of any similar formulae in the p-adic or motivic integration literature. The overall shape of the expression, that is, evaluating the analytic continuation of a generating series at infinity to obtain a quantity of interest has appeared before, notably in connection with motivic vanishing cycles (see \cite{DL98,HL15}). It would be interesting to investigate a possible connection between these two, a priori unrelated, formulae.

The $r=1$ case of Theorem \ref{prop:r-finite}, that is the map $\pi_1\colon \Xc(\mathbb{D}_F) \to X(\Oc_F)$, was first studied in the motivic setting in \cite{SU22}, where it was interpreted as a measure of the non-separatedness of a stack. It should be pointed out that in this context, unlike (1) of Theorem \ref{prop:r-finite} for $p$-adic integration, $\pi_1$ can have infinite fibres, see Section 10 in \textit{loc. cit}. 

In Donaldson-Thomas theory, BPS-invariants and their refinements appear as integral invariants in the presence of strictly semi-stable objects in a given CY3 category \cite{JS12, KS08}. In \cite{COW21} it was observed that in the case of $1$-dimensional sheaves on local del Pezzo surfaces and meromorphic Higgs bundles on a curve, the alternating sum of Frobenius-traces on BPS-cohomology can be computed as a $p$-adic integral on the corresponding coarse moduli spaces. The proof in \textit{loc. cit.} is rather indirect, based on a $\chi$-independence result of Maulik-Shen \cite{MS20}, and the present article grew out of the desire to provide a more conceptual understanding from the point of view of $p$-adic integration. Our methods allowed us to give a new proof of $\chi$-independence (see Theorem \ref{thm:intro-chi} above), albeit $p$-adic integration only allows one to recover a slightly weaker statement.

\subsubsection*{Acknowledgements} We warmly thank Arthur Forey and François Loeser for sharing their intuition from motivic integration with us. We further thank Francesca Carocci, Lucien Hennecart, Young-Hoon Kiem, Matthew Satriano, Jeremy Usatine and Tanguy Vernet for interesting discussions on various aspects of the paper and Evan Sundbo for careful reading of an early draft and pointing out several inaccuracies.

\section{$p$-adic manifolds from linear quotient stacks}\label{mqs}

\subsection{Quotients of $F$-analytic manifolds}

In this subsection we refer to a group object in the category of $F$-analytic manifolds as a \emph{$p$-adic Lie group}, even for other non-archimedean local fields $F$. We refer to \cite{Se09} for a detailed account. 

Let $X$ be an $F$-analytic manifold endowed with an analytic action $a\colon G \times X \to X$ by a $p$-adic Lie group $G$. We assume that the action is proper, i.e., the map
$$(a,\id_X)\colon G \times X \to X \times X$$
is proper and also that it is free. We include the proof of the following proposition since we will refer to details of the construction at a later point.

\begin{proposition}\label{prop:quotient}\cite[Theorem II.III.12.1]{Se09}
 Under the assumptions above, the topological quotient $X/G$ can be endowed in a unique manner with the structure of an $F$-analytical manifold such that the quotient map $\pi\colon X \to X/G$ is an $F$-analytic submersion.  

In particular we have for $x \in X$ a short exact sequence
\[ 0 \to \mathfrak{g} \to T_xX \to T_{[x]}(X/G), \]
where $\mathfrak{g} \to T_xX$ denotes the injection given by the infinitesimal action. 
 
\end{proposition}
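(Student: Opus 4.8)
The plan is to build the manifold structure on $X/G$ out of \emph{local slices}, the usual device for free proper actions. One begins with the topological facts: $\pi$ is open because the saturation $G\cdot U=\bigcup_{g\in G}g\cdot U$ of an open set is open, and $X/G$ is Hausdorff because properness of $(a,\id_X)$ forces the graph $\{(x,g\cdot x):g\in G,\,x\in X\}\subset X\times X$ of the orbit relation to be closed (a proper map into a locally compact Hausdorff space is closed), and an open equivalence relation with closed graph has Hausdorff quotient.

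The core is the slice at a point $x\in X$. The orbit map $\alpha_x\colon G\to X$, $g\mapsto g\cdot x$, is an immersion: it has constant rank by $G$-homogeneity (left translation on $G$ and the action on $X$ intertwine it with itself through analytic isomorphisms) and is injective by freeness, so by the rank theorem its rank equals $\dim G$; in particular the infinitesimal action $\mathfrak{g}\to T_xX$, which is $d\alpha_x$ at $e$, is injective. Choose a locally closed analytic submanifold $S\ni x$ with $T_xS$ complementary to its image, so $\dim(G\times S)=\dim X$, and set $\mu\colon G\times S\to X$, $(g,s)\mapsto g\cdot s$; its differential at $(e,x)$ is the isomorphism $\mathfrak{g}\oplus T_xS\xrightarrow{\sim}T_xX$, so by continuity and left $G$-equivariance in the first factor we may shrink $S$ so that $\mu$ is \'etale, and then fix a neighbourhood $\Omega$ of $(e,x)$ on which $\mu$ is injective. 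The delicate step is a further shrinking of $S$ making $\mu$ injective on all of $G\times S$: otherwise there are sequences $s_n,s_n'\to x$ in $S$ and, for each $n$, a distinct pair $(g_n,s_n)\neq(g_n',s_n')$ with $h_n\cdot s_n=s_n'$ where $h_n=(g_n')^{-1}g_n$; then $(s_n',s_n)=(h_n\cdot s_n,s_n)\to(x,x)$, so by properness the $h_n$ lie in a compact set, a subsequence converges to some $h$ with $h\cdot x=x$, forcing $h=e$ by freeness, so eventually $(h_n,s_n),(e,s_n')\in\Omega$, contradicting injectivity of $\mu$ there. An injective \'etale map is an open embedding, so we obtain a $G$-stable open neighbourhood $G\cdot S$ of the orbit and a $G$-equivariant isomorphism $\mu\colon G\times S\xrightarrow{\sim}G\cdot S$.

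With slices in hand, $\pi|_S\colon S\to X/G$ is open (since $\pi$ is) and injective (by the slice property), hence a homeomorphism onto an open subset; the inverses of these maps are the charts. The transition between slices $S_x$ and $S_y$ is analytic: for $s\in S_x$ with $\pi(s)\in\pi(S_y)$ one has $s\in G\cdot S_y$, so $(g,s')=\mu_y^{-1}(s)\in G\times S_y$ depends analytically on $s$, and $s\mapsto s'$ is precisely $(\pi|_{S_y})^{-1}\circ\pi|_{S_x}$ (its inverse being analytic for the same reason). Thus $X/G$ becomes an $F$-analytic manifold, and in the chart $\mu_x\colon G\times S_x\xrightarrow{\sim}G\cdot S_x$ the map $\pi$ is the projection onto $S_x$, so $\pi$ is an analytic submersion. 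Uniqueness follows because a submersion admits local analytic sections, so a set-theoretic map out of $X/G$ is analytic iff its composite with $\pi$ is; applying this to the identity between any two such structures, and to its inverse, shows they coincide.

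Finally, for the displayed exact sequence: $d\pi_x$ is surjective by the submersion property, and in the slice chart $\ker(d\pi_x)=T_eG\oplus 0=T_x(G\cdot x)$, which is the image of the infinitesimal action $\mathfrak{g}\to T_xX$; injectivity of that map was established above. The only genuine obstacle in all of this is the injectivity part of the slice construction — promoting the local injectivity of $\mu$ near $(e,x)$, supplied by the $p$-adic inverse function theorem, to injectivity on a whole set $G\times S$ — which is exactly where properness and freeness enter, via a compactness argument that is legitimate because $p$-adic manifolds are locally compact and first countable. Everything else is formal.
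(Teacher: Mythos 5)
Your proof is correct and follows the same slice-based strategy as the paper (construct a transversal $S$ through $x$, use the inverse function theorem on the action map $G\times S\to X$, take $\pi|_S$ as charts, check transition maps, deduce submersion and uniqueness). The one place you go beyond the paper's sketch is worth flagging: the paper simply posits a slice ``which intersects each $G$-orbit at most once,'' whereas you actually \emph{prove} that after shrinking $S$ this holds — via the sequential compactness argument that uses properness to trap the $h_n$ in a compact set and freeness to force $h=e$ — and this is indeed the substantive point that properness and freeness are there to supply.
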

\begin{proof}
 It is clear that there can be at most one such structure of an $F$-analytic manifold, by virtue of the Implicit Function Theorem. 

 Thus, we turn to the proof of existence. Hausdorffness of the quotient follows from properness of the action. Second countability is inherited from $X$ to $G$ by virtue of openness of the projection map $\pi\colon X \to X/G$.
 
The last step will be to construct charts on $X$ which are adapted to the $G$-action.
 
 \begin{claim}[Adapted charts]\label{cl:adapted}
 For every point $x \in X$ there exists a chart $(W,\phi)$, where $W \subset X$ and $\phi\colon W \to \Oc_F^m \times \Oc_F^n$, such that the pre-image $\phi^{-1}(\Oc_F^m \times \{y\})$ agrees with the intersection of a $G$-orbit with $W$. 
\end{claim}

\begin{proof}
 Note that the orbit $Gx$ is a submanifold of $X$ since it equals the image of the proper injective immersion $G \to X$ sending $g$ to $gx$. 
 
 To construct a chart as above, we let $S$ be a slice to the orbit $Gx$, i.e., a submanifold which is transverse to the orbit $Gx \subset X$ and intersects each $G$-orbit at most once. The action map $G \times S \to X$ is by construction regular in $(1,x)$ and thus the Inverse Function Theorem yields a neighbourhood $U \times V$ of $(1,x)$ and $W$ of $x \in X$ such that the above map restricts to an isomorphism
 $$W \to U \times V.$$
 This concludes the proof of the claim.
 \end{proof}

The subset $\pi(W)\subset X/G$ is open since $\pi$ maps open subsets of $X$ to open subsets in the quotient. By design, the map $\mathrm{pr_2}\circ\phi\colon W \to \Oc_F^n$ descends to a continuous bijection $\phi'\colon \pi(W) \to \Oc_F^{n}$. Since the left-hand space is compact and the right-hand space is Hausdorff, this map is a homeomorphism. We have therefore succeeded in assigning a chart $(\pi(W),\phi')$ to an adapted chart $(W,\phi)$.

The collection of all charts of $X/G$ obtained in this manner from adapted charts of $X$ is an $F$-analytic atlas. To see this it suffices to prove that the change of coordinates map is $F$-analytic. Let $(W_1,\phi_1)$ and $(W_2,\phi_2)$ be two adapted charts of $X$. We then have the following commutative diagram

\[
\xymatrix{
\phi_2(W_1 \cap W_2) \ar[r]^{\phi_{12}} \ar[d]_{\mathrm{pr}_2} & \phi_1(W_1 \cap W_2) \ar[d]^{\mathrm{pr}_2} \\
\phi'_2(\pi(W_1\cap W_2)) \ar[r]^{\phi'_{12}} & \phi'_1(\pi(W_1\cap W_2)),
}
\]
where $\mathrm{pr}_2\colon \Oc_F^m \times \Oc_F^n \to \Oc_F^n$ denotes the projection to the second component.
Since the top row is an analytic map the same property holds for the bottom row.
\end{proof}

\subsection{$\Oc_F$-points of a linear stack}

In this subsection we study the structure of $\Xc(\Oc_F)$ where $\Xc$ is a $\Oc_F$-stack, which can be represented as a global quotient by a connected linear algebraic group.

\begin{definition}
A stack $\Xc$ over a base scheme $S$ is said to be a \emph{linear quotient stack} if there exists a smooth linear group scheme $G \to S$ and an algebraic $S$-space $U$ endowed with a relative group action $U \times_S G \to U$ such that there is an equivalence of stacks $[U/G] \simeq \Xc$.\end{definition}

By virtue of definition, there is a $G$-invariant map $U \to \Xc$ which is a $G$-torsor with representable total space. Existence of such a $G$-torsor leads to another equivalent way to phrase the definition of linear quotient stacks.

There is usually a plethora of ways to realize a given $\Xc$ as a linear quotient stack. For instance, given a presentation $\Xc \simeq [U/G]$ and an embedding of $G \hookrightarrow H$ we may write
$\Xc \simeq [U_H / H]$, where $U_H = (U \times H)/G$ denotes the quotient with respect to the anti-diagonal action, i.e., the induced $H$-torsor. For this reason, we may assume, without loss of generality that $G$ is a general linear group.

\begin{definition}\label{defi:hashtag}
Let $\Xc$ be a smooth linear $\Oc_F$-stack, together with a dense open smooth subspace $W \subset \Xc$. Then, we define $\Xc(\Oc_F)^{\sharp}$ to be the set given by the fibre product $\Xc(\Oc_F)^{\iso} \times_{\Xc(F)^{\iso}} W(F)$.
\end{definition}

\begin{proposition}\label{presin}
There exists a unique structure of an $F$-analytic manifold on $\Xc(\Oc_F)^{\sharp}$ such that for every presentation as a linear quotient stack $\Xc\simeq [U/G]$, the induced map 
$$U(\Oc_F) \to \Xc(\Oc_F)^{\iso}$$ 
is an $F$-analytic submersion over $\Xc(\Oc_F)^{\sharp}$.
\end{proposition}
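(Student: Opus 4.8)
The plan is to build the $F$-analytic structure from one conveniently chosen presentation and then to check that it is forced, and has the required compatibility, for \emph{every} presentation. The engine throughout is the principle that a surjective $F$-analytic submersion is an effective epimorphism for the analytic structure: if $h$ is a surjective $F$-analytic submersion, then for composable $g$ the composite $g\circ h$ is an analytic submersion (resp.\ analytic) iff $g$ is, and in particular a surjective analytic submersion onto a set $S$ pins down the analytic structure on $S$ uniquely. This follows from the existence of local analytic sections of submersions (the $F$-analytic inverse/implicit function theorem, \cite{Se09}). I will call this the \emph{cancellation principle}.

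\emph{Construction.} Using the reduction recorded just before Definition~\ref{defi:hashtag}, I fix a presentation $\Xc\simeq[U/\GL_n]$ with $U$ a smooth $\Oc_F$-space and let $U(\Oc_F)^{\sharp}\subset U(\Oc_F)$ be the preimage of $\Xc(\Oc_F)^{\sharp}$, i.e.\ the open, $\GL_n(\Oc_F)$-stable subset $U(\Oc_F)\cap(U\times_{\Xc}W)(F)$; it is over this locus that one is dealing with an honest (Hausdorff) $F$-analytic manifold, which is precisely why the definition singles out $\Xc(\Oc_F)^{\sharp}$. The action of $\GL_n(\Oc_F)$ on $U(\Oc_F)^{\sharp}$ is free, since the torsor $U\to\Xc$ has scheme-theoretically trivial stabilisers, and proper, since $\GL_n(\Oc_F)$ is compact (closed in $M_n(\Oc_F)$). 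By Proposition~\ref{prop:quotient}, $U(\Oc_F)^{\sharp}/\GL_n(\Oc_F)$ then carries a unique $F$-analytic structure for which the quotient map is an analytic submersion. As $\GL_n$-torsors over the local ring $\Oc_F$ are trivial, the tautological map $U(\Oc_F)^{\sharp}\to\Xc(\Oc_F)^{\sharp}$ is surjective with fibres exactly the $\GL_n(\Oc_F)$-orbits, so it identifies $\Xc(\Oc_F)^{\sharp}$ with $U(\Oc_F)^{\sharp}/\GL_n(\Oc_F)$; I transport the manifold structure along this bijection. Uniqueness then drops out of the cancellation principle applied to this surjective submersion.

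\emph{Independence and the submersion property.} Let $\Xc\simeq[U'/G']$ be an arbitrary presentation. I choose a closed immersion $G'\hookrightarrow\GL_m$ over $\Oc_F$, put $U'_H:=(U'\times\GL_m)/G'$ so that $\Xc\simeq[U'_H/\GL_m]$, and note that $\mathrm{pr}_1\colon V:=U'\times\GL_m\to U'$ is the projection while $q\colon V\to U'_H$ is the quotient map for the free $G'$-action, a $G'$-torsor. I would then chase the diagram (all on $\sharp$-loci)
\[ U'(\Oc_F)^{\sharp}\ \xleftarrow{\mathrm{pr}_1}\ V(\Oc_F)^{\sharp}\ \xrightarrow{q}\ U'_H(\Oc_F)^{\sharp}\ \longleftarrow\ W_0(\Oc_F)^{\sharp}\ \longrightarrow\ U(\Oc_F)^{\sharp}\ \longrightarrow\ \Xc(\Oc_F)^{\sharp}, \]
where $W_0:=U\times_{\Xc}U'_H$. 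Its two projections are a $\GL_m$- and a $\GL_n$-torsor, so by Proposition~\ref{prop:quotient} and triviality of $\GL$-torsors over $\Oc_F$ they induce, on $\sharp$-loci, surjective analytic submersions; since $W_0(\Oc_F)^{\sharp}\to U(\Oc_F)^{\sharp}\to\Xc(\Oc_F)^{\sharp}$ realises the underlying-point map as a surjective submersion, cancelling the surjective submersion $W_0(\Oc_F)^{\sharp}\to U'_H(\Oc_F)^{\sharp}$ shows $U'_H(\Oc_F)^{\sharp}\to\Xc(\Oc_F)^{\sharp}$ is an analytic submersion. Likewise $q$ induces on $\sharp$-loci the quotient of $V(\Oc_F)^{\sharp}$ by the free, proper action of the compact group $G'(\Oc_F)$, hence an analytic submersion by Proposition~\ref{prop:quotient}, so $V(\Oc_F)^{\sharp}\xrightarrow{q}U'_H(\Oc_F)^{\sharp}\to\Xc(\Oc_F)^{\sharp}$ is a submersion. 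This composite equals $V(\Oc_F)^{\sharp}\xrightarrow{\mathrm{pr}_1}U'(\Oc_F)^{\sharp}\to\Xc(\Oc_F)^{\sharp}$ (both are the underlying-point map), and $\mathrm{pr}_1$ is a surjective submersion; by the cancellation principle $U'(\Oc_F)^{\sharp}\to\Xc(\Oc_F)^{\sharp}$ is an analytic submersion — which is exactly the assertion that $U'(\Oc_F)\to\Xc(\Oc_F)^{\iso}$ is a submersion over $\Xc(\Oc_F)^{\sharp}$. When $G'$ is connected, Lang's theorem together with Henselianity of $\Oc_F$ makes $q$ on $\sharp$-loci, hence the map to $\Xc(\Oc_F)^{\sharp}$, moreover surjective.

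\emph{Main obstacle.} The real content is the cancellation principle in the $F$-analytic category — establishing local analytic sections of submersions and deducing the epimorphism property — together with checking, at each step, that Proposition~\ref{prop:quotient} genuinely applies (freeness of the torsor action; compactness of the $\Oc_F$-points of the acting group) and that the torsors in play are trivial over $\Spec\Oc_F$ (Hilbert~90 for general linear groups, Lang plus Hensel in general). A secondary, but genuine, nuisance is keeping the phrase ``submersion over $\Xc(\Oc_F)^{\sharp}$'' accurate when $G'$ is not general linear or not connected, where $U'(\Oc_F)\to\Xc(\Oc_F)^{\iso}$ can fail to be surjective and one only obtains a submersion onto an open subset of $\Xc(\Oc_F)^{\sharp}$.
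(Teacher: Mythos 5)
Your proof is correct and follows the same basic route as the paper: fix a presentation by a general linear group, build the $F$-analytic structure on $\Xc(\Oc_F)^{\sharp}$ via Proposition~\ref{prop:quotient} applied to $U(\Oc_F)^{\sharp}$ with its $\GL_n(\Oc_F)$-action, and then show independence of presentation by comparing through a fibre product. The paper's own proof compares directly with $(U\times_{\Xc}V)(\Oc_F)^{\sharp}$ and invokes the uniqueness part of Proposition~\ref{prop:quotient}, implicitly treating $V(\Oc_F)^{\sharp}\to\Xc(\Oc_F)^{\sharp}$ as a quotient by a free proper $H(\Oc_F)$-action. That step is only literally correct when $H$-torsors over $\Oc_F$ are all trivial; otherwise the map need not be surjective and one only gets an open embedding of $V(\Oc_F)^{\sharp}/H(\Oc_F)$ into $\Xc(\Oc_F)^{\sharp}$.

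Your extra step — first replacing $[U'/G']$ by the induced $\GL_m$-presentation $[U'_H/\GL_m]$, then comparing via $W_0=U\times_{\Xc}U'_H$, then descending from $V=U'\times\GL_m$ along $\mathrm{pr}_1$ by the cancellation principle — is precisely what patches this. It isolates the role of $\GL$-torsor triviality in the surjective links of the chain and reduces the arbitrary-group case to a composition of submersions, while correctly noting that for non-special or disconnected $G'$ the map $U'(\Oc_F)\to\Xc(\Oc_F)^{\iso}$ may only be a submersion onto an open subset of $\Xc(\Oc_F)^{\sharp}$ (which is all the proposition, read carefully, actually claims). The ``cancellation principle'' you rely on is the standard local-section argument for $F$-analytic submersions, and it is indeed the engine behind the uniqueness in Proposition~\ref{prop:quotient}, so there is no new analytic input to worry about. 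In short: same strategy as the paper, with a genuine and worthwhile tightening of the comparison step for general $G'$.
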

\begin{proof}
For a given presentation as a quotient by the general linear group, as explained above, we can define an $F$-analytic manifold structure on $\Xc(\Oc_F)^{\sharp}$ by applying Proposition \ref{prop:quotient} to $U(\Oc_F)^{\sharp}$ acted on by $G(\Oc_F)$. Here, we denote by $U(\Oc_F)^{\sharp}$ the open submanifold given by the pre-image of $\Xc(\Oc_F)^{\sharp}$.

It remains to ensure that for an alternative choice of presentation $\Xc \simeq [V/H]$ we obtain an equivalent manifold structure. This follows from the following commutative diagram:
\[
\xymatrix{
(U \times_{\Xc} V)(\Oc_F)^{\sharp} \ar[r] \ar[d] & V(\Oc_F)^{\sharp} \ar[d] \\
U(\Oc_F)^{\sharp} \ar[r] & \Xc(\Oc_F)^{\sharp} 
}
\]
Since all morphisms in this diagram are quotients with respect to a free and proper $G(\Oc_F)$ or $H(\Oc_F)$-action, we can apply the uniqueness assertion of Proposition \ref{prop:quotient} to infer that the identity map $\id_{\Xc(\Oc_F)^{\sharp}}$ induces an $F$-analytic isomorphism between the quotients $U(\Oc_F)^{\sharp}/G(\Oc_F) \simeq V(\Oc_F)^{\sharp}/H(\Oc_F)$.
\end{proof}

\subsection{Proper and \'etale maps}

Recall the notion of a finite covering map $\pi\colon U \to V$ of topological spaces as map for which there exists an open covering $\{V_i\}_{i \in I}$ of $V$ such that each $\pi^{-1}(V_i) \xrightarrow{\pi} V_i$ is isomorphic to the projection map $V_i \times F_i \to V_i$ for some finite set $F_i$ endowed with the discrete topology. The purpose of this subsection is to prove the following auxiliary results:

\begin{proposition}
Let $\pi\colon U \to V$ be a proper and \'etale morphism of $F$-analytic manifolds. Such a $\pi$ is a covering map in the neighbourhood of any point $x \in V$ where the preimage $\pi^{-1}(x)$ is finite.
\end{proposition}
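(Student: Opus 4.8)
The plan is to reduce the statement to a purely local model near a point $x \in V$ with finite fibre $\pi^{-1}(x) = \{u_1, \dots, u_n\}$. Since $\pi$ is étale, it is in particular a local isomorphism of $F$-analytic manifolds: for each $i$ there exist open neighbourhoods $\widetilde{W}_i \ni u_i$ and $W_i \ni x$ such that $\pi$ restricts to an $F$-analytic isomorphism $\widetilde{W}_i \isoto W_i$. After shrinking we may assume the $\widetilde{W}_i$ are pairwise disjoint (the ambient manifold is Hausdorff) and that $W_1 = \cdots = W_n =: W$, a single neighbourhood of $x$. The goal is then to shrink $W$ further so that $\pi^{-1}(W) = \bigsqcup_{i=1}^n \widetilde{W}_i$ exactly, with no extra sheets; once this is achieved, $\pi^{-1}(W) \xrightarrow{\pi} W$ is isomorphic to the projection $W \times \{1, \dots, n\} \to W$, which is the required covering chart.

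The key step where properness enters is ruling out the "escape of mass" phenomenon: a priori there could be points $v \in W$ arbitrarily close to $x$ whose fibre $\pi^{-1}(v)$ contains points lying outside $\bigsqcup \widetilde{W}_i$. To exclude this, I would argue by contradiction. Suppose no such shrinking works; then for a neighbourhood basis $W \supset W' \supset W'' \supset \cdots$ of $x$ we can pick points $v_j \in W^{(j)}$ and $z_j \in \pi^{-1}(v_j) \setminus \bigsqcup_i \widetilde{W}_i$. Then $v_j \to x$, so the set $\{v_j\} \cup \{x\}$ is compact (it is a convergent sequence together with its limit). By properness of $\pi$, the preimage $\pi^{-1}(\{v_j\}\cup\{x\})$ is compact, and it contains the sequence $(z_j)$; hence $(z_j)$ has a subsequence converging to some $z \in \pi^{-1}(x) = \{u_1,\dots,u_n\}$. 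But each $u_i$ lies in the open set $\widetilde{W}_i$, whereas every $z_j$ lies in the complement of $\bigsqcup_i \widetilde{W}_i$, which is closed; this forces the limit $z$ into that closed complement as well, contradicting $z \in \{u_1,\dots,u_n\} \subset \bigsqcup_i \widetilde{W}_i$.

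Having derived the contradiction, we conclude that for some sufficiently small neighbourhood $W$ of $x$ we have $\pi^{-1}(W) \subset \bigsqcup_{i=1}^n \widetilde{W}_i$. Replacing each $\widetilde{W}_i$ by $\widetilde{W}_i \cap \pi^{-1}(W)$ (still mapped isomorphically onto an open subset of $W$ by $\pi$, now possibly a proper open subset) and then shrinking $W$ once more to the intersection $\bigcap_i \pi(\widetilde{W}_i \cap \pi^{-1}(W))$, we arrange that each sheet surjects onto $W$. Then $\pi^{-1}(W) = \bigsqcup_{i=1}^n \widetilde{W}_i$ with $\pi|_{\widetilde{W}_i}\colon \widetilde{W}_i \isoto W$, which exhibits $\pi$ as a covering map over $W$. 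The main obstacle is precisely the compactness/escape-of-mass argument in the middle paragraph, where one must be careful that properness is exactly the hypothesis that converts "the fibre over $x$ is finite" into "the fibres over a neighbourhood of $x$ are uniformly controlled"; all the remaining steps are routine manipulations of open sets using that $\pi$ is a local homeomorphism and the manifolds are Hausdorff.
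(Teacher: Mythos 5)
Your proof is correct and follows essentially the same strategy as the paper's: isolate disjoint local-isomorphism neighbourhoods around the finitely many fibre points, assume a sequence of ``bad'' points escaping these sheets exists over a shrinking neighbourhood basis at $x$, use properness to extract a limit point, and derive a contradiction because that limit must simultaneously lie in $\pi^{-1}(x)$ and in the closed complement of the sheets. The only cosmetic difference is that the paper gets compactness from $\pi^{-1}$ of a compact-open member of the neighbourhood basis rather than from the convergent sequence $\{v_j\}\cup\{x\}$, which amounts to the same use of properness.
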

\begin{proof}
According to the Inverse Function Theorem, every $y \in \pi^{-1}(x)$ has an open neighbourhood $W_y$ such that $\pi(W_y) \subset V$ is open and the restriction 
$$\pi|_{W_y}\colon W_y \to \pi(W_y)$$
is an isomorphism of $F$-analytic manifolds.

Since there are only finitely many elements in $\pi^{-1}(x)$, the intersection
$N_x=\bigcap_{y \in \pi^{-1}(x)} \pi(W_y)$
is open. By construction, we have $\pi^{-1}(N_x) \supset \bigsqcup_{y \in \pi^{-1}(x)} W_y$, and thus we see that 
$$\pi^{-1}(N_x) \supset \bigsqcup_{y \in \pi^{-1}(x)} \underbrace{\big(\pi^{-1}(N_x) \cap W_y\big)}_{= L_y}.$$
We refer to $L_y$ as the \emph{leaf} of $y \in \pi^{-1}(x)$.

We intend to show that possibly after replacing $N_x$ by a smaller neighbourhood, the above inclusion is in fact an equality, i.e., that $\pi^{-1}(N_x)$ equals the disjoint union of leaves. Let $N_x^{(n)}$ be a descending sequence of compact-open neighbourhoods of $x$ contained in $N_x$ and forming a neighbourhood basis. We denote $\pi^{-1}(N_x^{(n)}) \cap L_y$ by $L_y^{(n)}$.

Assume by contradiction that for every $n \in \mathbb{N}$ there exists $z_n$ in $\pi^{-1}(N_x^{(n)})$, which does not lie in $\bigsqcup_{y \in \pi^{-1}(x)} L_y^{(n)}$. By properness of the map $f$, this sequences possesses a limit point $z$. By construction, this point $z$ lies in $\pi^{-1}(x)$ and must belong to a leaf $L_z$. Since $L_z$ is open in $U$, it follows that for some $n$ the point $z_n$ is contained in $L_z$ and hence in $L_z^{(n)}$. This contradiction concludes the proof. 
\end{proof}

\begin{corollary}\label{cor:loc-constant}
Assume that $\pi\colon U \to V$ is a proper and \'etale morphism of $F$-analytic manifolds with finite fibres. Then, the function 
$$\sep \colon V \to \mathbb{N}$$
assigning to $x \in V$ the cardinality $|\pi^{-1}(x)|$ is locally constant.
\end{corollary}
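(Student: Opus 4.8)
The plan is to derive this immediately from the preceding proposition. By hypothesis the morphism $\pi$ has finite fibres, so for \emph{every} $x \in V$ the preimage $\pi^{-1}(x)$ is finite; hence that proposition applies at each point of $V$ and produces, for each $x$, an open neighbourhood $N_x$ of $x$ over which $\pi$ restricts to a covering map. So the first step is simply to observe that the finiteness hypothesis upgrades the pointwise conclusion of the proposition to a statement valid at all of $V$.

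The second step is to unwind the covering structure to read off constancy of $\sep$. Rather than invoke only the abstract definition of a covering map (in which the local models $V_i \times F_i$ a priori involve finite sets $F_i$ that could differ from piece to piece), I would use the explicit description obtained in the proof of the proposition: after shrinking $N_x$ one has $\pi^{-1}(N_x) = \bigsqcup_{y \in \pi^{-1}(x)} L_y$, where each leaf $L_y$ maps isomorphically onto $N_x$ under $\pi$. Consequently, for any $x' \in N_x$ the fibre $\pi^{-1}(x')$ meets each leaf $L_y$ in exactly one point, so $|\pi^{-1}(x')| = |\pi^{-1}(x)|$. Thus $\sep$ is constant on $N_x$, equal to $|\pi^{-1}(x)|$; since $x \in V$ was arbitrary, $\sep$ is locally constant.

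I do not expect a genuine obstacle here: the substantive content lies entirely in the previous proposition, and the corollary is essentially bookkeeping. The only point deserving a word of care is the passage from ``$\pi$ is a covering map near $x$'' to ``$\sep$ is locally constant near $x$'': if one argued purely from the abstract definition of a covering map one would also want to note that $F$-analytic manifolds are locally connected (being locally modelled on $\Oc_F^n$), so that the index of the covering is well defined on a connected neighbourhood. Using the leaf decomposition from the proof of the proposition sidesteps this and yields the sharper assertion that $\sep$ is literally constant on the neighbourhood $N_x$ supplied by the proposition.
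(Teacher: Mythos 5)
Your proof is correct and follows essentially the same route as the paper's: invoke the preceding proposition to get a local covering structure near each point (valid everywhere because the fibres are all finite), then read off the fibre cardinality over a trivializing neighbourhood. The paper simply unwinds the definition of a finite covering ($\pi^{-1}(N_x)\cong N_x\times F_x$ with $F_x$ a finite discrete set) to conclude $\sep\equiv|F_x|$ on $N_x$; your use of the leaf decomposition from the proposition's proof is an equivalent way of saying the same thing.

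One parenthetical in your last paragraph is wrong and worth correcting: $F$-analytic manifolds over a non-archimedean local field are \emph{not} locally connected. The model spaces $\Oc_F^n$ are totally disconnected, so the only connected subsets are singletons. Fortunately this does not hurt your argument: the paper's notion of finite covering already builds in the product form $V_i\times F_i$ with $F_i$ a single fixed finite set per chart, so the fibre count over $V_i$ is $|F_i|$ by inspection, with no appeal to connectedness required; and the leaf decomposition you rely on gives the same conclusion directly. The worry you were hedging against (that the index of a covering might vary across a disconnected base) is genuinely a non-issue once the trivialization has the product form over a single open set, which is exactly what both the definition and the proposition provide.
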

\begin{proof}
 By virtue of definition of a finite covering, every point $x \in V$ possesses an open neighbourhood $N_x$ such that there exists a finite discrete space $F_x$ and a homeomorphism $\pi^{-1}(N_x) = N_x \times F_x$ intertwining $\pi$ and the projection to the first component. For every $y \in N_s$ we therefore have $\sep(y) = |F_x|$, and thus the function $\sep$ is locally constant.  
\end{proof}

\subsection{$p$-adic properties of the map $\pi$}

In this subsection we will study the behaviour of $p$-adic points with respect to birational maps from algebraic stacks to algebraic spaces (or schemes).

\begin{situation}\label{sit:good}
Assume that $\Xc = [U/G]$ is a linear quotient stack over $\Oc_F$ where $U$ is separated. Let $\pi\colon \Xc \to X$ be a morphism to an algebraic $\Oc_F$-space such that there exists an smooth open-dense subspace $W \subset X$ with the base change $\Xc_W = \Xc \times_X W \to W$ being an equivalence. 

Among all such $W \subset X$, there exists a maximal one, and we will always take $W$ to be maximal.
\end{situation}

\begin{definition} Assume that we are in Situation \ref{sit:good}.  Similar to Definition \ref{defi:hashtag}, we let $X(\Oc_F)^{\sharp} = X(\Oc_F)\times_{X(F)} W(F)$.
\end{definition}

\begin{proposition}\label{prop:etale}
Assume that we are in Situation \ref{sit:good}. Then, the map 
$\pi\colon \Xc(\Oc_F)^{\sharp} \to X(\Oc_F)^{\sharp}$ is an \'etale $F$-analytic map. 
\end{proposition}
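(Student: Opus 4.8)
\medskip

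The plan is to reduce the statement to a local computation of tangent spaces, using the manifold structure on $\Xc(\Oc_F)^\sharp$ furnished by Proposition \ref{presin} and the submersion $q\colon U(\Oc_F)^\sharp \to \Xc(\Oc_F)^\sharp$ it provides. First I would note that \'etaleness is a local property on source and target, and that a map of $F$-analytic manifolds is \'etale precisely when its derivative at every point is an isomorphism (Inverse Function Theorem). So it suffices to check that for every $\xi \in \Xc(\Oc_F)^\sharp$ the differential of $\pi$ at $\xi$ is bijective. Pick a lift $u \in U(\Oc_F)^\sharp$ with $q(u)=\xi$. By Proposition \ref{presin}, $q$ is a submersion, so $d q_u\colon T_u U(\Oc_F)^\sharp \to T_\xi \Xc(\Oc_F)^\sharp$ is surjective with kernel the image of the infinitesimal $G(\Oc_F)$-action, i.e. we have the exact sequence $0 \to \mathfrak{g} \to T_u U(\Oc_F)^\sharp \to T_\xi \Xc(\Oc_F)^\sharp \to 0$ from Proposition \ref{prop:quotient}. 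Composing with $d\pi_\xi$, it is therefore equivalent to understand the composite $U(\Oc_F)^\sharp \to \Xc(\Oc_F)^\sharp \xrightarrow{\pi} X(\Oc_F)^\sharp$ and its derivative at $u$.

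The key geometric input is the birationality hypothesis from Situation \ref{sit:good}: over the maximal open $W \subset X$, the map $\pi$ restricts to an equivalence $\Xc_W \xrightarrow{\sim} W$. By definition of the $\sharp$-decorations, a point $\xi \in \Xc(\Oc_F)^\sharp$ has its generic point $\xi_F \in \Xc(F)$ landing in $W(F)$, and likewise $\pi(\xi)$ lies in $X(\Oc_F)^\sharp$, i.e. its generic point is in $W(F)$. Thus $\pi$ identifies, on the level of $F$-points, the open subsets $\Xc(F)^\iso \times_{\Xc(F)^\iso} W(F)$ and $X(F) \times_{X(F)} W(F) = W(F)$ — in other words $\pi$ is a bijection after passing to generic points. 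This already tells us $\pi$ on $\Xc(\Oc_F)^\sharp$ is injective: two $\Oc_F$-points with the same image in $X(\Oc_F)^\sharp$ have the same generic fibre in $W(F)$, and by separatedness of $U$ (hence of $\Xc$, since $[U/G]\to BG$ is representable separated when $U$ is separated, and $BG \to \Spec\Oc_F$ is separated for $G=\GL_n$) the valuative criterion forces them to be equal as $\Oc_F$-points. Injectivity plus being an open map and a local homeomorphism will give \'etaleness; alternatively, one argues directly with derivatives as follows.

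For the derivative at $u \in U(\Oc_F)^\sharp$: the composite $U \to \Xc \xrightarrow{\pi} X$ is a $G$-invariant map of $\Oc_F$-schemes, so it factors (scheme-theoretically) through the GIT-type quotient; in any case on $F$-points it factors through $U(F) \to \Xc(F)^\iso$, and over the locus of points whose image lies in $W$, where $\Xc_W \simeq W$, the map $U_W(F) \to W(F)$ is exactly the $G(F)$-torsor projection. Hence the derivative $T_u U(\Oc_F)^\sharp \to T_{\pi(\xi)} X(\Oc_F)^\sharp$ is again surjective with kernel precisely $\mathfrak g$ (the infinitesimal orbit directions), because $U_W \to W$ is smooth of relative dimension $\dim G$ with fibres the $G$-orbits. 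Comparing with the sequence for $q$, the induced map $d\pi_\xi\colon T_\xi \Xc(\Oc_F)^\sharp \to T_{\pi(\xi)}X(\Oc_F)^\sharp$ is a map of $F$-vector spaces of the same dimension which is surjective, hence an isomorphism. By the Inverse Function Theorem $\pi$ is a local analytic isomorphism at $\xi$, and since this holds at every point, $\pi$ is \'etale.

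\medskip

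The step I expect to be the main obstacle is the careful bookkeeping of the decorations $(-)^\iso$, $(-)^\sharp$ and the passage between stack-points and scheme-points: one must verify that $q\colon U(\Oc_F)^\sharp \to \Xc(\Oc_F)^\sharp$ really does restrict to the honest quotient manifold structure of Proposition \ref{presin} over the relevant open, and that the $G$-invariant composite $U \to X$ genuinely factors, on the nose on $F$-points over $W$, through the torsor quotient $U_W \to W$ (this is where the maximality and density of $W$, and separatedness of $U$, are used). Everything downstream — surjectivity of the derivative, the dimension count, and the Inverse Function Theorem application — is then formal.
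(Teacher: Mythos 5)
Your main argument — computing the differential of $\pi$ via the two submersions $U(\Oc_F)^\sharp \to \Xc(\Oc_F)^\sharp$ (a $G(\Oc_F)$-torsor) and $U_W(F) \to W(F)$ (a $G(F)$-torsor), noting that both have kernel $\mathfrak g$, and then using the openness of $U(\Oc_F)$ in $U(F)$ to conclude that $d\pi_{[x]}$ is an isomorphism — is exactly the paper's proof, just written out at greater length. The reduction to $G=\GL_n$ and the observation $X(\Oc_F)^\sharp \subset W(F)$ are also the paper's first two moves. So the core of the proposal is correct.

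However, the aside claiming that $\pi$ is \emph{injective} on $\Xc(\Oc_F)^\sharp$ is false, and the justification you give contains a genuine error: $BG \to \Spec\Oc_F$ is \emph{not} separated when $G=\GL_n$ (or any non-proper group); a classifying stack $BG$ is separated precisely when $G$ is proper. Consequently $\Xc=[U/G]$ is not separated even when $U$ is, and the valuative criterion does not force two $\Oc_F$-points of $\Xc$ with the same generic fibre to agree. In fact the entire point of this paper is that $\pi$ has finite but nontrivial fibres — Proposition~\ref{prop:finite} proves finiteness, Theorem~\ref{bapro}(b) identifies $\bigcup_r \pi_r^{-1}(x)$ with the rational points of a polytope in the Bruhat–Tits building, and the worked example $[\BA^2/\BG_m] \to \BA^1$ exhibits fibres of size $>1$. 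You do not actually use the injectivity claim in the derivative argument, so the proof stands, but you should excise that paragraph: left in, it would contradict everything that follows in the paper, and if taken seriously would collapse the over-counting problem the rest of the article is built to solve.
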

\begin{proof}
We use that $X(\Oc_F)^{\sharp} \subset W(F)$. Without loss of generality, $G$ is a general linear group, and we then have $\Xc(\Oc_F) = U(\Oc_F)/G(\Oc_F)$ and $W(F) = U_W(F)/G(F)$. For a point $[x] \in \Xc(\Oc_F)^{\sharp}$ we therefore have 
$T_{[x]}  \Xc(\Oc_F) \simeq T_x U(\Oc_F) / \mathfrak{g} \simeq T_{\pi([x])} X(\Oc_F)$, and likewise $T_{\pi([x])} \simeq T_x U(F) / \mathfrak{g}$.

Since $U(\Oc_F) \subset U(F)$ is open, we obtain that both tangent spaces are identical. This concludes the proof.
\end{proof}

\begin{proposition}\label{prop:proper}
Assume that we are in Situation \ref{sit:good}. Then, the map $\pi = \pi(\Oc_F)$ is a proper map between $p$-adic manifolds.    
\end{proposition}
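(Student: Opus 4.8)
The goal is to show that $\pi\colon \Xc(\Oc_F)^{\sharp} \to X(\Oc_F)^{\sharp}$ is proper as a map of $p$-adic manifolds, i.e.\ that preimages of compact sets are compact. Since we are dealing with locally compact, second-countable (hence metrizable) spaces, properness is equivalent to the statement that $\pi$ is a \emph{closed} map with compact fibres, or equivalently (working sequentially) that for every convergent sequence $x_n \to x$ in $X(\Oc_F)^{\sharp}$ and every choice of lifts $\tilde{x}_n \in \pi^{-1}(x_n)$, the sequence $(\tilde{x}_n)$ has a convergent subsequence in $\Xc(\Oc_F)^{\sharp}$. I would reduce the whole question to a statement about $G(\Oc_F)$-orbits inside $U(\Oc_F)$ using the presentation $\Xc = [U/G]$, and then invoke properness of $\pi\colon \Xc \to X$ at the level of schemes together with the valuative criterion.

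\textbf{Step 1: reduce to the torsor $U$.} Fix a presentation $\Xc \simeq [U/G]$ with $G$ a general linear group, so that by Proposition~\ref{presin} the map $q\colon U(\Oc_F)^{\sharp} \to \Xc(\Oc_F)^{\sharp}$ is a submersion realizing the target as the quotient $U(\Oc_F)^{\sharp}/G(\Oc_F)$. Let $K \subset X(\Oc_F)^{\sharp}$ be compact; I want to show $\pi^{-1}(K)$ is compact. It suffices to show that the preimage $\widetilde{K}$ of $\pi^{-1}(K)$ in $U(\Oc_F)^{\sharp}$ is, modulo the $G(\Oc_F)$-action, contained in a compact set; more precisely, it suffices to produce a compact subset $C \subset U(\Oc_F)$ such that $q(C) \supseteq \pi^{-1}(K)$, since then $\pi^{-1}(K) = q(C \cap U(\Oc_F)^{\sharp})$ is a closed subset of the compact set $q(C)$. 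Because $U$ is of finite type over $\Oc_F$ and separated, $U(\Oc_F)$ is compact; so the real content is to show that the family of orbits meeting $\widetilde{K}$ can be represented by points staying in a fixed compact region — but in fact since $U(\Oc_F)$ itself is already compact, the only thing that can fail is \emph{closedness} of the image, so the crux is to prove $\pi$ is a closed map.

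\textbf{Step 2: closedness via the valuative criterion.} Suppose $x_n \to x$ in $X(\Oc_F)^{\sharp} \subset W(F)$ with lifts $[\tilde{x}_n] \to$ (nothing, a priori). Lift each $[\tilde{x}_n]$ further to $u_n \in U(\Oc_F)$. By compactness of $U(\Oc_F)$, after passing to a subsequence we may assume $u_n \to u$ for some $u \in U(\Oc_F)$, giving a point $[\bar{u}] = q(u) \in \Xc(\Oc_F)$ with $\pi([\bar u]) = \lim \pi([\tilde x_n]) = x \in W(F)$, by continuity of $\pi\colon \Xc(\Oc_F) \to X(\Oc_F)$. It remains to check that $[\bar u]$ lands in the open locus $\Xc(\Oc_F)^{\sharp}$, i.e.\ that its image under $\Xc(\Oc_F)^{\iso} \to \Xc(F)^{\iso}$ lies over $W(F)$: but this is immediate since $\pi([\bar u]) = x \in W(F)$ and over $W$ the map $\pi$ is an equivalence, so $[\bar u]$ is forced to lie in $\Xc_W(F) \subset \Xc(F)$. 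Hence $[\bar u] \in \Xc(\Oc_F)^{\sharp}$ and it is the desired limit of the $[\tilde x_n]$.

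\textbf{The main obstacle.} The delicate point is \emph{not} the noncompact-group issue one might fear — there is no divergent $G$-direction to control because $U(\Oc_F)$ is already compact and the $\sharp$-manifold is cut out as an open subset. The genuine subtlety is showing that the candidate limit $[\bar u]$ actually lies in $\Xc(\Oc_F)^{\sharp}$ rather than merely in $\Xc(\Oc_F)^{\iso}$, and then that the topology on $\Xc(\Oc_F)^{\sharp}$ induced from the $\sharp$-manifold structure (Proposition~\ref{presin}) agrees, near such a point, with the naive topology coming from $U(\Oc_F)$; this is exactly where one uses that $W \subset X$ is \emph{open} (so that $x_n \to x \in W(F)$ keeps us inside the good locus) together with the submersion property. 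I would also need to be mildly careful that convergence in $\Xc(\Oc_F)^{\sharp}$ can be tested after pulling back along the submersion $q$, which follows from the fact that $q$ is a topological quotient map with the fibres being $G(\Oc_F)$-orbits of a compact group acting properly. Once these points are in place, the argument is the standard ``compact source, Hausdorff target, valuative criterion'' package.
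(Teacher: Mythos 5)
Your proof is correct and is essentially the paper's argument: both rest on the compactness of $U(\Oc_F)$ together with the observation that the preimage in $U(\Oc_F)$ of a compact $K\subset X(\Oc_F)^{\sharp}\subset W(F)$ automatically lands in $U(\Oc_F)^{\sharp}$, since the $\sharp$-locus is cut out precisely by the condition of lying over $W(F)$. The paper packages this topologically (a closed subset of the compact $U(\Oc_F)$, pushed forward by the continuous map $g\colon U(\Oc_F)^{\sharp}\to\Xc(\Oc_F)^{\sharp}$), while you phrase the same argument sequentially, which is equivalent for these metrizable spaces.
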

\begin{proof}
 Let $\Xc \simeq [U/G]$ be a presentation of $\Xc$ as a linear quotient stack, without loss of generality we assume that $G$ is a general linear group. We then have a cartesian diagram of topological spaces
 \[
\xymatrix{
U(\Oc_F)^{\sharp} \ar[d] \ar[r]^g & \Xc(\Oc_F)^{\sharp} \ar[d] \ar[r]^{\pi} & X(F)^{\sharp} \ar[d]  \\
U(\Oc_F) \ar[r]^-{\alpha} & U(\Oc_F)/G(\Oc_F) \ar[r] & X(F),
}
 \]
i.e., $U(\Oc_F)^{\sharp} = U(\Oc_F) \times_{U(\Oc_F)/G(\Oc_F)} \Xc(\Oc_F)^{\sharp}$. The map $\alpha$ is a continuous map of compact Hausdorff spaces and thus is proper. 

Let $K \subset X(F)^{\sharp}$ be a compact subset. Then, it is also closed in $X(F)$, therefore the pre-image $g^{-1}(\pi^{-1}(K))$ is closed in $U(\Oc_F)^{\sharp}$.  We conclude that $\pi^{-1}(K)=g(g^{-1}(\pi^{-1}(K)))$ is proper too.
\end{proof}

\subsection{$p$-adic finiteness}

Continuing our analysis of the $p$-adic properties of $\pi$, we will prove in this subsection that the map $\pi$ even happens to be finite.

\begin{proposition}\label{prop:finite}
Assume that we are in Situation \ref{sit:good}. The map $\pi=\pi(\Oc_F)$ is a finite map between $p$-adic manifolds, i.e., every fibre $\pi^{-1}(x)$ is a finite set.
\end{proposition}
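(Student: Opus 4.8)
The plan is to reduce the finiteness of the fibres of $\pi=\pi(\Oc_F)\colon \Xc(\Oc_F)^\sharp \to X(\Oc_F)^\sharp$ to a statement about a single orbit, and then use the separatedness of $U$ together with properness of the $G(\Oc_F)$-action. Fix $x \in X(\Oc_F)^\sharp$. By Proposition \ref{prop:etale} the map $\pi$ is \'etale, so $\pi^{-1}(x)$ is a discrete subset of $\Xc(\Oc_F)^\sharp$; by Proposition \ref{prop:proper} it is compact. A discrete compact space is finite, so \emph{in principle} we are done once we know $\pi^{-1}(x)$ is both discrete and compact. The only subtlety is that these two propositions together already give the result, so the real content is checking that no hypothesis has been quietly dropped — in particular that $\Xc(\Oc_F)^\sharp$ is genuinely an $F$-analytic manifold (Proposition \ref{presin}) and that $X(\Oc_F)^\sharp \subset W(F)$ is too, so that the words ``\'etale'' and ``proper map of $p$-adic manifolds'' make sense.

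Concretely, I would argue as follows. First I would recall that $\pi^{-1}(x)$, being the fibre of an \'etale morphism of $F$-analytic manifolds (Proposition \ref{prop:etale}), is discrete: near each point $[u] \in \pi^{-1}(x)$ the Inverse Function Theorem provides an open neighbourhood on which $\pi$ is an isomorphism onto an open subset of $X(\Oc_F)^\sharp$, so $[u]$ is isolated in the fibre. Next, $\pi$ is proper (Proposition \ref{prop:proper}), and $\{x\}$ is compact, so $\pi^{-1}(x)$ is a compact subspace of $\Xc(\Oc_F)^\sharp$. A topological space that is simultaneously discrete and compact is finite. Hence $|\pi^{-1}(x)| < \infty$, which is exactly the claim.

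If instead a more hands-on argument is wanted — avoiding the formal ``discrete $+$ compact $=$ finite'' move — I would pass to a presentation $\Xc \simeq [U/G]$ with $G=\GL_n$ and unwind via the cartesian square from the proof of Proposition \ref{prop:proper}. A point of $\pi^{-1}(x)$ is a $G(\Oc_F)$-orbit on $U(\Oc_F)^\sharp$ lying over $x$; all such orbits sit inside a single $G(F)$-orbit on $U(F)$ (because $\Xc_W \to W$ is an equivalence and $x \in W(F)$), so counting $\pi^{-1}(x)$ amounts to counting $G(\Oc_F)$-orbits inside $\big(G(F)\cdot u\big)\cap U(\Oc_F)$ for a fixed $u \in U(\Oc_F)$ over $x$. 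Separatedness of $U$ forces the stabiliser $\stab_{G(F)}(u)$ to be trivial (or at least, this is where the hypothesis enters), so this set of orbits is identified with a subset of $\stab_{G(F)}(u)\backslash G(F)/G(\Oc_F)$, and its finiteness then follows from the compactness of $\pi^{-1}(x)$ established above, or from a Bruhat--Tits-type boundedness argument as previewed in Theorem \ref{bapro}(b).

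The main obstacle, such as it is, is purely bookkeeping: one must make sure that $\pi^{-1}(x)$ is being taken inside the manifold $\Xc(\Oc_F)^\sharp$ rather than the full $\Xc(\Oc_F)^{\iso}$, since it is only on the $\sharp$-locus that Propositions \ref{prop:etale} and \ref{prop:proper} apply; and one must observe that $x \in X(\Oc_F)^\sharp$ indeed lands in $W(F)$, so that the fibre of $\pi$ over $x$ coincides with the fibre of the \'etale, proper map between manifolds. No genuinely hard analytic input is needed — the work was already done in the preceding subsections, and this proposition is a short corollary that packages it.
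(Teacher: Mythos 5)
Your first argument is correct and is in fact \emph{simpler} than the paper's. The paper obtains discreteness of $\pi^{-1}(x)$ without invoking Proposition~\ref{prop:etale}: it introduces the map $c\colon U_W(F)/G(\Oc_F) \to W(F)$, shows (via Lemma~\ref{lemma:discrete}, i.e.\ $G(F)/G(\Oc_F)$ discrete) that $c$ has discrete fibres, and exhibits a natural injection $b\colon \Xc(\Oc_F)^\sharp \hookrightarrow U_W(F)/G(\Oc_F)$ so that $\pi=c\circ b$; hence fibres of $\pi$ embed into the discrete fibres of $c$. Combined with compactness from Proposition~\ref{prop:proper}, this gives finiteness. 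You instead observe that an \'etale map of $F$-analytic manifolds is a local homeomorphism (Inverse Function Theorem), so its fibres are automatically discrete, and then close the same way. Both are valid; yours is shorter because it reuses Proposition~\ref{prop:etale} as a black box, whereas the paper's route reveals the more refined structure (that $\pi^{-1}(x)$ sits inside $G(F)/G(\Oc_F)$, hence inside the vertex set of the Bruhat--Tits building) that is exploited later in Section~\ref{tpb}; it also renders Lemma~\ref{lemma:discrete} superfluous for this particular proof.

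One small inaccuracy in your ``hands-on'' variant: it is not separatedness of $U$ that forces $\stab_{G(F)}(u)$ to be trivial for $u$ over a point of $W$. That triviality comes from the hypothesis that $\Xc_W \to W$ is an equivalence, which kills automorphism groups of objects over $W$. Separatedness of $U$ enters elsewhere (it ensures $U(\Oc_F)$ is compact Hausdorff, which underlies the properness argument). The invocation of Theorem~\ref{bapro}(b) would also be circular here, since that result depends on the present proposition; compactness from Proposition~\ref{prop:proper} is the right closing step, as you also say.
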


The proof of finiteness rests on the following (well-known) auxiliary result, for which a proof is include for the convenience of the reader.

\begin{lemma}\label{lemma:discrete}
Let $G/\Oc_F$ be a linear group scheme. The quotient topology on $G(F)/G(\Oc_F)$ is discrete.
\end{lemma}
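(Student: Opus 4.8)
The plan is to reduce to the case $G = \GL_n$ by means of a closed embedding, and then to invoke the elementary fact that an open subgroup of a topological group has discrete quotient.

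First I would use that $G$ is linear to fix a closed immersion of $\Oc_F$-group schemes $G \hookrightarrow \GL_{n,\Oc_F}$. Restricting along $\Spec F \hookrightarrow \Spec \Oc_F$ this identifies $G(F)$ with a closed subgroup of $\GL_n(F)$ carrying the subspace topology (a closed immersion of finite-type $F$-schemes induces a closed topological embedding on $F$-points), and I claim that inside $\GL_n(F)$ one has $G(\Oc_F) = G(F)\cap \GL_n(\Oc_F)$. The inclusion ``$\subseteq$'' is clear. For ``$\supseteq$'', an element of the right-hand side is an $\Oc_F$-point $x\colon \Spec\Oc_F \to \GL_{n,\Oc_F}$ whose restriction to the generic point factors through the closed subscheme $G$; writing $J \subseteq \Oc_F$ for the ideal obtained by pulling back the ideal sheaf of $G$ along $x$, the hypothesis says $J\otimes_{\Oc_F}F = 0$, and since $\Oc_F$ is a domain and $J$ is an ideal (hence a torsion-free module) this forces $J = 0$, i.e.\ $x$ factors through $G$, so $x \in G(\Oc_F)$.

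Next I would show that $\GL_n(\Oc_F)$ is an \emph{open} subgroup of $\GL_n(F)$. A quick way: $\GL_n(\Oc_F)$ contains $1 + \mathfrak{m}_F M_n(\Oc_F)$, which is a translate of the ball $\mathfrak{m}_F M_n(\Oc_F)$ inside $M_n(F)$, hence an open neighbourhood of the identity lying inside $\GL_n(F)$; and a subgroup of a topological group containing an open neighbourhood of the identity is open, being a union of translates of it. (Equivalently one may note $\GL_n(\Oc_F) = M_n(\Oc_F)\cap\{A : |\det A| = 1\}$, an intersection of two open subsets of $M_n(F)$.) Combining this with the previous paragraph, $G(\Oc_F) = G(F)\cap\GL_n(\Oc_F)$ is open in $G(F)$.

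Finally, for any topological group $\Gamma$ and open subgroup $H$, each coset $\gamma H$ is open in $\Gamma$, so by the definition of the quotient topology every singleton of $\Gamma/H$ is open; that is, $\Gamma/H$ is discrete. Applying this with $\Gamma = G(F)$ and $H = G(\Oc_F)$ finishes the proof (note the quotient is typically infinite, e.g.\ $F^\times/\Oc_F^\times \cong \Zb$). I do not anticipate a genuine obstacle here; the only steps needing a modicum of care are the scheme-theoretic identity $G(\Oc_F) = G(F)\cap\GL_n(\Oc_F)$ and the fact that $G(F)$ inherits its analytic topology from $\GL_n(F)$, both of which are standard.
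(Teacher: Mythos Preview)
Your proof is correct and follows essentially the same approach as the paper: both arguments hinge on the fact that $G(\Oc_F)$ is an open subgroup of $G(F)$, so that each coset is open and the quotient is discrete. The paper's proof simply asserts that $\Oc_F \subset F$ being compact-open implies $G(\Oc_F) \subset G(F)$ is compact-open, whereas you unpack this implication explicitly via the closed embedding into $\GL_n$ and the identity $G(\Oc_F) = G(F)\cap \GL_n(\Oc_F)$.
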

\begin{proof}
 Recall that $\Oc_F \subset F$ is a compact-open subset. This implies that $G(\Oc_F)\subset G(F)$ is a compact-open subgroup. The quotient space $G(F)/G(\Oc_F)$ therefore has the property that its points (which correspond to $G(\Oc_F)$-orbits) are open. Hence, the quotient topology is discrete.    
\end{proof}

\begin{proof}[Proof Proposition \ref{prop:finite}]
Let $U_W$ be the fibre product $U \times_X W$ and $\widetilde{W(F)}$ the quotient $U_W(F)/G(\Oc_F)$. The map 
\begin{equation}\label{eqn:strassenzug}
\widetilde{X(F)}=U_W(F)/G(\Oc_F) \to U_W(F)/G(F)=W(F)\end{equation}
will be denoted by $c$.

There is a commutative diagram
\[
\xymatrix{
& \widetilde{W(F)} \ar[rd]^{c} & \\
\Xc(\Oc_F)^{\sharp} \ar[r]^{\pi} \ar@{-->}[ur]^b & X(\Oc_F)^{\sharp} \ar[r]^{\subset} & W(F),
}
\]
where the dashed arrow is given by the natural map 
$$\Xc(\Oc_F)^{\sharp}=U(\Oc_F)^{\sharp}/G(\Oc_F) \to U_W(F)/G(\Oc_F)=\widetilde{W(F)}.$$ 
Since $c$ has discrete fibres homeomorphic to $G(F)/G(\Oc_F)$ and $\pi$ is proper, we infer from injectivity of the dashed arrow that the fibres of $\pi$ are simultaneously compact and discrete spaces. Therefore, $\pi$ must be a finite map.
\end{proof}

\begin{corollary}
The map $\pi\colon \Xc(\Oc_F) \to X(\Oc_F)$ is a finite cover and the function $\sep\colon X(\Oc_F)^{\sharp} \to \mathbb{N},\; x \mapsto |\pi^{-1}(x)|$ is well-defined and locally constant.
\end{corollary}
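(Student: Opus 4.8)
The plan is simply to combine the structural facts already established in this subsection for the restriction of $\pi$ to the distinguished open loci. Write $\pi^{\sharp}$ for the map $\Xc(\Oc_F)^{\sharp} \to X(\Oc_F)^{\sharp}$ obtained from $\pi(\Oc_F)$; this is the map to which Propositions \ref{prop:etale}, \ref{prop:proper}, \ref{prop:finite} actually apply. By Proposition \ref{prop:etale} it is an \'etale morphism of $F$-analytic manifolds, by Proposition \ref{prop:proper} it is proper, and by Proposition \ref{prop:finite} each of its fibres is a finite set. In particular the assignment $x \mapsto |\pi^{-1}(x)|$ takes finite values and is therefore well-defined as a function $\sep\colon X(\Oc_F)^{\sharp}\to\mathbb{N}$.

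Next I would invoke the general statement on proper \'etale maps preceding Corollary \ref{cor:loc-constant}: a proper \'etale morphism of $F$-analytic manifolds is a covering map in a neighbourhood of every point with finite fibre. Since all fibres of $\pi^{\sharp}$ are finite, this shows that every point of $X(\Oc_F)^{\sharp}$ has a neighbourhood $N_x$ over which $\pi^{\sharp}$ is isomorphic to the projection $N_x \times F_x \to N_x$ from a finite discrete product, i.e.\ $\pi^{\sharp}$ is a finite cover in the sense recalled at the start of the subsection. Local constancy of $\sep$ is then precisely Corollary \ref{cor:loc-constant} applied to $\pi^{\sharp}$, whose proof reads off the constant value $|F_x|$ on such a trivialising neighbourhood.

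There is no real obstacle here: the content of the statement has been distributed across the preceding propositions and the two general lemmas on proper \'etale maps, and the corollary is just their conjunction. The only point worth spelling out is a bookkeeping one — the $F$-analytic manifold structures, and hence the covering and local-constancy assertions, live on the $\sharp$-loci $\Xc(\Oc_F)^{\sharp}$ and $X(\Oc_F)^{\sharp}$ rather than on all of $\Xc(\Oc_F)$ and $X(\Oc_F)$, and $\sep$ is defined only there, so $\pi$ in the statement is to be read as $\pi^{\sharp}$ throughout.
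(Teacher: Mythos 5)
Your argument is correct and coincides with the paper's own proof: both cite Propositions \ref{prop:finite}, \ref{prop:etale}, \ref{prop:proper} to establish finiteness of fibres, \'etaleness, and properness, and then apply the proper-\'etale-covering lemma together with Corollary \ref{cor:loc-constant} to get the covering and local-constancy claims. Your closing remark that everything is really taking place on the $\sharp$-loci is a fair bookkeeping clarification that the paper leaves implicit.
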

\begin{proof}
It follows from Proposition \ref{prop:finite} that each fibre is finite. The counting function $\sep$ is therefore well-defined on all of $X(\Oc_F)^{\sharp}$. In Corollary \ref{cor:loc-constant} it is shown that this function is locally constant, provided that $\pi$ is \'etale and proper. Those two qualities were established in Propositions \ref{prop:etale} and \ref{prop:proper}.
\end{proof}

\section{Twisted $\Oc_F$-points and buildings}\label{tpb}

This section is devoted to incorporating twisted $\Oc_F$-points in the picture developed so far. Twisted $\Oc_F$-points have already played an important role in our previous work on $p$-adic integration for Hitchin systems (see \cite{GWZ20b,GWZ20a}). The key difference here is that we also require certain twists by $\mu_r$, where we no longer assume that the residual characteristic is prime to $r$.

\subsection{Definition and basic properties}
The most efficient option to introduce this concept of twisted $\Oc_F$-points is via the theory of root stacks introduced independently by Abramovich--Graber--Vistoli \cite{MR2450211} and Cadman \cite{1117817b-301b-381d-b9c7-cb40e4814623}.

\begin{definition}
For $r \in \mathbb{N} \setminus \{0\}$ we denote by $\Dr$ the root stack of order $r$ of $\Spec \Oc_F$ with respect to the Cartier divisor given by $\mathfrak{m}_F \subset \Oc_F$. To wit, we fix a generator $\varpi \in \mathfrak{m}$, which allows us to identify the root stack with the quotient stack of $\Spec \Oc_F[\lambda]/(\lambda^r-\varpi)$ with respect to the natural $\mu_r$-action. 
\end{definition}

The root stack $\Dr$ receives two immersions, the first being open and the second one being closed:

$$\Spec F \hookrightarrow \Dr \hookleftarrow  B\mu_{r,k},$$
where $B\mu_{r,k}=[\Spec k / \mu_{r,k}]$.

\begin{definition}
 Let $\Xc /\Oc_F$ be an algebraic stack.
 \begin{enumerate}
     \item For $r \geq 1$, we denote by $\Xc(\Dr)$ the set of isomorphism classes of maps $\Dr \to \Xc$. Restriction to the open substack $\Spec F$ induces a natural map $\pi_r\colon \Xc(\Dr) \to \Xc(F)$. 
     \item We also write $\Xc(\Dinfty)$ for the union $\cup_{r\geq 1} \Xc(\Dr)$.
 \end{enumerate}    
\end{definition}

The purpose of the next subsection will be to construct a natural structure of an $F$-analytic manifold on $\Xc(\Dr)$ and to show that the map $\pi_r$ is \'etale.

\subsection{$F$-analytic manifolds of twisted points}

\begin{definition}
 Let $L/F$ be a finite Galois extension with Galois group $\Gamma = \Gal(L/F)$, and $M$ an $L$-analytic manifold. A $\Gamma$-action on $M$ is said to be \emph{Galois} if every $x\in M$ is contained in a $\Gamma$-invariant chart $(U,\phi\colon U \hookrightarrow L^d)$ such that $\phi$ is a $\Gamma$-equivariant map.
\end{definition}

We first establish a few basic lemmas, relating $\Gamma$-fixpoints to $F$-analytic manifolds
\begin{lemma}\label{lemma:Galois-fix}
 Let $L/F$, $\Gamma$ and $M$ be as before. Then, the fixpoint set $M^{\Gamma}$ is naturally endowed with the structure of an $F$-analytic manifold.  
\end{lemma}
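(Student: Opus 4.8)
The plan is to build the $F$-analytic manifold structure on $M^{\Gamma}$ chart-by-chart, starting from the $\Gamma$-invariant charts provided by the Galois condition. First I would fix a point $x \in M^{\Gamma}$ and, by hypothesis, choose a $\Gamma$-invariant chart $(U,\phi\colon U \hookrightarrow L^d)$ with $\phi$ a $\Gamma$-equivariant map, where $\Gamma$ acts on $L^d$ coordinatewise through its action on $L$ (possibly after shrinking $U$ we may assume $\phi(U)$ is a $\Gamma$-stable polydisc, i.e.\ of the form $\Oc_L^d$ up to translation and rescaling by an element of $F$). Then $\phi$ restricts to a homeomorphism $U^{\Gamma} \isoto \phi(U)^{\Gamma} = \phi(U) \cap (L^d)^{\Gamma}$. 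The key algebraic input is that $(L^d)^{\Gamma} = (L^{\Gamma})^d = F^d$, since $L/F$ is Galois, and more precisely $\phi(U)^{\Gamma}$ is an open polydisc in $F^{d}$ (a $\Gamma$-stable $\Oc_L$-polydisc intersected with $F^d$ is an $\Oc_F$-polydisc, using that $\Oc_L \cap F = \Oc_F$). This exhibits $U^{\Gamma}$ as homeomorphic to an open subset of $F^d$, giving a candidate chart $(U^{\Gamma},\phi|_{U^{\Gamma}})$ for $M^{\Gamma}$.

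Next I would verify that the transition maps between two such charts are $F$-analytic. Given a second $\Gamma$-invariant chart $(U',\phi')$ with $x \in U' \cap U$, the transition map $\psi = \phi' \circ \phi^{-1}$ is an $L$-analytic map defined on $\phi(U \cap U') \subset L^d$, and by $\Gamma$-equivariance of both $\phi$ and $\phi'$ it is a $\Gamma$-equivariant $L$-analytic map. Its restriction to the fixed locus $\phi(U\cap U')^{\Gamma} \subset F^d$ lands in $F^d$ and is the transition map for $M^{\Gamma}$. The point is that the restriction of a $\Gamma$-equivariant $L$-analytic function to $F^d$ is $F$-analytic: writing $\psi$ as a convergent power series $\sum_{I} c_I (z-a)^I$ with $c_I \in L$, the equivariance forces, at a $\Gamma$-fixed center $a \in F^d$, the identity $\sigma(c_I) = c_I$ for all $\sigma \in \Gamma$ and all $I$ (comparing coefficients, since the monomials $(z-a)^I$ are permuted trivially when $z$ ranges over $F^d$), hence $c_I \in F$, so $\psi|_{F^d}$ is given by an $F$-coefficient convergent power series. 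The collection of all charts $(U^{\Gamma},\phi|_{U^{\Gamma}})$ as $(U,\phi)$ ranges over $\Gamma$-invariant charts of $M$ therefore forms an $F$-analytic atlas on $M^{\Gamma}$; Hausdorffness and second countability are inherited from $M$.

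The main obstacle I anticipate is the coefficient-comparison step, i.e.\ making precise why $\Gamma$-equivariance of the $L$-analytic transition map, together with the fact that we only test it on the $F$-rational points near a fixed center, forces the Taylor coefficients into $F$. One must be slightly careful that $\Gamma$ acts on the source $L^d$ by coordinatewise Galois action and not merely on the target, so that equivariance reads $\psi(\sigma \cdot z) = \sigma \cdot \psi(z)$; at a fixed point $a = \sigma\cdot a$ this becomes $\sum_I c_I \sigma(z-a)^I = \sigma(\sum_I c_I (z-a)^I) = \sum_I \sigma(c_I)(z-a)^I$ after relabeling, and uniqueness of power-series coefficients gives $c_I = \sigma(c_I)$. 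A secondary technical point is arranging, by shrinking, that the $\Gamma$-invariant charts have genuinely $\Gamma$-stable \emph{polydisc} images so that intersecting with $F^d$ produces an honest open subset of $F^d$ rather than something more exotic; this is harmless since one may always precompose $\phi$ with a suitable $F$-linear rescaling and translation. Everything else — gluing, the cocycle condition, independence of choices — is routine and follows from the corresponding facts for $M$ together with functoriality of passing to $\Gamma$-fixed points.
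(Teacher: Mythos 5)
Your proof is correct and follows essentially the same route as the paper's: restrict $\Gamma$-invariant charts to the fixed locus to land in $(L^d)^\Gamma\cong F^d$, then show that the (necessarily $\Gamma$-equivariant) $L$-analytic transition maps have power-series coefficients in $F$ by comparing coefficients in the identity $\phi_{ij}(\gamma\cdot z)=\gamma\cdot\phi_{ij}(z)$. One small caution: the parenthetical remark about "testing on $F$-rational points" by itself does not determine the coefficients — you need the equivariance on a full $L$-neighbourhood, as your subsequent coefficient comparison (and the paper) actually uses, so I would drop that parenthetical to avoid suggesting a weaker input suffices.
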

\begin{proof}
Since $M^{\Gamma} \subset M$ is a subset, it is Hausdorff and second countable. Let $x \in M^{\Gamma}$, and let $(U,\phi)$ a $\Gamma$-invariant chart containing $x$ as above. 
In particular, $\phi|_{U^{\Gamma}}$ yields an open embedding $U^{\Gamma} \hookrightarrow (L^d)^{\Gamma} \cong F^d$. We claim that the charts for $M^{\Gamma}$ obtained this way are compatible, i.e., change-of-coordinates maps are $F$-analytic. Let $\phi_{ij}\colon \Oc_L^d \to \Oc^d_L$ be a change-of-coordinates map (up to rescaling, it can be assumed to be of this form). In particular, $\phi_{ij}$ is a Galois-invariant map, locally given by a convergent power series with coefficients in $L$. After replacing $\Oc_L^d$ by a sufficiently small neighbourhood of $0$, and rescaling again, we may assume that $\phi_{ij}(z) = \sum_{n=0}^{\infty} a_n z^n$ with $a_n \in L$. Galois-invariance amounts to the condition $\phi_{ij}(\gamma \cdot{}z) = \gamma\cdot{} \phi_{ij}(z)$, which forces the coefficients $a_n$ to lie in $F$. In particular, we see that $\phi_{ij}|_{\Oc_F^d}$ is an $F$-analytic map, which concludes the proof.
 \end{proof}
 
\begin{lemma}\label{lemma:Gamma-etale}
Let $g\colon M \to N$ be a Galois-equivariant \'etale map between $L$-analytic manifolds. Then, the induced map $g|_{M^{\Gamma}} \colon M^{\Gamma} \to N^{\Gamma}$ is \'etale. 
\end{lemma}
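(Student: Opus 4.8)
The plan is to reduce the statement to a purely local computation on tangent spaces, using that étaleness of a morphism of $F$-analytic manifolds is equivalent to the induced map on tangent spaces being an isomorphism at every point (the Inverse Function Theorem). First I would fix a point $x \in M^{\Gamma}$ and its image $g(x) \in N^{\Gamma}$. By the definition of a Galois action, I may choose a $\Gamma$-invariant chart $(U, \phi \colon U \hookrightarrow L^d)$ around $x$ with $\phi$ equivariant, and similarly a $\Gamma$-invariant chart $(V, \psi \colon V \hookrightarrow L^e)$ around $g(x)$; after shrinking $U$ I may assume $g(U) \subset V$. In these coordinates $g$ is given locally by a convergent power series map $L^d \supset \Oc_L^d \to \Oc_L^e$ which, exactly as in the proof of Lemma \ref{lemma:Galois-fix}, has coefficients forced into $F$ by $\Gamma$-equivariance; hence it restricts to an $F$-analytic map $\Oc_F^d \to \Oc_F^e$ representing $g|_{M^{\Gamma}}$ in the induced charts from Lemma \ref{lemma:Galois-fix}. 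This already shows $g|_{M^{\Gamma}}$ is a morphism of $F$-analytic manifolds; it remains to check étaleness.

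The key step is the tangent space computation. The derivative $d_x g \colon T_x M \to T_{g(x)} N$ is an isomorphism of $L$-vector spaces since $g$ is étale, and it is $\Gamma$-equivariant because $g$ is. Under the chart identifications, $T_x M^{\Gamma} = (T_x M)^{\Gamma} = (L^d)^{\Gamma} = F^d$ and likewise $T_{g(x)} N^{\Gamma} = (L^e)^{\Gamma} = F^e$, and $d_x(g|_{M^{\Gamma}})$ is simply the restriction of $d_x g$ to $\Gamma$-invariants. Since taking $\Gamma$-invariants of a $\Gamma$-equivariant isomorphism of $L$-vector spaces yields an isomorphism of the $F$-subspaces of invariants (this uses that $L/F$ is Galois, so $(-)^{\Gamma}$ is exact on $L[\Gamma]$-modules, or more elementarily that an equivariant linear isomorphism and its inverse both preserve invariants), the restricted derivative $d_x(g|_{M^{\Gamma}}) \colon F^d \to F^e$ is an isomorphism. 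In particular $d = e$. By the Inverse Function Theorem for $F$-analytic manifolds, $g|_{M^{\Gamma}}$ is a local isomorphism at $x$, hence étale; since $x$ was arbitrary, $g|_{M^{\Gamma}}$ is étale.

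The main obstacle, such as it is, is a bookkeeping one: making sure the chart on $M^{\Gamma}$ used here is the same as (or compatible with) the one produced in Lemma \ref{lemma:Galois-fix}, and that the identification $T_x(M^{\Gamma}) \cong (T_x M)^{\Gamma}$ is legitimate — i.e. that the $F$-analytic submanifold structure on $M^{\Gamma}$ has tangent space at $x$ equal to the $\Gamma$-fixed subspace of $T_x M$. This is immediate from the construction in Lemma \ref{lemma:Galois-fix}, where $M^{\Gamma}$ is charted by $\phi|_{U^{\Gamma}} \colon U^{\Gamma} \xrightarrow{\sim} (L^d)^{\Gamma} \cong F^d$, so the differential of the inclusion $M^{\Gamma} \hookrightarrow M$ at $x$ is precisely the inclusion $(L^d)^{\Gamma} \hookrightarrow L^d$. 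With this identification in hand, the only genuinely substantive input is the elementary linear-algebra fact that taking $\Gamma$-invariants sends an equivariant isomorphism to an isomorphism, which I would state in a single sentence rather than belabor. Everything else is a direct unwinding of definitions together with the Inverse Function Theorem already invoked repeatedly above.
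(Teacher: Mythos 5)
Your proof is correct and follows essentially the same route as the paper's: identify $T_x(M^\Gamma)$ with the $\Gamma$-invariants of $T_x M$ (equivalently, note that the base change of $T_x(M^\Gamma)$ to $L$ recovers $T_x M$), observe that the equivariant isomorphism $d_x g$ therefore restricts to an isomorphism on invariants, and conclude by the Inverse Function Theorem. The only cosmetic difference is that the paper phrases the linear-algebra step as a descent argument via $(T_x M^\Gamma)\otimes_F L \simeq T_x M$, while you phrase it as exactness of $(-)^\Gamma$ on $L[\Gamma]$-modules; these are the same fact. Your added remarks about the $F$-analyticity of $g|_{M^\Gamma}$ and the compatibility of charts are correct and fill in details the paper leaves implicit (they already appear in Lemma \ref{lemma:Galois-fix}).
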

\begin{proof}
For every $x \in M^{\Gamma}$ we have a natural isomorphism $(T_x M^{\Gamma}) \otimes_F L \simeq T_x M$, as follows from the chart-wise nature of the definition of a Galois action. Since the differential $dg \colon T_x M \to T_x N$ is an isomorphism, it follows from descent theory that its restriction $$dg|_{T_xM^{\Gamma}}\colon T_xM^{\Gamma}\to T_xN^{\Gamma}$$ must be an isomorphism too.
\end{proof}

\begin{lemma}\label{lemma:Galois}
Let $X/F$ be a smooth $F$-variety and $L/F$ be a finite Galois extension with Galois group $\Gamma$. Then, the natural $\Gamma$-action on the $L$-analytic manifold $X(L)$ is Galois.
\end{lemma}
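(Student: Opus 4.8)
The plan is to reduce the statement to a local computation in \'etale coordinates and to exploit the fact that $\Gamma$ acts on $X(L)$ through its action on $L$ alone, not on $X$. First I would recall that since $X/F$ is smooth, every point $x \in X(L)$ admits a Zariski-open neighbourhood $V \subseteq X$ together with an \'etale map $f\colon V \to \mathbb{A}^d_F$ defined over $F$. Passing to $L$-analytic points, $f$ induces a map $f(L)\colon V(L) \to \mathbb{A}^d(L) = L^d$ which is \'etale in the $L$-analytic sense (the differential is an isomorphism because $f$ is \'etale); by the Inverse Function Theorem, after shrinking, $f(L)$ restricts to an $L$-analytic isomorphism from an open neighbourhood $U$ of $x$ onto an open subset of $L^d$. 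This provides a chart $(U, \phi)$ with $\phi = f(L)|_U$.

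The key point is that this chart is $\Gamma$-invariant and $\phi$ is $\Gamma$-equivariant, essentially because $f$ is defined over $F$. Concretely, the $\Gamma$-action on $X(L)$ is functorial: for $\gamma \in \Gamma$ and a point $x\colon \Spec L \to X$, the point $\gamma \cdot x$ is the composite $\Spec L \xrightarrow{\gamma^{-1}} \Spec L \xrightarrow{x} X$ (using that $X$ is an $F$-scheme). Since $V \subseteq X$ is defined over $F$, $V(L) \subseteq X(L)$ is $\Gamma$-stable, and the \'etale map $f\colon V \to \mathbb{A}^d_F$, being an $F$-morphism, commutes with the $\Gamma$-action on $L$-points, i.e. $f(L)(\gamma \cdot x) = \gamma \cdot f(L)(x)$ where on the right $\Gamma$ acts on $L^d$ coordinatewise via its action on $L$. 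Thus $\phi$ is $\Gamma$-equivariant where $L^d$ carries the standard coordinatewise Galois action. The only subtlety is that the open neighbourhood $U$ on which $f(L)$ is an isomorphism may not itself be $\Gamma$-invariant; this is remedied by replacing $U$ with $\bigcap_{\gamma \in \Gamma} \gamma \cdot U$, a finite intersection of open neighbourhoods of the fixed point $x$ (one checks $x$ lies in each $\gamma \cdot U$ since $\gamma \cdot x = x$), hence still an open neighbourhood of $x$; and $f(L)$ remains an isomorphism on this smaller set onto its image, which is an open subset of $L^d$ stable under the coordinatewise $\Gamma$-action.

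Assembling these observations: every $x \in X(L)$ lies in a $\Gamma$-invariant chart $(U, \phi\colon U \hookrightarrow L^d)$ with $\phi$ $\Gamma$-equivariant for the standard coordinatewise action on $L^d$, which is precisely the definition of the $\Gamma$-action on $X(L)$ being Galois. I expect the main obstacle to be purely bookkeeping: making sure the chart can be chosen $\Gamma$-invariant (handled by the intersection trick above) and verifying carefully that the functorial Galois action on $L$-points translates, under an $F$-defined \'etale chart, into the naive coordinatewise action on $L^d$ — this is a standard but slightly fiddly compatibility of the functor-of-points description of $X(L)$ with the one on affine space. No deep input is needed beyond smoothness (to get \'etale coordinates over $F$) and the Inverse Function Theorem for $L$-analytic manifolds.
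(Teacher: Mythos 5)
Your argument is the same as the paper's: pick an \'etale $F$-morphism $V \to \mathbb{A}^d_F$ from a Zariski neighbourhood, observe that the induced map $V(L) \to L^d$ is automatically Galois-equivariant because the morphism is defined over $F$, and shrink to obtain a $\Gamma$-equivariant chart. One small caveat, shared with the paper's own proof (which begins ``for every $x \in X(F)$''): your shrinking step $\bigcap_{\gamma}\gamma\cdot U$ invokes $\gamma\cdot x = x$, so as written it only produces $\Gamma$-invariant charts around $\Gamma$-fixed points $x \in X(F) \subset X(L)$ rather than around all $x \in X(L)$ as the literal definition would require; this is harmless since only fixed points are used downstream (Lemma \ref{lemma:Galois-fix}), but for a general $x$ one would instead shrink $U$ to contain the whole finite $\Gamma$-orbit of $x$.
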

\begin{proof}
For every $x \in X(F)$ we can find an \'etale chart, i.e., an open subscheme $V \subset X$ with an \'etale morphism of $F$-schemes $\phi\colon V \to \Ab_F^d$. The induced map $V(L) \to L^d$ is then a Galois-equivariant \'etale map. Within a sufficiently small neighbourhood of $x$ it will be a Galois-equivariant chart.
\end{proof}

\begin{lemma}\label{lemma:Galois-slice}
Let $i\colon M \hookrightarrow N$ be a $\Gamma$-equivariant immersion of $L$-analytic manifolds, i.e., $d_xi$ is assumed to be injective for all $x \in M$. Then, for every $p \in M$ there exists a $\Gamma$-equivariant submanifold $S \hookrightarrow N$, of complementary dimension to $M$, with $M \cap S =\{p\}$ and $T_p N = T_p M \oplus T_p S$.
\end{lemma}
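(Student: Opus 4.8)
The plan is to construct $S$ by a $\Gamma$-averaging argument applied to an arbitrary (non-equivariant) complementary submanifold. First I would work in local coordinates: by Lemma \ref{lemma:Galois} (applied in the situations of interest) or directly from the definition of a Galois action, pick a $\Gamma$-invariant chart $(V,\psi\colon V\hookrightarrow L^d)$ around $p$ with $\psi$ $\Gamma$-equivariant, and $\psi(p)=0$. The immersion hypothesis means that, after shrinking $V$, the image $\psi(M\cap V)\subseteq L^d$ is the graph of an $L$-analytic map over a linear subspace; more precisely $T_pM\subseteq T_pN=L^d$ is a $\Gamma$-stable subspace (stable because $i$ is equivariant and the chart is equivariant). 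By Maschke/descent over the field extension $L/F$ — the $\Gamma$-action on $L^d=F^d\otimes_F L$ being semilinear — one can choose a $\Gamma$-stable $F$-rational linear complement $V_0$ to $T_pM$ inside $L^d$, i.e. $L^d=T_pM\oplus (V_0\otimes_F L)$. This is the linear-algebra input and it is clean: a semilinear $\Gamma$-representation over $L$ descends to an $F$-vector space, and subrepresentations split.

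Next I would take $S$ to be (a neighbourhood of $0$ in) the affine translate $\psi^{-1}$ of the linear subspace $V_0\otimes_F L$ — or rather the preimage under $\psi$ of that subspace intersected with a small ball. Since $\psi$ is $\Gamma$-equivariant and $V_0\otimes_F L$ is $\Gamma$-stable, $S$ is automatically a $\Gamma$-equivariant submanifold of $N$. By construction $T_pS=V_0\otimes_F L$ is a complement to $T_pM$, giving $T_pN=T_pM\oplus T_pS$. It remains to arrange $M\cap S=\{p\}$ after shrinking: in the chart, $\psi(M\cap V)$ is the graph of a map $g$ vanishing to first order at $0$ over the subspace $T_pM$, while $\psi(S)$ is the linear complement; the intersection near $0$ is cut out by $g(x)=0$ together with $x\in V_0\otimes_F L$, and since $dg(0)=0$ on $T_pM$ the implicit/inverse function theorem forces the intersection to be $\{0\}$ on a sufficiently small ball. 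Shrink $V$ accordingly and transport back.

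The only genuine subtlety — the step I expect to be the main obstacle — is verifying that the $F$-rational splitting $L^d=T_pM\oplus(V_0\otimes_F L)$ exists, i.e. that the $\Gamma$-stable subspace $T_pM$ admits a $\Gamma$-stable complement that is itself "rational" in the sense needed for $S$ to be $\Gamma$-equivariant with the right tangent space. The point is that the Galois action on the chart $L^d$ is the standard semilinear action $\gamma\cdot(v\otimes\lambda)=v\otimes\gamma(\lambda)$ (this is what being a Galois action in the sense of the paper's definition buys us, since the chart map is $\Gamma$-equivariant), so $(L^d)^\Gamma=F^d$ and by Galois descent every $\Gamma$-stable subspace is of the form $V'\otimes_F L$ for a unique $F$-subspace $V'\subseteq F^d$; complementation then happens already at the level of $F$-vector spaces, where it is trivial. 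Once this is pinned down the rest is routine transversality bookkeeping, so I would state the descent fact explicitly as the load-bearing lemma and keep the geometric argument short.
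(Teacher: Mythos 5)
Your proof is correct and follows essentially the same strategy as the paper's: work in a $\Gamma$-equivariant chart, select a $\Gamma$-stable linear complement to $T_pM$, and take the corresponding affine slice. The only cosmetic difference is that the paper obtains the complement by reordering coordinates so that $i$ becomes a graph $(x,f(x))$ (which makes $M\cap S=\{p\}$ automatic), whereas you produce it abstractly via Galois descent of the semilinear $\Gamma$-representation $L^d$ and then verify $M\cap S=\{p\}$ by a separate transversality argument; both steps are sound and interchangeable.
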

\begin{proof}
Since this is a local statement, we may assume without loss of generality that $M=\Oc_L^m$ and $N=\Oc_L^n$. We are thus given a $\Gamma$-equivariant analytic map $i=(i_1,\dots,i_n)\colon \Oc_L^m \to \Oc_L^n$ with injective Jacobi matrix $d_xi$.  By further shrinking the neighbourhoods around $p$ and re-ordering coordinates if necessary, we achieve that the first $m\times m$ minor of $d_xI$ is invertible. Thus, the function $g\colon \Oc_L^m \to \Oc_L^m$ given by
$g(x,y) = (i_1(x),\dots,i_m(x))$ is \'etale and $\Gamma$-equivariant.

In particular, after further shrinking the neighbourhood of $p$, we may assume that $g^{-1}$ exists. We then have that $(i \circ g^{-1})(x) = (x,f(x))$, where $f$ is a $\Gamma$-equivariant function $f\colon \Oc_{L}^m \to \Oc_L^{n-m}$ by construction. The requisite transverse $S$ through $p$ can now be defined as the subset
$\{(p,z)|z\in \Oc_L^{n-m}\}.$
\end{proof}

\begin{proposition}\label{prop:Galois-quotient}
Let $M$ be an $L$-analytic manifold endowed with an analytic and free and proper $G$-action
$$m\colon G \times M \to M$$ where $G$ is an $L$-analytic Lie group. Assume furthermore that $G$ and $M$ are endowed with compatible Galois actions by $\Gamma = \Gal(L/F)$ such that the action map $m$ is $\Gamma$-equivariant. Then, the quotient manifold $M/G$ is endowed with a Galois action.
\end{proposition}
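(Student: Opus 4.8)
The plan is to carry the Galois action down to the quotient chart-by-chart, using the adapted charts constructed in the proof of Proposition~\ref{prop:quotient} together with the slice construction of Lemma~\ref{lemma:Galois-slice} to make those charts $\Gamma$-equivariant. First I would fix $p \in M$ and let $[p] \in M/G$ be its image. By Proposition~\ref{prop:quotient} (more precisely Claim~\ref{cl:adapted}) the orbit $Gp$ is a submanifold of $M$, and since $m$ is $\Gamma$-equivariant the orbit $Gp$ is a $\Gamma$-invariant submanifold; the inclusion $Gp \hookrightarrow M$ is a $\Gamma$-equivariant immersion. Applying Lemma~\ref{lemma:Galois-slice} to this immersion at the point $p$ produces a $\Gamma$-equivariant slice $S \hookrightarrow M$ of complementary dimension with $Gp \cap S = \{p\}$ and $T_pM = T_p(Gp) \oplus T_pS$. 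As in the proof of Claim~\ref{cl:adapted}, the action map $G \times S \to M$ is then regular at $(1,p)$, $\Gamma$-equivariant, and by the Inverse Function Theorem restricts to an isomorphism $U \times V \xrightarrow{\sim} W$ for suitable neighbourhoods, which we may shrink to be $\Gamma$-invariant. Pulling back a $\Gamma$-equivariant chart on $V \subset S$ (available because $S$, being a $\Gamma$-equivariant submanifold, locally admits one — this is implicit in the chart-wise definition of a Galois action, or one can intersect with a $\Gamma$-invariant chart of $M$) one obtains an adapted chart $(W,\phi)$, $\phi\colon W \xrightarrow{\sim} \Oc_F^m \times \Oc_F^n \otimes \dots$, wait — more precisely $\phi\colon W \to \mathcal{O}_L^m \times \mathcal{O}_L^n$, which is $\Gamma$-equivariant and whose fibres over the second factor are the $G$-orbit slices.

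Second, I would descend this chart to the quotient exactly as in the proof of Proposition~\ref{prop:quotient}: the map $\mathrm{pr}_2 \circ \phi \colon W \to \mathcal{O}_L^n$ descends to a homeomorphism $\phi'\colon \pi(W) \to \mathcal{O}_L^n$, and since $\phi$ is $\Gamma$-equivariant and the $G$-action commutes with $\Gamma$, the descended map $\phi'$ is automatically $\Gamma$-equivariant for the induced $\Gamma$-action on $M/G$ (which exists because $\Gamma$ permutes $G$-orbits, $m$ being $\Gamma$-equivariant). Thus every point $[p] \in M/G$ lies in a $\Gamma$-invariant chart on which $\Gamma$ acts through a $\Gamma$-equivariant coordinate map, which is precisely the definition of a Galois action.

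The remaining point is coherence: one must check that the $\Gamma$-action on the topological space $M/G$ is well-defined and that the $F$-analytic structure on $M/G$ (which by the uniqueness clause of Proposition~\ref{prop:quotient} is independent of choices) is preserved by this action. The first is immediate from $\Gamma$-equivariance of $m$. For the second, the change-of-coordinates maps between two such descended $\Gamma$-equivariant charts are, by the commutative square in the proof of Proposition~\ref{prop:quotient}, the descents of $\Gamma$-equivariant change-of-coordinates maps upstairs, hence are themselves $\Gamma$-equivariant; in particular the $\Gamma$-action is analytic. I expect the main obstacle to be the careful verification that the slice $S$ from Lemma~\ref{lemma:Galois-slice} can be fed into the Inverse-Function-Theorem step of Claim~\ref{cl:adapted} while retaining $\Gamma$-equivariance of all neighbourhoods involved — one must shrink $U$, $V$, $W$ to $\Gamma$-invariant opens without destroying the product decomposition, which works because $\Gamma$ is finite (so intersecting a neighbourhood with its finitely many $\Gamma$-translates stays open) but deserves to be spelled out. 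Everything else is a routine transcription of the untwisted argument with ``$\Gamma$-equivariant'' inserted throughout.
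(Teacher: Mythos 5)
Your approach is the same as the paper's: construct a $\Gamma$-invariant transverse slice via Lemma~\ref{lemma:Galois-slice}, feed it into the adapted-chart construction of Claim~\ref{cl:adapted}, and descend the resulting $\Gamma$-equivariant chart to the quotient. One small imprecision worth fixing: you fix an arbitrary $p \in M$ and then assert that ``since $m$ is $\Gamma$-equivariant the orbit $Gp$ is a $\Gamma$-invariant submanifold,'' but this is false for general $p$ — one has $\gamma \cdot (Gp) = G(\gamma p)$, which is a different orbit unless $\gamma p \in Gp$. The paper explicitly restricts to $x \in M^\Gamma$ before taking the orbit, which is also all that is needed for the downstream uses (Lemma~\ref{lemma:Galois-fix}, Proposition~\ref{defi:r-points} operate on the fixed-point set); you should impose the same restriction. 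The rest — the Inverse Function Theorem step with $\Gamma$-invariant shrinking (exploiting finiteness of $\Gamma$), the descent of $\mathrm{pr}_2 \circ \phi$, and the $\Gamma$-equivariance of the change-of-coordinates maps — is a correct and somewhat more detailed transcription of what the paper leaves to ``conclude in the same manner as Proposition~\ref{prop:quotient}.''
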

\begin{proof} 

We will freely use the notation and terminology from the proof of Proposition \ref{prop:quotient}. Recall that the key step for the construction of an $F$-analytic manifold structure for $M/G$ was the construction of \emph{adapted charts} for $M$ with respect to the group action $G$. This was achieved in Claim \ref{cl:adapted}. Note that we replace $F$ by $L$ for the purpose of the current argument. We must show that the adapted charts can be constructed in a Galois-invariant manner. For this purpose, we fix a point $x \in M^{\Gamma}$ and consider the orbit $G\cdot{} x$. We then choose a $\Gamma$-invariant transverse slice $S$ (see Lemma \ref{lemma:Galois-slice}) and conclude the proof in the same manner as the one of Proposition \ref{prop:quotient}.
\end{proof}

\begin{proposition}\label{prop:Gamma-finite}
Assume that we are in Situation \ref{sit:good}, and let $L/F$ be a finite Galois extension with Galois group $\Gamma$. Then, the map
$$\pi_{\Gamma}\colon \Xc(\Oc_L)^{\sharp,\Gamma} 
\to X(\Oc_L)^\Gamma=X(\Oc_F)$$
is proper and \'etale with finite fibres.
\end{proposition}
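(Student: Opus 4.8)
The plan is to deduce Proposition~\ref{prop:Gamma-finite} from the already-established unramified case (Propositions~\ref{prop:etale}, \ref{prop:proper}, \ref{prop:finite}) by base change to $L$ and then passing to $\Gamma$-fixed points, using the ``Galois'' package of Lemmas~\ref{lemma:Galois-fix}--\ref{lemma:Galois-slice} and Proposition~\ref{prop:Galois-quotient}. First I would observe that $\Xc_{\Oc_L} = [U_{\Oc_L}/G_{\Oc_L}]$ is again a linear quotient stack over $\Oc_L$ with $U_{\Oc_L}$ separated, that $X_{\Oc_L}$ is an algebraic $\Oc_L$-space, and that $W_{\Oc_L} \subset X_{\Oc_L}$ is still a dense open over which $\Xc_{\Oc_L}$ is an equivalence --- i.e., Situation~\ref{sit:good} is stable under the (unramified, hence étale) base change $\Oc_F \to \Oc_L$. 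Note that in general $W_{\Oc_L}$ need not be the \emph{maximal} such open; but enlarging it only shrinks the $\sharp$-locus, and a Galois-descent argument (the maximal $W$ over $\Oc_F$ pulls back into the maximal one over $\Oc_L$, which is $\Gamma$-invariant by uniqueness) shows $X(\Oc_L)^{\sharp,\Gamma}$ is computed correctly; alternatively one just works with the pulled-back $W$ throughout, since the statement is insensitive to this choice. Hence by Propositions~\ref{prop:etale}, \ref{prop:proper}, \ref{prop:finite} the map $\pi_L\colon \Xc(\Oc_L)^\sharp \to X(\Oc_L)^\sharp$ is étale, proper, and has finite fibres.

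The second step is to upgrade this to a statement of $L$-analytic manifolds carrying compatible $\Gamma$-actions, and to check the relevant maps are Galois-equivariant. Writing $G$ as a general linear group (Section~\ref{mqs}, which we may do without loss of generality), we have $\Xc(\Oc_L)^\sharp = U(\Oc_L)^\sharp / G(\Oc_L)$ as in Proposition~\ref{presin}, and $\Gamma$ acts on $U(\Oc_L)$, $G(\Oc_L)$, $X(\Oc_L)$ functorially; Lemma~\ref{lemma:Galois} shows these actions are Galois on the ambient manifolds $U(L)$, $G(L)$, $X(L)$, hence restrict to Galois actions on the open submanifolds $U(\Oc_L)^\sharp$, etc. The action map for $G(\Oc_L) \curvearrowright U(\Oc_L)^\sharp$ is $\Gamma$-equivariant because it comes from a morphism of $\Oc_F$-schemes; so Proposition~\ref{prop:Galois-quotient} endows $\Xc(\Oc_L)^\sharp$ with a Galois $\Gamma$-action, and the quotient map $U(\Oc_L)^\sharp \to \Xc(\Oc_L)^\sharp$ as well as $\pi_L$ are $\Gamma$-equivariant (the latter since $\pi$ is an $\Oc_F$-morphism). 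Now $X(\Oc_L)^\Gamma = X(\Oc_F)$ because $X$ is a scheme/algebraic space over $\Oc_F$ and $\Oc_L/\Oc_F$ is étale Galois, so $X(\Oc_L)^{\Gal(L/F)} = X(\Oc_F)$ by Galois descent for points; intersecting with the $\Gamma$-invariant condition $X(F) \in W(F)$ gives $X(\Oc_L)^{\sharp,\Gamma} = X(\Oc_F)^\sharp$ — wait, more precisely $X(\Oc_L)^{\sharp,\Gamma}$ maps to $X(\Oc_F)$, and this is what is asserted.

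The third step is to take $\Gamma$-fixed points of $\pi_L$ and verify the three properties survive. For étaleness, Lemma~\ref{lemma:Gamma-etale} applies directly: a Galois-equivariant étale map of $L$-analytic manifolds restricts to an étale map on $\Gamma$-fixed loci (and Lemma~\ref{lemma:Galois-fix} guarantees those fixed loci are genuine $F$-analytic manifolds). For finiteness of fibres, the fibre of $\pi_\Gamma$ over $x \in X(\Oc_L)^{\sharp,\Gamma}$ is contained in the fibre of $\pi_L$ over $x$, which is finite by Proposition~\ref{prop:finite}; so $\pi_\Gamma$ has finite fibres. For properness, I would argue that $\Xc(\Oc_L)^{\sharp,\Gamma} \hookrightarrow \Xc(\Oc_L)^\sharp$ is a closed embedding (fixed-point sets of continuous group actions on Hausdorff spaces are closed), and properness of $\pi_L$ restricts along the closed subset $X(\Oc_L)^{\sharp,\Gamma}$ of the target: the preimage of a compact $K \subset X(\Oc_L)^{\sharp,\Gamma}$ under $\pi_\Gamma$ is $\pi_L^{-1}(K) \cap \Xc(\Oc_L)^{\sharp,\Gamma}$, which is a closed subset of the compact set $\pi_L^{-1}(K)$, hence compact.

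I expect the main obstacle to be the bookkeeping around the maximal open $W$ and the identification $X(\Oc_L)^{\sharp,\Gamma} = X(\Oc_F)^\sharp$: one must be careful that ``$\sharp$'' for $X_{\Oc_L}$ is defined using the maximal trivialising open over $\Oc_L$, whereas base change only supplies the pulled-back $W_{\Oc_L}$, and a priori the former could be strictly larger. The clean resolution is to note that by uniqueness of the maximal $W$ and functoriality of base change, the maximal $W$ over $\Oc_F$ and the maximal one over $\Oc_L$ are related by $W^{\max}_{\Oc_L} \supseteq (W^{\max}_{\Oc_F})_{\Oc_L}$, that $W^{\max}_{\Oc_L}$ is $\Gamma$-invariant, and hence $(W^{\max}_{\Oc_L})^\Gamma$ descends to a trivialising open over $\Oc_F$ containing $W^{\max}_{\Oc_F}$ — so in fact equality holds on $\Gamma$-invariants and no discrepancy arises. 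Everything else is a routine transcription of the unramified proofs through the Galois-descent lemmas already in place.
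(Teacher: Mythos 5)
Your approach matches the paper's proof: the paper likewise reduces to the base-changed map over $\Oc_L$ (using Propositions~\ref{prop:etale}--\ref{prop:finite}), then passes to $\Gamma$-fixed loci via Lemma~\ref{lemma:Gamma-etale} for \'etaleness and notes that properness and finiteness descend. You fill in more detail than the paper's terse argument — in particular the closed-subspace argument for properness and the care around the maximal $W$ — and that extra bookkeeping is correct. One small inaccuracy: you describe $\Oc_F \to \Oc_L$ as ``unramified, hence \'etale,'' but the relevant $L$ in this paper (e.g.\ $F(\mu_r(\bar F),\varpi^{1/r})$) is typically ramified. This does not damage the proof, because none of the stability-of-Situation-\ref{sit:good} checks you perform actually require \'etaleness — only that the base change is finite flat, which suffices to preserve separatedness, smoothness, openness, and density, and gives $\Oc_L^\Gamma = \Oc_F$ for the descent of points. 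You should simply drop the unramified claim.
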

\begin{proof}
    We know this map to be finite \'etale by virtue of Proposition \ref{prop:finite}. The analogous property now follows from Lemma \ref{lemma:Gamma-etale}. Likewise, we obtain properness and finiteness, since these properties hold for the map $\pi \colon \Xc(\Oc_F)^{\#}\to X(\Oc_F)^{\#}$.
\end{proof}

Let us fix $r \in \mathbb{N} \setminus \{0\}$ and a smooth algebraic $\Dr$-stack $\Xc \to \Dr$.
We denote by $L$ the Galois extension $F(\mu_r(\bar F),\varpi^{1/r})$, where $\varpi$ is a uniformiser in $F$.

\begin{proposition}\label{defi:r-points}
Let $\Xc$ be a linear quotient stack over $\Oc_F$ as in Situation \ref{sit:good}. Then we have an identification $\Xc(\Dr)^{\sharp} =(\Xc(\Oc_L)^\sharp)^{\Gamma}= (U(\Oc_L)^\sharp/G(\Oc_L))^\Gamma$.

In particular, this endows $\Xc(\Dr)^{\sharp}$ with the structure of an $F$-analytic manifold independent of the presentation.  

Moreover, for any morphism $\Xc \to \Yc$ of such quotient stacks over $\Oc_F$, the resulting map $\Xc(\Dr)^{\sharp} \to \Xc(\Dr)^{\sharp}$ is analytic. 
\end{proposition}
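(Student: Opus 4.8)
The statement splits into three assertions --- the identification $\Xc(\Dr)^{\sharp} = (\Xc(\Oc_L)^{\sharp})^{\Gamma} = (U(\Oc_L)^{\sharp}/G(\Oc_L))^{\Gamma}$, the presentation-independent $F$-analytic manifold structure, and functoriality --- and I would treat them in that order, the first being the substantive one and the other two largely formal given it.

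Granting the set-level identification, the manifold structure is a consequence of the earlier lemmas. Applying Proposition \ref{presin} to the linear quotient stack $\Xc_{\Oc_L} = [U_{\Oc_L}/G_{\Oc_L}]$ over $\Oc_L$, which is still in Situation \ref{sit:good} with the base change $W_{\Oc_L}$ (flat base change preserves the relevant open immersion and equivalence), yields the second equality $\Xc(\Oc_L)^{\sharp} = U(\Oc_L)^{\sharp}/G(\Oc_L)$ as an $L$-analytic manifold, independent of the presentation. By Lemma \ref{lemma:Galois} the $\Gamma$-actions on $U(\Oc_L)^{\sharp} \subset U(L)$ and on $G(\Oc_L) \subset G(L)$ are Galois and compatible with the free, proper action map; Proposition \ref{prop:Galois-quotient} then puts a Galois $\Gamma$-action on the quotient, and Lemma \ref{lemma:Galois-fix} makes $(U(\Oc_L)^{\sharp}/G(\Oc_L))^{\Gamma}$ into an $F$-analytic manifold. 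Presentation-independence descends to the fixed loci because the isomorphism $U(\Oc_L)^{\sharp}/G(\Oc_L) \simeq V(\Oc_L)^{\sharp}/H(\Oc_L)$ produced by Proposition \ref{presin} is canonical, hence $\Gamma$-equivariant. For functoriality I would, given $\Xc \to \Yc$, choose compatible presentations $[U/G] \to [V/H]$ together with a $G$-equivariant map $U \to V$ over $G \to H$ (available after replacing $G,H$ by general linear groups and passing to induced spaces, as in the remarks following the definition of linear quotient stack); then $U(\Oc_L) \to V(\Oc_L)$ is $\Gamma$-equivariant and analytic, descends to an analytic map of quotients by the submersion property in Proposition \ref{presin}, and restricts to an analytic map of $\Gamma$-fixed loci since these are closed analytic submanifolds with analytic inclusions.

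For the identification, write $\Dr = [\Spec B/\mu_r]$ with $B = \Oc_F[\lambda]/(\lambda^r - \varpi) = \Oc_{F(\varpi^{1/r})}$ (the polynomial $\lambda^r-\varpi$ being Eisenstein), so that a point of $\Xc(\Dr)$ is a $\mu_r$-equivariant map $\Spec B \to \Xc$. The assignment $\lambda \mapsto \varpi^{1/r}$ gives a finite flat map $q\colon \Spec \Oc_L \to \Spec B \to \Dr$ and hence a restriction map $\rho\colon \Xc(\Dr) \to \Xc(\Oc_L)$. I would show: (a) $\rho$ lands in the $\Gamma$-fixed points --- for $\gamma \in \Gamma$ one has $\gamma(\varpi^{1/r}) = \chi(\gamma)\varpi^{1/r}$ with $\chi(\gamma) \in \mu_r$, so $q\circ\gamma$ and $q$ differ by the element $\chi(\gamma)$ of the very group $\mu_r$ one quotiented by, giving a canonical $2$-isomorphism $q\circ\gamma \simeq q$; combined with the $\mu_r$-equivariance datum of a $\Dr$-point this produces the fixed-point datum $\{\gamma^{*}\rho(f) \isoto \rho(f)\}_{\gamma}$, whose cocycle condition follows from that of $\chi$; and (b) on the $\sharp$-loci $\rho$ is a bijection onto $(\Xc(\Oc_L)^{\sharp})^{\Gamma}$. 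For (b) the $\sharp$-condition is crucial: it forces the generic restriction to $\Spec F \subset \Dr$ to land in $W(F)$, which has no nontrivial automorphisms since $W$ is an algebraic space; hence a $\sharp$-point of $\Xc(\Dr)$ is automorphism-free and is determined by $\rho(f)$ together with its canonical fixed-point datum, giving injectivity. Surjectivity amounts to descending a $\Gamma$-fixed $\sharp$-point of $\Xc(\Oc_L)$ along $q$, which I would do in two steps, first along $\Spec\Oc_L \to \Spec B$ and then along the $\mu_r$-quotient $\Spec B \to \Dr$; after reducing to $G = \GL_n$ all $G$-torsors on the local rings $\Oc_L$ and $B$ are trivial, so both descent data are explicit cocycles and, the $\sharp$-locus being automorphism-free, there is nothing obstructing effectivity. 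One finally checks that the $\sharp$-conditions match: restricting a $\Gamma$-fixed $\Oc_L$-point to $\Spec L$ gives a point of $\Xc(L)^{\Gamma} = \Xc(F)$, and lying in $W$ there is the same condition on both sides.

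The step I expect to be hardest is the surjectivity in (b) when $p \mid r$, i.e.\ effective descent along the possibly wildly ramified $q\colon \Spec\Oc_L \to \Dr$. When $p\nmid r$ one can sidestep it, since then $\Dr$ trivializes after base change to $\Oc_L$ as $\Spec\Oc_L \times B\mu_r$, reducing to the earlier papers in this series; for $p\mid r$ that fails. Two features should nevertheless carry the argument: first, $\Dr$ remains a tame (linearly reductive) stack since $\mu_r$ is diagonalizable, so vector bundles on $\Dr$ split into isotypic pieces and $\GL_n$-torsors are classified by a discrete twisting type, which makes the descent datum along $\Spec B \to \Dr$ explicit; and second, restricting to the $\sharp$-locus kills all automorphisms, so the groupoid-level descent collapses to the asserted bijection of sets with no cohomological obstruction. (Alternatively one might realize $\Dr$ as a rigidification of $[\Spec\Oc_L/\Gamma]$ along a subgroup of the inertia of $L/F$, which would make the $\sharp$-identification immediate at the cost of analysing that inertia group.)
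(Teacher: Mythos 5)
Your proposal follows the same route as the paper: the identification is by fppf/Galois descent, the manifold structure follows from Proposition~\ref{presin} together with Lemmas~\ref{lemma:Galois}, \ref{lemma:Galois-fix} and Proposition~\ref{prop:Galois-quotient}, and functoriality is reduced to a compatible choice of presentations. The paper's proof is much terser (a few lines), while you spell out the restriction map $\rho$, the cocycle verification, and injectivity/surjectivity on the $\sharp$-locus; the substance is the same.

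Two small remarks. First, the worry you raise about effective descent along a possibly wildly ramified $\Spec\Oc_L\to\Dr$ is not actually an issue: what is being descended is a morphism into the fppf stack $\Xc$, and descent data for such $\mathsf{Hom}$-objects are always effective along any fppf cover, regardless of ramification, precisely because $\Xc$ is a stack. The role of the $\sharp$-condition (killing generic automorphisms) is, as you observe, to make the descent datum unique so that the identification is an honest bijection of sets rather than an equivalence of groupoids. Second, your functoriality step is slightly imprecisely stated: given $\Xc\to\Yc$, one cannot simply ``choose'' compatible presentations by replacing $G,H$ with general linear groups; one needs to construct them, e.g.\ by replacing the atlas $U\to\Xc$ with $U\times_\Xc(\Xc\times_\Yc V)$, a $(G\times H)$-torsor over $\Xc$ with a $(G\times H)$-equivariant projection to $V$. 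This is what the paper's phrase ``applying the preceding statement to the graph'' is doing. With that correction the argument is complete.
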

\begin{proof}
Any $x \in \Xc(\Dr)^{\sharp}$ is given by a $\mu_r$-invariant $\Oc$-morphism $\Spec(\Oc_F(\varpi^{1/r})) \to \Xc$. By faithfully flat descent this is the same as a $\Gamma$-invariant morphism $\Spec(\Oc_L) \to \Xc$, which gives the desired identification. 

 By Proposition \ref{prop:Galois-quotient} and Lemmas \ref{lemma:Galois-fix} \& \ref{lemma:Galois}, we thus see that $\Xc(\Dr)^{\sharp}$ is the fixpoint set of a Galois action and thus an $F$-analytic manifold. The independence of presentation is proven as in Proposition \ref{presin}.    

 The last point by applying the preceding statement to the graph of such a morphism.
\end{proof}

Since the map $\pi_r \colon \Xc(\Dr)^{\sharp} \to X(\Oc_F)^{\sharp}$ is obtained by applying $\Gamma$-fixpoint loci to domain and co-domain of the morphism
$\pi\colon \Xc(\Oc_L)^{\sharp} \to X(\Oc_L)^{\sharp}$, we obtain:

\begin{proposition}\label{prop:r-finite}
Assume that we are in Situation \ref{sit:good}. Then, for any $r\geq 1$ the map
$$\pi_{r}\colon \Xc(\Dr)^{\sharp} 
\to X(\Oc_F)^\sharp$$
is proper and \'etale with finite fibres.
\end{proposition}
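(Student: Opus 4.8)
The plan is to deduce Proposition \ref{prop:r-finite} as an essentially formal consequence of the already-established Proposition \ref{prop:Gamma-finite}, which is precisely the statement that the map $\pi_\Gamma\colon \Xc(\Oc_L)^{\sharp,\Gamma}\to X(\Oc_L)^{\Gamma}=X(\Oc_F)$ is proper and étale with finite fibres for the particular finite Galois extension $L/F$. Indeed, by Proposition \ref{defi:r-points} we have the identification $\Xc(\Dr)^{\sharp}=(\Xc(\Oc_L)^{\sharp})^{\Gamma}$ as $F$-analytic manifolds, where $L=F(\mu_r(\bar F),\varpi^{1/r})$ and $\Gamma=\Gal(L/F)$. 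First I would observe that under this identification the map $\pi_r\colon \Xc(\Dr)^{\sharp}\to X(\Oc_F)^{\sharp}$ is literally the map obtained by taking $\Gamma$-fixpoint loci of $\pi\colon\Xc(\Oc_L)^{\sharp}\to X(\Oc_L)^{\sharp}$ — this is the content of the short remark preceding the proposition, but I would want to spell it out: both the source and target of $\pi_r$ are cut out as $\Gamma$-fixed points (using that $X(\Oc_L)^\Gamma=X(\Oc_F)$ since $X$ is an algebraic space over $\Oc_F$ and descent along $\Oc_F\to\Oc_L$), and the square is compatible because restriction of a map from $\Dr$ to $\Spec F$ versus the Galois-descent picture are visibly the same construction.

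Second, I would invoke the structural lemmas already available: étaleness of the restriction to $\Gamma$-fixpoints is Lemma \ref{lemma:Gamma-etale}, provided the ambient map $\pi\colon\Xc(\Oc_L)^{\sharp}\to X(\Oc_L)^{\sharp}$ is Galois-equivariant and étale between $L$-analytic manifolds. Galois-equivariance holds because $\pi$ arises by base change along $\Oc_F\to\Oc_L$ of a map defined over $\Oc_F$, and the Galois action on all relevant $L$-points is the natural one, which is Galois in the sense of the definition by Lemma \ref{lemma:Galois} (applied to the smooth varieties underlying $U$, $U_W$ and $W$) together with Proposition \ref{prop:Galois-quotient} for the quotient manifolds. Étaleness of $\pi$ over the $\sharp$-locus is Proposition \ref{prop:etale}, which applies verbatim over $\Oc_L$ since $\Xc_{\Oc_L}\to X_{\Oc_L}$ is again in Situation \ref{sit:good}.

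Third, for properness and finiteness of the fibres I would note these are even more elementary: $\Xc(\Dr)^{\sharp}=(\Xc(\Oc_L)^{\sharp})^\Gamma$ is a \emph{closed} subspace of $\Xc(\Oc_L)^{\sharp}$, and $X(\Oc_F)=X(\Oc_L)^\Gamma$ is a closed subspace of $X(\Oc_L)^{\sharp}$, so $\pi_r$ is obtained from the proper map $\pi\colon\Xc(\Oc_L)^{\sharp}\to X(\Oc_L)^{\sharp}$ (properness being Proposition \ref{prop:proper}, applied over $\Oc_L$) by restricting to a closed subset of the target and intersecting the source with a closed set; properness is preserved under such base change, and finiteness of fibres is inherited since $\pi_r^{-1}(x)\subset \pi^{-1}(x)$, which is finite by Proposition \ref{prop:finite}. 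Assembling these three points gives the claim.

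The only genuine subtlety — and the step I would dwell on — is the verification that $\pi_r$ really is the $\Gamma$-fixpoint-locus map in a way that makes Lemmas \ref{lemma:Gamma-etale} and the properness argument literally applicable: one must check that the $F$-analytic manifold structure on $\Xc(\Dr)^{\sharp}$ coming from the root-stack definition agrees with the one on $(\Xc(\Oc_L)^{\sharp})^{\Gamma}$ coming from Lemma \ref{lemma:Galois-fix}, and that the square relating $\pi_r$ to $\pi$ over $\Oc_L$ commutes \emph{as a diagram of $F$-analytic manifolds}, not merely of sets. This is exactly what Proposition \ref{defi:r-points} provides (the identification is as $F$-analytic manifolds, and functoriality in $\Xc$ is recorded there via graphs of morphisms, which one applies to $\pi$ itself). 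Once that identification is granted, everything else is a routine transfer of properties along fixpoint loci, so I expect the proof to be only a few lines.
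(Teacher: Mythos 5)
Your proposal is correct and takes essentially the same route as the paper: the paper's proof is the single sentence ``This is a direct consequence of Proposition \ref{prop:Gamma-finite},'' which relies on the remark preceding the statement that $\pi_r$ is the map induced on $\Gamma$-fixpoint loci by $\pi\colon\Xc(\Oc_L)^\sharp\to X(\Oc_L)^\sharp$. You have simply unfolded the internal citations of Proposition \ref{prop:Gamma-finite} (Propositions \ref{prop:etale}, \ref{prop:proper}, \ref{prop:finite}, Lemma \ref{lemma:Gamma-etale}) into the argument and, usefully, made explicit the compatibility check via Proposition \ref{defi:r-points} that the paper leaves implicit.
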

\begin{proof}
This is a direct consequence of Proposition \ref{prop:Gamma-finite}.
\end{proof}

\subsection{Buildings}
For a quotient stack $\CX=[U/G]$ with $G$ a reductive group over $\CO_F$, we will relate the set of twisted points $\CX(\Dinfty)$ to the (extended) Bruhat-Tits building $B(G_F)$ of $G_F$. This is a polysimplicial complex with a $G(F)$-action obtained by gluing affine spaces $A(T)\cong \BR^n$, the so-called apartments of $A(T)$ associated to the maximal split tori $T$ of $G_F$, along hyperplanes determined by the root system of $G$. It was constructed by Bruhat and Tits in \cite{BT1}.

We fix a hyperspecial point $\theta_0 \in B(G)$ whose parahoric integral model is equal to $G$. Then by the work of Landvogt \cite{MR1739403} (c.f. also \cite[Theorem 127]{MR4129311}) the pair $(B(G_F),\theta_0)$ is naturally functorial in the group scheme $G$ over $\CO_F$. That is, for any homomorphism $G \to H$ to a reductive group scheme $H$ over $\CO_F$ and any hyperspecial point $\theta'_0 \in B(H_F)$ with parahoric group $H$, there exists a unique $G(F) \to H(F)$-equivariant map $h\colon B(G_F) \to B(H_F)$ which sends $\theta_0$ to $\theta'_0$ and which is toral, that is for every maximal split torus $T$ of $G_F$ there exists a maximal split torus $T'$ of $H_F$ such that $T$ maps to $T'$ and such that $h$ restricts to an affine map $A(T) \to A(T')$.

\begin{definition} \label{RationalPtDef}
    Let $\Gamma \subset \BR$ be any additive subgroup containing $\BZ$. We let $B(G_F)_\Gamma \subset B(G_F)$ be the set of points $\theta \in B(G_F)$ which in some apartment $A(T)$ containing both $\theta_0$ and $\theta$ differ from $\theta_0$ by an element of $X_*(T)_\Gamma \subset X_*(T)_\BR$ (with respect to the structure of $A(T)$ as an affine $X_*(T)_\BR$-space).

    For any subset $S \subset B(G_F)$ we let $S_\Gamma \defeq S \cap B(G_F)_\Gamma.$
\end{definition}
\begin{rmk}
    In the situation of Definition \ref{RationalPtDef}, any two maximal split tori $T$ and $T'$ of $G_F$ whose apartments contain $\theta$ and $\theta_0$ are conjugate by an element of $G(F)$ which fixes both $\theta$ and $\theta_0$. Conjugation by such an element gives an affine isomorphism $A(T) \cong A(T')$ fixing $\theta$ and $\theta_0$. Hence the condition from Definition \ref{RationalPtDef} holds for one such $T$ if and only if it holds for all such $T$. 
\end{rmk}

The building of the group $\GL_n$ admits a concrete description, due to Goldman-Iwahori \cite{MR0144889}, as the set of non-archimedean norms $F \to \BR^{\geq 0}$ on the vector space $F^n$. Such norms are closely related to vector bundles on the spaces $\Dr$ by the following:

\begin{construction} \label{NormTOVB}
    Let $(V,|\cdot|)$ be a finite-dimensional normed vector space over $F$ for which $|V|$ is contained in $|F(\varpi^{1/r})|=\langle |\omega|^{-1/r} \rangle \subset \BR^{>0}$. To this we associate a vector bundle on $\Dr$ as follows: 
    By \cite[Prop. 1.1]{MR0144889} there exists a basis $(e_i)_{i=1}^n$ of $V$ such that $| \sum_i \lambda_i v_i|=\max_i |\lambda_i| |e_i|$ for all $(\lambda_i)_i \in F^n$. Then the $\Oc_{F(\varpi^{1/r})}$-lattice $M \subset V_{F(\varpi^{1/r})}$ given by those elements which have norm $\leq 1$ under the base change norm $|\cdot|_{F(\varpi^{1/r})}$ on $V_{F(\varpi^{1/r})}$ is given by
    \begin{equation*}
        M = \bigoplus_i {F(\varpi^{1/r})} \cdot (e_i \otimes \varpi^{w_i/r})
    \end{equation*}
    for suitable integers $w_i$ satisfying $|e_i|=|\varpi^{-w_i/r}|$. From this one sees that the ${F(\varpi^{1/r})}$-equivariant $\mu_r$-action on $V_{F(\varpi^{1/r})}$ restricts to a $\CO_{F(\varpi^{1/r})}$-equivariant $\mu_r$-action on $M$. This gives us the desired vector bundle on $\Dr.$

    Moreover one checks that this construction is functorial in $(V,|\cdot|)$ with respect to contractive morphisms of normed vector spaces.
\end{construction}

\begin{theorem} \label{NormVBEq}
    This construction gives an exact monoidal equivalence between the category of normed finite-dimensional vector spaces over $F$ with values in $\langle |\omega|^{-1/r} \rangle \subset \BR^{>0}$ and the category of vector bundles over $\Dr$.
\end{theorem}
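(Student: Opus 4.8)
The plan is to construct a quasi-inverse functor, from vector bundles on $\Dr$ to normed vector spaces over $F$, and to check that both composites are naturally isomorphic to the identity. Given a vector bundle $\mathcal{E}$ on $\Dr$, pulling back along the open immersion $\Spec F \hookrightarrow \Dr$ gives a finite-dimensional $F$-vector space $V = \mathcal{E}|_{\Spec F}$. By faithfully flat descent along $\Spec \Oc_{F(\varpi^{1/r})} \to \Dr$, the bundle $\mathcal{E}$ corresponds to a $\mu_r$-equivariant $\Oc_{F(\varpi^{1/r})}$-lattice $M \subset V_{F(\varpi^{1/r})}$. The norm on $V$ is then recovered as $|v| = \inf\{ |\lambda| : \lambda \in F(\varpi^{1/r})^\times, \, v \in \lambda M \}$, or more precisely the base-change norm on $V_{F(\varpi^{1/r})}$ whose unit ball is $M$, restricted to $V$; the $\mu_r$-equivariance of $M$ ensures this restricted function takes values in $\langle |\varpi|^{-1/r}\rangle$ and is Galois-invariant, hence descends to a genuine $F$-norm. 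One should verify this is a norm in the non-archimedean sense (ultrametric inequality, homogeneity) — this is immediate from the lattice description.

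The key structural input is the diagonalisation result of Goldman–Iwahori \cite[Prop. 1.1]{MR0144889} already invoked in Construction \ref{NormTOVB}: every non-archimedean norm on $F^n$ admits an orthogonal basis, and correspondingly every $\mu_r$-equivariant lattice $M$ over $\Oc_{F(\varpi^{1/r})}$ decomposes as $\bigoplus_i \Oc_{F(\varpi^{1/r})} \cdot (e_i \otimes \varpi^{w_i/r})$, since the $\mu_r$-action forces the decomposition into weight spaces after the standard argument on $\Oc_{F(\varpi^{1/r})}[\mu_r]$-modules (using $r$ invertible in $F(\varpi^{1/r})$, as $\operatorname{char} F = 0$). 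First I would record this normal form as a lemma; it simultaneously shows the two functors are mutually inverse on objects, because starting from $M$ in this form and building the norm and then re-applying Construction \ref{NormTOVB} returns $M$ on the nose, and starting from $(V, |\cdot|)$ with orthogonal basis $(e_i)$ one reads off $|e_i| = |\varpi^{-w_i/r}|$ both times.

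Next I would upgrade this to a statement about morphisms. A morphism of vector bundles $\mathcal{E} \to \mathcal{F}$ on $\Dr$ is, again by descent, a $\mu_r$-equivariant $\Oc_{F(\varpi^{1/r})}$-linear map $M_{\mathcal{E}} \to M_{\mathcal{F}}$, equivalently an $F$-linear map $V_{\mathcal{E}} \to V_{\mathcal{F}}$ carrying $M_{\mathcal{E}}$ into $M_{\mathcal{F}}$; and carrying unit ball into unit ball is precisely the statement that the map is contractive, $|\phi(v)| \leq |v|$. This gives fully faithfulness directly, and essential surjectivity is the object-level statement above. Exactness is the observation that a sequence of bundles on $\Dr$ is exact iff its pullback to $\Spec \Oc_{F(\varpi^{1/r})}$ is, which translates to the sequence of lattices (equivalently, of $F$-vector spaces with the subspace/quotient norms matching) being exact. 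Monoidality is checked on an orthogonal basis: the tensor product norm has orthogonal basis $(e_i \otimes f_j)$ with $|e_i \otimes f_j| = |e_i||f_j|$, matching $M_{\mathcal{E}} \otimes M_{\mathcal{F}}$ which is the unit ball of the base-changed tensor norm, and the $\mu_r$-weights add, so the associated bundle is $\mathcal{E} \otimes \mathcal{F}$; the unit object $F$ with trivial norm corresponds to $\Oc_{\Dr}$.

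The main obstacle I anticipate is not any single deep step but the bookkeeping around the non-integral exponents $w_i/r$: one must be careful that the $\Oc_{F(\varpi^{1/r})}$-lattice is the \emph{full} unit ball of the base-changed norm (not merely contained in it) and that descending the $\mu_r$-action back down does not lose information — i.e.\ that the functor of Construction \ref{NormTOVB} really is essentially surjective and not just faithful. The cleanest way to dispatch this is to phrase everything in terms of the normal form $M = \bigoplus_i \Oc_{F(\varpi^{1/r})}\cdot(e_i \otimes \varpi^{w_i/r})$ and check the round-trip explicitly on this form, which reduces the equivalence to the rank-one case where it is transparent: rank-one bundles on $\Dr$ are classified by $\Zb$ via the order of vanishing along $B\mu_{r,k}$ measured in $\tfrac1r\Zb$, matching one-dimensional normed spaces up to isometry, classified by $|F(\varpi^{1/r})^\times| / |F^\times| \cong \tfrac1r\Zb/\Zb$ together with the underlying line — and then invoke the already-established orthogonal decomposition to bootstrap to arbitrary rank.
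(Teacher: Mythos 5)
Your overall strategy — flat descent along $\Spec\Oc_{F(\varpi^{1/r})}\to\Dr$, weight decomposition of the equivariant lattice, and the Goldman–Iwahori normal form — is the same as the paper's, and the construction of the quasi-inverse, the characterization of morphisms as contractions, and the exactness and monoidality checks are all sound. But there is a real flaw in your justification of the central structural step, and it bites precisely in the regime the paper was designed to handle.

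You write that the $\mu_r$-equivariant lattice $M$ decomposes into weight spaces ``after the standard argument on $\Oc_{F(\varpi^{1/r})}[\mu_r]$-modules (using $r$ invertible in $F(\varpi^{1/r})$, as $\operatorname{char} F = 0$).'' The averaging argument you have in mind needs $r$ to be invertible in the ring over which the lattice is a module, and that ring is $\Oc_{F(\varpi^{1/r})}$, not its fraction field. When $p\mid r$ — a case the paper explicitly allows (``we no longer assume that the residual characteristic is prime to $r$'') — $r$ is not invertible in $\Oc_{F(\varpi^{1/r})}$, the idempotents $\tfrac1r\sum_{\zeta}\zeta^{-w}$ do not exist, and moreover $\mu_r$ is not even the constant group scheme $\underline{\BZ/r\BZ}$ there, so ``$\Oc_{F(\varpi^{1/r})}[\mu_r]$-module'' is not the right object. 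The correct reason the decomposition exists is that $\mu_r$ is a \emph{diagonalizable} group scheme, so a $\mu_r$-comodule over any base splits canonically into its $\BZ/r\BZ$-graded pieces with no hypothesis on $r$; note the grading is only by $\Oc_F$-submodules, since $\varpi^{1/r}$ shifts weight. A second, smaller elision is that passing from $M=\bigoplus_w M^w$ to your normal form $\bigoplus_i \Oc_{F(\varpi^{1/r})}\cdot(e_i\otimes\varpi^{w_i/r})$ is not immediate ``correspondingly'' from Goldman–Iwahori (which lives on the norm side); the paper obtains it by lifting a graded basis of $\bigoplus_w M^w/\varpi^{1/r}M^{w-1}$ via Nakayama, and you should make that step explicit rather than attributing it to the norm-side diagonalization.
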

\begin{proof}
    We first show essential surjectivity: A vector bundle over $\Dr$ is given by a finite free $\CO_{F(\varpi^{1/r})}$-module $M$ with an equivariant $\mu_r$-action. Then as an $\CO_F$-module we may decompose $M$ into its weight spaces $M^w$ for $w \in \BZ/r\BZ$:
    \begin{equation*}
        M=\bigoplus_{w \in \BZ/r\BZ} M^w
    \end{equation*}
    The fact that $\mu_r$ acts equivariantly over $\CO_{F(\varpi^{1/r})}$ translates to the fact that $\varpi^{1/r}$ maps each $M^w$ to $M^{w+1}$. We choose a graded basis $(\bar f_i)_i$ of the $\BZ/r\BZ$-graded $k_F$-vector space $\oplus_w M^w/\varpi^{1/r} M^{w-1}$. For each $i$ let $w_i \in \BZ/r\BZ$ be the degree of $\bar f_i$. Then if we lift each $\bar f_i$ to an element $f_i \in M^{w_i}$, by Nakayama's Lemma we obtain a basis $(f_i)_i$ of $M$. 

    The equivariant $\mu_r$-structure on the generic fibre $M_{F(\varpi^{1/r})}$ gives a descent datum to $F$ which yields an $F$-vector space $V$ over $F$ with an isomorphism $M_{F(\varpi^{1/r})} \cong V_{F(\varpi^{1/r})}$. Then for each $i$ the element $e_i \defeq \varpi^{-w_i/r} f_i$ is invariant under $\mu_r$ and hence contained in $V$. It follows from the above that these $e_i$ form a basis of $V$. If we endow $V$ with the norm given by
    \begin{equation*}
        |\sum_i \lambda_i e_i|=\max_i |\lambda_i| |\varpi^{-w_i/r}|
    \end{equation*}
    for all $(\lambda_i)_i \in F^n$, then by comparing with Construction \ref{NormTOVB} we see that we have constructed a preimage of $M$.

    Similarly one checks that the functor is fully faithful and exact.
\end{proof}

\begin{construction} \label{TorsEmb}
    By \cite[Theorem 4.16]{Ziegler22}, the above description of $B(\GL_n)$ as a space of norms generalizes to the following descriptions of the building $B(G_F)$ of a reductive group $G$ over $\CO_F$: Once we fix our base point $\theta_0 \in B(G)$, specifying a point of $B(G)$ amounts to specifying for any representation $\rho\colon G \to \operatorname{GL}(X)$ of $G$ on a finite free $\CO_F$-module $X$ a norm on $X_F$, in such a way that these norms are functorial and exact in $X$ as well as compatible with tensor products. This description of $B(G_F)$ is also functorial in the group $G$, and for any subgroup $\BZ \subset \Gamma \subset \BR$ an element of $B(G_F)$ is in $B(G_F)_\Gamma$ if and only if all the corresponding norms on the $X_F$ have values in $|\varpi|^\Gamma \subset \BR^{>0}$.

    Using this, we associate to a $G$-torsor $\mathcal{T}$ over $\Dr$ together with a trivialisation $\psi\colon \mathcal{T}_F \cong G_F$ of the generic fibre of $\mathcal{T}$ a point of $B(G_F)_{\frac{1}{r}\BZ}$: Let $\rho\colon G \to \operatorname{GL}(X)$ be any representation of $G$ on a finite free $\CO_F$-module $X$. By pushing out $\mathcal{T}$ along $\rho$ we obtain a vector bundle $M$ over $\Dr$ together with an isomorphism $M_F \cong X_F$. By Theorem \ref{NormVBEq} this vector bundle corresponds to a norm on $X_F$. These norms are functorial and exact in $X$, as well as compatible with tensor products and so give a point in $B(G_F)_{\frac{1}{r}\BZ}$.
\end{construction}

\begin{theorem}\label{mrdes}
    Construction \ref{TorsEmb} gives a bijection between the set of pairs $(\CT,\psi)$ consisting of a $G$-torsor $\CT$ over $\Dr$ together with a generic trivialization $\psi$ of $\CT$ and the set $B(G_F)_{\frac{1}{r}\BZ}$.

    In case $F$ contains all $r$-th roots of unity, this gives a canonical bijection
    \begin{equation*}
        (G({F(\varpi^{1/r})})/G(\CO_{F(\varpi^{1/r})}))^{\mu_r(F)} \cong B(G_F)_{\frac{1}{r}\BZ}.
    \end{equation*}

\end{theorem}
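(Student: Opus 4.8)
The strategy is to reduce the statement to the previously established equivalence of categories in Theorem \ref{NormVBEq} via Tannakian reconstruction. First I would recall that, by Construction \ref{TorsEmb} together with the description of $B(G_F)$ from \cite[Theorem 4.16]{Ziegler22}, a point of $B(G_F)_{\frac{1}{r}\Zb}$ is precisely the datum of a tensor-compatible, functorial, exact assignment, to each representation $\rho\colon G \to \GL(X)$ on a finite free $\Oc_F$-module, of a norm on $X_F$ with values in $|\varpi|^{\frac{1}{r}\Zb}$. On the other side, a $G$-torsor $\Tc$ over $\Dr$ with a generic trivialization $\psi\colon \Tc_F \cong G_F$ is, by Tannakian duality for the affine group scheme $G$ over $\Oc_F$ (acting on the Tannakian category of vector bundles on $\Dr$, which by Theorem \ref{NormVBEq} is identified with normed vector spaces), the same datum of a tensor-exact functor from $\Rep_{\Oc_F}(G)$ to $\Bun(\Dr)$ equipped with a trivialization of the associated fibre functor over $\Spec F$. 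Composing with the equivalence of Theorem \ref{NormVBEq} turns each such functor into exactly the norm-valued data defining a point of $B(G_F)_{\frac{1}{r}\Zb}$, and the generic trivialization matches on both sides; this gives the bijection.

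Concretely, the key steps, in order, are: (1) For the forward direction, given $(\Tc,\psi)$ and a representation $X$, push out to get $M = \Tc \times^{G,\rho} X$ on $\Dr$ with $M_F \cong X_F$ via $\psi$, apply Theorem \ref{NormVBEq} to obtain a norm on $X_F$, and check functoriality, exactness and tensor-compatibility — all of which are immediate from the corresponding properties of the pushout construction and of the equivalence in Theorem \ref{NormVBEq}. (2) For the inverse direction, given a point $\theta \in B(G_F)_{\frac{1}{r}\Zb}$, i.e.\ a compatible system of norms, apply the quasi-inverse of Theorem \ref{NormVBEq} to get a tensor-exact $\Oc_F$-linear functor $\Rep_{\Oc_F}(G) \to \Bun(\Dr)$ together with an identification of its generic fibre with the standard fibre functor; by the Tannakian reconstruction theorem for flat affine group schemes (one may cite the usual reference, e.g.\ Saavedra/Deligne, or Broshi's extension to torsors over general bases) this functor is represented by a $G$-torsor $\Tc$ over $\Dr$, and the generic identification furnishes $\psi$. (3) Check that these two constructions are mutually inverse, which again follows formally from the equivalence of Theorem \ref{NormVBEq} and the uniqueness in Tannakian reconstruction.

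For the second assertion, assume $F$ contains all $r$-th roots of unity, so $\varpi^{1/r}$ generates a totally ramified Kummer extension with $\mu_r(F) = \mu_r$ and $\Dr = [\Spec \Oc_F[\lambda]/(\lambda^r - \varpi) \,/\, \mu_r]$. A $G$-torsor over $\Dr$ with generic trivialization is, by descent along $\Spec \Oc_{F(\varpi^{1/r})} \to \Dr$, the same as a $\mu_r$-equivariant $G$-torsor over $\Oc_{F(\varpi^{1/r})}$ with equivariant generic trivialization; since $G$ is smooth with connected fibres (reductive), every $G$-torsor over the henselian local ring $\Oc_{F(\varpi^{1/r})}$ is trivial, so such data amounts to a class in $(G(F(\varpi^{1/r}))/G(\Oc_{F(\varpi^{1/r})}))^{\mu_r}$, the $\mu_r$-fixed points of the affine Grassmannian-type coset space. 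Combining with the bijection just proven yields the displayed canonical identification with $B(G_F)_{\frac{1}{r}\Zb}$.

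The main obstacle I anticipate is step (2): invoking Tannakian reconstruction for $G$-torsors over the base $\Dr$, rather than over a field, requires the version of the theorem valid over general (or at least reasonable) base schemes — here a tame stack — and one must verify that the hypotheses of that version (e.g.\ $G$ flat affine, $\Dr$ having enough structure, the functor being exact and faithful) are met. Checking that the abstract tensor functor produced from a compatible system of norms is actually representable by a torsor — as opposed to merely a form of a torsor — is the crux, and it is where the smoothness of $G$ and the tameness of $\Dr$ (ensuring $\Bun(\Dr)$ behaves well) enter decisively. Everything else is a formal consequence of Theorem \ref{NormVBEq}.
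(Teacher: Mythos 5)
Your proposal is correct and follows essentially the same route as the paper: the first bijection via the Tannakian description of $G$-torsors over $\Dr$ as exact $\Oc_F$-linear tensor functors from dualizable $\Oc_F$-representations of $G$ to $\Bun(\Dr)$ (combined with Theorem \ref{NormVBEq}), and the second bijection via descent along $\Spec\Oc_{F(\varpi^{1/r})} \to \Dr$ together with triviality of $G$-torsors over $\Oc_{F(\varpi^{1/r})}$ and translation of $\mu_r$-equivariance into $\mu_r(F)$-invariance of cosets. Your discussion of the Tannakian reconstruction over a general base is more explicit than the paper (which simply asserts this correspondence), but the underlying argument is the same.
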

\begin{proof}
    The first claim follows from Theorem \ref{NormVBEq} and the fact that $G$-torsors over $\Dr$ correspond to $\CO_F$-linear exact tensor functors from the category of dualizable representations of $G$ over $\CO_F$ to the category of vector bundles over $\Dr$.

    For the second statement, we use the fact that $G$-torsors over $\Dr$ correspond to $\mu_r$-equivariant $G$-torsors over $\Spec(\CO_{F(\varpi^{1/r})})$. Since every $G$-torsor over $\Spec(\CO_{F(\varpi^{1/r})})$ is trivial, the set of $G$-torsors over $\Spec(\CO_{F(\varpi^{1/r})})$ together with a generic trivialization is in bijection with $G({F(\varpi^{1/r})})/G(\CO_{F(\varpi^{1/r})})$. Finally,  by the assumption on $F$ the $\mu_r$-equivariance of such a torsor is equivalent to the $\mu_r(F)$-invariance of the corresponding element of $G({F(\varpi^{1/r})})/G(\CO_{F(\varpi^{1/r})})$. 
\end{proof}
\subsection{Construction of the embedding}
 Let now $\CX=[U/G]$ be a quotient stack over $\CO_F$ with $U$ separated over $\CO_F$ and $G$ reductive over $\CO_F$. For a point $x \in \CX(F)$, we consider the fibre $\CX(\Dinfty)_x$ above $x$ of the functor $\CX(\Dinfty) \to \CX(F)$. That is $\CX(\Dinfty)_x$ is the set of isomorphism classes of pairs $(y,\psi)$ where $y$ is an object of $\CX(\Dinfty)$ and $\psi$ is an isomorphism $y|_F \isoto x$. We also denote by $\CX(\Dr)_x \subset \CX(\Dinfty)_x$ the subset given by those pairs with $y \in \CX(\Dr)$.
 
\begin{construction} \label{EmbCons}
    Let $x \in \CX(F)$ be such that the $G$-torsor $U|_x$ over $F$ is trivial and fix a point $\tilde x \in U|_x(F)$ of this torsor. We construct an embedding
    \begin{equation*}
        b_{\tilde x}\colon \CX(\Dinfty)_x \into B(G_F)_\BQ
    \end{equation*}
    as follows: For any $r \geq 1$, an element $y$ of $\CX(\Dr)$ is given by the $G$-torsor $\CT=U|_y$ over $\Dr$ together with a $G$-equivariant morphism $\CT \to U$. Via our chosen point $\tilde x$, an identification of $y|_F$ with $x$ gives identifications $\CT_F \cong U|_x \cong G_F$ of $G_F$-torsors. Hence via Construction \ref{TorsEmb} we obtain a point $b_{\tilde x}(y) \in B(G_F)_{\frac{1}{r}\BZ}$. This construction is functorial in $r$ and thus gives a map as desired.
\end{construction}

A different choice of base point $\tilde x$ is of the form $g\cdot \tilde x$ for some $g \in G(F)$. So the following describes the effects of a different choice of $\tilde x$:
\begin{lemma} \label{bBP}
   In the situation of Construction \ref{EmbCons}, for any $g \in G(F)$ and $y \in \CX(\Dr)$ we have $b_{g\cdot \tilde x}(y)=g\cdot b_{\tilde x}(y)$. 
\end{lemma}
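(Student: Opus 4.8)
The plan is to unwind both sides of the claimed equality $b_{g\cdot\tilde x}(y)=g\cdot b_{\tilde x}(y)$ by tracing through the definitions in Construction \ref{EmbCons} and Construction \ref{TorsEmb}. Fix $r\geq 1$ and $y\in\CX(\Dr)_x$ given by a $G$-torsor $\CT=U|_y$ over $\Dr$ together with its $G$-equivariant map to $U$; restricting to the generic fibre and using $\tilde x\in U|_x(F)$ produces a trivialization $\psi_{\tilde x}\colon\CT_F\isoto G_F$, whereas using $g\cdot\tilde x$ produces the trivialization $\psi_{g\cdot\tilde x}=L_{g}\circ\psi_{\tilde x}$, where $L_g\colon G_F\to G_F$ is left translation by $g$ (this is exactly the statement that changing the base point of a trivial torsor by $g$ changes the trivialization by the corresponding left translation). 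So the two data $(\CT,\psi_{g\cdot\tilde x})$ and $(\CT,\psi_{\tilde x})$ differ precisely by post-composing the generic trivialization with $L_g$.

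The core of the argument is then to check that Construction \ref{TorsEmb} intertwines this operation with the natural $G(F)$-action on $B(G_F)$. Concretely, for a representation $\rho\colon G\to\GL(X)$ on a finite free $\CO_F$-module, pushing $\CT$ out along $\rho$ gives a vector bundle $M$ on $\Dr$ with $M_F\cong X_F$; the trivialization $\psi_{\tilde x}$ identifies $M_F$ with $X_F$ one way, and $\psi_{g\cdot\tilde x}$ identifies it via the extra automorphism $\rho(g)$ of $X_F$. Under the equivalence of Theorem \ref{NormVBEq}, the vector bundle $M$ corresponds to a norm $|\cdot|$ on $M_F$; transporting along the two identifications yields the norms $\|v\|_{\tilde x}=|v|$ and $\|v\|_{g\cdot\tilde x}=|\rho(g)^{-1}v|$ on $X_F$ — that is, $\|\cdot\|_{g\cdot\tilde x}=\rho(g)_*\|\cdot\|_{\tilde x}$. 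Since by construction (see \cite[Theorem 4.16]{Ziegler22}, as recalled in Construction \ref{TorsEmb}) the $G(F)$-action on $B(G_F)$ is exactly the one induced on the system of norms by the $\GL(X_F)$-action through each $\rho$, this says precisely that the point of $B(G_F)_{\frac{1}{r}\BZ}$ attached to $(\CT,\psi_{g\cdot\tilde x})$ is $g$ applied to the point attached to $(\CT,\psi_{\tilde x})$, i.e. $b_{g\cdot\tilde x}(y)=g\cdot b_{\tilde x}(y)$.

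Finally I would note the compatibility in $r$ is automatic since everything in sight is functorial in $r$ (the inclusion $\Dr\hookrightarrow\mathbb{D}_F^{1/rs}$ induces the obvious maps), so the equality on each $\CX(\Dr)_x$ assembles to the asserted equality on $\CX(\Dinfty)_x$.

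The main obstacle is purely bookkeeping: making sure the direction of the translation is right (left versus right, $g$ versus $g^{-1}$) and that the normalization of the $G(F)$-action on $B(G_F)$ via the Tannakian/norm description of \cite{Ziegler22} matches the one implicitly used in Construction \ref{EmbCons}. Once the dictionary ``change of trivialization by $L_g$'' $\leftrightarrow$ ``act by $\rho(g)$ on each norm'' $\leftrightarrow$ ``act by $g$ on $B(G_F)$'' is pinned down, the proof is a one-line diagram chase; there is no analytic or geometric content beyond the already-established Theorems \ref{NormVBEq} and \ref{mrdes}.
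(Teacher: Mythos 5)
Your argument is correct and arrives at the right conclusion, by a route closely related to but slightly more explicit than the paper's one-line proof. The paper appeals directly to the fact (part of \cite[Theorem 4.16]{Ziegler22}) that replacing the hyperspecial base point $\theta_0$ by $g\cdot\theta_0$ changes the Tannakian bijection of Construction~\ref{TorsEmb} by the action of $g$; you instead hold $\theta_0$ fixed and trace how the generic trivialization $\psi$, and with it each norm on $X_F$, transforms, invoking the $G(F)$-equivariance of the norm description. These are two faces of the same equivariance, so the substance is the same. One caveat on the bookkeeping you flag: the intermediate claim $\psi_{g\cdot\tilde x}=L_g\circ\psi_{\tilde x}$ and the downstream formula $\|v\|_{g\cdot\tilde x}=\|\rho(g)^{-1}v\|_{\tilde x}$ (equivalently, $\iota_{g\cdot\tilde x}=\rho(g)\circ\iota_{\tilde x}$ for the induced identifications $M_F\cong X_F$) are only simultaneously correct for one of the two natural conventions for turning a point of a torsor into a trivialization; with the convention $h\cdot\tilde x\mapsto h$ one instead gets $\psi_{g\cdot\tilde x}=R_{g^{-1}}\circ\psi_{\tilde x}$, which is what feeds through to $\iota_{g\cdot\tilde x}=\rho(g)\circ\iota_{\tilde x}$. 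As you anticipate this does not affect the conclusion, but it would be worth fixing a convention once and checking it against Lemma~\ref{bDesc}, which already pins down the normalization used in the paper.
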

\begin{proof}
    This follows from the construction of $b_{g \cdot \tilde x}$ using the fact that in \cite[Theorem 4.16]{Ziegler22}, replacing the base point $\theta_0$ by $g\cdot \theta_0$ changes the resulting bijection by the action of $g$ on $B(G_F)$.
\end{proof}
We first note the following functoriality property of this construction:
\begin{construction} \label{bFuncCons}
    Let $G' \to G$ be a homomorphism of reductive group schemes over $\CO_F$ and $\CX'$ the resulting quotient stack $[U/G']$ with the canonical morphism $\CX' \to \CX$. Let $x' \in \CX'(F)$ be a point with image $x \in \CX(F)$. There is a natural morphism of torsors $U|_{x'} \to U|_x$ which is equivariant with respect to $G' \to G$. We assume that the $G'$-torsor $U|_{x'}$ is trivial and choose a point $\tilde x' \in U|_{x'}(F)$. Let $\tilde x$ be the image of $\tilde x'$ in $U|_{x}(F)$. We obtain the following diagram:
       \begin{equation} \label{bFuncDiag}
        \xymatrix{
            \CX'(\Dinfty)_{x'} \ar[r]^{b_{\tilde x'}} \ar[d] & B(G'_F)_{\Qb} \ar[d]\\
            \CX(\Dinfty)_x \ar[r]_{b_{\tilde x}}  & B(G_F)_{\Qb}         
        }
    \end{equation}
\end{construction}
\begin{proposition} \label{bFunc}
     The diagram \eqref{bFuncDiag} is Cartesian.
\end{proposition}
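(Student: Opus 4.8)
The plan is to make the two left-hand corners of \eqref{bFuncDiag} explicit in terms of torsors by means of Theorem~\ref{mrdes}, and then to recognise the asserted fibre product as an instance of the induction--restriction adjunction for equivariant maps. First I would reformulate Construction~\ref{EmbCons}: for a quotient stack $\CY=[U/H]$ over $\CO_F$ with $U$ separated and $H$ reductive, a point $y\in\CY(F)$ with $U|_y$ trivial, and a chosen $\tilde y\in U|_y(F)$, the embedding $b_{\tilde y}$ identifies $\CY(\Dinfty)_y$ with the set of pairs $(\theta,f)$ where $\theta\in B(H_F)_\BQ$ corresponds, via Theorem~\ref{mrdes} and for any $r$ with $\theta\in B(H_F)_{\frac{1}{r}\BZ}$, to an $H$-torsor $\CT_\theta$ on $\Dr$ with a generic trivialisation $\tau_\theta\colon(\CT_\theta)_F\cong H_F$, and where $f\colon\CT_\theta\to U$ is an $H$-equivariant morphism whose restriction to the generic fibre equals $H_F\xrightarrow{\tau_\theta^{-1}}(\CT_\theta)_F\to U$, the last arrow being $h\mapsto\tilde y\cdot h$ (writing $\tilde y$ also for its image in $U(F)$). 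This is a direct unwinding of the definitions: an object of $\CY(\Dr)$ over $y$ is an $H$-torsor $\CT$ on $\Dr$ with an $H$-equivariant map $\CT\to U$ and an identification of $\CT|_F$ with $U|_y\cong H_F$; by Theorem~\ref{mrdes} a torsor-with-generic-trivialisation has no nontrivial automorphisms and is classified by its point of $B(H_F)_\BQ$; and $b_{\tilde y}$ is by construction the first projection $(\theta,f)\mapsto\theta$. I would apply this to $(\CX,x,\tilde x,G)$ and to $(\CX',x',\tilde x',G')$, compatibly in $r$.

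Next I would identify the vertical arrows in these coordinates. The left one $\CX'(\Dinfty)_{x'}\to\CX(\Dinfty)_x$ is induced by $[U/G']\to[U/G]$, so it sends a pair $(\CT',f')$ to $(\CT'\times^{G'}G,\bar f')$ with $\bar f'([\tilde t,g])=f'(\tilde t)\cdot g$; on the $\theta$-component it is therefore the push-out of torsors-with-generic-trivialisation $(\CT',\tau')\mapsto(\CT'\times^{G'}G,\tau'\times^{G'}G)$. The right one $B(G'_F)_\BQ\to B(G_F)_\BQ$ is the functoriality map of Construction~\ref{TorsEmb}, and the key compatibility (see below) is that under Theorem~\ref{mrdes} this map is exactly the same push-out. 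Granting it, the square commutes --- which is the content of Construction~\ref{bFuncCons} --- and the hypothesis that $\tilde x$ is the image of $\tilde x'$ guarantees that the two prescribed generic values match up, i.e. $\overline{(h'\mapsto\tilde x'\cdot h')}=(h\mapsto\tilde x\cdot h)$.

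With all of this in place, Cartesianness is purely formal. An element of $\CX(\Dinfty)_x\times_{B(G_F)_\BQ}B(G'_F)_\BQ$ amounts to a point $\theta'\in B(G'_F)_\BQ$ together with a $G$-equivariant morphism $f\colon\CT_{h(\theta')}\to U$ having the prescribed generic value; since $\CT_{h(\theta')}=\CT'_{\theta'}\times^{G'}G$ compatibly with generic trivialisations, I must exhibit a unique $G'$-equivariant $f'\colon\CT'_{\theta'}\to U$ with the prescribed generic value such that $\bar f'=f$. This is exactly the adjunction bijection between $G'$-equivariant morphisms $\CT'_{\theta'}\to U$ and $G$-equivariant morphisms $\CT'_{\theta'}\times^{G'}G\to U$: it sends $f'$ to $\bar f'$ and has inverse $f\mapsto f|_{\CT'_{\theta'}}$, the restriction along $\tilde t\mapsto[\tilde t,1]$. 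Checking well-definedness and that the two composites are the identity is elementary, and the bijection respects the generic conditions by the previous paragraph. Hence $f':=f|_{\CT'_{\theta'}}$ is the unique lift, so the square is Cartesian.

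The step I expect to be the main obstacle is the identification of the functoriality map $B(G'_F)\to B(G_F)$ with the push-out of $G'$-torsors-with-generic-trivialisation --- equivalently, the commutativity of \eqref{bFuncDiag}. I would settle it using the description of $B(G_F)$ from Construction~\ref{TorsEmb} via norms on all representations: restriction along $G'\to G$ carries $\mathrm{Rep}(G)$ to $\mathrm{Rep}(G')$, and for a $G'$-torsor $\CT'$ and any $\rho\colon G\to\GL(X)$ one has a canonical identification $(\CT'\times^{G'}G)\times^{G}X\cong\CT'\times^{G'}X_{\rho|_{G'}}$, so the norm that $\CT'\times^{G'}G$ assigns to $X_F$ is the one that $\CT'$ assigns to $X_{\rho|_{G'},F}$; this is precisely the recipe ``restrict the norm datum along $G'\to G$'' that defines the functoriality map of buildings. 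In particular, since $\theta'\in B(G'_F)_\BQ$ means these norms take values in $|\varpi|^\BQ$, the same holds after restriction, so the map indeed lands in $B(G_F)_\BQ$. Everything else in the argument is bookkeeping with equivariant maps and generic fibres.
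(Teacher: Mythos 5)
Your proof is correct and takes essentially the same approach as the paper's: via Theorem~\ref{mrdes} one identifies building points with torsors equipped with generic trivialisations, identifies the functoriality map of buildings with push-out of torsors, and recognises a lift of $b_{\tilde x}(y)$ to $B(G'_F)_\BQ$ as a $G'$-torsor $\CT'$ with a map to $\CT=U|_y$ and compatible generic trivialisation, hence a lift of $y$ to $\CX'(\Dinfty)_{x'}$. The paper's terse proof obtains the $G'$-equivariant map to $U$ by composing $\CT'\to\CT\to U$ where you invoke the induction--restriction adjunction, but these are the same step; you have simply spelled out the details that the paper leaves implicit.
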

\begin{proof}
    The commutativity follows from the various constructions. To see that the diagram is Cartesian, we argue as follows: Consider a point $y$ in $\CX(\Dinfty)_x$, the associated $G$-torsor $\CT=U|_y$ and the trivialization $\CT_F \cong U|x \cong G$ as in Construction \ref{EmbCons}. Then the point $b_{\tilde x}(y)$ encodes the the $G$-torsor $\CT$ together with this trivialization of $\CT_F$. A lift of $b_{\tilde x}(y)$ to $B(G'_F)_{\Qb}$ amounts to giving, after possibly enlarging $r$, a $G'$-torsor $\CT'$ together with an equivariant morphism $\CT'\to \CT$ and a compatible trivialization $\CT'_F \cong G'_F$. But such a datum amounts exactly to a lift of $y$ to $\CX'(\Dinfty)_{x'}$.
\end{proof}

From now on we fix a point $x \in \CX(F)$ as in Construction \ref{EmbCons} and assume in addition that the automorphism group of $x$ is trivial. Then the natural morphism $U|_x \to U$ is a closed immersion with image $c^{-1}(x)$, and so the choice of $\tilde x$ amounts to the choice of a point $\tilde x \in c^{-1}(x)$. We fix such a choice.

We give the following more explicit description of the map $b_{\tilde x}$. First we describe its restriction to $\CX(\CO_F)_x$: We note that by Lang's theorem $\CX(\CO_F)=G(\CO_F) \backslash U(\CO_F)$. So if $\CX(\CO_F)_x$ is non-empty, then there exists a point $u \in U(\CO_F)$ whose generic fibre lies over $x$. We fix such a $u$ and take $\tilde x$ to be $u|_{\Spec(F)}$. Then we find 
\begin{equation*}
    \CX(\CO_F)_x= G(\CO_F) \backslash (G(F)\cdot u \cap U(\CO_F)) \subset \CX(\CO_F)
\end{equation*}

\begin{lemma} \label{bDesc}
    The map $b_{\tilde x}$ sends the class of a point $g\cdot u \in U(\CO_F)$ for some $g \in G(F)$ the point $g \cdot \theta_0$.
\end{lemma}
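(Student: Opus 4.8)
The plan is to make Construction \ref{EmbCons} completely explicit on $\CX(\CO_F)_x$ and to recognise the resulting point of the building as $g\cdot\theta_0$. Note first that $\CX(\CO_F)_x$ is exactly the $r=1$ part of $\CX(\Dinfty)_x$, since $\Dr[1] = \Spec\CO_F$; so $b_{\tilde x}$ restricted to it is computed entirely from $G$-torsors over $\Spec\CO_F$ and takes values in $B(G_F)_{\BZ}$. Let $y$ be the class of $g\cdot u\in U(\CO_F)$, with $g\in G(F)$ such that $g\cdot u\in U(\CO_F)$, and recall that the fixed base point is $\tilde x = u|_{\Spec F}$. The $G$-torsor $\CT = U|_y$ over $\Spec\CO_F$ carries the tautological section $\sigma$ determined by the lift $g\cdot u$, hence is trivial, and the equivariant map $\CT\to U$ sends $\sigma$ to $g\cdot u$; on generic fibres it therefore sends $\sigma|_{\Spec F}$ to $g\cdot\tilde x$, under the identification $\CT_F = U|_x$. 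On the other hand, the generic trivialisation $\psi\colon \CT_F \cong U|_x \cong G_F$ entering Construction \ref{EmbCons} is the one normalised by $\tilde x\mapsto 1$. Comparing, $\psi$ differs from the trivialisation of $\CT_F$ attached to $\sigma$ by translation by $g$.

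Now I would run this through Construction \ref{TorsEmb} and Theorem \ref{NormVBEq}. For a representation $\rho\colon G\to\GL(X)$ on a finite free $\CO_F$-module, the vector bundle $M$ on $\Spec\CO_F$ obtained by pushing out $\CT$ along $\rho$ has, via the $\sigma$-trivialisation, underlying $\CO_F$-module the standard lattice $X$; but under the identification $M_F\cong X_F$ induced by $\psi$ this lattice is carried to $\rho(g)\cdot X\subset X_F$. By Theorem \ref{NormVBEq}, $M$ therefore corresponds to the norm on $X_F$ with unit ball $\rho(g)\cdot X$, that is $v\mapsto|\rho(g)^{-1}v|$ with $|\cdot|$ the norm whose unit ball is $X$. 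Letting $\rho$ vary, this family of norms is precisely the datum of $g\cdot\theta_0$ in the dictionary of \cite[Theorem 4.16]{Ziegler22}: the base point $\theta_0$ is the point all of whose associated norms have unit ball the integral structure $X$, and $G(F)$ acts on $B(G_F)$ by moving each norm $n$ to $v\mapsto n(\rho(g)^{-1}v)$. Hence $b_{\tilde x}(y) = g\cdot\theta_0$. Equivalently, $b_{\tilde x}|_{\CX(\CO_F)_x}$ is the composite of $[g\cdot u]\mapsto gG(\CO_F)$ with the $r=1$ instance of the bijection of Theorem \ref{mrdes}, which is $G(F)$-equivariant and sends the trivial coset to $\theta_0$.

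The step requiring care is the bookkeeping in the second paragraph: tracking the generic trivialisation through $\CT_F\cong U|_x\cong G_F$ and pinning down that the discrepancy between $\psi$ and the $\sigma$-trivialisation is translation by $g$, in the orientation matching the convention for the $G(F)$-action on $B(G_F)$. This is consistent with the harmless special case $g\in G(\CO_F)$, where $[g\cdot u] = [u]$ in $\CX(\CO_F)$ and $g\cdot\theta_0 = \theta_0$ because $\theta_0$ is the hyperspecial point with parahoric $G$. One may also bypass the explicit pushout computation by applying Lemma \ref{bBP} with the alternative base point $g\cdot\tilde x = g\cdot u|_{\Spec F}$: with that base point the triviality argument above involves no twist and gives $b_{g\cdot\tilde x}(y) = \theta_0$, from which Lemma \ref{bBP} recovers $b_{\tilde x}(y) = g\cdot\theta_0$.
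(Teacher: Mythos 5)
Your main argument is correct and is essentially the paper's proof, just with the dictionary unpacked. The paper's proof is a two-sentence observation: the trivial torsor $\CT$ comes with the tautological section from $g\cdot u$, and the generic trivialisation normalised by $\tilde x\mapsto 1$ is therefore ``given by the action of $g$''; the identification with $g\cdot\theta_0$ then follows from the $G(F)$-equivariant bijection in Theorem~\ref{mrdes}. You go one layer deeper, pushing out along a representation $\rho$ and reading off the lattice $\rho(g)\cdot X$ via Theorem~\ref{NormVBEq} to land on $g\cdot\theta_0$ in the norm description. That is a legitimate and somewhat more self-contained route, and your remark that one may instead quote the $G(F)$-equivariant bijection of Theorem~\ref{mrdes} directly is exactly the paper's shortcut.

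However, the ``alternative'' argument in your last paragraph does not give what you claim. You correctly compute $b_{g\cdot\tilde x}(y)=\theta_0$ (with base point $g\cdot\tilde x$ the lift $g\cdot u$ involves the trivial group element). But then Lemma~\ref{bBP} as stated reads $b_{g\cdot\tilde x}(y)=g\cdot b_{\tilde x}(y)$, so combining gives $\theta_0 = g\cdot b_{\tilde x}(y)$ and hence $b_{\tilde x}(y)=g^{-1}\cdot\theta_0$, not $g\cdot\theta_0$. As written, the shortcut therefore produces the inverse of the desired point; either your application of Lemma~\ref{bBP} has the base points and the group element on the wrong sides, or there is an orientation mismatch between Lemma~\ref{bBP} and the present statement that needs to be resolved before this derivation can be used. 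Since your primary computation is correct, this does not sink the proof, but the closing sentence should not be presented as an independent confirmation without sorting out that sign.
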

\begin{proof}
    The image of $g \cdot u$ in $\CX(\CO_F)$ is given by the trivial $G$-torsor $\CT=G$ together with the morphism $G \to U, \; h \mapsto h\cdot u$. Hence the resulting isomorphism $\CT_F \to G$ from Construction \ref{EmbCons} is given by the action of $g$. 
\end{proof}

\begin{lemma} \label{bBC}
    The diagram
    \begin{equation*}
        \xymatrix{
            \CX(\Dr)_x \ar[r]^{b_{\tilde x}} \ar[d] & B(G_F) \ar[d] \\
            \CX(\CO_{F(\varpi^{1/r})})_x \ar[r]_{b'_{\tilde x}} & B(G_{F(\varpi^{1/r})})
        },
    \end{equation*}
    in which the map $b'_{\tilde x}$ is the one associated to the stack $\CX_{F(\varpi^{1/r})}$ over ${F(\varpi^{1/r})}$, is Cartesian.
\end{lemma}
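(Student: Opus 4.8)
The plan is to identify both vertical arrows with pullback along the tautological $\mu_r$-atlas of $\Dr$ and then to reduce the Cartesian property to a descent statement for morphisms into the separated scheme $U$. Write $F' \defeq F(\varpi^{1/r})$ and let $a\colon \Spec \CO_{F'} \to \Dr$ be the atlas exhibiting $\Dr=[\Spec \CO_{F'}/\mu_r]$, using $\CO_{F'}=\CO_F[\lambda]/(\lambda^r-\varpi)$. First I would record two compatibilities. (i) Since $a$ is a $\mu_r$-torsor, its restriction over the open substack $\Spec F\subset \Dr$ is the $\mu_r$-torsor $\Spec F'\to \Spec F$; hence pulling a $\Dr$-point of $\CX$ back along $a$ is precisely the left vertical map, and under it the marked data $(x,\tilde x)$ goes to $(x_{F'},\tilde x_{F'})$ equipped with their canonical descent data. (ii) The constructions of Theorems \ref{NormVBEq} and \ref{mrdes} commute with this pullback: pulling a vector bundle, resp.\ a $G$-torsor with generic trivialisation, on $\Dr$ back along $a$ corresponds to the field base-change map $B(G_F)\to B(G_{F'})$; and with respect to the identifications of Definition \ref{RationalPtDef} one has $B(G_F)\cap B(G_{F'})_{\BZ}=B(G_F)_{\frac{1}{r}\BZ}$ inside $B(G_{F'})$, because $F'/F$ is totally ramified of degree $r$. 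Granting (i) and (ii), the square commutes, and the comparison map from $\CX(\Dr)_x$ to the fibre product is injective because $b_{\tilde x}$ is.

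For surjectivity onto the fibre product, suppose $y'\in \CX(\CO_{F'})_x$ and $\theta\in B(G_F)$ satisfy that $b'_{\tilde x}(y')$ equals the image of $\theta$ in $B(G_{F'})$. As $y'$ is an honest $\CO_{F'}$-point, $b'_{\tilde x}(y')\in B(G_{F'})_{\BZ}$, whence $\theta\in B(G_F)_{\frac{1}{r}\BZ}$ by the displayed equality in (ii); so Theorem \ref{mrdes} produces a $G$-torsor $\CT$ on $\Dr$ together with a generic trivialisation, and (ii) identifies $a^*\CT$, with its trivialisation, with the trivialised $G$-torsor underlying $y'$. Writing $y'=(a^*\CT,\,f'\colon a^*\CT\to U)$ accordingly, it remains to promote the $G$-equivariant morphism $f'$ to a $G$-equivariant $\Dr$-morphism $\CT\to U$; by $\mu_r$-descent along $a$ this is the same as checking that $f'$ is compatible with the descent datum on $a^*\CT$ and the evident $\mu_r$-action on $U_{\CO_{F'}}$.

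The heart of the matter is this last check, and it is where I expect the real work to lie; it uses that $U$ is separated over $\CO_F$ and that $\mu_r$ is flat over $\CO_F$. Because $a^*\CT$ is smooth over $\CO_{F'}$ it is flat and reduced with schematically dense generic fibre $(a^*\CT)|_{\Spec F'}$, and $\mu_r\times_{\CO_F}a^*\CT$ is likewise flat with dense generic fibre; consequently the two morphisms $\mu_r\times_{\CO_F}a^*\CT\to U$ expressing the equivariance of $f'$ coincide as soon as they coincide over $\Spec F$, the target being separated. But by (i) and the constraint that $y'$ lies over $x$ with $b'_{\tilde x}$ normalised through $\tilde x$, the restriction of $f'$ to $\Spec F'$ is the pullback along the $\mu_r$-torsor $\Spec F'\to \Spec F$ of the morphism $U|_x\hookrightarrow U$, which is constant in the $\mu_r$-direction; such a pullback is tautologically descent-compatible. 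Hence $f'$ is descent-compatible, descends to $\bar f\colon \CT\to U$ over $\Dr$, and $y\defeq(\CT,\bar f)$ is a point of $\CX(\Dr)_x$ with $a^*y=y'$ and $b_{\tilde x}(y)=\theta$. This gives surjectivity, and together with the injectivity above the lemma follows. The only genuinely delicate input is the base-change compatibility (ii) of the Tannakian descriptions in Theorems \ref{NormVBEq} and \ref{mrdes}, which amounts to tracing through the constructions of \cite{MR0144889,Ziegler22}.
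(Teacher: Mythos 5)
Your proof is correct and follows essentially the same route as the paper's: identify the left vertical arrow with pullback along the $\mu_r$-atlas $a\colon \Spec \CO_{F'}\to\Dr$, observe via Theorem~\ref{mrdes} that a lift of $b'_{\tilde x}(y')$ to $B(G_F)$ is equivalent to a $\mu_r$-descent datum on the torsor $U|_{y'}$ extending the given generic one, and then use flatness plus separatedness of $U$ to see that the equivariant morphism to $U$ is automatically compatible with this descent datum because it is so over $\Spec F$. The only difference is that you make explicit two points the paper treats as routine — the compatibility of the Tannakian/norm constructions with base change $F\to F'$, and the identity $B(G_F)\cap B(G_{F'})_{\BZ}=B(G_F)_{\frac{1}{r}\BZ}$ coming from total ramification — which is a reasonable expansion, not a different argument.
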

\begin{proof}
    The commutativity of the diagram follows from the various constructions. The see that it is Cartesian, we argue as follows: A point $y$ of $\CX(\CO_{F(\varpi^{1/r})})_x$ is given by a $G$-torsor $\CT=U|_y$ over $\CO_{F(\varpi^{1/r})}$ together with a $G$-equivariant morphism $\CT \to U_{\CO_{F(\varpi^{1/r})}}$ the image of whose generic fibre is the $G$-orbit $c^{-1}(x)_{F(\varpi^{1/r})}$ of $\tilde x$ in $U_{F(\varpi^{1/r})}$. Then giving a lift of $b'_{\tilde x}(y)$ to $B(G_F)$ amounts to descending the torsor $\CT$ to $\Dr$, i.e. to giving a $\mu_r$-equivariant structure on $\CT$ which extends the one on $\CT$ coming from the given identification $\CT_F \cong c^{-1}(X)_{F(\varpi^{1/r})}$. By flatness, if such a $\mu_r$-equivariant structure exists, the morphism $\CT \to U_{\CO_{F(\varpi^{1/r})}}$ is automatically $\mu_r$-equivariant, since it is so in the generic fibre. Hence giving a lift of $b'_{\tilde x}(y) \to B(G_F)$ is equivalent to giving a lift of $y$ to $\CX(\Dr)_x$.   
\end{proof}

\subsection{Structure of the image of the embedding}
\begin{situation} \label{AffineCoverSit}
    We assume that $\CX$ satisfies the assumptions of Situation \ref{sit:good} and that $\CX$ admits a presentation $[U/G]$ with $G$ reductive over $\CO_F$ and $U$ separated and of finite type over $\Spec(\CO_F)$ such that moreover for some split subtorus $T \subset G$ over $\CO_F$ whose generic fibre is a maximal split torus of $G_F$, there exists a finite open covering $U=\cup_i U_i$ by affine $T$-invariant open subschemes $U_i \subset U$.
\end{situation}

\begin{rmk}
    This additional condition on the $G$-action on $U$ is satisfied in the following cases:
    \begin{enumerate}
        \item Tautologically: If $U$ is affine.
        \item By \cite[3.11]{MR0387294}: If $U$ is flat over $\Spec(\CO_F)$ and its special fibre is geometrically normal and geometrically integral. 
        \item As a special case of (ii): If $U$ is smooth over $\Spec(\CO_F)$.
    \end{enumerate}

    Furthermore, e.g. by \cite[3.18]{Ziegler22} any two split tori $T \subset G$ as in this condition are conjugate by an element of $G(\CO_F)$. Hence this conditions holds for one such $T$ if and only if it holds for all such $T$. 
\end{rmk}

The building $B(G_F)$ admits natural $G(F)$-invariant metrics whose restriction to each apartment are Euclidean and which make $B(G_F)$ into a CAT(0)-space, and so in particular into a geodesic space. Although these metrics are not unique, the resulting notion of the geodesic $[\theta,\theta']$ between two points $\theta, \theta' \in B(G_F)$ is. Namely, this geodesic is the straight line segment between $\theta$ and $\theta'$ in any apartment containing these two points. This gives a notion of a convex subset of $B(G_F)$ as a subset which contains the geodesic between any two of its elements. By a rational polytope in $B(G_F)$ we mean a subset which is the convex hull of finitely many points in $B(G_F)_\BQ$. We also note that this description of geodesics implies that toral maps of buildings send geodesics to geodesics.

 \begin{lemma} \label{VertexBound}
     Consider integers $n,m \geq 1$ as well as a matrix $A=(a_{ij}) \in \BQ^{m \times n}$. For any $y=(y_i) \in \BZ^m$ we consider the polyhedron
     \begin{equation*}
         P_y \defeq \{ (x_j) \in \BR^n \mid \forall i\colon \sum_j a_{ij} x_j \geq y_i \} \subset \BR^n.
     \end{equation*}

     Then there exists an integer $N \geq 1$ such that for all $y \in \BZ^m$ for which $P_y$ is bounded, all vertices of $P_y$ are contained in $(\frac{1}{N}\BZ)^n \subset \BR^n$
 \end{lemma}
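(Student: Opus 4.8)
The plan is to combine the classical description of a vertex of a polyhedron as the unique solution of a maximal-rank square subsystem of the defining inequalities with Cramer's rule, and then to control the denominators that appear by a common-denominator estimate. The bound $N$ will be built out of the (finitely many) nonzero $n\times n$ minors of an integral rescaling of $A$.

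\textbf{Reduction to the nondegenerate case.} First I would dispose of the degenerate situation $\rank(A) < n$: at a vertex of $P_y$ the rows of $A$ that are active must span $\BR^n$, so if $\rank(A) < n$ then no $P_y$ possesses a vertex, and the conclusion holds vacuously with $N = 1$. From now on assume $\rank(A) = n$. Choose a positive integer $D$ with $B \defeq DA \in \BZ^{m\times n}$, and for a subset $S \subseteq \{1,\dots,m\}$ write $A_S$, $B_S$ for the submatrices of $A$, $B$ on the rows indexed by $S$ and $y_S$ for the corresponding subvector of $y$.

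\textbf{Vertices via Cramer's rule.} Let $v$ be a vertex of $P_y$. Being an extreme point, the rows $a_i$ of $A$ with $\sum_j a_{ij}v_j = y_i$ span $\BR^n$; picking $n$ linearly independent such rows, indexed by a subset $S$ with $|S| = n$, the matrix $A_S$ is invertible and $v$ is the unique solution of $A_S x = y_S$. There are only finitely many $S$ with $A_S$ invertible, and this list does not depend on $y$ --- this is what makes a uniform $N$ possible. By Cramer's rule $v_j = \det(A_S^{(j)})/\det(A_S)$, where $A_S^{(j)}$ arises from $A_S$ by replacing its $j$-th column with the integer vector $y_S$. Since $A_S = D^{-1}B_S$ we get $\det(A_S) = D^{-n}\det(B_S)$ with $\det(B_S) \in \BZ \setminus \{0\}$, while expanding $\det(A_S^{(j)})$ along the column $y_S$ writes it as a $\BZ$-linear combination of $(n-1)\times(n-1)$ minors of $A_S$, each in $D^{-(n-1)}\BZ$, so $\det(A_S^{(j)}) \in D^{-(n-1)}\BZ$. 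Hence $v_j\,\det(B_S) = D^n\,\det(A_S^{(j)}) \in D\,\BZ \subseteq \BZ$, i.e. $v_j \in \tfrac{1}{|\det B_S|}\BZ$.

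\textbf{Conclusion and the ``hard'' part.} Finally I would take $N$ to be the product (equivalently, the least common multiple) of $|\det B_S|$ over the finitely many $n$-element subsets $S$ with $B_S$ invertible; then $|\det B_S| \mid N$ for every relevant $S$, so every coordinate of every vertex of every $P_y$ with $y \in \BZ^m$ lies in $\tfrac{1}{N}\BZ$, proving the lemma. (Note that the boundedness hypothesis on $P_y$ plays no role in the argument; it is retained only because that is the case used in the application.) There is no genuine obstacle here: the proof is routine linear algebra, and the only points requiring mild care are the bookkeeping of powers of $D$ in the cofactor expansion and not forgetting the degenerate case $\rank(A) < n$.
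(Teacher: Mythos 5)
Your proof is correct and follows the same underlying strategy as the paper's: identify a vertex as the unique solution of a square subsystem of active constraints, and then control denominators via the minors of $A$. You simply make the Cramer's-rule bookkeeping and the choice of $N$ explicit where the paper states it in one line, and you correctly note in passing that the boundedness hypothesis is never actually used.
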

 \begin{proof}
     Let $y \in \BZ^m$ be such that $P_y$ is bounded and $v$ a vertex of $P_y$. We consider all affine equations $\sum_j a_{ij} x_j = y_i$ which are satisfied by $v$ and claim that this system of equations has $v$ as its unique solution in $\BR^m$. Indeed, if the affine subspace $W$ cut out by these equations had positive dimension, then by considering the remaining inequalities defining $P_y$, it would follow that some neighbourhood of $v$ in $W$ is contained in $P_y$, which is not possible since $v$ is a vertex of $P_y$. 

     This shows that any $N$ for which $\frac{1}{N}\BZ$ contains all coefficients of $A$ and the inverses of all minors of $A$ has the required property.
 \end{proof}
\begin{theorem} \label{Polytopes}
    There exists an integer $N \geq 1$, such that for all $x \in \pi^{-1}(W)(F)$ and all lifts $\tilde x$ of $x$, there exist finitely many rational polytopes $P_1, \hdots, P_m \subset B(G_F)$, each of which is the convex hull of finitely many points in $b_{\tilde x}(\CX(\Dr[N])_x)$, such that $b_{\tilde x}(\CX(\Dinfty)_x)$ is equal to $\cup_i P_{i,\BQ}$. 
\end{theorem}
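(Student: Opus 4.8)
The plan is to reduce the statement about the building $B(G_F)$ to the concrete polyhedral description supplied by Construction \ref{TorsEmb} and Theorem \ref{mrdes}, and then to invoke the affine cover of $U$ from Situation \ref{AffineCoverSit} together with the finiteness Lemma \ref{VertexBound}. First I would use the $G(F)$-equivariance of the whole picture (Lemma \ref{bBP}) to reduce to a convenient choice of base point $\tilde x$: since $x \in \pi^{-1}(W)(F)$ and $W$ lies in the locus where $\pi$ is an isomorphism, the automorphism group of $x$ is trivial, so $U|_x \to U$ is a closed immersion onto $c^{-1}(x)$ and $\tilde x$ is simply a point of this orbit. Replacing $\tilde x$ by $g\cdot\tilde x$ only translates everything by $g$, so it suffices to produce $N$ and the polytopes for one orbit representative; functoriality of $N$ in $x$ will then need the affine cover to be used uniformly, see below.

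The core of the argument is a local analysis using the affine $T$-invariant cover $U = \bigcup_i U_i$. Fix a twisted point $y \in \CX(\Dinfty)_x$, corresponding to a $G$-torsor $\CT$ over $\Dr$ with equivariant map $f\colon \CT \to U$ and trivialization of the generic fibre via $\tilde x$. After conjugating by $G(F)$ (i.e. moving the base point) I may assume that the point $\theta = b_{\tilde x}(y) \in B(G_F)$ lies in the apartment $A(T)$ attached to our fixed split maximal torus $T$. Writing $\CT$ in terms of this apartment, the reduction of structure group to $T$ means the torsor is described by a cocharacter-valued datum, i.e. by integers $w_i/r$ as in Construction \ref{NormTOVB}; concretely, over $\Dr$ the torsor becomes $T$-equivariantly trivial after twisting by $\varpi^{\lambda/r}$ for $\lambda \in X_*(T)_\BQ$, and $\theta - \theta_0 = \lambda$. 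The condition that the equivariant map $\CT \to U$ extends over $\Dr$ — equivalently, that the corresponding map $\Spec(\CO_{F(\varpi^{1/r})}) \to U_{F(\varpi^{1/r})}$ lands in one of the $U_i$ and extends to $\Spec(\CO_{F(\varpi^{1/r})})$ — is, after passing to the $T$-invariant affine $U_i$ and writing $U_i = \Spec R_i$ with $R_i$ graded by the character lattice $X^*(T)$, exactly the condition that for each monomial $m$ in a finite set of generators of $R_i$ appearing in the image of $\tilde x$, the quantity $\langle \chi(m), \lambda\rangle$ (plus an integer coming from the valuation of the coefficient) is $\geq 0$. This is precisely a system $\sum_j a_{ij}x_j \geq y_i$ with $A$ rational (independent of $r$ and of $x$, once $U$ and its cover are fixed — the $y_i$ depend on $x$ through valuations of coordinates of $\tilde x$, but lie in $\BZ$). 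Thus $b_{\tilde x}(\CX(\Dinfty)_x) \cap \overline{A(T)^+}$ (a fixed Weyl chamber) is a finite union of rational polyhedra $P_y$ of the shape in Lemma \ref{VertexBound}; conjugating by the finitely many elements of $N_G(T)(\CO_F)/T(\CO_F)$ and by $G(\CO_F)$ covers all apartments through $\theta_0$, and since every point of the building lies in such an apartment this describes the whole image.

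It remains to extract the uniform $N$ and to identify the bounded polytopes. Boundedness of $b_{\tilde x}(\CX(\Dinfty)_x)$ will follow from Theorem \ref{bapro}(b) / the properness already established (or directly: the image is bounded because $\pi$ is proper with finite fibres over $X(\CO_F)^\sharp$, and $\bigcup_r \pi_r^{-1}(x)$ is in bijection with $b_{\tilde x}(\CX(\Dinfty)_x)$ via Theorem \ref{mrdes} and the Cartesian diagrams of Proposition \ref{bFunc} and Lemma \ref{bBC}), hence each $P_y$ occurring is bounded. Applying Lemma \ref{VertexBound} to the finitely many matrices $A$ coming from the finitely many affine charts $U_i$ (and their finitely many sets of generators) yields a single integer $N$, depending only on $\CX$ and the chosen presentation and cover, such that all vertices of all the bounded $P_y$ lie in $(\tfrac1N\BZ)^n$, i.e. in $B(G_F)_{\frac1N\BZ}$. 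By Theorem \ref{mrdes} these vertices are therefore in the image of $\CX(\Dr[N])_x$, and each bounded polyhedron is the convex hull of its vertices; renaming these convex hulls $P_1,\dots,P_m$ (over all chambers, finitely many since the image is bounded) gives $b_{\tilde x}(\CX(\Dinfty)_x) = \bigcup_i P_{i,\BQ}$, because a rational point of the polytope corresponds to a point of $\CX(\Dr)_x$ for suitable $r$ and conversely, the image of any twisted point being rational. The main obstacle I anticipate is making the local chart computation genuinely uniform: one must check that the system of inequalities defining "the map extends over $\Dr$" is controlled by a matrix $A$ that does not depend on $r$ (this is where the $T$-invariant affine cover and the grading by $X^*(T)$ are essential) and that the integrality of the right-hand sides $y_i$ holds regardless of $x$, so that Lemma \ref{VertexBound} applies with a single $N$; the Cartesian diagrams \ref{bFunc} and \ref{bBC} are the bookkeeping tools that let one pass between $\Dr$ and $\CO_{F(\varpi^{1/r})}$ and between $G$ and $T$ cleanly while doing this.
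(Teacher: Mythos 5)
Your proposal follows essentially the same route as the paper's proof: fix a $T$-invariant affine cover, reduce to $B(T_F)$ via Lemma~\ref{bFunc}, express the extension condition over $\Dr$ as linear inequalities $v(\chi_j)\geq -v(\tilde x_j)$ in $X_*(T)_\BR$, invoke Lemma~\ref{VertexBound} for a uniform $N$, and handle the remaining apartments via $G(\Oc_F)$-conjugacy. The only point where you should be more careful is the boundedness step: citing ``Theorem~\ref{bapro}(b)/the properness already established'' is circular, since that theorem (i.e.\ Theorem~\ref{conhul}) depends on the current one, and properness of $\pi$ alone does not give boundedness of $b_{\tilde x}(\CX(\Dinfty)_x)$, because the union $\bigcup_r\pi_r^{-1}(x)$ is not a priori finite. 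The paper's actual argument is more specific: a polyhedron cut out by rational inequalities that is unbounded contains infinitely many points of $(\tfrac1N\BZ)^n$, which would make $b_{\tilde x}(\CX(\Dr[N])_x)$ infinite and contradict the finiteness of $\pi_N^{-1}(x)$ from Proposition~\ref{prop:r-finite} (first applied apartment by apartment to bound each $P_y$, then to the whole image to conclude that only finitely many apartments are involved). With that substitution your argument agrees with the paper's.
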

\begin{proof}
 We fix a presentation $\CX=[U/G]$ as in Situation \ref{AffineCoverSit} with $G$ a general linear group.

    Let $T \subset G$ be a split closed subtorus whose generic fibre is a maximal split torus of $G_F$. Then by Lemma \ref{bFunc} the intersection of $b_{\tilde x}(\CX(\Dinfty)_x)$ with the apartment $A(T)$ of $T$ is equal to the image of the map $b'_{\tilde x}\colon [U/T](\Dinfty)_{x'} \to B(T_F)$, and we first consider this set $b'_{\tilde x}([U/T](\Dinfty)_{x'})$.
    
     We cover $U$ by finitely many open affine $T$-invariant subschemes $U_i$. Then
     \begin{equation*}
         [U/T](\Dinfty)_{x'} = \cup_{i\colon \tilde x \in U_i(F)} [U_i/T](\Dinfty)_{x'},
     \end{equation*}
    
    and so we consider a single $U_i$ containing $\tilde x$. Then by choosing finitely many generators of the ring of global functions of $U_i$ which are homogenous with respect to the induced action of $T$, we can embed $U_i$ equivariantly into an affine $n$-spaces $\BA^n$ equipped with a diagonal $T$-action, i.e. such that $T$ acts by $t\cdot (u_j)_{1\leq j \leq n}=(\chi_j(t)u_j)$ for some homomorphisms $\chi_j\colon T \to \BG_m$. By Lemma \ref{bFunc} the image of $[U_i/T](\Dinfty)_{x'}$ in $B(T_F)$ is the same as the image of $[\mathbb{A}^n/T](\Dinfty)_{x'}$. 

    For any algebraic extension $L$ of $F$ and any section $t \in T(L)$, the element $t \cdot \tilde x$ is in $\BA^n(\CO_L)$ if and only if $v(\chi_j(t)) \geq -v(\tilde x_j)$. Using Lemmas \ref{bFunc} and \ref{bDesc} this implies that the image of $b_{\tilde x}$ consists of the rational points of the subset of $B(T_F)\cong X_*(T)_{\BR}$ defined by these linear inequalities $v(\chi_j) \geq -v(\tilde x_j)$. These inequalities are of the kind considered in Lemma \ref{VertexBound}, and we fix an $N \geq 1$ given by this lemma. Then to obtain the desired description of $b'_{\tilde x}([U_i/T](\Dinfty)_{x'})$, it suffices to see that the locus cut out by these inequalities is bounded. However, if it were unbounded, the set $b_{\tilde x}(\CX(\Dr[N])_x)$ would be infinite, which would contradict Proposition \ref{prop:r-finite}.

    We apply the above to all the $U_i$ and choose a single $N$ large enough to apply to all of them. Then we know that for any single $T \subset G$ as above, the intersection $A(T) \cap b_{\tilde x}(\CX(\Dinfty)_x)$ has the desired description. Any other such torus $T' \subset G$ is of the form $^{g}T$ for some $g \in G(\CO_F)$. Hence the action of $g^{-1}$ on $U$ gives an isomorphism $[U/T'] \isoto [U/T]$ which by Lemma \ref{bFunc} maps $A(T') \cap b_{\tilde x}(\CX(\Dinfty)_x)$ to $A(T) \cap b_{g^{-1}\cdot \tilde x}(\CX(\Dinfty)_x)$. So, up to the integral isomorphism $X_*(T') \cong X_*(T)$ given by $g$, the intersection $A(T') \cap b_{\tilde x}(\CX(\Dinfty)_x)$ is described on each $U_i$ by the linear inequalities $v(\chi_j) \geq -v((g^{-1}\cdot \tilde x)_j)$ as above. So we obtain the desired description for $A(T) \cap b_{\tilde x}(\CX(\Dinfty)_x)$ for all such $T$ with respect to our chosen $N$. The collection of all apartments $A(T)$ for such $T$ covers $B(G_F)$. 

    Next we claim that $b_{\tilde x}(\CX(\Dinfty)_x)$ is bounded. If it were not, the above would imply that there are infinitely many points in $b_{\tilde x}(\CX(\Dr[N])_x)$, which would again contradict Proposition \ref{prop:r-finite}. Hence, by the local finiteness of the apartments in $B(G_F)$, there exists finitely many $T \subset G$ as above whose apartments cover $b_{\tilde x}(\CX(\Dinfty)_x)$. So by using the above for these finitely many apartments we obtain the claim.
\end{proof}

\begin{corollary} \label{PiImageBounded}
    There exists an integer $N\geq 1$ for which the maps $\CX(\Dinfty)^\sharp \to X(\CO_F)^\sharp$ and $\CX(\Dr[N])^\sharp \to X(\CO_F)^\sharp$ have the same image.
\end{corollary}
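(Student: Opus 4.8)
The plan is to deduce the statement from Theorem \ref{Polytopes}. Fix once and for all a presentation $\CX=[U/G]$ with $G$ a general linear group (as in the proof of Theorem \ref{Polytopes}), and let $N\geq 1$ be the integer produced by that theorem; write $\pi_\infty\colon\CX(\Dinfty)^\sharp\to X(\CO_F)^\sharp$ and $\pi_N\colon\CX(\Dr[N])^\sharp\to X(\CO_F)^\sharp$ for the two maps in question. The inclusion $\image(\pi_N)\subseteq\image(\pi_\infty)$ is essentially formal: the natural maps $\mathbb{D}_F^{1/rN}\to\mathbb{D}_F^{1/N}$ (sending an $rN$-th root of $\varpi$ to its $r$-th power) are compatible with the structure maps to $\mathbb{D}_F$, so the induced maps $\CX(\Dr[N])^\sharp\to\CX(\Dinfty)^\sharp$ are compatible with the projections down to $X(\CO_F)^\sharp$, and the inclusion follows.

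For the reverse inclusion I would fix $x=(a,w)\in X(\CO_F)^\sharp$ in the image of $\pi_\infty$ (thus $a\in X(\CO_F)$, $w\in W(F)$ and $a|_{\Spec F}=\iota(w)$, where $\iota\colon W\hookrightarrow X$ is the open immersion) and reduce everything to the generic fibre. Via the equivalence $\CX_W\simeq W$ of Situation \ref{sit:good}, $w$ corresponds to a point $\bar x\in\pi^{-1}(W)(F)\subset\CX(F)$; since $\CX_W$ is an algebraic space, $\bar x$ has trivial automorphism group, and since $G$ is a general linear group the torsor $U|_{\bar x}$ over $F$ is trivial by Hilbert 90, so I may choose a lift $\tilde x$. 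This is the setting of Construction \ref{EmbCons} and of Theorem \ref{Polytopes}, whose ``$x$'' is my $\bar x$.

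The key step I would then carry out is short. Any preimage $y\in\CX(\Dinfty)^\sharp$ of $x$ under $\pi_\infty$ is a twisted point $\mathbb{D}_F^{1/r}\to\CX$ whose generic fibre lies in $\CX_W$ and maps under $\pi$ to $\iota(w)$, hence $y|_{\Spec F}\cong\bar x$; so $\CX(\Dinfty)_{\bar x}\neq\varnothing$. By Theorem \ref{Polytopes}, $b_{\tilde x}(\CX(\Dinfty)_{\bar x})$ is a finite union $\bigcup_i P_{i,\BQ}$ of rational polytopes, each $P_i$ being the convex hull of finitely many points of $b_{\tilde x}(\CX(\Dr[N])_{\bar x})$. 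Non-emptiness of the union forces some $P_i$, and hence the finite set of points spanning it, to be non-empty; therefore $\CX(\Dr[N])_{\bar x}\neq\varnothing$. Finally, choosing $z\in\CX(\Dr[N])_{\bar x}$, its generic fibre $\bar x$ lies in $\CX_W(F)$ so that $z\in\CX(\Dr[N])^\sharp$, and composing $z$ with $\pi$ — using that $X$ is an algebraic space, so that $\pi\circ z$ factors through the coarse space $\Spec\CO_F$ of $\mathbb{D}_F^{1/N}$ — yields an $\CO_F$-point of $X$ restricting to $\iota(w)$ over $\Spec F$, which equals $a$ because $X$ is separated. Hence $\pi_N(z)=(a,w)=x$, so $x\in\image(\pi_N)$.

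I do not expect a genuine obstacle, since the combinatorial content — that the full union of polytopes already meets the $\mathbb{D}_F^{1/N}$-level — is exactly Theorem \ref{Polytopes}, and the rest is bookkeeping. The one point needing care is the translation between a $\sharp$-point over $x$ and a twisted point over its generic fibre $\bar x$, in particular the last verification that composing a $\mathbb{D}_F^{1/N}$-twisted point over $\bar x$ with $\pi$ returns the \emph{given} integral point $a$ rather than a different $\CO_F$-point with the same generic fibre — this is where the coarse-moduli factorization of maps out of the root stack and the (mild) separatedness of $X$ enter.
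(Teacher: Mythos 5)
Your proof is correct and follows essentially the same route as the paper's: take $N$ from Theorem \ref{Polytopes} and observe that since each polytope $P_i$ is the convex hull of finitely many points in $b_{\tilde x}(\CX(\Dr[N])_{\bar x})$, non-emptiness of $\CX(\Dinfty)_{\bar x}$ forces $\CX(\Dr[N])_{\bar x}\neq\varnothing$. You supply more detail than the paper's two-sentence proof, in particular spelling out the passage from a $\sharp$-point of $X(\CO_F)$ to the fibre of $\CX(\Dinfty)$ over its generic fibre and back (using the coarse-moduli factorisation and separatedness of $X$); the paper treats this identification as understood, and your care here is a reasonable addition rather than a deviation.
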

\begin{proof}
    If we take $N$ as in Theorem \ref{Polytopes}, then we see that if $\CX(\Dinfty)_x$ is non-empty, any of the $P_i$ appearing must have a point coming from $\CX(\Dr[N])_x$. So such an $N$ has the required property.
\end{proof}

\subsection{Convexity of the image}
Next we want to show that in case the stack $\CX$ is S-complete over $\CO_F$ in the sense of \cite[Definition 3.38]{AHH23}, the image of $b_{\tilde x}$ is the set of rational points of a single convex bounded subset of $B(G_F)$. For this we consider the stack $$\overline{ST}_{\CO_F}=[\Spec(\CO_F[s,t]/(st-\varpi))/\BG_m],$$ where the $\BG_m$-action has weight $1$ on $s$ and weight $-1$ on $t$. This stack has a canonical stacky closed point $0 \colon B_{k_F} \BG_m \into \overline{ST}_{\CO_F}(k_F)$ whose complement is the scheme obtained by gluing two copies of $\Spec(\CO_F)$ along their generic fibre $\Spec(F)$. We denote the unique $F$-point of $\overline{ST}_{\CO_F}$ by $\ast$ and by $V$ the scheme $\Spec(\CO_F[s,t]/(st-\varpi))$ with the above $\BG_m$-action.

\begin{lemma}
    For any $\tilde x \in V(F)$ the image of $b_{\tilde x} \colon \overline{ST}_{\CO_F}(\Dinfty)_x \to B(\BG_m) \cong \BR$ is an interval $[n, n+1]$ for some $n \in \BZ$, whose end-points are the images of the two $\CO_F$-points of $\overline{ST}_{\CO_F}$.
\end{lemma}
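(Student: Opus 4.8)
The plan is to make the situation completely explicit: $\overline{ST}_{\CO_F}$ is simple enough that only the \emph{idea} of the proof of Theorem~\ref{Polytopes} is needed. Over $F$ the relation $st=\varpi$ forces $s$ and $t$ to be units, so $V_F\cong\BG_{m,F}$ and $\overline{ST}_{\CO_F}\times_{\CO_F}F\cong\Spec F$; hence $\ast$ is the unique $F$-point, $x=\ast$, the stabiliser in $\BG_m$ of any lift is trivial, and a lift $\tilde x\in V(F)$ is a point $(a,\varpi/a)$ with $a\in F^{\times}$. By Lemma~\ref{bBP}, replacing $\tilde x$ by $\lambda\tilde x$ for $\lambda\in\BG_m(F)=F^{\times}$ translates $b_{\tilde x}$ by the action of $\lambda$ on $B(\BG_m)\cong\BR$, which is translation by the integer $\pm v(\lambda)$; since the two $\CO_F$-points of $\overline{ST}_{\CO_F}$ and the image of $b_{\tilde x}$ are then all shifted by the same integer, I may assume $a\in\CO_F^{\times}$. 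In that case $\tilde x$ is the generic fibre of the $\CO_F$-point $u=(a,\varpi/a)$ of $V$, so the hypotheses of Lemma~\ref{bDesc} are in force, and it is enough to prove that the image of $b_{\tilde x}$ is a unit interval with integer endpoints equal to the images of the two $\CO_F$-points of $\overline{ST}_{\CO_F}$.

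First I would identify these $\CO_F$-points. By Lang's theorem $\overline{ST}_{\CO_F}(\CO_F)=\BG_m(\CO_F)\backslash V(\CO_F)$, and every $\CO_F$-point lies over $x$; writing such a point as $\mu\cdot\tilde x=(\mu a,\mu^{-1}\varpi/a)$ with $\mu\in F^{\times}$, integrality of both coordinates reads $0\le v(\mu)\le 1$, so the only classes are those with $v(\mu)\in\{0,1\}$: there are exactly two $\CO_F$-points $P_0,P_1$. By Lemma~\ref{bDesc}, $b_{\tilde x}$ sends $P_i$ to $\mu_i\cdot\theta_0$; since $\BG_m(F)$ acts on $B(\BG_m)\cong\BR$ by translation through $v\colon\BG_m(F)\to\BZ$, the two images differ by $1$, and with the normalisation $\theta_0=0$ they are $0$ and $\pm 1$.

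It remains to show that the image of $b_{\tilde x}$ is exactly the intersection with $\BQ$ of the interval $I$ spanned by $b_{\tilde x}(P_0)$ and $b_{\tilde x}(P_1)$. Here I would follow the proof of Theorem~\ref{Polytopes} with $G=T=\BG_m$ and the $\BG_m$-equivariant closed immersion $V\hookrightarrow\BA^2_{\CO_F}$, where $\BG_m$ acts diagonally with characters $\pm\mathrm{id}$ on the two coordinates. Because $V$ is cut out in $\BA^2$ by the $\BG_m$-invariant equation $st=\varpi$ and $\CO_L$ is a domain for every finite extension $L/F$, any morphism $\Spec\CO_L\to\BA^2$ whose generic fibre lands in the $\BG_m$-orbit of $\tilde x$ automatically factors through $V$; hence $\overline{ST}_{\CO_F}(\Dinfty)_x=[\BA^2/\BG_m](\Dinfty)_x$, and the inequalities ``$v(\chi_j)\ge -v(\tilde x_j)$'' of that proof exhibit $b_{\tilde x}(\overline{ST}_{\CO_F}(\Dinfty)_x)$ as the set of rational points of $\{\theta\in\BR: \pm\theta\ge -v(a),\ \mp\theta\ge -v(\varpi/a)\}$. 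This locus is an interval with integer endpoints and of length $v(a)+v(\varpi/a)=1$, so by the previous paragraph it must be $I$. Alternatively one may simply specialise Theorems~\ref{mrdes} and \ref{Polytopes} to $\BG_m$, where a rational polytope in $B(\BG_m)\cong\BR$ is a rational interval and the boundedness together with the length-$1$ computation identify it with $I$. Undoing the normalisation of the first paragraph then yields the statement for an arbitrary lift $\tilde x$.

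The one step that is more than bookkeeping is the identification $\overline{ST}_{\CO_F}(\Dinfty)_x=[\BA^2/\BG_m](\Dinfty)_x$ together with the apartment-level description of the image of $b_{\tilde x}$: this rests on the fact that a $\BG_m$-torsor over $\Dr$ equipped with a compatible generic trivialisation carries a \emph{unique} $\mu_r$-equivariant structure, which is exactly the content of Theorems~\ref{NormVBEq} and \ref{mrdes}. With that in hand the rest is combinatorics. A purely cosmetic nuisance is keeping track of the sign conventions — the weight convention for the $\BG_m$-action on $V$ and the sign of the $\BG_m(F)$-action on $B(\BG_m)\cong\BR$ — which I have arranged the argument to be insensitive to, using only that the image is an interval of length $1$ with integer endpoints and that the two $\CO_F$-points land on points that differ by $1$.
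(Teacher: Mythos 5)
Your proof is correct, and the core computation coincides with the paper's: you identify the twisted points lying over $x$ with translates $t\cdot\tilde x$ satisfying $0\le v(t)\le 1$, and then apply Lemmas~\ref{bDesc} and \ref{bBC} to read off the interval. The packaging differs somewhat. The paper's proof is a one-liner: it fixes the specific lift $\tilde x=(1,\varpi)$, describes $V(\Dr)$ directly, and computes $b_{\tilde x}(t\cdot\tilde x)=-v(t)\in[-1,0]$, leaving the reduction to this special lift (and the identification of the endpoints with the two $\CO_F$-points) implicit. You instead normalise a general lift $\tilde x=(a,\varpi/a)$ to $a\in\CO_F^\times$ via Lemma~\ref{bBP}, explicitly enumerate the two $\CO_F$-points via the inequalities $0\le v(\mu)\le 1$, and then invoke the apartment-level inequalities from the proof of Theorem~\ref{Polytopes} applied to the equivariant closed immersion $V\hookrightarrow\BA^2$ (justifying $\overline{ST}_{\CO_F}(\Dinfty)_x=[\BA^2/\BG_m](\Dinfty)_x$ by the domain argument). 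Your version is more systematic and makes the general-$\tilde x$ case and the endpoint identification fully explicit, at the cost of invoking heavier machinery; the paper's is more hands-on and shorter. Both are valid, and your care about the sign conventions (arranging the argument so it only uses length $1$ and integrality of endpoints) is sound.
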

\begin{proof}
    If we take for example $\tilde x =(1,\varpi) \in V(\CO_F) \subset V(F)$, then for any $r\geq 1$ the set $V(\Dr)$ consists of the points $t \cdot \tilde x$ for all $t \in \BG_m(F(\varpi^{1/r}))$ satisfying $|\varpi \leq |t| \leq 1$. Using Lemmas \ref{bDesc} and \ref{bBC} we find $b_{\tilde x}(t\cdot \tilde x)=-v(t) \in [-1,0]_{\BQ}$. (Here the minus sign comes from the fact that in Bruhat-Tits theory the group $\BG_m(F)$ acts on $B(\BG_m) \cong \mathbb{R}$ through translation by minus the valuation of an element.)
\end{proof}

    The scheme $V^\circ\defeq V \setminus \{0\}=\Spec((\CO_F[s,t]/(st-\pi))_{(s,t,\pi)})$ admits two open immersions $\BG_{m,\CO_F} \into V^\circ$, given by the ring homomorphisms $(\CO_F[s,t]/(st-\pi))_{(s,t,\pi)} \to \CO_F[s^\pm],\; s\mapsto s, t\mapsto \pi/s$ and $(\CO_F[s,t]/(st-\pi))_{(s,t,\pi)} \to \CO_F[t^\pm],\; s \mapsto \pi/t,\; t \mapsto t$. These two open subschemes cover $V^\circ$, and their intersection is given by the two generic fibres $\BG_{m,F}$ glued via $s \mapsto \pi/t$.

\begin{lemma}\label{SomeHomCons}
    For any two sections $u, v \in U(\CO_F)$ whose generic fibres are conjugate under $G(F)$, there exists a homomorphism $\chi\colon \BG_{m} \to G$ over $\CO_F$ and a $\chi$-equivariant morphism $V^\circ \to U$ such that the images of the compositions with the two open immersions $\BG_{m,\CO_F} \into V^\circ$ land in the $G$-orbits of $x$ and $y$ respectively.
\end{lemma}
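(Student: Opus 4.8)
The plan is to reduce the lemma, via the Cartan decomposition of $G(F)$, to an explicit description of $\chi$-equivariant morphisms out of $V^\circ$, and then to build the desired map by gluing two equivariant orbit maps along the generic fibre.

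First I would analyse $\chi$-equivariant morphisms $V^\circ \to U$ concretely. Recall that $V^\circ$ is covered by the two charts $A_1 = \Spec(\CO_F[s^{\pm 1}])$ (with $t = \varpi s^{-1}$) and $A_2 = \Spec(\CO_F[t^{\pm 1}])$ (with $s = \varpi t^{-1}$), on which $\BG_m$ acts by translation $\mu\cdot\sigma = \mu\sigma$ in the coordinate $\sigma = s$ of weight $1$, respectively by $\mu\cdot\tau = \mu^{-1}\tau$ in the coordinate $\tau = t$ of weight $-1$; their overlap is the common generic fibre $\BG_{m,F}$, glued via $s = \varpi t^{-1}$. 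Fix a homomorphism $\chi\colon \BG_m \to G$ over $\CO_F$, and let $\BG_m$ act on $U$ through $\chi$. A $\chi$-equivariant morphism out of the $\BG_m$-torsor $A_i$ is determined by the image of its identity section: a $\chi$-equivariant $\phi_1\colon A_1 \to U$ is $\sigma \mapsto \chi(\sigma)\cdot u_1$ for the unique point $u_1 \defeq \phi_1(1) \in U(\CO_F)$, and a $\chi$-equivariant $\phi_2\colon A_2 \to U$ is $\tau \mapsto \chi(\tau)^{-1}\cdot u_2$ for $u_2 \defeq \phi_2(1) \in U(\CO_F)$. Since the identity section of $A_1$ has $t$-coordinate $\varpi$, comparing the two on the overlap shows that $\phi_1$ and $\phi_2$ glue to a $\chi$-equivariant morphism $V^\circ \to U$ if and only if $u_2|_F = \chi(\varpi)\cdot u_1|_F$ in $U(F)$; and when they do, the images of the compositions $\BG_{m,\CO_F} \into V^\circ \to U$ are contained in the orbits $G\cdot u_1$ and $G\cdot u_2$ respectively. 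Thus it suffices to produce a homomorphism $\chi$ together with points $u_1 \in U(\CO_F)$ in the $G(\CO_F)$-orbit of $u$ and $u_2 \in U(\CO_F)$ in the $G(\CO_F)$-orbit of $v$ with $u_2|_F = \chi(\varpi)\cdot u_1|_F$.

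For this I would invoke the Cartan decomposition of $G$ over $\CO_F$ with respect to a maximal split subtorus $T \subset G$ (as in Situation \ref{AffineCoverSit}): since $G(\CO_F)$ is hyperspecial, the element $g \in G(F)$ with $v|_F = g\cdot u|_F$, which exists by hypothesis, can be written $g = k_1\,\chi(\varpi)\,k_2$ with $k_1, k_2 \in G(\CO_F)$ and $\chi \in X_*(T)$, regarded as a homomorphism $\BG_m \to T \into G$ over $\CO_F$. Set $u_1 \defeq k_2\cdot u$ and $u_2 \defeq k_1^{-1}\cdot v$; both lie in $U(\CO_F)$ and in the $G(\CO_F)$-orbits of $u$ and $v$ respectively, and one computes $u_2|_F = k_1^{-1}\,g\cdot u|_F = \chi(\varpi)\,k_2\cdot u|_F = \chi(\varpi)\cdot u_1|_F$, which is exactly the gluing condition. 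Feeding $(\chi, u_1, u_2)$ into the description above produces a $\chi$-equivariant morphism $V^\circ \to U$, and since $G\cdot u_1 = G\cdot u$ and $G\cdot u_2 = G\cdot v$ its two chart restrictions land in the $G$-orbits of $u$ and $v$, as desired.

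The only genuine input is the Cartan decomposition for the reductive $\CO_F$-group $G$; everything else is formal bookkeeping. I expect the step needing the most care is the concrete analysis of $\chi$-equivariant maps out of $V^\circ$ — in particular keeping track of the two opposite $\BG_m$-weights on $A_1$ and $A_2$ together with the gluing $s = \varpi t^{-1}$, so that the exponent of $\chi(\varpi)$ in the compatibility relation $u_2|_F = \chi(\varpi)\cdot u_1|_F$ comes out with the correct sign; it is this sign that makes the Cartan decomposition plug in on the nose.
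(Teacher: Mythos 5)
Your argument is correct and essentially identical to the paper's: both reduce to the Cartan decomposition $g = k_1\chi(\varpi)k_2$ to replace $u,v$ by $G(\CO_F)$-conjugates related by $\chi(\varpi)$, and then glue the two $\chi$-equivariant orbit maps $\BG_{m,\CO_F}\to U$ along the generic fibre of $V^\circ$. You have simply spelled out the gluing relation $u_2|_F = \chi(\varpi)\cdot u_1|_F$ via the chart transition $s = \varpi t^{-1}$, a computation the paper leaves implicit; note the paper's displayed relation has the roles of $u$ and $v$ (equivalently the sign of $\chi$) flipped relative to yours, which is immaterial since one may replace $\chi$ by $\chi^{-1}$.
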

\begin{proof}
    Using the Cartan decomposition of $G(F)$, after replacing $u$ and $v$ by some $G(\CO_F)$-conjugates, there exists a homomorphism $\chi\colon \BG_{m,\CO_F} \to G$ such that $u|_F=\chi(\varpi)\cdot v|_F$. Then the two morphisms $\BG_{m,\CO_F} \to U, h \mapsto \chi(h)\cdot u$ and $\BG_{m,\CO_F} \to U, h \mapsto \chi(h)^{-1}\cdot v$ glue to a morphism $V^\circ \to U$ as desired.
\end{proof}

\begin{theorem}\label{conhul}
    If $\CX$ is S-complete, then there exists an integer $r \geq 1$, such that for all $x \in \pi^{-1}(W)(F)$ and all lifts $\tilde x$ of $x$, there exists a rational polytope $P \subset B(G_F)$, which is the convex hull of finitely many points in $b_{\tilde x}(\CX(\Dr)_x)$, such that $b_{\tilde x}(\CX(\Dinfty)_x)$ is equal to $P_\BQ$. 
\end{theorem}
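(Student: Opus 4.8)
The strategy is to deduce the theorem from Theorem \ref{Polytopes}, which already exhibits $S\defeq b_{\tilde x}(\CX(\Dinfty)_x)$ as a finite union $\bigcup_i P_{i,\BQ}$ of rational polytopes $P_i=\mathrm{conv}(A_i)$ with $A_i\subset b_{\tilde x}(\CX(\Dr[N])_x)$ finite, by showing that $S$-completeness forces $S$ to be convex. Concretely, I would first reduce to the claim
\[
(\star)\qquad \text{for all }\theta_0,\theta_1\in S,\text{ every rational point of the geodesic }[\theta_0,\theta_1]\subset B(G_F)\text{ lies in }S.
\]
Put $A\defeq\bigcup_i A_i$ and $P\defeq\mathrm{conv}(A)$. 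Granting $(\star)$: approximating the endpoints of an arbitrary geodesic by rational points of the $P_i$ and using that geodesics depend continuously on their endpoints in the $\mathrm{CAT}(0)$-building shows that the closure $\bar S=\bigcup_i P_i$ is geodesically convex; since $A\subset\bar S$, convexity gives $\bar S=P$, a rational polytope which is the convex hull of the finitely many points $A\subset b_{\tilde x}(\CX(\Dr[N])_x)$. Moreover $\mathrm{conv}(A)=\bigcup_k A_k$, where $A_0=A$ and $A_{k+1}$ is the union of all geodesics between points of $A_k$ (this stabilises to $P$ since $\bigcup_k A_k$ is already convex), so iterating $(\star)$ gives $P_\BQ=\bigcup_k\bigl(A_k\cap B(G_F)_\BQ\bigr)\subset S$; the reverse inclusion $S\subset\bar S\cap B(G_F)_\BQ=P_\BQ$ is clear. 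Hence $S=P_\BQ$, and the theorem holds with $r=N$.

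To prove $(\star)$, fix $\theta_0,\theta_1\in S$ and a rational point $\theta''\in[\theta_0,\theta_1]$; I must exhibit $z\in\CX(\Dinfty)_x$ with $b_{\tilde x}(z)=\theta''$. Passing to a sufficiently divisible $L=F(\mu_m,\varpi^{1/m})$ with $\Gamma=\Gal(L/F)$, the formalism of Section~\ref{tpb} --- Proposition \ref{defi:r-points} together with the Cartesian squares of Lemma \ref{bBC}, supplemented by unramified descent to adjoin the roots of unity --- identifies $\CX(\Dinfty)_x$ with the $\Gamma$-fixed points of $\CX_{\CO_L}(\mathbb{D}_L^{1/\infty})_x$, compatibly with $b_{\tilde x}$ under a toral, hence geodesic- and $\BQ$-point-preserving, embedding $\iota\colon B(G_F)\hookrightarrow B(G_L)$ with image the $\Gamma$-fixed locus. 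Over $\CO_L$ the twisted points representing $\theta_0$ and $\theta_1$ become honest points, given by $u_0,u_1\in U(\CO_L)$ via $\CX(\CO_L)=G(\CO_L)\backslash U(\CO_L)$ (Lang's theorem), with $G(L)$-conjugate generic fibres; by the Cartan decomposition, after a $G(\CO_L)$-translation, $u_0|_L=\chi(\varpi_L)\cdot u_1|_L$ for some cocharacter $\chi\colon\BG_{m,\CO_L}\to G_{\CO_L}$. Lemma \ref{SomeHomCons} now furnishes a $\chi$-equivariant morphism $V^\circ\to U_{\CO_L}$, that is, a morphism $\overline{ST}_{\CO_L}\setminus 0\to\CX_{\CO_L}$ restricting to $u_0$ and $u_1$ on the two $\CO_L$-points, and $S$-completeness of $\CX_{\CO_L}$ (the base change of that of $\CX$) extends it to $\Psi\colon\overline{ST}_{\CO_L}\to\CX_{\CO_L}$.

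It remains to read off the image of $b_{\tilde x}\circ\Psi$ and to descend. By the lemma above (applied over $\CO_L$), $b_{\tilde x}$ maps $\overline{ST}_{\CO_L}(\mathbb{D}_L^{1/\infty})_\ast$ onto the rational points of a segment of length one in $B(\BG_m)\cong\BR$ whose end-points are the images of the two $\CO_L$-points; combining this with the functoriality of $b_{\tilde x}$ along $\BG_m\xrightarrow{\chi}G$ (the Cartesian square of Proposition \ref{bFunc}), the composite $b_{\tilde x}\circ\Psi$ has image exactly the set of rational points of the geodesic $\iota([\theta_0,\theta_1])$ inside the one-dimensional apartment spanned by $\chi$. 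In particular there is $z\in\CX_{\CO_L}(\mathbb{D}_L^{1/\infty})_x$ with $b_{\tilde x}(z)=\iota(\theta'')$; since $\iota(\theta'')$ is $\Gamma$-fixed (because $\theta''\in B(G_F)_\BQ$), the $\Gamma$-action on $\CX_{\CO_L}(\mathbb{D}_L^{1/\infty})_x$ is compatible with $b_{\tilde x}$, and $b_{\tilde x}$ is injective (Construction \ref{EmbCons}), it follows that $z$ is $\Gamma$-fixed and hence descends to an element of $\CX(\Dinfty)_x$ with $b_{\tilde x}$-image $\theta''$. This establishes $(\star)$.

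I expect the main obstacle to lie in the interplay of $S$-completeness with the twisted-point formalism: one has to check that the morphism produced by Lemma \ref{SomeHomCons} extends over the stacky point of $\overline{ST}_{\CO_L}$ using only $S$-completeness (and its stability under base change of the base DVR), and --- more delicately --- to organise the base change $\CO_F\rightsquigarrow\CO_L$ so that the twisted points interpolated over $\CO_L$ are accounted for inside $\CX(\Dinfty)_x$, where the injectivity of $b_{\tilde x}$ and the Cartesian squares of Section~\ref{tpb} carry the argument. The passage from convexity of $S$ to a single rational polytope, by contrast, is soft point-set topology in the building.
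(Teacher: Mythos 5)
Your overall strategy is the same as the paper's: reduce the theorem, via Theorem \ref{Polytopes}, to the claim that rational points on geodesics between points of $b_{\tilde x}(\CX(\Dinfty)_x)$ stay in the image; pass to a suitable extension so that the two endpoints come from genuine integral points; invoke Lemma \ref{SomeHomCons} together with $S$-completeness to interpolate via a map from $\overline{ST}$; and use the toral functoriality of $b_{\tilde x}$ along the cocharacter $\chi$ to identify the resulting segment with the geodesic. The main mechanical difference is in the base-change bookkeeping: you enlarge to the Galois closure $L=F(\mu_m,\varpi^{1/m})$ and descend via $\Gamma$-fixed points, using injectivity of $b_{\tilde x}$ to force $\Gamma$-fixedness; the paper instead only adjoins $\varpi^{1/r}$ and descends via the Cartesian square of Lemma \ref{bBC}. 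The paper's route avoids having to justify the identification $\CX(\Dinfty)_x\cong(\CX_{\CO_L}(\mathbb{D}_L^{1/\infty})_x)^\Gamma$ and the compatibility of $b_{\tilde x}$ with the Galois action (and it sidesteps questions about $B(G_L)^\Gamma$ when $p\mid m$ makes $L/F$ wild). These are not fatal, but they are unstated lemmas in your version.

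There is, however, a genuine gap at the point where you invoke the Cartesian square of Proposition \ref{bFunc} to conclude that $b_{\tilde x}\circ\Psi$ factors through $\chi_*\colon B(\BG_m)\to B(G)$ and so traces out the geodesic. Proposition \ref{bFunc} — and more generally the toral functoriality of $b_{\tilde x}$ you need here — requires that the morphism of stacks $\overline{ST}_{\CO_L}\to\CX_{\CO_L}$ is presented by a $\chi$-equivariant morphism of atlases $V\to U$. What $S$-completeness gives you directly is the extension of the stack morphism $\overline{ST}_{\CO_L}\setminus\{0\}\to\CX_{\CO_L}$ over the stacky point; it does not by itself extend the $\chi$-equivariant map $V^\circ\to U$ from Lemma \ref{SomeHomCons} to a $\chi$-equivariant $V\to U$ lying over $\Psi$. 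That extension amounts to extending a section of the $G$-torsor $\Psi^*U$ from $V^\circ$ to $V$; the paper supplies this with a separate Hartogs argument, using that $V$ is affine and regular and $\{0\}\subset V$ has codimension $2$. Without this step the toral factorization of $b_{\tilde x}\circ\Psi$ is not established. You do flag this as the expected obstacle, and your diagnosis is right — but it is precisely the missing ingredient, not something that follows from $S$-completeness alone, so the proposal as written does not close the argument.
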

\begin{proof}
    By Theorem \ref{Polytopes}, it suffices to show, that for any two points $y_1$ and $y_2$ in $\CX(\Dinfty)_x$, any rational point on the geodesic from $b_{\tilde x}(y_1)$ to $b_{\tilde x}(y_2)$ is again in $b_{\tilde x}(\CX(\Dinfty)_x)$. Using Lemma \ref{bBC}, we may replace $F$ by $F(\varpi^{1/r})$ for some suitable $r$ and hence may assume that the $y_i$ are elements of $\CX(\CO_F)$. We choose lifts $u$ and $v$ of the $y_i$ and obtain a $\chi$-equivariant morphism $V^\circ \to U$ as in Lemma \ref{SomeHomCons}. Such a morphism induces a morphism $\overline{ST}_{\CO_F} \setminus \{0\} \to \CX$ which sends the two $\CO_F$-points of $\overline{ST}_{\CO_F}$ to $y_1$ and $y_2$ respectively. By S-completeness this morphism extends uniquely to a morphism $\overline{ST}_{\CO_F} \to \CX$. 

    We claim that the morphism $V^\circ \to U$ extends to a $\chi$-equivariant morphism $V \to U$ which lies over the morphism $\overline{ST}_{\CO_F} \to \CX$. Finding such an extension amounts to finding a splitting of the $G$-torsor on $V$ obtained by pulling back the $G$-torsor $U \to \CX$ along the composition $V \to \overline{ST}_{\CO_F} \to \CX$ which extends the given splitting of this torsor on $V^\circ$. But since $V$ is an affine regular scheme and $\{0\} \subset V$ has codimension $2$, the existence of such a splitting follows from Hartogs's extension theorem.

    By Lemma \ref{bFunc}, the existence of this morphism $V \to U$ gives us a commutative diagram
    \begin{equation*}
        \xymatrix{
            \overline{ST}_{\CO_F}(\Dinfty)_{\ast} \ar[r] \ar[d] & B(\BG_{m,F}) \ar[d] \\
            \CX(\Dinfty)_x \ar[r] & B(G_F),
        }
    \end{equation*}
    in which the right vertical arrow is induced by $\chi$ and hence sends geodesics to geodesics. Hence this arrow sends the interval $b_*(\overline{ST}_{\CO_F})$ to the geodesic from $b_{\tilde x}(y_1)$ to $b_{\tilde x}(y_2)$. This proves the claim.
\end{proof}

\section{$p$-adic integration}\label{pins}

\subsection{Volume formula}\label{vol-for}

In this subsection we give a formula for the $p$-adic volume of $\Dr[r]$-points of a smooth quotient stack as in Situation \ref{sit:good}. 

We start by introducing a natural \textit{weight function}. For this let $\Xc$ be a smooth finite type Artin stack over a perfect field $k$. The cyclotomic inertia stack $I_{\hat{\mu}}\Xc$ is defined as
\[I_{\hat{\mu}}\Xc = \varprojlim I_{\mu_r} \Xc,\]
where $I_{\mu_r} \Xc = \Hom(B{\mu_r}, \Xc)$.
A point in $I_{\hat{\mu}}\Xc(k)$ is given by a pair $(x,\phi)$, where $x \in \Xc(k)$ and $\phi:\hat{\mu} \to \Aut(x)$. The weight $w(x,\phi) \in \BQ$ is defined as in the case of Deligne-Mumford stacks \cite[Definition 2.20]{GWZ20b}, with the difference, that instead of a tangent space of $\Xc$ at $x$, we must work with a tangent complex, a two-term complex of $k$-vector spaces $T_x\Xc = [V_{-1} \to V_0]$, well defined up to quasi-isomorphism. 

Furthermore, $T_x\Xc$ can be realised as a complex of vector spaces endowed with actions of the stabiliser group $\Aut(x)$. This is a consequence of the formalism of (co)tangent complexes, according to which the tangent complex $T\Xc$ can be defined as an object of the derived category $D(\Xc)$ of quasi-coherent sheaves on $\Xc$. In particular, we may consider the pullback $T\Xc|_{B\Aut(x)}$ along the morphism $B\Aut(x) \to \Xc$, which yields an object in $D(B\Aut(x))$. Quasi-coherent sheaves on the classifying stack $BG$ of a group $G$ correspond to $G$-representations, and we therefore obtain the requisite $\Aut(x)$-action on $T_x\Xc$.

For every pair $(x,\phi)$ as above, we therefore obtain a $\hat{\mu}$-action on $T_x\Xc$ by restricting the natural $\Aut(x)$-action along $\phi$.

\begin{definition}\label{weightf}
    \begin{enumerate}
    
        \item For a $\hat\mu$-representation $V$ over $k$ with character decomposition $V=\bigoplus_{\chi \in \hat\mu^*}{V^{\oplus n\chi}_\chi}$, we define 
    \[w(V) =  \sum_{\chi \in  \hat\mu^* } n_\chi \cdot \chi \in \BQ,\]
    where we make the somewhat unconventional identification
    \[\hat\mu^* = \BQ/\BZ \cong (0,1] \cap \BQ.\]
    \item For any $(x,\phi) \in I_{\hat{\mu}}\Xc(k)$ with tangent complex $[V_{-1} \to V_0]$ we define 
    \[w(x,\phi) = w(V_0)-w(V_{-1}).\]
    \end{enumerate}
\end{definition}

In the case of a global quotient stack $\Xc=[U/G]$ we have $T_x\Xc = [\Lie(G) \to T_{\tilde{x}}U]$ with $\tilde{x} \in U$ any lift of $x$. Thus, $w(x,\phi) = w(T_{\tilde x}U) - w(\Lie(G))$, where $T_{\tilde x}U$ is a $\hat \mu$-representation via $\hat \mu \to \Aut(x) \cong \stab(\tilde{x}) \subset G$ and $\Lie(G)$ via the adjoint action.

Let $\Xc = [U/G]$ be a linear quotient stack over $\Oc_F$ as in Situation \ref{sit:good}. We assume further that $\Xc$ is smooth over $\Oc_F$ and that its canonical bundle $\omega_{\Xc}$, i.e. the determinant of its cotangent complex, is Zariski-locally trivial. For any $r\geq 1$ we define a natural measure on $\Xc(\Dr)$ as follows:

Choose an open cover $\Xc = \bigcup_i \Xc_i$ such that $\omega_{\Xc|\Xc_i}$ admits a trivializing section $\omega_i$. Then $\omega_i$ defines a volume form, i.e. a non-vanishing top-form, on $\Xc_i \cap \Xc_W \cong \pi(\Xc_i) \cap W$, where $\pi: \Xc \to X$. Integrating the absolute value $|\omega_i|$ defines a Borel measure $\mu_i$ on $(\pi(\Xc_i) \cap W)(F)$ and the same proof as \cite[Proposition 3.1.2]{COW21} gives:

\begin{lemma} The measures $\mu_i$ glue to a Borel measure $\mu_{orb}$ on $X^\sharp = X(\Oc_F) \cap W(F)$ which is independent of the choice of trivializing sections $\omega_i$.    
\end{lemma}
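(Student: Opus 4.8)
The statement to be established is that the locally-defined measures $\mu_i$ on $(\pi(\Xc_i)\cap W)(F)$ glue to a single Borel measure $\mu_{orb}$ on $X^\sharp$, independent of the chosen trivializing sections $\omega_i$. The plan is to reduce everything to the behaviour of $p$-adic integrals under the transition functions on overlaps $\Xc_i\cap\Xc_j$, exactly as in \cite[Proposition 3.1.2]{COW21}, with the only new ingredient being that $\omega_{\Xc}$ is the determinant of a \emph{two-term} cotangent complex rather than of an honest sheaf of differentials. First I would observe that since $\omega_{\Xc}$ is a line bundle on $\Xc$, on each overlap $\Xc_i\cap\Xc_j$ the two trivializing sections differ by a unit $g_{ij}\in\Oc_{\Xc}^{\times}(\Xc_i\cap\Xc_j)$, i.e. $\omega_j = g_{ij}\,\omega_i$. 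Because $\Xc$ is smooth over $\Oc_F$, the unit $g_{ij}$ takes values in $\Oc_F^{\times}$ at every $\Oc_F$-point, so $|g_{ij}|=1$ pointwise on $(\Xc_i\cap\Xc_j)(\Oc_F)$. Under the identification $\Xc_i\cap\Xc_W \cong \pi(\Xc_i)\cap W$ furnished by Situation \ref{sit:good}, the form $\omega_i$ pulls back to a genuine volume form on the smooth $F$-manifold $(\pi(\Xc_i)\cap W)(F)$, and the change-of-variables formula for $p$-adic integration shows that $\mu_i$ and $\mu_j$ agree on the open set $(\pi(\Xc_i\cap\Xc_j)\cap W)(F)$. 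Hence the $\mu_i$ glue to a single Borel measure on the union $X^\sharp = X(\Oc_F)\cap W(F)$, which by construction does not depend on the $\omega_i$ since any two choices differ by a unit of absolute value $1$.

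The one point requiring care — and the main obstacle — is making sense of $\omega_{\Xc}$ and its restriction to $\Xc_W$ when $\Xc$ is a stack whose cotangent complex $L_{\Xc/\Oc_F}$ has amplitude in $[-1,0]$. Here $\omega_{\Xc}:=\det L_{\Xc/\Oc_F}$ is defined as an alternating determinant of the two terms; since $\Xc$ is smooth over $\Oc_F$ it is a line bundle, and over the dense open $\Xc_W$, where $\pi$ is an isomorphism onto the smooth space $W$, the $H^{-1}$ of the cotangent complex vanishes, so $\omega_{\Xc}|_{\Xc_W}$ canonically agrees with the top exterior power of $\Omega^1_{W/\Oc_F}$, hence with the usual canonical bundle of $W$. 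This is exactly the compatibility needed so that a trivializing section $\omega_i$ of $\omega_{\Xc}|_{\Xc_i}$ really does restrict to a non-vanishing top form on $(\pi(\Xc_i)\cap W)(F)$, and so that the absolute value $|\omega_i|$ defines a Borel measure there in the standard way (Weil). I would spell this out by choosing, locally on $\Xc_i$, a presentation $[U_i/G]$ and using $L_{\Xc/\Oc_F}|_{\Xc_i} = [\mathfrak{g}^{\vee}\to\Omega^1_{U_i/\Oc_F}]$ to identify the determinant concretely, and then invoking that on the stable locus the $G$-action is free with trivial stabilizers so the $\mathfrak{g}^{\vee}$ term drops out after restriction.

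Finally I would check the two bookkeeping points: that $X^\sharp$ is indeed covered by the open subsets $(\pi(\Xc_i)\cap W)(F)$ — which follows since $\{\Xc_i\}$ covers $\Xc$, the maximal $W$ satisfies $\Xc_W\cong W$, and $X^\sharp = X(\Oc_F)\times_{X(F)}W(F)$ — and that the glued measure is Borel, which is immediate from the fact that it is Borel on each member of an open cover and $X^\sharp$ is a countable union of compact opens (being an open subset of the locally compact second-countable space $X(\Oc_F)$). Independence of the choice of cover itself then follows by passing to a common refinement and applying the overlap computation once more. I expect the only genuinely non-routine step is the cotangent-complex determinant identification of the previous paragraph; everything else is the verbatim argument of \cite[Proposition 3.1.2]{COW21}, so I would simply cite it for the gluing and change-of-variables mechanics and restrict the written proof to the stacky modification.
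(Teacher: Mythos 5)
There is a genuine gap in the step where you pass from the observation ``$|g_{ij}|=1$ on $(\Xc_i\cap\Xc_j)(\Oc_F)$'' to the conclusion ``$\mu_i$ and $\mu_j$ agree on $(\pi(\Xc_i\cap\Xc_j)\cap W)(F)$.'' These two sets are not the same, and the implication does not follow from the change-of-variables formula. The measure $\mu_i$ is a measure on $F$-points of the open $\pi(\Xc_i)\cap W$ of $W$, and the transition $g_{ij}$ is an invertible regular function on the $\Oc_F$-scheme $\pi(\Xc_i\cap\Xc_j)\cap W$; as such its absolute value is $1$ only on \emph{$\Oc_F$-points} of that scheme, not on general $F$-points, so the measures $\mu_i$ and $\mu_j$ certainly do \emph{not} agree on all of $(\pi(\Xc_i\cap\Xc_j)\cap W)(F)$. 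What the lemma needs is agreement on $X^\sharp\cap(\pi(\Xc_i\cap\Xc_j)\cap W)(F)$, i.e.\ on $\Oc_F$-points of $X$ whose generic fibre lands in the overlap. But such a point $x$ need not factor through an $\Oc_F$-point of $\pi(\Xc_i\cap\Xc_j)$ (its special fibre $x_k$ may lie outside this open of $X$), nor need it lift to an $\Oc_F$-point of the substack $\Xc_i\cap\Xc_j$, so one cannot simply evaluate $g_{ij}$ at an $\Oc_F$-point and invoke your observation. This is exactly the place where the fact that $\pi$ is a contraction (not an isomorphism) bites: the relevant $\Oc_F$-points of $X$ live on the base but need not be visible on the stack over $\Xc_i\cap\Xc_j$.

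Since the paper's own proof is a citation to \cite[Proposition~3.1.2]{COW21} rather than an argument in the text, I cannot compare routes in detail, but your write-up needs to supply the missing step: one must show that $|g_{ij}|=1$ holds (at least almost everywhere) on $X^\sharp\cap(\pi(\Xc_i\cap\Xc_j)\cap W)(F)$, not merely on $\Oc_F$-points of the stack, and this requires an actual argument about how the transition cocycle interacts with $\Oc_F$-points of $X$ that do not lift integrally to the overlap. The parts of your proposal on the determinant of the two-term cotangent complex, the canonical identification $\omega_\Xc|_{\Xc_W}\cong\omega_W$, and the bookkeeping are all fine; it is this central step that is unjustified as written.
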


By Proposition \ref{prop:Gamma-finite}, the map 
\[ \pi_r: \Xc(\Dr)^\sharp \to X^\sharp   \]
is étale and we thus obtain a measure on $\Xc(\Dr)^\sharp$, still denoted by $\mu_{orb}$, by pulling back $\mu_{orb}$ along $\pi_r$. 

To compute the volume of $\Xc(\Dr)^\sharp$ with respect to $\mu_{orb}$ we consider the specialization map 
\begin{equation}\label{ersp} e_r: \Xc(\Dr)^\sharp \to I_{\mu_r} \Xc(k) = \Hom(B_{\mu_r}, \Xc)(k),  \end{equation}
given by restriction to $B\mu_r \subset \Dr$.

\begin{theorem}\label{orbiorbi} Assume that $F$ does not contain any $p$-power roots of unity. Then for any $y \in I_{\mu_r} \Xc(k)$ we have 
\[ \mu_{orb}( e_r^{-1}(y)) = \frac{q^{-w(y)}}{|\Aut(y)(k)|}. \] \end{theorem}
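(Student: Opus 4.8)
The plan is to compute the volume $\mu_{orb}(e_r^{-1}(y))$ by choosing a convenient representative of $y$, reducing to an explicit local computation on $U(\Oc_L)^\Gamma$, and tracking how the orbifold measure transforms under the twisted specialization map. First I would fix $y = (x_0,\phi) \in I_{\mu_r}\Xc(k)$ and a lift $\tilde x_0 \in U(k)$; the homomorphism $\phi\colon \mu_r \to \Aut(x_0) \cong \stab(\tilde x_0) \subset G$ may, after enlarging the presentation to one with $G$ a general linear group and conjugating, be taken to land in the diagonal torus, so that $\mu_r$ acts on $T_{\tilde x_0}U$ and on $\Lie(G)$ through explicit characters. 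Write $L = F(\mu_r, \varpi^{1/r})$ with $\Gamma = \Gal(L/F)$. A point of $e_r^{-1}(y)$ is, via Proposition \ref{defi:r-points}, a $\Gamma$-fixed point of $U(\Oc_L)^\sharp/G(\Oc_L)$ whose reduction is $x_0$; equivalently, after choosing the lift compatibly, one reduces to the subset of $U(\Oc_L)$ consisting of points $u$ with $u \bmod \mathfrak{m}_L$ in the $G(k)$-orbit of $\tilde x_0$ and satisfying the $\mu_r$-equivariance condition imposed by $\phi$. The key point is that this locus is an analytic submanifold cut out, in $\mu_r$-adapted coordinates on a formal neighbourhood of $\tilde x_0$ in $U$, by a "weighted" integrality condition: a coordinate on which $\mu_r$ acts by the character $\chi \in \Qb/\Zb \cong (0,1]\cap\Qb$ is constrained to lie in $\varpi^{\chi}\Oc_L$ rather than $\Oc_L$.

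Next I would make the change of variables precise. Using that $\Xc$ is smooth over $\Oc_F$, pick a $\mu_r$-equivariant étale chart $T_{\tilde x_0}U \cong \Oc_L^{n} \supset$ neighbourhood of $\tilde x_0$; decompose $T_{\tilde x_0}U = \bigoplus_\chi V_\chi$ and $\Lie(G) = \bigoplus_\chi \mathfrak g_\chi$ into $\mu_r$-weight spaces. The fibre $e_r^{-1}(y)$ is then analytically identified, modulo the free proper $G(\Oc_L)^\Gamma$-action (whose orbit directions account for the $\Lie(G)$-weights), with $\prod_\chi \varpi^{\chi}\Oc_L^{\dim V_\chi}$ modulo $\prod_\chi \varpi^{\chi}\Oc_L^{\dim \mathfrak g_\chi}$, all of this intersected with the $\Gamma$-fixed points and then descended to $F$-coordinates. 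The measure $\mu_{orb}$ is, by construction, $|\omega_\Xc|$ transported along $\pi_r$ and then along the submersion $U(\Oc_F)^\sharp \to \Xc(\Oc_F)^\sharp$; since $\omega_\Xc = \det(\text{cotangent complex}) = \det(T^*_{\tilde x_0}U) \otimes \det(\Lie G)$ near $x_0$, on the chart it is (up to a unit) the standard top form on $U$ divided by the standard top form along the $G$-orbit. Scaling the $\chi$-coordinate of $V_\chi$ by $\varpi^{-\chi}$ to pass from the weighted lattice back to the standard one multiplies the measure by $|\varpi|^{\chi \dim V_\chi} = q^{-\chi \dim V_\chi}$ in the numerator directions and by $q^{-\chi \dim \mathfrak g_\chi}$ in the denominator (orbit) directions, which cancels to leave a factor
\[
\prod_\chi q^{-\chi(\dim V_\chi - \dim \mathfrak g_\chi)} = q^{-(w(T_{\tilde x_0}U) - w(\Lie G))} = q^{-w(y)},
\]
exactly the weight of Definition \ref{weightf}. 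The residual "standard" integral computes the measure of $\Xc(\Oc_F)^\sharp$-points over the orbit of $\tilde x_0$, which is $1/|\Aut(y)(k)| = 1/|\stab(\tilde x_0)(k)|$ — the usual volume count of $k$-points of a smooth stack, where the automorphism group contributes inversely because the relevant points of $\Xc$ over the residue disc around $x_0$ are counted with multiplicity $|\Aut(y)(k)|$ by the $G(\Oc_F)$-orbit structure (Lang's theorem, as in the coarse-moduli volume formulas of \cite{GWZ20b}).

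The main obstacle I expect is the bookkeeping around the fractional-exponent lattices and the Galois descent: one must check that the "weighted integrality" description of $e_r^{-1}(y)$ is correct — i.e. that restricting a $\Dr$-map to the closed point $B\mu_r$ and demanding the reduction be $x_0$ with inertia $\phi$ really does impose precisely the constraint "$\chi$-coordinate $\in \varpi^\chi\Oc_L$" — and that this is compatible with the $\Gamma$-action so that taking $\Gamma$-fixed points and then the change of variables is legitimate and measure-theoretically clean. This is essentially a twisted/stacky refinement of the change-of-variables computation in \cite[§2--3]{GWZ20b}, now with $\mu_r$-weights replacing the integer-valued weights coming from a finite group action; the hypothesis that $F$ contains no $p$-power roots of unity is what guarantees (via tameness of $\Dr$ and of the relevant $\mu_r$) that all these weight spaces behave as in characteristic zero and that no wild phenomena spoil the computation. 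Once the identification of $e_r^{-1}(y)$ with the explicit weighted disc quotient is in place, the remaining integral is the routine one and yields the stated formula.
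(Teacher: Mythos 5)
Your overall strategy matches the paper's: reduce to a diagonalized $\mu_r$-action, change variables by scaling the weight-$\chi$ coordinates by $\varpi^{-\chi}$, and use the Jacobian factor $q^{-\chi\dim V_\chi}$ (resp.\ $q^{-\chi\dim\mathfrak g_\chi}$) to produce $q^{-w(y)}$, with the automorphism count $1/|\Aut(y)(k)|$ coming from the $G$-orbit/quotient bookkeeping. The paper organizes this differently in detail --- it first proves the special case $[U/\mu_r]$ (Lemma~\ref{locc}) via an explicit scaling map $\lambda$ and Lemma~\ref{diagdiag}, then lifts $\phi$ to $\tilde\phi\colon\mu_r\to G$ over $\Oc_F$ and uses a Fubini decomposition with the invariant form $\omega_G$ to apply that lemma to both $U$ and to $G$ (with the conjugation action), finally citing \cite[Lemma 2.10]{GWZ20b} for the equality $|\Aut(x,\phi)(k)|=|\stab(\phi)(k)\cap\stab(\tilde x)(k)|$ --- but these are organizational choices that land in the same place as your tangent-complex computation.

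The genuine gap is the step you yourself flag as ``the main obstacle'': the identification of $e_r^{-1}(y)$ with the set of $u\in U(\Oc_L)$ satisfying the single $\mu_r$-equivariance condition $u^\zeta=u\tilde\phi(\zeta)$. A $\Gamma$-fixed class in $U(\Oc_L)^\sharp/G(\Oc_L)$ is \emph{a priori} represented by a pair $(u,\psi)$ where $\psi\colon\mu_r(L)\to G(\Oc_L)$ is only a $1$-cocycle, not a homomorphism, and different cocycles can restrict to the same $\phi$ on the special fibre. This is precisely the content of the paper's Lemma~\ref{efib}, and closing it requires nontrivial input: one needs the exact sequence of pointed sets relating $H^1(\mu_r(L),G(\Oc_L))$ to $\bigl(G(L)/G(\Oc_L)\bigr)^{\mu_r(L)}$, the Bruhat--Tits identification of the latter with $B(G_{F(\mu_r(L))})_{\frac{1}{r}\BZ}$ (Theorem~\ref{mrdes}) to straighten the cocycle to an honest homomorphism, and then the rigidity result \cite[XI.5.2]{SGA3II} to conjugate that homomorphism to the chosen integral lift $\tilde\phi$. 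Without this, your ``equivalently, after choosing the lift compatibly, one reduces to$\ldots$'' is an assertion rather than a proof, and it is exactly the place where the hypothesis structure of Situation~\ref{sit:good} and the building machinery of Section~\ref{tpb} enter. The rest of your computation (the weighted-lattice change of variables and the resulting exponent $w(T_{\tilde x_0}U)-w(\Lie G)$) is correct once that identification is in hand.
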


\begin{corollary}\label{finvol} Assume that $\pi_\infty: \Xc(\Dinfty)^\sharp \to X^\sharp $ is surjective. Then the $F$-analytic manifold $X^\sharp$ has finite volume with respect to $\mu_{orb}$. 
\end{corollary}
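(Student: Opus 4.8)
The plan is to deduce finiteness of the volume from the volume formula in Theorem~\ref{orbiorbi} together with the boundedness of twists established in Corollary~\ref{PiImageBounded}. First I would fix, using Corollary~\ref{PiImageBounded}, an integer $N\geq 1$ for which the maps $\Xc(\Dinfty)^\sharp \to X(\Oc_F)^\sharp$ and $\Xc(\Dr[N])^\sharp \to X(\Oc_F)^\sharp$ have the same image. Since by hypothesis the first map is surjective onto $X^\sharp = X(\Oc_F)\cap W(F)$, so is $\pi_N\colon \Xc(\Dr[N])^\sharp \to X^\sharp$. It therefore suffices to bound the volume of $\Xc(\Dr[N])^\sharp$ from above and to compare it with the volume of $X^\sharp$.

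For the comparison: by Proposition~\ref{prop:r-finite} the map $\pi_N$ is proper and \'etale with finite fibres, hence a finite covering map, and by construction $\mu_{orb}$ on $\Xc(\Dr[N])^\sharp$ is the pullback of $\mu_{orb}$ on $X^\sharp$ along $\pi_N$. Writing $\sep_N(x)=|\pi_N^{-1}(x)|$ for the (locally constant, by Corollary~\ref{cor:loc-constant}) fibre-counting function, the covering structure gives
\[
\mu_{orb}\bigl(\Xc(\Dr[N])^\sharp\bigr) = \int_{X^\sharp}\sep_N(x)\,d\mu_{orb}(x) \geq \int_{X^\sharp}1\,d\mu_{orb} = \mu_{orb}(X^\sharp),
\]
where the inequality uses $\sep_N\geq 1$, i.e.\ surjectivity of $\pi_N$.

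It remains to bound $\mu_{orb}(\Xc(\Dr[N])^\sharp)$. Restriction along the closed immersion $B\mu_{N,k}\hookrightarrow \Dr[N]$ gives the specialization map $e_N\colon \Xc(\Dr[N])^\sharp \to I_{\mu_N}\Xc(k)$ of~\eqref{ersp}, whose fibres partition $\Xc(\Dr[N])^\sharp$ into measurable subsets; by Theorem~\ref{orbiorbi} each fibre $e_N^{-1}(y)$ has volume $q^{-w(y)}/|\Aut(y)(k)|$. Hence
\[
\mu_{orb}\bigl(\Xc(\Dr[N])^\sharp\bigr) \leq \sum_{y\in I_{\mu_N}\Xc(k)} \frac{q^{-w(y)}}{|\Aut(y)(k)|}.
\]
This is a \emph{finite} sum: as $\Xc$ is of finite type over $\Oc_F$, the cyclotomic inertia stack $I_{\mu_N}\Xc$ is a finite-type algebraic stack over the finite residue field $k$, hence has only finitely many isomorphism classes of $k$-points. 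Combining the two displays yields $\mu_{orb}(X^\sharp)<\infty$.

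The argument is largely formal given the earlier results; the point requiring care is the fibre-integration identity for the surjective \'etale covering $\pi_N$ — equivalently, the domination $\mu_{orb}(X^\sharp)\leq \mu_{orb}(\Xc(\Dr[N])^\sharp)$ — which rests on the local description of $\pi_N$ as a trivial finite cover together with the local constancy of $\sep_N$. One also implicitly uses the standing finite-type hypothesis on $\Xc$ (to make the final sum finite) and the hypothesis that $F$ contains no $p$-power roots of unity (inherited from Theorem~\ref{orbiorbi}).
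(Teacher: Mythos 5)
Your proof is correct and follows the same strategy as the paper: use Corollary~\ref{PiImageBounded} to find a single $N$ with $\pi_N$ surjective, compare volumes along the finite \'etale cover $\pi_N$, and then bound $\mu_{orb}(\Xc(\Dr[N])^\sharp)$ by the finite sum coming from Theorem~\ref{orbiorbi}. The only difference is that you spell out the last step (partitioning by $e_N$ and noting $I_{\mu_N}\Xc(k)$ is finite), which the paper leaves implicit in the phrase ``Theorem~\ref{orbiorbi} shows in particular that $\int_{\Xc(\Dr[s])^\sharp}\mu_{orb}$ is finite.''
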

\begin{proof}
By Corollary \ref{PiImageBounded}, there exists an integer $s\geq 1$ for which $\pi_s: \Xc(\Dr[s])^\sharp \to X^\sharp $ is surjective. By Proposition \ref{prop:r-finite}, the map $\pi_s$ is étale with finite fibres and thus
\[\int_{\Xc(\Dr[s])^\sharp}\mu_{orb} \geq \int_{X(\Oc_F)^\sharp}\mu_{orb}.
  \]
But Theorem \ref{orbiorbi} shows in particular that $\int_{\Xc(\Dr[s])^\sharp}\mu_{orb}$ is finite and the corollary follows.    
\end{proof}

\subsection{Proof of Theorem \ref{orbiorbi}}

We start with an important special case.
\begin{lemma}\label{locc} Let $\Xc = [U/\mu_r]$ be a generically representable smooth quotient stack over $\Oc_F$. Then for any $x\in \Xc(k)$ with $\Aut(x) = \mu_r$ we have
\[  \mu_{orb}( e_r^{-1}(x,\id))  = \frac{q^{-w(y)}}{|\Aut(x,\id)(k)|} = \frac{q^{-w(y)}}{|\mu_r(k)|} ,  \]
where $\id$ denotes the identity on $\mu_r$.
\end{lemma}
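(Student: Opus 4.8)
The plan is to reduce the computation of $\mu_{orb}(e_r^{-1}(x,\id))$ to an explicit integral over an analytic neighbourhood of a lift of $x$ in $U$, and then evaluate that integral using the weight function. Write $\Xc = [U/\mu_r]$ with $\mu_r$ acting on the smooth $\Oc_F$-scheme $U$, and let $L = F(\varpi^{1/r})$ with $\Gamma = \Gal(L/F)$ (note $\mu_r(F)$ may be trivial, so one works over $L$). By Proposition \ref{defi:r-points} we have $\Xc(\Dr)^{\sharp} = (U(\Oc_L)^{\sharp}/\mu_r(\Oc_L))^{\Gamma}$, and the specialization map $e_r$ sends a twisted point to its restriction along $B\mu_r \hookrightarrow \Dr$, i.e. to the $k$-point $x \in \Xc(k)$ together with the induced $\mu_r$-action on its tangent space. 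First I would pick a lift $\tilde x \in U(k)$ of $x$ whose stabilizer in $\mu_r$ is all of $\mu_r$ (possible since $\Aut(x) = \mu_r$); then the fibre $e_r^{-1}(x,\id)$ is identified, via the theory of root stacks, with an orbit space of certain $\mu_r$-equivariant $\Oc_L$-points of $U$ reducing to $\tilde x$.

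The next step is to linearize. Since $U$ is smooth over $\Oc_F$ and $\mu_r$ is linearly reductive (as $F$ contains no $p$-power roots of unity, $r$ is prime to $p$), one can find a $\mu_r$-equivariant étale chart around $\tilde x$: an étale $\mu_r$-equivariant map $U \to T_{\tilde x}U =: \Ab^n_{\Oc_F}$ (or rather a $\mu_r$-equivariant identification of the completed local ring), where $\mu_r$ acts linearly on $\Ab^n$ through its action on $T_{\tilde x}U$. Under this chart the measure $\mu_{orb}$ — which by construction is $|\omega|$ for $\omega$ a local trivialization of $\omega_{\Xc}$ — pulls back to a Haar-type measure, and the $\Dr$-points of $\Xc$ lying over $(x,\id)$ correspond to $\mu_r$-eigen-coordinate data: writing $T_{\tilde x}U = \bigoplus_{\chi} V_\chi$ for the character decomposition, a $\mu_r$-equivariant $\Oc_L$-point of $\Ab^n$ reducing to $0$ in the appropriate sense is given by choosing, in each eigenspace $V_\chi$, coordinates in $\varpi^{\langle\chi\rangle/r}\Oc_L \cdot (\text{something})$, where $\langle\chi\rangle \in (0,1]\cap\Qb$ is the representative of $\chi$ under the identification $\hat\mu^* = \Qb/\Zb \cong (0,1]\cap\Qb$ of Definition \ref{weightf}. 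This is exactly the root-stack bookkeeping: adjoining $\varpi^{1/r}$ allows coordinates in each eigenspace to be "smaller" by the fractional amount dictated by the character.

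Then I would carry out the volume computation directly. Quotienting $U(\Oc_L)^{\sharp}$ by $\mu_r(\Oc_L)$ and taking $\Gamma$-fixed points (as in Proposition \ref{defi:r-points}), the fibre $e_r^{-1}(x,\id)$ becomes, after the linearization, a product over characters $\chi$ of polydiscs of radius $|\varpi|^{\langle\chi\rangle/r}$ in the eigenspaces $V_\chi \subset T_{\tilde x}U$, further divided by the action of $\mu_r(k)$ on the special fibre (which accounts for the $|\Aut(x,\id)(k)| = |\mu_r(k)|$ in the denominator). The volume of such a polydisc in $\dim V_\chi$ coordinates, measured against the standard top form descended to $X^{\sharp}$, contributes a factor $q^{-\langle\chi\rangle \cdot \dim V_\chi}$; since $T_x\Xc = [\Lie(\mu_r) \to T_{\tilde x}U] = [0 \to T_{\tilde x}U]$ (as $\mu_r$ has trivial Lie algebra in char prime to $p$), the total exponent is $\sum_\chi \langle\chi\rangle \dim V_\chi = w(T_{\tilde x}U) = w(x,\id) = w(y)$ by Definition \ref{weightf}. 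Putting this together yields $\mu_{orb}(e_r^{-1}(x,\id)) = q^{-w(y)}/|\mu_r(k)|$ as claimed.

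The main obstacle I anticipate is the careful setup of the $\mu_r$-equivariant étale chart and the precise matching between root-stack twisted points and eigenspace-coordinate data — in particular, checking that the descent to $X^{\sharp}$ and the $\Gamma$-fixed-point construction interact correctly with the measure, so that the fractional radii $|\varpi|^{\langle\chi\rangle/r}$ appear with exactly the exponents prescribed by the (somewhat unconventional) identification $\hat\mu^* \cong (0,1]\cap\Qb$. The analytic/measure-theoretic bookkeeping — ensuring the top form $\omega$ descended from $\Xc_W \cong \pi(\Xc_i)\cap W$ pulls back to the standard measure on the chart up to a unit, and that the quotient by $\mu_r(\Oc_L)$ versus $\mu_r(k)$ is handled without double-counting — is the delicate part; the exponent computation itself is then a short calculation.
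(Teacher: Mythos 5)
Your proposal follows the same essential strategy as the paper: lift $x$ to a fixed point $\tilde x$, linearize via a $\mu_r$-equivariant \'etale chart to reduce to a diagonal $\mu_r$-action on $\BA^d$, and then compute the volume character-by-character using the fractional exponents $\langle\chi\rangle/r$, dividing by $|\mu_r(k)|$ at the end. The paper makes this concrete via the explicit parametrizing map $\lambda\colon \BA^d(\Oc_F)\to[\BA^d/\mu_r](\Dr)$, $(x_i)\mapsto(\varpi^{c_i/r}x_i)$, which it shows is $|\mu_r(F)|$-to-$1$ onto $e_r^{-1}(0,\id)$ outside a null set, and then invokes $|\mu_r(F)|=|\mu_r(k)|$ (valid because $F$ contains no $p$-power roots of unity). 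Your polydisc description is essentially the image of this map.

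Two corrections worth flagging. First, your parenthetical ``as $F$ contains no $p$-power roots of unity, $r$ is prime to $p$'' is wrong and is precisely the assumption this paper is designed to drop: $r$ may well be divisible by $p$. Fortunately your argument does not actually need it — the equivariant chart exists because $\mu_r$ is diagonalizable (the paper's Lemma~\ref{diagdiag} is stated for finite diagonalizable group schemes, not finite \'etale ones), and the identity $|\mu_r(F)|=|\mu_r(k)|$ is what the no-$p$-power-roots hypothesis actually buys you, not coprimality. Second, you gloss over the existence of a lift $\tilde x\in U^{\mu_r}(k)$; the paper is careful to allow replacing $U$ by an unramified $\mu_r$-twist to guarantee such a lift exists over $k$ and not merely over $\bar k$. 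Neither issue changes the overall shape of the argument, but both should be fixed before the proof is complete.
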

\begin{proof} The proof is essentially the same as in \cite[Lemma 2.22]{GWZ20b}, which in turn is taken from \cite{DL02}. Up to replacing $U$ by an unramified $\mu_r$-twist, we may assume that $x$ admits a lift $\tilde{x} \in U^{\mu_r}(k)$. Then by Lemma \ref{diagdiag} below, there exists a $\mu_r$-invariant Zariski-open neighborhood $V$ of $\tilde x$ and a $\mu_r$-equivariant étale morphism $V \to \BA^d$ sending $\tilde{x}$ to $0$. By construction the orbifold measure is compatible with equivariant étale morphisms and thus it suffices to show the lemma for $U=\BA^d$ with a linear $\mu_r$-action and $x=0$. Since $\mu_r$ is diagonalisable, we may further assume that the action is diagonal with characters $\chi_1,\dots,\chi_d$.

In this situation we can describe $ e_r^{-1}(0,\id)$ explicitly as follows. For $1 \leq i \leq d$, let $1\leq c_i \leq r$ be such that $\chi_i = c_i/r$ under the identification $\mu_r^* \cong \frac{1}{r} \BZ \cap (0,1]$. Consider the map
\begin{align*}
    \lambda: \BA^d(\Oc_F) &\rightarrow [\BA^d/\mu_r](\Dr)\\
    (x_1,\dots,x_d) &\mapsto (\varpi^{c_1/r}x_1,\dots,\varpi^{c_d/r}x_d).
\end{align*} 

By \ref{defi:r-points} the map $\lambda$ is an analytic map of $F$-analytic manifolds. By looking at the restriction to the generic point $\Spec(F)$, we see that $\lambda$ is $|\mu_r(F)|$-to-$1$ outside a set of measure $0$. Finally essentially by construction, see also \cite[(2.3.4)]{DL02}, the image of $\lambda$ is $e_r^{-1}(0,\id)$. Hence we get
\[ \int_{e_r^{-1}(0,\id)} \mu_{orb} = \frac{1}{|\mu_r(F)|}\int_{\Oc_F^d}\lambda^*\mu_{orb} =  \frac{1}{|\mu_r(F)|}\int_{\Oc_F^d} |\varpi^{\sum c_i}|^{1/r}dx_1\dots dx_r = \frac{q^{-w(0,\id)}}{|\mu_r(F)|}.  \]
Finally, by assumption, $F$ does not contain any $p$-power roots of unity and hence $|\mu_r(F) | =| \mu_r(k) |$.
\end{proof}

\begin{lemma}\label{diagdiag}
Let $G$ be a finite diagonalisable group scheme acting on a smooth $\Oc_F$-scheme $U$ of relative dimension $d$. Assume that $x \in U^G(k_F)$. Then, there exists a Zariski-open neighbourhood $V \subset U$ of $x$ and a $G$-equivariant \'etale morphism $g\colon V\to \Ab_{\Oc_F}^d$, where $g(x)=0$ and the affine space is endowed with the infinitesimal $G$-action on $T_xU$.     
\end{lemma}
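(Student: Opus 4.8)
The statement is the standard "equivariant étale coordinate chart at a fixed point" assertion, adapted to a finite diagonalisable group scheme acting on a smooth scheme over $\Oc_F$. The plan is to produce $G$-semi-invariant functions $f_1,\dots,f_d$ vanishing at $x$ whose differentials form a basis of the cotangent space $T_x^*U$, and then to argue that the resulting map $g=(f_1,\dots,f_d)\colon V\to \Ab^d_{\Oc_F}$ is étale near $x$ and $G$-equivariant for a suitable linear action on $\Ab^d$. The only genuine input is the semisimplicity of $G$-representations in the diagonalisable setting, which lets one split the relevant $\Oc_F$-modules into eigenspaces compatibly with reduction mod $\mathfrak{m}_F$.

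\textbf{Step 1: The cotangent space and a semi-invariant basis.} Since $U$ is smooth over $\Oc_F$ of relative dimension $d$ and $x\in U^G(k_F)$, the stalk $\Oc_{U,x}$ is a regular local ring, and $G$ acts on the maximal ideal $\mathfrak{m}_{U,x}$ and hence on $\mathfrak{m}_{U,x}/\mathfrak{m}_{U,x}^2$, a free $k_F$-module of rank $d$. A finite diagonalisable group scheme over a field has semisimple, in fact completely reducible, representation category (representations are graded by the character group), so I can choose a basis $\bar f_1,\dots,\bar f_d$ of $\mathfrak{m}_{U,x}/\mathfrak{m}_{U,x}^2$ consisting of elements on which $G$ acts through characters $\chi_1,\dots,\chi_d$ — by construction these characters are exactly the weights of $T_xU$, i.e.\ of the infinitesimal action in the statement. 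Now I need to lift each $\bar f_i$ to a genuine semi-invariant $f_i\in\mathfrak{m}_{U,x}$ of the same weight. For this I pass to an open affine $G$-invariant neighbourhood $U'=\Spec A$ of $x$ (such a neighbourhood exists since $x$ is a fixed point: its $G$-orbit is itself, and one can shrink $U$ to $G$-stable affine — e.g.\ intersect a basic open of $U/G$ with $U$, or use that for finite group schemes every point has a $G$-stable affine neighbourhood), decompose the coordinate ring $A=\bigoplus_{\chi}A_\chi$ into weight spaces for the $G$-coaction, write each $\bar f_i$ as a sum of its weight components in $\mathfrak{m}_{U,x}/\mathfrak{m}_{U,x}^2$, and lift the single weight-$\chi_i$ component to an element of $A_{\chi_i}\cap\mathfrak{m}_x$. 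Because the decomposition into weight spaces is compatible with reduction mod $\mathfrak{m}_F$ (the group is $\Oc_F$-flat, diagonalisable, and $A$ is $\Oc_F$-flat), such a lift exists and still lies in $\mathfrak{m}_x$.

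\textbf{Step 2: étaleness and equivariance.} Let $g=(f_1,\dots,f_d)\colon U'\to\Ab^d_{\Oc_F}$. Giving $\Ab^d$ the diagonal $G$-action with weights $\chi_1,\dots,\chi_d$ makes $g$ a $G$-equivariant morphism by construction, and $g(x)=0$ since $f_i\in\mathfrak{m}_x$. To see $g$ is étale near $x$: the induced map on cotangent spaces at $x$ sends the standard basis of $T_0^*\Ab^d\otimes k_F$ to $\bar f_1,\dots,\bar f_d$, which is a basis, so $dg_x$ is an isomorphism of $k_F$-vector spaces; since both source and target are smooth over $\Oc_F$ of relative dimension $d$, the Jacobian criterion (or: a morphism of smooth $\Oc_F$-schemes of the same relative dimension which is unramified at a point is étale there, and unramifiedness at $x$ follows from the differential being surjective at the closed point $x$ of the special fibre, which by Nakayama propagates to an open) gives an open $V\ni x$, which we may take $G$-stable by further shrinking inside $U'$ (replace $V$ by $\bigcap_{\text{open orbit}}$, i.e.\ by the largest $G$-stable open contained in the étale locus, which still contains $x$), on which $g$ is étale. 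Finally note the $G$-action on $\Ab^d$ at $0$ on $T_0\Ab^d$ is exactly $\bigoplus_i\chi_i = T_xU$ as a $G$-representation, so $g$ realizes the infinitesimal action as claimed.

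\textbf{Main obstacle.} The one step requiring care is Step 1 — lifting a basis of the cotangent space to honest semi-invariant functions in a $G$-stable affine neighbourhood, over $\Oc_F$ rather than a field. The subtlety is ensuring (i) that $x$ has a $G$-stable affine open neighbourhood at all, and (ii) that the weight-space decomposition of the coordinate ring is well-behaved enough (flatness, compatibility with mod-$\mathfrak{m}_F$ reduction) that a weight-$\chi_i$ lift landing in $\mathfrak{m}_x$ genuinely exists; this is where complete reducibility of diagonalisable group scheme representations, together with $\Oc_F$-flatness of $U$ and $G$, does the work. Once the semi-invariant coordinates are in hand, étaleness and equivariance are formal, as sketched. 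If one prefers to avoid the mixed-characteristic subtleties entirely, an alternative is to run the argument over the special fibre $U_{k_F}$ to get an étale equivariant chart there, and then deform/lift it using smoothness of $U\to\Spec\Oc_F$ together with formal smoothness of $\Ab^d$, but the direct weight-space argument above seems cleaner.
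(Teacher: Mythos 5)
Your proof takes essentially the same approach as the paper's: choose a basis of the cotangent space at $x$ consisting of weight vectors, lift these to functions on a $G$-stable affine neighbourhood, project each lift onto the component of the correct weight, and observe that the resulting map $V\to\Ab^d_{\Oc_F}$ is $G$-equivariant for the diagonal action and étale at $x$ by the Jacobian criterion. Two small remarks. First, the mechanism that makes the weight-homogeneous lift still hit $\bar f_i$ is just the $G$-equivariance of the restriction map $\Oc_U(V)\to\mathfrak{m}_x/\mathfrak{m}_x^2$ — an equivariant map automatically preserves weight decompositions — so the "flatness / compatibility with reduction mod $\mathfrak{m}_F$" discussion you offer is not really the relevant point and slightly obscures what is otherwise a one-line argument. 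Second, you write that $\mathfrak{m}_{U,x}/\mathfrak{m}_{U,x}^2$ is a free $k_F$-module of rank $d$; since $x$ is a closed point of the special fibre and $U$ has relative dimension $d$ over $\Oc_F$, its full Zariski cotangent space has dimension $d+1$, and what you want is the relative cotangent space $\Omega^1_{U/\Oc_F}|_x$ (equivalently, $\mathfrak{m}_x$ modulo $\mathfrak{m}_x^2$ and $\mathfrak{m}_F$), which is $d$-dimensional and is the thing the Jacobian criterion for étaleness over $\Oc_F$ tests — the paper's wording has the same imprecision and the argument is unaffected.
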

\begin{proof}
    Let $A$ be the character group of $G$. Then giving a representation of $G$ on some module $M$ is equivalent to giving a weight decomposition $M=\oplus_{a\in A} M^a$. We call the elements of $M^a$ homogeneous of weight $a$.
    
    Let $\mathfrak{m}_x$ be the maximal ideal of the local ring $\Oc_{U,x}$ of $U$ at $x$. Since $x$ is fixed by $G$, the group $G$ acts on these objects. We choose a basis $u_1,\hdots,u_d$ of the cotangent space $\mathfrak{m}_x/\mathfrak{m}^2_x$ of $U$ at $x$ consisting of elements $u_i$ which are homogeneous of weight $a_i$. The ring $\Oc_{U,x}$ is the colimit of rings $\Oc_U(V)$, where $V$ runs over Zariski-open affine neighbourhoods of $x$. We may furthermore assume $V$ to be preserved by $G$ since $x$ is a fixpoint and $G$ is finite. So for some such $G$-invariant $V$ there exist elements $f_i \in \Oc_U(V)$ mapping to the $u_i$. The group $G$ acts on $\Oc_U(V)$ and the homomorphism $\Oc_U(V) \to \Oc_{U,x}/\mathfrak{m}_x^2$ is $G$-equivariant. Hence after throwing away the homogeneous components of each $f_i$ which are not of weight $a_i$, we can assume that each $f_i$ is homogeneous of weight $a_i$. Then the $f_i$ define a morphism $V \to \Ab^d$ which is equivariant if we let $G$ act on the $i$-th component of $\Ab^d$ with weight $a_i$. By construction this morphism is \'etale at $x$.
    
\end{proof}

For the general case let us fix a presentation $\Xc=[U/G]$ such that $G/\Oc_F$ is fibrewise connected and $H^1(F,G) = \{0\}$. Let $y = (x,\phi) \in I_{\mu_r} \Xc(k)$ with $x\in \Xc(k)$ and $\phi:\mu_r \to \Aut(x)$. Then given $x\in \Xc(k)$, since every $G_k$-torsor over $k$ is trivial by Lang's theorem, we may pick a lift $\tilde{x} \in U(k)$, which induces an identification $\Aut(x) \cong \stab(\tilde{x}) \subset G_k$. We still write $\phi$ for the induced homomorphism $\mu_r\to  \stab(\tilde{x})$ and furthermore fix a lift 
\[\tilde\phi: \mu_r \to G\]
of this homomorphism over $\Spec(\Oc_F)$, which exists e.g. by \cite[3.14]{Ziegler22}. By means of $\tilde\phi$ we obtain an action of $\mu_r$ on $U$ and we write $\widetilde\Xc = [U/\mu_r]$. We denote by
\[p: \widetilde\Xc \to \Xc\]
the resulting quotient morphism.

If we write $\tilde e_r$ for the specialization morphism of $\widetilde\Xc$ we have a commutative diagram
\begin{equation}\label{spdiag}
\xymatrix{
 \widetilde\Xc(\Dr)^\sharp \ar[r]^{\tilde e_r} \ar[d]^{p} & I_{\mu_r} \widetilde\Xc(k)  \ar[d]^{I_{\mu_r}p} \\
\Xc(\Dr)^\sharp \ar[r]^{e_r} & I_{\mu_r} \Xc(k).
}
\end{equation}
Our goal is to describe $e_r^{-1}(x,\phi)$ by means of $\tilde{e}_r$ and $p$. For this recall first from \ref{defi:r-points} that 
\begin{equation}\label{galid}
    \Xc(\Dr)^\sharp  = \left(U(\Oc_L)^\sharp/G(\Oc_L)\right)^\Gamma,\end{equation}
where $L/F$ is a Galois-closure of $F(\varpi^{1/r})=F[\lambda]/(\lambda^r-\varpi)$ and $\Gamma = \Gal(L/F)$. We write $\Oc_r$ for the ring of integers of $F(\varpi^{1/r})$.

The group $\mu_r(L)$ can be identified with the subgroup of $\Gamma$ corresponding to the subfield $F(\mu_r(L)) \subset L$. With this in mind we find:

\begin{lemma}\label{efib}
Under the identification \eqref{galid} we have
\[e_r^{-1}(x,\phi) = U_{x}^{\tilde{\phi}}(\Oc_r)^\sharp / G_{x}^{\tilde{\phi}}(\Oc_r),\]
where
\begin{align*}  
U_{x}^{\tilde{\phi}}(\Oc_r)^\sharp &=  \{ u \in U(\Oc_r)^\sharp \ |\ \forall \zeta \in \mu_r(L)\colon u^\zeta = u \tilde{\phi}(\zeta)   \text{ and } u_{|k} = \tilde{x}\}, \\
    G_{ x}^{\tilde{\phi}}(\Oc_r) &= \{g \in G(\Oc_r)\ | \ \forall \zeta \in \mu_r(L)\colon g^{\zeta} = \tilde{\phi}(\zeta)^{-1}g\tilde{\phi}(\zeta) \  \text{ and } g_{|k} \in \stab(\tilde{x})\}.\end{align*}
\end{lemma}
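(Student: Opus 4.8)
The plan is to unwind the definition of the specialisation map $e_r$ and the identification \eqref{galid}, and then to normalise a twisting cocycle. Recall that a point of $\Xc(\Dr)^\sharp$ is the same as a $\mu_r$-equivariant $\Oc_r$-point of $\Xc$ lying in the $\sharp$-locus, and that under \eqref{galid} it is represented by some $u \in U(\Oc_L)^\sharp$ whose $G(\Oc_L)$-orbit is fixed by $\Gamma$; since automorphism groups vanish over the $\sharp$-locus (recall $\Xc_W \to W$ is an equivalence and the generic fibre of a $\sharp$-point lies in $W$), this $\Gamma$-fixedness is encoded by a \emph{unique} $1$-cocycle $c\colon \Gamma \to G(\Oc_L)$ with $u^\gamma = u\,c_\gamma$, and $e_r$ is computed by restricting to the closed substack $B\mu_r \subset \Dr$. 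First I would trivialise: the $G$-torsor underlying $\xi$ becomes trivial after pullback to $\Spec\Oc_r$, since $H^1(\Oc_r,G) = 1$ — Lang's theorem over the finite residue field together with smoothness and henselianness of $\Oc_r$ — so after replacing $u$ by a $G(\Oc_L)$-translate we may take $u \in U(\Oc_r)^\sharp$. Because $\mu_r(L) \subset \Gamma$ stabilises $\Oc_r$ inside $\Oc_L$ (as $\zeta\varpi^{1/r}$ is again an $r$-th root of $\varpi$, so $\Oc_F[\zeta\varpi^{1/r}] = \Oc_r$) and acts there through the tautological $\mu_r$-action, the restriction $c|_{\mu_r(L)}$ then takes values in $G(\Oc_r)$ and is a $\mu_r$-semilinear cocycle.

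The next step is to pin down the special fibre. Restriction to $B\mu_r$ sends $\xi$ to the class in $I_{\mu_r}\Xc(k)$ represented, with respect to the chosen lift $\tilde x$, by $(u|_k,\bar c|_{\mu_r(L)})$, so the equation $e_r(\xi) = (x,\phi)$ says precisely that $u|_k$ lies in the $G(k)$-orbit of $\tilde x$ and, after conjugating, $\bar c_\zeta = \phi(\zeta) = \tilde\phi(\zeta)|_k$. Using surjectivity of $G(\Oc_r) \to G(k)$ I would replace $u$ by a further $G(\Oc_r)$-translate so that $u|_k = \tilde x$ exactly; then $c|_{\mu_r(L)}$ is a $\mu_r$-semilinear $G(\Oc_r)$-cocycle reducing to $\tilde\phi \bmod \mathfrak m$, and the crux is to re-trivialise once more so that $c_\zeta = \tilde\phi(\zeta)$ identically — equivalently, that a $G$-torsor on the root stack $\Dr$ whose restriction to $B\mu_r$ is the one defined by $\tilde\phi|_k$ is itself the one defined by $\tilde\phi$. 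This I would prove by successive approximation along the filtration of $\ker(G(\Oc_r) \to G(\Oc_r/\mathfrak m^n))$, the obstruction at each stage lying in $H^1$ of $\mu_r$ with coefficients in $\Lie(G)\otimes \mathfrak m^n/\mathfrak m^{n+1}$; here it is essential to read this as cohomology of the diagonalisable, hence linearly reductive, \emph{group scheme} $\mu_r$ — that is, to argue on the stack $\Dr$ rather than with the abstract group $\mu_r(L)$ — so that it vanishes even when $p \mid r$. Once $u$ is brought into $U_x^{\tilde\phi}(\Oc_r)^\sharp$ the bijection is formal: a second such representative $u'$ yields the same $\xi$ iff $u' = ug$ for an automorphism $g \in G(\Oc_r)$ of the trivial torsor compatible with the $\tilde\phi$-twisting, i.e. $g^\zeta = \tilde\phi(\zeta)^{-1}g\tilde\phi(\zeta)$, with $g|_k \in \stab(\tilde x)$ forced by $u|_k = u'|_k = \tilde x$ — that is, $g \in G_x^{\tilde\phi}(\Oc_r)$; conversely every such $g$ preserves $U_x^{\tilde\phi}(\Oc_r)^\sharp$ and the class $\xi$, while every element of $U_x^{\tilde\phi}(\Oc_r)^\sharp$ manifestly determines a $\Dr$-point lying over $(x,\phi)$.

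I expect the main obstacle to be exactly this cocycle normalisation in the wild case $p \mid r$: the naive abstract-group-cohomology argument breaks down (already $H^1(\mathbb{Z}/p,\Lie(G)\otimes k)$ need not vanish, e.g. for trivial or unipotent action), and side-stepping the wildly ramified extension $\Oc_L/\Oc_r$ forces one to phrase the relevant deformation theory on the root stack, using linear reductivity of $\mu_r$. A convenient way to organise this is via the diagram \eqref{spdiag} and the stack $\widetilde\Xc = [U/\mu_r]$ with its tautological $\mu_r$-action: there the twisting cocycle has the required shape by construction, the analogous description of $\tilde e_r^{-1}$ of the point $(\tilde x,\id) \in I_{\mu_r}\widetilde\Xc(k)$ is immediate, and the lemma follows by transporting it along $p$ and $I_{\mu_r}p$, the remaining input being that $p$ identifies $\tilde e_r^{-1}$ of the fibre of $I_{\mu_r}p$ over $(x,\phi)$ with $e_r^{-1}(x,\phi)$.
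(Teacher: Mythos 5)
The key step of the lemma — showing that a representative $u\in U(\Oc_r)$ for a class in $e_r^{-1}(x,\phi)$ can be chosen so that its cocycle equals $\tilde\phi$ exactly — is the one the paper spends all of its effort on, and your proposed argument for it does not go through as written.

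First, a structural error in the setup: you assert that "$\mu_r(L)\subset\Gamma$ stabilises $\Oc_r$ inside $\Oc_L$" and hence that the cocycle $c|_{\mu_r(L)}$ is valued in $G(\Oc_r)$. This is only true when $F$ already contains the $r$-th roots of unity, i.e.\ when $L=F(\varpi^{1/r})$. In general $\zeta\in\mu_r(L)$ sends $\Oc_r=\Oc_F[\varpi^{1/r}]$ to the \emph{conjugate} ring $\Oc_F[\zeta\varpi^{1/r}]$, which is a different subring of $\Oc_L$ (the two fields $F(\varpi^{1/r})$ and $F(\zeta\varpi^{1/r})$ are $\Gamma$-conjugate but unequal when $\zeta\notin F(\varpi^{1/r})$). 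Consequently $\psi(\zeta)=u^{-1}u^{\zeta}$ lives only in $G(\Oc_L)$, exactly as the paper states. Your successive-approximation along $\ker(G(\Oc_r)\to G(\Oc_r/\mathfrak m^n))$ therefore does not have the object it is supposed to be approximating.

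Second, and more fundamentally, your deformation-theoretic normalisation is not made precise and I do not believe it can be completed in the form you sketch. You want to replace abstract-group cohomology of $\mu_r(L)$ by Hochschild cohomology of the linearly reductive group scheme $\mu_r$ so that the wild case $p\mid r$ goes through; this is the right instinct, but translating the semilinear $\mu_r(L)$-cocycle into a group-scheme datum on $\Dr$, and then running an obstruction calculus along $\Dr\otimes\Oc_F/\mathfrak m^n$ and passing from the formal completion back to $\Dr$ itself, are substantial points that you do not address. Moreover the global statement "a $G$-torsor on $\Dr$ whose restriction to $B\mu_r$ is the one defined by $\tilde\phi|_k$ is itself the one defined by $\tilde\phi$" is false as a statement about torsors: $G$-torsors on $\Dr$ with a fixed restriction to $B\mu_r$ are not unique (already for $G=\GL_1$ they are parametrised by the choice of lifts of the characters modulo $r$, cf.\ Theorem \ref{NormVBEq}). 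What the paper actually proves is the refinement that the torsor together with a generic trivialisation whose underlying cocycle reduces correctly mod $\mathfrak m$ is conjugate to $\tilde\phi$, and the mechanism is quite different: the paper establishes the bijection $H^1(\mu_r(L),G(\Oc_L))\cong\Hom(\mu_r(L),G(\Oc_{F(\mu_r(L))}))/G(\Oc_{F(\mu_r(L))})$ by feeding Theorem \ref{mrdes} (the building-theoretic description of $(G(L)/G(\Oc_L))^{\mu_r(L)}$) into the Serre exact sequence of pointed sets, thereby reducing every cocycle to a genuine homomorphism; it then invokes the rigidity result \cite[XI.5.2]{SGA3II} to conclude that two such homomorphisms with $G(k)$-conjugate reductions are integrally conjugate. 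That chain of reductions is exactly what replaces the vanishing of $H^1$ you were hoping for. The concluding reduction to $\widetilde\Xc=[U/\mu_r]$ does not sidestep this: the normalisation problem for the $\mu_r$-cocycle reappears unchanged there.
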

\begin{proof} Let $u \in U_{x}^{\tilde{\phi}}(\Oc_r)^\sharp$. Then the condition $u^\zeta = u \tilde{\phi}(\zeta)$ for all $\zeta \in \mu_r(L)$ implies, that the composition $\Spec(\Oc_r) \to U \to \Xc$ is $\mu_r$-invariant and that the induced morphism between automorphism groups of closed points is given by $\phi$. Thus, we have a map
\begin{align*}U_{x}^{\tilde{\phi}}(\Oc_r)^\sharp &\to e_r^{-1}(x,\phi).
\end{align*}
Conversely, given $[u] \in e^{-1}(x,\phi) \subset (U(\Oc_L)/G(\Oc_L))^\Gamma$, we may pick a representative $u \in U(\Oc_r)$, which defines a 1-cocycle $\psi: \mu_r(L) \to G(\Oc_L)$ by the requirement
$u^\zeta = u \psi(\zeta)$ for all $\zeta \in \mu_r(L)$. Next we use that we have a bijection in group cohomology
\[ H^1(\mu_r(L), G(\Oc_L)) = \Hom(\mu_r(L),G(\Oc_{F(\mu_r(L))}) )/ G(\Oc_{F(\mu_r(L))}). \]

Indeed, by \cite[Proposition I.36]{Se07} we have an exact sequence of pointed sets

\[1 \to G(\Oc_{F(\mu_r(L))}) \to G(F(\mu_r(L))) \to  \left(G(L)/G(\Oc_L) \right)^{\mu_r(L)} \to H^1(\mu_r(L), G(\Oc_L)) \to 1,\]
and by Theorem \ref{mrdes} we can identify $\left(G(L)/G(\Oc_L) \right)^{\mu_r(L)}$ with $B(G_{F(\mu_r(L))})_{\frac{1}{r}\BZ}$. Taking the quotient by $G(F(\mu_r(L)))$ identifies this subset with the Weyl-group orbits of the $\frac{1}{r}\BZ$-points of a fixed apartment, which are in bijection with $\Hom(\mu_r(L),G(\Oc_{F(\mu_r(L))}) )/ G(\Oc_{F(\mu_r(L))})$.

Thus, we may assume that $\psi$ is an actual group homomorphism. Its restriction to the special fibre is $G(k)$-conjugate to $\phi$ by assumption, which by \cite[XI.5.2]{SGA3II} implies that $\psi$ and $\tilde\phi$ are $G(\Oc_{F(\mu_r(L))})$-conjugate. So we may assume that $\psi = \tilde \phi$, which shows surjectivity of $U_{x}^{\tilde{\phi}}(\Oc_r)^\sharp \to e_r^{-1}(x,\phi)$. 
The identification of the fibre with $ G_x^{\tilde{\phi}}(\Oc_r)$ is a direct computation using the fact that $G(\Oc_L)$ acts freely on $U(\Oc_L)^\sharp$. 
\end{proof}

By construction the measure $\mu_{orb}$ on $ \Xc(\Dr)^\sharp $ is given by integrating local forms $|\omega_i|$ on an open covering of the schematic locus $W(F)$. Pulling back the $\omega_i$ along the $G$-torsor $U \to \Xc$ and wedging with a translation-invariant form $\omega_G$ on $G$ we obtain $G$-invariant forms $\tilde{\omega}_i$ locally on $U$. In particular integrating locally against the $\tilde{\omega}_i$ and $\omega_G$ defines gives the orbifold measures on $U/\mu_r$ and $G/\mu_r$ respectively. Here we consider the action of $\mu_r$ on $G$ by conjugation via $\tilde \phi$. 

By Fubini and Lemma \ref{efib} we thus have
\[ \int_{U_{ x}^{\tilde{\phi}}(\Oc_r)^\sharp } \mu_{orb} = \int_{e_r^{-1}(x,\phi) }\mu_{orb} \int_{G_{x}^{\tilde{\phi}}(\Oc_r)} \mu_{orb}. \]

Finally we notice that we can compute the volume of $U_{x}^{\tilde{\phi}} $ using Lemma \ref{locc}, since as in Lemma \ref{efib} there is an identification $U_{x}^{\tilde{\phi}}(\Oc_r)^\sharp /\mu_r(F)  = \tilde{e}^{-1}_r (\tilde{x},\id)$. Since the action of $\mu_r(F)$ is free we get 
\[ \int_{U_{ x}^{\tilde{\phi}}(\Oc_r)^\sharp } \mu_{orb} = q^{-w(T_{\tilde x}U)}.  \] 
The same applies to $G_{x}^{\tilde{\phi}}(\Oc_r)$, which specializes by definition onto $\stab(\phi)(k) \cap \stab(\tilde{x})(k)$. Thus, we finally obtain
 \[\mu_{orb}(e_r^{-1}(x,\phi)) =  \frac{ q^{-w(x,\phi)} }{|\stab(\phi)(k) \cap \stab(\tilde{x})(k)|}.\] 

This finishes the proof of Theorem \ref{orbiorbi} since $|\Aut(x,\phi)(k)| = |\stab(\phi)(k) \cap \stab(\tilde{x})(k)|$, see for example \cite[Lemma 2.10]{GWZ20b}.

\subsection{Integration on the base}\label{iob}

Let $\Xc$ be a smooth linear quotient stack over $\Oc_F$ and $\Xc \to X$ as in Situation \ref{sit:good}.

We furthermore assume:
\begin{enumerate}
\item $\Xc \to \Spec(\Oc_F)$ is $S$-complete.
\item The map 
\[ \pi_{\infty}\colon \Xc(\Dinfty)^\sharp \to X(\Oc_F)^\sharp\]
is surjective.
\item The canonical bundle $K_{\Xc}$ is Zariski-locally trivial.     
\end{enumerate}

Our goal is to express integrals of certain functions $f:X(\Oc_F)^\sharp \to \BC$ in terms of the twisted inertia stack $I_{\hat \mu}\Xc$. We note that the maps $e_r\colon \Xc(\Dr) \to I_{\mu_r} \Xc(k)$ fit together to a map $e\colon \Xc(\Dinfty) \to I_{\hat \mu} \Xc(k).$

\begin{definition} A function $f:X(\Oc_F)^\sharp \to \BC$ is admissible, if it is integrable with respect to $\mu_{orb}$ and the composition $f  \circ \pi_{\infty}$ factors through the specialization map $e: \Xc(\Dinfty)^\sharp \to I_{\hat \mu}\Xc(k)$ i.e. if there exists a function $\overline{f}: I_{\hat \mu}\Xc(k) \to \BC$ such that $f  \circ \pi_{\infty} = \overline{f} \circ e$.
\end{definition}

\begin{theorem}\label{inbase} Let $f:X(\Oc_F)^\sharp \to \BC$ be an admissible function and $\overline{f}:I_{\hat \mu}\Xc(k) \to \BC$ the induced function. Then for any $x \in X(k)$ the generating series $\sum_{r\geq 1} \sum_{y \in I_{\mu_r}\Xc_x(k)}\overline{f}(y)\frac{q^{-w(y)}}{|\Aut(y)(k)|}T^r$ is a rational function $Q_{x,f}(T)$ in $T$ of degree $0$ and we have
\[ \int_{B(x)}f \mu_{orb} = -\lim_{T\to \infty}Q_{x,f}(T),  \]
    where $B(x) = \{ \tilde x \in X(\Oc_F)^\sharp \ |\ \tilde{x}_{|k} = x\}$ and $\Xc_x \subset \Xc$ is the fibre over $x$. In particular,
    \[\int_{X(\Oc_F)^\sharp} f \mu_{orb} = -\lim_{t\to \infty}Q_f(T),\]
    where $Q_f(T) = \sum_{x \in X(k)} Q_{x,T}(T)$.
    
\end{theorem}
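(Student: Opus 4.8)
The plan is to reduce the integral over the compact--open fibre $B(x)$ to a weighted count of twisted $\Oc_F$-points, to re-express that count fibrewise over $B(x)$ in terms of the Bruhat--Tits building (using $S$-completeness), and to finish with Ehrhart reciprocity. Fix $x\in X(k)$ and set $\Xc(\Dr)^\sharp_x\defeq\pi_r^{-1}(B(x))$. The commuting diagram $\Dr\to\Xc\to X$ restricted along $B\mu_{r}\to\Spec k$ shows that $e_r$ sends $\Xc(\Dr)^\sharp_x$ into $I_{\mu_r}\Xc_x(k)$ and that $\Xc(\Dr)^\sharp_x=\bigsqcup_{y\in I_{\mu_r}\Xc_x(k)}e_r^{-1}(y)$. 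Because $\Xc$ is of finite type over $\Oc_F$ and $k$ is finite, $I_{\hat\mu}\Xc(k)$ is finite, so $\overline f$ is bounded; by assumption (ii), for every $x'\in X(\Oc_F)^\sharp$ and every $y\in\pi_\infty^{-1}(x')$ one has $f(x')=\overline f(e(y))$, hence $f$ is bounded on $X(\Oc_F)^\sharp$ and, by admissibility, $\overline f\circ e_r=f\circ\pi_r$ on $\Xc(\Dr)^\sharp$. Using Theorem \ref{orbiorbi} I would write the $r$-th coefficient of the generating series as
\[
a_r=\sum_{y\in I_{\mu_r}\Xc_x(k)}\overline f(y)\,\frac{q^{-w(y)}}{|\Aut(y)(k)|}=\int_{\Xc(\Dr)^\sharp_x}\overline f\circ e_r\,\mu_{orb}=\int_{\Xc(\Dr)^\sharp_x}f\circ\pi_r\,\mu_{orb},
\]
and, since $\pi_r\colon\Xc(\Dr)^\sharp_x\to B(x)$ is a finite covering (Proposition \ref{prop:r-finite}) along which $\mu_{orb}$ is pulled back, rewrite this as $a_r=\int_{B(x)}f(x')\,N_r(x')\,\mu_{orb}(x')$ with $N_r(x')\defeq|\pi_r^{-1}(x')|$.

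Next I would bring in the building. Every $x'\in B(x)$ lies in $W(F)$, hence has trivial automorphism group in $\Xc$, so Construction \ref{EmbCons} applies; since $\Xc$ is $S$-complete, Theorem \ref{conhul} (together with Theorem \ref{Polytopes}) provides a single $r_0\geq1$ such that for all $x'$ and all lifts $\tilde x'$ the set $\bigcup_{r}\pi_r^{-1}(x')$ is identified by $b_{\tilde x'}$ with the rational points of a bounded convex rational polytope $P(x')\subset B(G_F)$, itself the convex hull of finitely many points of $b_{\tilde x'}(\pi_{r_0}^{-1}(x'))$, in such a way that $\pi_r^{-1}(x')$ corresponds to $P(x')\cap B(G_F)_{\frac1r\BZ}$; in particular $N_r(x')=|P(x')\cap B(G_F)_{\frac1r\BZ}|$ and $P(x')\neq\emptyset$ by (ii). I would then argue that the Ehrhart series $E_{x'}(T)\defeq\sum_{r\geq1}N_r(x')T^r$ is locally constant in $x'\in B(x)$: each $N_r$ is locally constant (Proposition \ref{prop:r-finite} and Corollary \ref{cor:loc-constant}), and the finitely many generating vertices $b_{\tilde x'}(\pi_{r_0}^{-1}(x'))$ vary continuously in $x'$ while lying in $B(G_F)_{\frac1{r_0}\BZ}$, which is discrete in $B(G_F)$ by local finiteness of the building; hence $P(x')$, and with it $E_{x'}$, is locally constant up to a change of lift, which by Lemma \ref{bBP} translates $P(x')$ by an element of $G(F)$ — and such a translation preserves $B(G_F)_{\frac1{r_0}\BZ}$ (it sends the hyperspecial point $\theta_0$ to an integral point) and thus the count. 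By compactness of $B(x)$ only finitely many series $E_1,\dots,E_N$ occur, on compact--open pieces $B(x)=\bigsqcup_j B_j$.

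To conclude, I would observe that the $N_r(x')$ grow polynomially in $r$ (polytope of bounded dimension), so for $|T|<1$ the sum $\sum_{r\geq1}$ and the integral $\int_{B(x)}$ may be interchanged (dominated convergence, $f$ bounded, $\mu_{orb}(B(x))\leq\mu_{orb}(X(\Oc_F)^\sharp)<\infty$ by Corollary \ref{finvol}), giving
\[
Q_{x,f}(T)=\sum_{r\geq1}a_r\,T^r=\int_{B(x)}f(x')\,E_{x'}(T)\,\mu_{orb}(x')=\sum_{j}\Big(\int_{B_j}f\,\mu_{orb}\Big)E_j(T),
\]
a finite $\BC$-linear combination of rational functions with poles only at roots of unity, hence a rational function of degree $0$ (finite limit at $T=\infty$). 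Each $E_j$ is the Ehrhart series of a nonempty bounded convex rational polytope in $B(G_F)$; covering such a polytope by finitely many apartments and combining ordinary Ehrhart reciprocity with inclusion--exclusion — the alternating sum of the resulting $-1$'s being the Euler characteristic of the contractible polytope — gives the standard fact $\lim_{T\to\infty}E_j(T)=-1$. Hence $-\lim_{T\to\infty}Q_{x,f}(T)=\sum_j\int_{B_j}f\,\mu_{orb}=\int_{B(x)}f\,\mu_{orb}$, and since $X(\Oc_F)^\sharp=\bigsqcup_{x\in X(k)}B(x)$ with $X(k)$ finite, summing over $x$ and pulling the limit through the finite sum yields $\int_{X(\Oc_F)^\sharp}f\,\mu_{orb}=-\lim_{T\to\infty}Q_f(T)$.

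The step I expect to be the main obstacle is the local-constancy claim in the second paragraph: upgrading the pointwise building description of Theorems \ref{Polytopes} and \ref{conhul} to a statement uniform along the compact set $B(x)$, so that $Q_{x,f}$ is visibly a finite combination of rational functions and the $T\to\infty$ limit can be taken term by term. This needs both the continuity of the embeddings $b_{\tilde x'}$ in families and the discreteness of $B(G_F)_{\frac1{r_0}\BZ}$; it is also where $S$-completeness is indispensable, since without it $\bigcup_r\pi_r^{-1}(x')$ is only a finite \emph{union} of polytopes and extracting the clean value $-1$ at infinity would require tracking the combinatorics of that union.
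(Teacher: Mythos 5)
Your architecture is the same as the paper's: rewrite the coefficient $a_r$ as $\int_{B(x)}f\,N_r\,\mu_{orb}$ using Theorem \ref{orbiorbi} and the \'etaleness of $\pi_r$, identify $N_r(x')$ as an Ehrhart count via the building embedding, argue that the Ehrhart series is locally constant in $x'$, and then pass the $T\to\infty$ limit through a decomposition of $B(x)$. The place where your proposal has a genuine gap is exactly the step you flagged: local constancy of $E_{x'}(T)$. You argue this by claiming the generating vertices $b_{\tilde x'}(\pi_{r_0}^{-1}(x'))$ ``vary continuously'' in $x'$ while lying in the discrete set $B(G_F)_{\frac{1}{r_0}\BZ}$, but there is no canonical family of lifts $\tilde x'$ varying continuously with $x'$, and nothing in the paper (nor in your argument) establishes such a continuity of the vertex set. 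The paper avoids this entirely in Lemma \ref{locco}: by Stanley's theorem and Theorems \ref{Polytopes}/\ref{conhul} every $J_x(T)=\sum_r N_r(x)T^r$ has the fixed form $g_x(T)/(1-T^N)^{d+1}$ with $N$ and $d$ uniform in $x$, hence is determined by a fixed finite number of its coefficients; since each coefficient $x'\mapsto N_r(x')$ is locally constant (Proposition \ref{prop:r-finite} together with Corollary \ref{cor:loc-constant}), so is $J_x$. This is a purely combinatorial reduction that sidesteps any claim about continuity of the building embedding in families; I would replace your vertex-continuity argument by it.

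A second, smaller issue: you appeal to compactness of $B(x)$ to obtain a \emph{finite} decomposition $B(x)=\bigsqcup_j B_j$ on which the Ehrhart series is constant, so that $Q_{x,f}$ is a finite linear combination of rational functions. Compactness of $B(x)$ is not established and need not hold in general (it is an open subset of the residue fibre, not necessarily the whole fibre). The paper instead works with a \emph{countable} cover and shows that the partial sums $P_k(T)=\sum_{i\leq k}J_i(T)\int_{U_i}f\,\mu_{orb}$ converge coefficient by coefficient; because all the $J_i$ share the fixed denominator $(1-T^N)^{d+1}$, the numerators are polynomials of bounded degree whose coefficients converge, so the limit is again a rational function with that denominator, and one can then take $T\to\infty$ term by term. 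You should either justify compactness in the cases of interest or switch to this coefficient-wise convergence argument. The remaining steps in your proposal (the use of Theorem \ref{orbiorbi}, the identification $a_r=\int_{B(x)}f\,N_r\,\mu_{orb}$, the invocation of Ehrhart reciprocity to get $\lim_{T\to\infty}E_j(T)=-1$, and the final summation over $x\in X(k)$) match the paper.
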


\begin{rmk}\label{gerf} The main example of  admissible functions we have in mind are functions coming from $\BG_m$-gerbes on $\Xc$. Given such a gerbe $\alpha \in H^2(\Xc,\BG_m)$ and a point $x\in  X(\Oc_F)^\sharp$ we may consider the generic point $x_{|F}$ as an element of $\Xc(F)$ and then the pullback $x_{|F}^*\alpha$ can be identified with an element in $\BQ/\BZ$ via the isomorphism $H^2(F,\BG_m) \cong \BQ/\BZ$ given be the Hasse invariant. Then the composition with $\pi_\infty$ of the function given by $f_\alpha(x) = e^{2 \pi i x_{|F}^*\alpha}$ factors through the specialization map $e$ by\cite[Lemma 3.10]{GWZ20b}.
\end{rmk}
 
\begin{proof} Consider the projections $\pi_r: \Xc(\Dr)^\sharp \to X(\Oc_F)^\sharp$ and the generating series
\begin{equation}\label{genser} E(T) = \sum_{r \geq 1} \int_{\pi_r^{-1}B(x)} f \circ \pi_r \mu_{orb} T^r= \sum_{r\geq 1} \sum_{y \in I_{\mu_r}\Xc_x(k)}\overline{f}(y)\frac{q^{-w(y)}}{|\Aut(y)(k)|}T^r,\end{equation}
where we used Theorem \ref{orbiorbi} for the equality. 

By Lemma \ref{locco} below there is an open cover $B(x) = \sqcup_{i \geq 1} U_i$ and rational functions $J_i(T) = \frac{g_i(T)}{(1-T^N)^{d+1}} \in \BZ(T)$ of degree $0$ with $\lim_{T \to \infty} J_i(T) = -1$ such that for all $i$ and $z \in U_i$ we have
\[ \sum_{r \geq 1}  |\pi_r^{-1}(z)| T^r = J_i(T).\] 
Since the projections $\pi_r$ are étale by Proposition \ref{prop:r-finite} we find for every $i$ the equality
\[  \sum_{r \geq 1} \left( \int_{\pi_r^{-1}U_i} f \circ \pi_r \mu_{orb} \right) T^r  =  J_i(T) \int_{U_i} f \mu_{orb}.\]

Now, the partial sums $P_k(T)= \sum_{i=1}^k J_i(T)\int_{U_i}f \mu_{orb}$ converge coefficient by coefficient to $E(T)$. On the other hand, $\left(\int_{U_i}f \mu_{orb}\right)J_i(T)= \left(\int_{U_i}f \mu_{orb}\right)\frac{g_i(T)}{(1-T^{N})^{d+1}}$, and thus the $P_k(T)$ also converge coefficient by coefficient to a rational function of degree $0$. Thus, $E(T)$ is indeed rational and taking $\lim_{T \to \infty}$ on both sides of \eqref{genser} gives the desired formula.
\end{proof}

\begin{lemma}\label{locco} There exist an integer $N\geq 1$, a countable open cover $X(\Oc_F)^\sharp = \sqcup_i U_i$ and rational functions $J_i(T) = \frac{g_i(T)}{(1-T^N)^{d+1}} \in \BZ(T)$ of degree $0$ with $\lim_{T \to \infty} J_i(T) = -1$ such that for all $i$ and $x \in U_i$ we have
\[ \sum_{r \geq 1}  |\pi_r^{-1}(x)| T^r = J_i(T).\] 
\end{lemma}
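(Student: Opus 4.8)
The plan is to combine the building-theoretic description of the fibres (Theorem \ref{conhul}) with the theory of Ehrhart series of rational polytopes. By Theorem \ref{conhul}, there is an integer $r_0 \geq 1$ such that for every $x \in \pi^{-1}(W)(F)$ with trivial automorphism group and every lift $\tilde x$ of $x$, the union $\bigcup_{r\geq 1}\pi_r^{-1}(x)$ is identified (via $b_{\tilde x}$) with $P_{\BQ}$, where $P = P(\tilde x) \subset B(G_F)$ is a rational polytope expressible as the convex hull of finitely many points of $b_{\tilde x}(\CX(\Dr[r_0])_x)$. Under this identification, $\pi_r^{-1}(x)$ corresponds to $P \cap B(G_F)_{\frac{1}{r}\BZ}$, so $|\pi_r^{-1}(x)| = \#(P \cap B(G_F)_{\frac{1}{r}\BZ})$. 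Choosing an apartment $A(T) \cong X_*(T)_{\BR} \cong \BR^d$ containing $P$ and a lattice $X_*(T) \cong \BZ^d$, the set $B(G_F)_{\frac{1}{r}\BZ}\cap A(T)$ is the dilate $\frac{1}{r}\BZ^d$, so $|\pi_r^{-1}(x)| = \#(rP \cap \BZ^d)$ is the Ehrhart function of the rational polytope $P$. By the Ehrhart–Macdonald theory for rational polytopes, the generating series $1 + \sum_{r\geq 1}\#(rP\cap \BZ^d)T^r$ is a rational function of the form $\frac{h(T)}{(1-T^N)^{d+1}}$, where $N$ is any common denominator of the vertex coordinates of $P$; it has degree $0$ as a rational function and its value at $T=\infty$ is $(-1)^{?}$ — more precisely, by Ehrhart reciprocity the limit as $T\to\infty$ of $\sum_{r\ge1}\#(rP\cap\BZ^d)T^r$ equals $-\sum_{r\ge 1}\#(\mathrm{int}(rP)\cap \BZ^d)T^r$ evaluated formally, and since $P$ is a nonempty bounded convex body the constant term in this expansion gives $\lim_{T\to\infty} J(T) = -1$. (Here I use that the relevant polytope is always nonempty of the correct full dimension in its affine span; if $P$ is lower-dimensional one works inside its affine hull, and the leading power of $(1-T^N)$ drops accordingly, but one may pad to exponent $d+1$ harmlessly.)

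The first step is to fix, via Theorem \ref{Polytopes}, a single integer $N$ (a multiple of $r_0$) that simultaneously serves as the period for all the rational functions arising: by Lemma \ref{VertexBound} all vertices of all the polytopes $P(\tilde x)$ have coordinates in $\frac{1}{N}\BZ$ once $N$ is chosen divisible by the relevant bounded set of denominators coming from the finitely many affine charts $U_i$ of the presentation. The crucial point making $N$ uniform is that the linear inequalities cutting out $P(\tilde x)$ in each apartment have the fixed coefficient matrix $A$ coming from the characters $\chi_j\colon T\to\BG_m$ of the embedding $U_i \hookrightarrow \BA^n$, with only the right-hand sides $-v(\tilde x_j)$ varying with $x$; so Lemma \ref{VertexBound} applies with one $A$ and yields one $N$.

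The second step is to produce the open cover $X(\Oc_F)^\sharp = \sqcup_i U_i$ on which the Ehrhart data is locally constant. The combinatorial type of $P(\tilde x)$ — i.e. which facet inequalities are active, hence the vertices and the Ehrhart quasi-polynomial — depends only on the tuple of valuations $(v(\tilde x_j))_j$, which is locally constant on $W(F)$; combined with the locally constant behaviour of $\sep = |\pi_r^{-1}(\cdot)|$ from Corollary \ref{cor:loc-constant} and Proposition \ref{prop:r-finite}, and a further subdivision to handle which affine chart $U_i$ contains a chosen lift and to trivialize the $G$-torsor $U|_x$ (possible after shrinking, by smoothness and Hensel), one obtains a countable partition into compact-open pieces $U_i$ on each of which $|\pi_r^{-1}(x)|$ is independent of $x$ for every $r$. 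On each $U_i$ we then set $J_i(T) = \sum_{r\geq 1}|\pi_r^{-1}(x)|T^r$ for any (hence every) $x\in U_i$, and the Ehrhart theory of the previous paragraph identifies it with $\frac{g_i(T)}{(1-T^N)^{d+1}}$ of degree $0$ with $\lim_{T\to\infty}J_i(T) = -1$.

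The main obstacle I expect is \emph{uniformity} — getting one $N$ and a genuinely locally constant description, rather than something that merely works pointwise. Theorem \ref{Polytopes} already does the heavy lifting here (it is stated with a uniform $N$ valid for all $x$ and all lifts), so the real work is the bookkeeping: checking that the Ehrhart series of a rational polytope depends only on its combinatorial-and-metric type, that this type is locally constant in $x$, and that the passage from the building-theoretic count to $|\pi_r^{-1}(x)|$ is compatible with the $\sharp$-decorations and the choice of lift (using Lemma \ref{bBP} to handle the $G(F)$-ambiguity in $\tilde x$, which only translates $P$ and so does not change its Ehrhart series). The statement $\lim_{T\to\infty}J_i(T)=-1$ is the Ehrhart reciprocity input and is the one genuinely nontrivial combinatorial fact; everything else is assembling results already in hand.
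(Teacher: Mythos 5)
Your overall strategy is the same as the paper's: reduce via Theorem \ref{conhul} and Construction \ref{EmbCons} to Ehrhart-type counting of $\frac{1}{r}\BZ$-points in a bounded polytope, invoke the rationality and $\lim_{T\to\infty}=-1$ facts (the paper cites Stanley \cite{St80}), get the uniform $N$ from Theorem \ref{conhul}, and use the local constancy of $x\mapsto|\pi_r^{-1}(x)|$ from Proposition \ref{prop:r-finite} to obtain the cover. But there is a genuine gap in the step ``Choosing an apartment $A(T)\cong X_*(T)_\BR\cong\BR^d$ containing $P$.'' A rational polytope $P\subset B(G_F)$ in the paper's sense is a \emph{geodesic} convex hull inside the building, and such a set need not lie in any single apartment: already for $\GL_2$, where apartments are geodesic lines in a tree, the convex hull of three non-collinear rational points is a tripod, which fits in no apartment. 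So $|\pi_r^{-1}(x)|$ is not literally the Ehrhart count of one Euclidean polytope. The paper's proof handles this by covering $P$ by its intersections $P_j=P\cap A(T_j)$ with finitely many apartments (these are genuine Euclidean rational polytopes), applying Stanley's theorem to each $P_j$ and to each intersection $\bigcap_{j\in S}P_j$, and combining by inclusion--exclusion. The convexity of $P$ guaranteed by Theorem \ref{conhul} is then used precisely at this point: the alternating sum of the limits $(-1)$ over non-empty intersections equals $-\chi(\text{nerve})=-\chi(P)=-1$ because the $P_j$ and all their intersections are convex (hence contractible or empty) and $P$ itself is contractible. Without convexity this sum could be any integer, so your appeal to Theorem \ref{conhul} is doing more work than you credit it for, and the inclusion--exclusion step cannot be skipped.

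A secondary, non-fatal remark: your construction of the cover $\{U_i\}$ by tracking the combinatorial type of $P(\tilde x)$ (active facets, valuations $v(\tilde x_j)$, chart membership, torsor trivialization) is more laborious than needed, and re-proves parts of Theorem \ref{Polytopes}. The paper's shortcut is cleaner: once one knows $J_x(T)$ has the fixed denominator $(1-T^N)^{d+1}$, it is determined by a fixed finite number of its Taylor coefficients; each coefficient $x\mapsto|\pi_r^{-1}(x)|$ is locally constant by Proposition \ref{prop:r-finite}; and $X(\Oc_F)^\sharp$ is second-countable and totally disconnected, so one can partition it into countably many clopen sets on which these finitely many functions are all constant. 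This sidesteps any discussion of when two points give the same polytope, and also makes the worry about the $G(F)$-ambiguity in $\tilde x$ (which you address via Lemma \ref{bBP}) unnecessary at this stage. (Also note that with $G=\GL_n$, Hilbert 90 gives $H^1(F,\GL_n)=0$, so every $U|_x$ is already trivial; no local subdivision is needed for that.)
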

\begin{proof}
    We choose a presentation $\CX=[U/G]$ with $G=\GL_n$ for some $n$, so that we may apply Construction \ref{EmbCons} to all points of $\CX(F)$.

A theorem of Stanley \cite{St80} states that for any rational convex polytope $P \subset \BR^d$, the series

\[J_P(T) = \sum_{r \geq 1} |\frac{1}{r}\BZ^d \cap P|T^r, \]
is a rational function of the form $\frac{g(T)}{(1-T^\delta)^{d+1}}$, where $\delta \geq 1$ is an integer such that $\delta P$ is an integral polytope. Furthermore $J_P(T)$ is of degree $0$ and $\lim_{T \to \infty} J_P(T) = -1$. 

Now, given $x \in X(\Oc_F)^\sharp$, and any lift $\tilde x$ of $x|_F$ to $U$, it follows from Theorem \ref{conhul} and an inclusion-exclusion argument that the same holds for the series
\[J_x(T) = \sum_{r \geq 1} |B(G_F)_{\frac{1}{r}\BZ^d} \cap b_{\tilde x}(\Dinfty)|T^r= \sum_{r \geq 1} |\pi_r^{-1}(x)| T^r.\]
Furthermore the denominator of $J_x(T)$ may be chosen to be $(1-T^N)^{d+1}$, with $N$ independent of $x$ as in Theorem \ref{conhul}. Thus, $J_x(T)$ is determined by a finite number of its coefficients, and that number is independent of $x$. Since for every $r \geq 1$ the function $x \mapsto |\pi_r^{-1}(x)|$ is locally constant by Proposition \ref{prop:r-finite} and $ X(\Oc_F)^\sharp$ is second-countable and totally disconnected we obtain the claim.
\end{proof}

We give two examples on how to compute the formulas in Theorem \ref{inbase}.

\begin{example}\label{DMcase} Let $\Xc$ be a smooth and tame Deligne-Mumford stack over $\Oc_F$ (Zariski-locally) of the form $\Xc \cong [U/\Gamma]$ with $\Gamma$ a finite étale group scheme. Then we can apply Theorem \ref{inbase} to the coarse moduli space map $\Xc \to X$. Indeed conditions (i) and (iii) are automatic and (ii) follows since any $X(\Oc_F)^\sharp$ lifts after a tame extension and up to an unramified extension they are all obtained by adjoining a root of a uniformiser. 

Then there are only a finite numbers of isomorphism classes $y=(x,\phi) \in I_{\hat{\mu}}\Xc(k)$, each of which contributing $\frac{q^{-w(y)}}{|\Aut(y)(k)|}$ to the coefficient of $T^r$, whenever the order of $\phi$ divides $r$. Theorem \ref{inbase} thus gives
\[ \int_{X(\Oc_F)^\sharp}\mu_{orb} =  -\lim_{T\to \infty}  \sum_{y = (x,\phi) \in I_{\mu_r}\Xc(k)} \frac{q^{-w(y)}}{|\Aut(y)(k)|}\sum_{r' \geq 1} T^{\ord(\phi)r'} = \sum_{y  \in I_{\mu_r}\Xc(k)} \frac{q^{-w(y)}}{|\Aut(y)(k)|}, \]
in agreement with the orbifold formula for DM-stacks as for example in \cite[Theorem 2.21]{GWZ20b}.
\end{example}

\begin{example} Consider $\Xc = [\BA^2/\BG_m]$ where $\BG_m$ acts on $\BA^2$ with weights $(1,-1)$. Then $\pi:\Xc \to X = \BA^1$ defined by $\pi(x,y) = xy$ satisfies all conditions (i)-(iii). The fibre $\Xc_0(k)$ consists of three classes corresponding to the orbits $(1,0), (0,1)$ and $(0,0)$ and only the last one has non-trivial automorphism group $\BG_m$. For any $\phi:\mu_r \to \BG_m$ one computes $w(0,\phi) = 1$ if $\phi \equiv 1$ and $w(0,\phi)  = 0$ if $\phi$ is non-trivial. We thus get 
\[ \int_{B(0)}  \mu_{orb} = 2q^{-1}  -\lim_{T\to \infty}\sum_{r\geq 1}  \frac{q^{-1}}{q-1} + (r-1) \frac{1}{q-1}T^r = 2q^{-1} +\frac{q^{-1}}{q-1} - \frac{1}{q-1} = q^{-1}, \]
in agreement with the standard measure of a $1$-dimensional ball around $0$. 
\end{example}

\section{Plethystic identities for $k$-linear stacks}\label{pikls}

\subsection{$\lambda$-rings of counting functions}

We follow essentially \cite{Mo19}. Let $k \cong \BF_q$ be a fixed finite field and write $k_n$ for the degree $n$ extension of $k$ in a fixed algebraic closure $\bar{k}$. Let $\sigma$ be a generator of the absolute Galois group of $k$. In the following we consider a set $S$ endowed with a continuous $\widehat{\mathbb{Z}} = \Gal(\bar{k}/k)$-action. We denote for any subset $X \subset S$ as above the fixed-point set $X^{\sigma^n}$ by $X_n$.
We call $S$ as above $\sigma$-finite if the set $S_n = S^{\sigma^n}$ is finite for every integer $n \geq 1$.

For example, if $\BM /k$ is an Artin stack over $k$, we have an action of $\widehat{\mathbb{Z}}$ on the set of isomorphism classes $\BM(\bar{k}).$\footnote{By abuse of notation we will write $\BM(T)$, where $T/k$ is any scheme, both for the groupoid and the set of isomorphism classes in the groupoid.} 
As above, we denote by $\BM(\bar{k})_n$ the set of $\sigma^n$-fixpoints in $\BM(\bar{k})^{\iso}$. 
Note that there is a surjective map:
$$ \BM(k_n) \to \BM(\bar{k})_n =\BM(\bar{k})^{\sigma^n}.$$
\begin{rmk}\label{rmk:bij}
For moduli stacks of objects in abelian categories, bijectivity of this map follows from \cite[Lemmas 3.4 \& 3.5]{Mo19}.    
\end{rmk}

In order to define a suitable ring of functions on a $\widehat{\mathbb{Z}}$-set $S$, recall the volume ring introduced in \cite[Section 2.2]{Mo19} 
\[ \CV = \prod_{n\geq 1} \overline{\BQ}.\]
We equip $\CV$ with Adams operations $\psi_m:\CV \to \CV$ by means of the formula $\psi_m(v_n)_{n\geq 1} = (v_{mn})_{n\geq 1}$. By \cite[Remark 2.1]{Mo19} this defines a $\lambda$-ring structure on $\CV$.

\begin{definition} For $S$ as above the ring of counting functions $\mcf(S)$ on $S$ is the set of maps 
\[ \mcf(S) = \{f: S \to \CV \mid \text{For all } n\geq 1\text{ and } x \in S: f(x)_n=0 \text{ unless } x^{\sigma^n}=x\},  \]
equipped with component-wise addition $+$ and multiplication $*$. For any equivariant morphism of $\phi: S_1 \to S_2$ of $\widehat{\mathbb{Z}}$-sets we get a pullback 
\[ \phi^*:   \mcf(S_2) \rightarrow  \mcf(S_1), \ \ \ \ f \mapsto f \circ \phi.\]
If furthermore $\phi$ has $\sigma$-finite fibres we also have a proper pushforward 
\[ \phi_!:  \mcf(S_1) \rightarrow  \mcf(S_2), \ \ \ \ \phi_!(f)(x) = \sum_{y \in \phi^{-1}(x)} f(y). \]
Both $\phi^*$ and $\phi_!$ are group homomorphism with respect to $+$, but only $\phi^*$ is a ring homomorphism for $*$.
\end{definition}

The functions we are interested in will usually be motivic in the following sense. Let $\BM$ be an Artin stack as above and $K_0(\Var/\BM)$ be the Grothendieck ring of varieties over $\BM$ generated by representable finite type morphisms $\CX \to \BM$ subject to the usual cut-and-paste relations, see \cite[Section 6.1]{MR19} for a precise definition. We denote the ring of counting functions of $\BM(\bar{k})$ by $\mcf(\BM)$. There is a ring homomorphism
\[K_0(\Var/\BM) \rightarrow \mcf(\BM)\]
defined as follows. Given $\pi: \CX \to \BM$ we may consider the function 
\begin{align*} |\CX|: \BM(\bar{k}) &\rightarrow \CV \\
  x &\mapsto (|\pi^{-1}(x)(\bar{k})_n|)_{n\geq 1}. 
\end{align*}
Notice that by definition $\pi^{-1}(x)$ is a finite type scheme and $|\pi^{-1}(x)(\bar{k})_n|$ denotes the cardinality of $|(\pi^{-1}(x)(\bar{k}))^{\sigma^n}|$ if $x$ is fixed by $\sigma^n$ and $0$ else. We write $\BL \in \mcf(\BM)$ for $|\BA^1\times \BM|$.

\begin{rmk} The reason we work with $\mcf(\BM)$ instead of $K_0(\Var/\BM)$ comes from the simple observation that, by definition, counting functions on $\BM$ agree if they agree on all $\bar{k}$-points of $\BM$ while elements $K_0(\Var/\BM)$ only agree if they agree after pullback to every schematic point \cite[Lemma 1.1.8]{CL16}. Indeed, an interesting class of examples of counting functions stems from lisse $\ell$-adic sheaves $\CL$ on an Artin stack $\BM/k$. To $\CL$ we associate the function 
$$f_{\CL}\in \mcf(\BM)$$
which assigns to $x \in \BM(\bar{k})_n$ the trace of $\sigma^n \in \Gal(\bar{k}/k)=\pi_1^{\text{\'et}}(\Spec k)$ acting on the stalk of $\CL_x$. However, the ring of counting functions also contains many examples, which do not arise in this way. These exotic counting functions are needed in the present work as the roots $\BL^{\frac{1}{2}}$ of $\BL$ introduced in Subsection \ref{pls} are of this type.
\end{rmk}

In order to define a $\lambda$-ring structure on $\mcf(S)$ we assume that $S$ is a commutative torsion-free monoid, i.e., that there is a $\widehat{\Zb}$-equivariant morphism
\[\oplus: S \times S \rightarrow S   \]
satisfying the natural compatibiliies, as well as a zero element $0 \in S^{\sigma}$. We also assume that $\oplus$ has $\sigma$-finite fibres. All monoids considered below will be assumed to satisfy these assumptions (in particular, torsion freeness). In this case, there is an other multiplication on $\mcf(S)$ given by convolution \cite[Section 4]{Me17b}, i.e., for $f,g \in \mcf(S)$ we define
\[ (f\cdot g)(x)_n = (fg)(x)_n = \sum_{(x',x'') \in \oplus^{-1}(x)(\bar{k})_n} f(x')_ng(x'')_n. \]
From now on we will consider $\mcf(S)$ as a ring with respect to the convolution product with unit given by the characteristic function of $0$. 

\begin{definition}
For positive integers $m | n$ we define the trace map 
\begin{equation*}
    \Tr_{n/m} \colon S_n \to S_m,\; x \mapsto \bigoplus_{i=1}^{n/m} x^{\sigma^{im}} 
\end{equation*}

The Adams operations $(\psi_m)_{  m \geq 1}$ on  $\mcf(S)$ are defined by 
\[ \psi_m(f)(x)_n =
\begin{cases}
    \sum_{y \in S_{nm}\colon \Tr_{mn/n}(y)=x} f(y)_{nm} & \text{ if } x \in S_n \\
    0 & \text{else}
\end{cases}   \]
\end{definition}

\begin{lemma}
There is a unique $\lambda$-ring structure on $\mcf(S)$ with these Adams operations.
\end{lemma}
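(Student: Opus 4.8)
The plan is to obtain the $\lambda$-ring structure from its Adams operations by the standard reconstruction procedure. The first observation is that $\mcf(S)$, with the convolution product, is a commutative $\BQ$-algebra: its values lie in $\CV=\prod_{n\ge1}\overline{\BQ}$, the convolution and the operations $\psi_m$ are visibly $\BQ$-linear, and commutativity of the product comes from commutativity of $\oplus$. In particular $\mcf(S)$ is torsion-free as an abelian group. Now, over a $\BQ$-algebra the $\lambda$-operations $\lambda^k$ are recovered from $\psi_1,\dots,\psi_k$ by Newton's identities, so a $\lambda$-ring structure realizing the given $\psi_m$ is unique if it exists; and by Wilkerson's criterion (whose integrality congruences are vacuous over a $\BQ$-algebra --- compare \cite[Remark~2.1]{Mo19} for the special case $S=\{0\}$, where $\mcf(S)=\CV$) such a structure exists as soon as the family $(\psi_m)_{m\ge1}$ forms a system of Adams operations, i.e.\ $\psi_1=\id$, each $\psi_m$ is a ring endomorphism of $(\mcf(S),\cdot\,)$, and $\psi_m\circ\psi_n=\psi_{mn}$ for all $m,n\ge1$. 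So the whole proof reduces to verifying these three properties.

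Two of them are purely formal. That $\psi_1=\id$ is immediate from $\Tr_{n/n}=\id_{S_n}$. For $\psi_m\circ\psi_n=\psi_{mn}$, unwinding the definitions gives, for $x\in S_k$,
\[
(\psi_m\psi_nf)(x)_k=\sum_{z\in S_{kmn}\,:\,\Tr_{km/k}(\Tr_{kmn/km}(z))=x}f(z)_{kmn},
\]
which coincides with $(\psi_{mn}f)(x)_k$ once one knows the transitivity relation $\Tr_{km/k}\circ\Tr_{kmn/km}=\Tr_{kmn/k}$. This last identity is a one-line reindexing of the partial Galois sums defining the trace maps, using that $x^{\sigma^n}=x$ for $x\in S_n$ and that $\oplus$ is $\widehat{\Zb}$-equivariant.

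The point that I expect to require the most care is that each $\psi_m$ is multiplicative for the convolution product (additivity being obvious from the formula, and $\psi_m(\Bone_0)=\Bone_0$ following from $\Tr_{mn/n}(0)=0$). Here I would fix $x\in S_n$ and expand both $\psi_m(f\cdot g)(x)_n$ and $\bigl(\psi_m(f)\cdot\psi_m(g)\bigr)(x)_n$; after discarding the auxiliary summation variables that are determined by the others, each of the two expressions becomes a sum over one and the same index set --- the pairs $(z',z'')\in S_{mn}\times S_{mn}$ with $\Tr_{mn/n}(z'\oplus z'')=x$ --- with the identical summand $f(z')_{mn}\,g(z'')_{mn}$. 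Matching the two sides this way uses exactly that $\Tr_{mn/n}$ commutes with $\oplus$ (again the $\widehat{\Zb}$-equivariance of $\oplus$), while the $\sigma$-finiteness of $S$ and of the fibres of $\oplus$ guarantees that all the sums in sight are finite. Once the three properties are in hand, existence and uniqueness of the $\lambda$-ring structure follow from the reconstruction recalled in the first paragraph.
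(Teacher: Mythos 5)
Your proof takes essentially the same route as the paper: reduce the claim to the standard fact (cited via Mozgovoy/Wilkerson over $\BQ$-algebras) that a $\lambda$-ring structure is determined by a compatible system of Adams operations, then verify $\psi_1=\id$, $\psi_m\circ\psi_n=\psi_{mn}$ via transitivity of trace, and multiplicativity by matching the two double sums over the common index set $\{(u,v)\in S_{mn}^2 : \Tr_{mn/n}(u\oplus v)=x\}$. The only cosmetic difference is the order of verification and that you name Wilkerson's criterion explicitly where the paper simply cites \cite[Section 2.1]{Mo19}.
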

\begin{proof}
    First we check that the $\psi_m$ are ring homomorphisms (with respect to the convolution product): 
    For $x \in S_n$ we find
    \begin{equation*}
        \psi_m(fg)(x)_n=\sum_{y \in \Tr_{mn/n}^{-1}(x)} (fg)(y)_{mn} =\sum_{y \in \Tr_{m/n}^{-1}(x)} \sum_{u,v \in S_{mn}\colon u+v=y } f(u)_{mn} g(v)_{mn}
    \end{equation*}
    and
    \begin{align*}
        (\psi_m(f)\psi_m(g))(x)_n &=\sum_{u', v'\in S_n\colon u'+v'=x}\psi_m(f)(u')_n\psi_m(g)(v')_n\\ &=\sum_{u', v'\in S_n\colon u'+v'=x} \sum_{u \in \Tr_{mn/n}^{-1}(u'), v \in \Tr_{mn/n}^{-1}(v') }f(u)_{mn} g(v)_{mn}.
    \end{align*}
    These sums are equal since they are both indexed by the elements $u, v \in S_{mn}$ satisfying $\Tr_{mn/n}(u+v)=x$.

    Hence by \cite[Section 2.1]{Mo19} the $\psi_m$ define a $\lambda$-ring structure on $\mcf(S)$ if in addition $\psi_1=\id$ and $\psi_m\circ\psi_{m'}=\psi_{m m'}$ for all $m,m' \geq 1$. The first statement follows from $\Tr_{n/n}=\id$, and to check the second we compute as follows:
    \begin{align*}
        \psi_m(\psi_{m'})(f)(x)_n&=\sum_{y \in \Tr_{mn/n}^{-1}(x)} \psi_{m'}(f)(y)_{nm} = \sum_{y \in \Tr_{mn/n}^{-1}(x)} \sum_{y'\in \Tr_{mm'n/mn}^{-1}(y')} f(y')_{nmm'}\\
        &= \sum_{y' \in \Tr_{mm'n/n}^{-1}(x)}f(y')_{nmm'}=\psi_{mm'}(f)(x)_n
    \end{align*}
    
\end{proof}
Next, let $\mcf(S)_0 \subset \mcf(S)$ be the ideal of functions that vanish at $0$ and $1+ \mcf(S)_0 \subset  \mcf(S)$ the multiplicative subgroup of functions with value $1=(1)_{n\geq 1 }$ at $0$. The usual formulae for exponential and logarithm define mutually inverse group homomorphisms
 \begin{align*}\exp&: \mcf(S)_0 \rightarrow 1 + \mcf(S)_0, \ \ \ \  f \mapsto \sum_{n \geq 0} \frac{f^n}{n!}.\\
\log&: 1+ \mcf(S)_0 \rightarrow \mcf(S)_0, \ \ \ \ 1+ f \mapsto \sum_{n\geq 1} (-1)^{n-1} \frac{f^n}{n}. \end{align*}
Notice that for any $x \in S$  only finitely many terms in $ \sum_{n \geq 0} \frac{f^n}{n!}$ and $\sum_{n\geq 1} (-1)^{n-1} \frac{f^n}{n}$ contribute to $\exp(f)(x)$ and $\log(1+f)(x)$ respectively, since $f^n$ is computed with respect to the convolution product. 

Similarly we have the plethystic exponential and logarithm
\begin{align*} \Sym&: \mcf(S)_0 \rightarrow 1 + \mcf(S)_0,  \ \ \ \ f \mapsto \exp \left(\sum_{n \geq 1} \frac{\psi_n(f)}{n}  \right)\\
\Log&: 1 + \mcf(S)_0 \rightarrow \mcf(S)_0, \ \ \ \  1+ f \mapsto \sum_{n \geq 1} \frac{\mu(n)}{n} \psi_n(\log(1+f)),
  \end{align*}
where $\mu$ denotes the Möbius function on $\BN$.

\begin{example}[{\cite[Example 4.2]{Me17b}}]\label{dimon} Consider the discrete monoid $\BM = \BN^r$ endowed with the trivial Galois action. Then, $ \mcf(\BM)$ can be identified with $\CV[[X_1,\dots,X_r]]$. Under this isomorphism, the pointwise product $*$ corresponds to the Hadamard product and the convolution product $\cdot$ to the usual product of power series. The $\lambda$-ring structure on  $ \mcf(\BM)$  induced by the $\psi_n$ agrees with the usual one on $\CV[[X_1,\dots,X_r]]$.
\end{example}

\begin{lemma}\label{mfunc} Let $\phi: S_1 \to S_2$ be a morphism of monoids.
\begin{enumerate}
 \item If $\phi$ has $\sigma$-finite fibres, then $\phi_!$ is a homomorphism of $\lambda$-rings. 
\item\label{disrec}Assume $\phi$ is injective and its image $\im(\phi) \subset S_2$ is a full submonoid i.e. if $x \in \im(\phi)$ and $x=x'+x''$ for objects $x', x'' \in S_2$, then $x',x'' \in \im(\phi) $. Then, $\phi^*$ is a homomorphism of $\lambda$-rings. 
\end{enumerate}

 In particular. in each of these cases  $\phi_!$ resp. $\phi^*$ commutes with the usual and plethystic exponential and logarithm on $\mcf(S_1)$ and $ \mcf(S_2)$.
\end{lemma}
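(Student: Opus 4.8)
The plan is as follows. Since the $\lambda$-ring structure on each $\mcf(S_i)$ is the one determined by its Adams operations (as in the discussion above, following \cite[Section 2.1]{Mo19}) and each $\mcf(S_i)$ is a $\BQ$-algebra, to prove that $\phi_!$, resp.\ $\phi^*$, is a homomorphism of $\lambda$-rings it suffices to check that it is a homomorphism of rings for the convolution product and that it commutes with every Adams operation $\psi_m$; additivity has already been recorded. Granting these two facts, the final assertion is purely formal, because $\exp$ and $\log$ are built from the convolution product and rational scalars via pointwise-finite sums, and $\Sym$, $\Log$ from $\exp$, $\log$ and the $\psi_m$; thus any $\BQ$-linear map that is multiplicative for the convolution product and commutes with all $\psi_m$ commutes with $\exp$, $\log$, $\Sym$ and $\Log$ — one need only add the remark that $\phi^*$ and $\phi_!$ carry $\mcf(S_1)_0$ into $\mcf(S_2)_0$ (immediate for $\phi^*$ since $\phi(0)=0$; for $\phi_!$ using that $0$ is the only preimage of $0 \in S_2$, as holds in the situations under consideration), hence $1+\mcf(S_1)_0$ into $1+\mcf(S_2)_0$.

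For multiplicativity of $\phi_!$ I would expand both sides and observe that $\phi_!(fg)(x)_n$ and $(\phi_!f\cdot\phi_!g)(x)_n$ are each the sum of $f(y')_n g(y'')_n$ over all $\sigma^n$-fixed $y', y'' \in S_1$ with $\phi(y'\oplus y'')=x$: on the second side one groups the pairs $(y',y'')$ by the value $(\phi(y'),\phi(y''))\in\oplus^{-1}(x)$, which is legitimate because $\phi$ is a monoid homomorphism, so $\phi(y')\oplus\phi(y'')=\phi(y'\oplus y'')$, and because terms indexed by $x'\notin\im(\phi)$ contribute $0$ to $(\phi_!f)(x')$. The unit is preserved since it is supported at $0\in\phi^{-1}(0)$, and the $\sigma^n$-fixed-point bookkeeping is compatible because $\phi$ is $\widehat{\BZ}$-equivariant. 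For $\phi^*$ the analogous comparison of $\phi^*(fg)(x)_n=(fg)(\phi(x))_n$ with $(\phi^*f\cdot\phi^*g)(x)_n$ reduces to showing that $(y',y'')\mapsto(\phi(y'),\phi(y''))$ is a \emph{bijection} $\oplus^{-1}(x)\xrightarrow{\sim}\oplus^{-1}(\phi(x))$: injectivity follows from injectivity of $\phi$, and for surjectivity, given $x'\oplus x''=\phi(x)\in\im(\phi)$, fullness of $\im(\phi)$ forces $x',x''\in\im(\phi)$, whereupon the unique preimages $y', y''$ satisfy $y'\oplus y''=x$ again by injectivity of $\phi$; preservation of the unit now uses $\phi^{-1}(0)=\{0\}$.

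For compatibility with the Adams operations I would use that an equivariant monoid homomorphism commutes with the trace maps, $\phi\circ\Tr_{mn/n}=\Tr_{mn/n}\circ\phi$, together with the fact that $f(w)_{mn}$ vanishes off $S_{1,mn}$. For $x\in S_{1,n}$, unwinding the definitions shows that $\psi_m(\phi_!f)(x)_n$ and $\phi_!(\psi_m f)(x)_n$ both equal the sum of $f(w)_{mn}$ over $w\in S_{1,mn}$ with $\phi(\Tr_{mn/n}(w))=x$, whence $\psi_m\circ\phi_!=\phi_!\circ\psi_m$. Likewise, for $x\in S_{1,n}$, both $\psi_m(\phi^*f)(x)_n$ and $\phi^*(\psi_m f)(x)_n$ equal the sum of $f(z)_{mn}$ over $z\in S_{2,mn}$ with $\Tr_{mn/n}(z)=\phi(x)$; to match the two index sets one sends $w\mapsto\phi(w)$, the inverse existing because $\Tr_{mn/n}(z)=\phi(x)\in\im(\phi)$ forces $z\in\im(\phi)$ by fullness applied to the $m$-term sum defining the trace, so that $z=\phi(w)$ for a unique $w\in S_{1,mn}$ with $\Tr_{mn/n}(w)=x$.

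The genuinely delicate point — and the reason parts (i) and (ii) carry different hypotheses — is the surjectivity of these reindexing maps for $\phi^*$: for a general monoid homomorphism an element of $\oplus^{-1}(\phi(x))$, or of a trace-fibre lying over $\phi(x)$, need not be the image of anything from $S_1$, and it is exactly fullness of $\im(\phi)$ (together with injectivity, so that preimages are canonical) that closes this gap. Everything else — additivity, preservation of the unit, the $\widehat{\BZ}$-fixed-point bookkeeping, and the passage from ``$\lambda$-ring homomorphism'' to compatibility with $\exp$, $\log$, $\Sym$, $\Log$ — is routine manipulation of the defining sums. It is worth noting that this is a reversal of roles relative to the pointwise product: there $\phi^*$ is always a ring homomorphism and $\phi_!$ is not, whereas for the convolution product $\phi_!$ is always a ring homomorphism while $\phi^*$ requires the fullness hypothesis.
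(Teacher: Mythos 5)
Your proof is correct and follows the same overall approach as the paper's: verify multiplicativity for the convolution product and commutation with the Adams operations by direct reindexing of the defining sums, using fullness of $\im(\phi)$ exactly where surjectivity of the reindexing maps must be established in part (ii). Two small remarks. First, your treatment of the Adams operations actually hews more closely to the trace-based definition of $\psi_m$ given in the paper than the paper's own proof does (the latter argues informally via a condition of the form ``$x = nx'$'' which reads most naturally for a discrete monoid with trivial Galois action and conflates $\psi_m$ on $\mcf(S)$ with $\psi_m$ on $\CV$); your unwinding via $\phi\circ\Tr_{mn/n}=\Tr_{mn/n}\circ\phi$ and the vanishing of $f(w)_{mn}$ off $S_{mn}$ is the clean way to do it, though note the typo: in the $\phi_!$ case you should take $x\in S_{2,n}$, not $S_{1,n}$. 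Second, you correctly flag that the ``in particular'' clause implicitly needs $\phi_!$ and $\phi^*$ to carry $\mcf(S_1)_0$ into $\mcf(S_2)_0$, and hence needs $\phi^{-1}(0)=\{0\}$ for $\phi_!$; the paper does not address this explicitly, and your acknowledgement that this holds ``in the situations under consideration'' is a fair way to handle a hypothesis the paper leaves tacit.
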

\begin{proof} Assume first that $\phi$ has $\sigma$-finite fibres. Then $\phi_!$ respects the convolution product: For $f,g \in \mcf(S_1)$ and $x \in S_2$ we find
\begin{align*} \phi_!(fg)(x) = \sum_{y \in \phi^{-1}(x)(\bar{k})} \sum_{y'+y'' = y} f(y')g(y'') &= \sum_{x' +x'' = x}\left(\sum_{y' \in \phi^{-1}(x')(\bar{k})} f(y') \right)\left(\sum_{y'' \in \phi^{-1}(x'')(\bar{k})} f(y'') \right) \\ &= \phi_!(f)\phi_!(g) (x).
\end{align*}
Next, let $f \in \mcf(S_1)$, let $x \in S_2$ and $n\geq 1$. If there is no $x' \in  S_2$ such that $x = n x'$, then there doesn't exist any $y \in \phi^{-1}(x)(\bar{k})$ such that $y = n y'$ and thus $\phi_!(\psi_n(f))(x) = 0 = \psi_n(\phi_{!}(f))(x)$. If $x= n x'$, then
\begin{align*}\phi_!(\psi_n(f))(x)  &=  \sum_{y \in \phi^{-1}(x)} \psi_n(f)(y) = \sum_{y' \in \phi^{-1}(x')} \psi_n(f(y')) \\    
&= \psi_n \left(\sum_{y' \in \phi^{-1}(x')} f(y')   \right) = \psi_n \left( \phi_!(f)  \right)(x). \end{align*}

This shows that $\phi_!$ is a $\lambda$-ring homomorphism.

Now, assume $\phi$ is as in \ref{disrec}. Then we have for $f,g \in \mcf(S_2)$ and $y \in S_1$

\begin{align*} \phi^*(fg)(y) = fg(\phi(y)) = \sum_{x'+x''=\phi(y)} f(x')g(x'') = \sum_{y'+y'' = y} f(\phi(y'))g(\phi(y'')) = \phi^*(f)\phi^*(g)(y).
\end{align*}
Finally, let $f \in  \mcf(S_2)$, $y \in S_1$ and $n\geq 1$. As above we argue that if there is no $y' \in  S_1$ such that $y = n y'$, then the same holds for $\phi(y)$ by assumption, and thus $\phi^* (\psi_n(f))(y) = 0 = \psi_n(\phi^*(f))(y)$. If $y = n y'$ we have 
\[ \phi^* (\psi_n(f))(y) = \psi_n(f)(\phi(y)) = \psi_n(f(\phi(y'))) = \psi_n(\phi^*(f))(y).  \]

 The final statement follows since (plethystic) exponential and logarithm are defined using only $\lambda$-ring operations.
\end{proof}

For later use let us record the following explicit formula:

\begin{proposition}
For any $f \in  \mcf(S)_0$ and $n\geq 1$ the following holds:
    \begin{equation}\label{logeq}
        \Log(1+f)(x)_n = \sum_{m,s \geq 1} \frac{(-1)^{s-1} \mu(m)}{ms}\sum_{(y_1,\hdots,y_s) \in S_{nm}^s\colon \; x=\sum_{i=1}^s \Tr_{nm/n}(y_i)} f(y_1)_{nm} \cdots f(y_s)_{nm}
    \end{equation}
\end{proposition}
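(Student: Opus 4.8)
The plan is to unwind the definitions of $\Log$, of the Adams operations $\psi_m$, and of the convolution product, reducing the identity to the single algebraic fact that the trace maps $\Tr_{nm/n}$ are additive with respect to $\oplus$. Fix $x \in S$ and an integer $n \geq 1$; we may assume $x \in S_n$, since otherwise both sides of \eqref{logeq} vanish.

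First I would record the formula for an $s$-fold convolution power: by associativity and commutativity of $\oplus$ and induction on $s$, for any $N \geq 1$ with $x \in S_N$ one has $f^{*s}(x)_N = \sum_{(y_1,\dots,y_s) \in S_N^s,\ \bigoplus_i y_i = x} f(y_1)_N \cdots f(y_s)_N$. Inserting this into $\log(1+f) = \sum_{s \geq 1} \frac{(-1)^{s-1}}{s} f^{*s}$ expresses $\log(1+f)(x)_N$ as the inner double sum of \eqref{logeq} with $m = 1$ and $nm$ replaced by $N$. As already noted after the definition of $\log$, this is a finite sum: since $f \in \mcf(S)_0$ vanishes at $0$, only tuples of nonzero elements contribute, and $\sigma$-finiteness of $\oplus$ bounds the length of such decompositions of $x$.

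Next I would compute $\psi_m(\log(1+f))(x)_n$. By definition this equals $\sum_{y \in S_{nm},\, \Tr_{nm/n}(y) = x} \log(1+f)(y)_{nm}$. Here I would use that $\sigma$ acts by monoid automorphisms, so that $\Tr_{nm/n} \colon S_{nm} \to S_n$ is a monoid homomorphism and in particular $\Tr_{nm/n}(\bigoplus_i y_i) = \bigoplus_i \Tr_{nm/n}(y_i)$. Feeding in the formula from the previous step with $N = nm$ and merging the sum over $y$ with the sum over tuples $(y_i)$ with $\bigoplus_i y_i = y$, the constraint $\Tr_{nm/n}(y) = x$ turns into $\sum_i \Tr_{nm/n}(y_i) = x$, yielding $\psi_m(\log(1+f))(x)_n = \sum_{s \geq 1} \frac{(-1)^{s-1}}{s} \sum_{(y_i) \in S_{nm}^s,\, x = \sum_i \Tr_{nm/n}(y_i)} f(y_1)_{nm} \cdots f(y_s)_{nm}$. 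Summing this against $\frac{\mu(m)}{m}$ over $m \geq 1$, as in the definition of $\Log$, produces \eqref{logeq} verbatim.

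The only non-formal point — and the one I expect to require the most care — is to check that for fixed $x$ and $n$ the resulting triple sum over $m$, $s$ and tuples is finite, so that the reorderings above are legitimate and no convergence issue arises. Finiteness of the sum over $s$ is the finiteness of $\log(1+f)(y)_{nm}$ recalled above. For the sum over $m$: in any nonzero term the identity $\Tr_{nm/n}(y) = \bigoplus_{j=1}^{m} y^{\sigma^{jn}} = x$ holds with $y \neq 0$ (as $f(0) = 0$), exhibiting $x$ as an ordered sum of $m$ nonzero elements of $S$; since $\oplus$ has $\sigma$-finite fibres and $S$ is torsion-free, the length of such decompositions of a fixed $x$ is bounded, hence so is $m$. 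I expect this bookkeeping, rather than any of the algebra, to be the part of the argument that has to be spelled out carefully.
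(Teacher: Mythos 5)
Your proof is correct and follows essentially the same route as the paper: both unwind $\Log$ into $\sum_m \frac{\mu(m)}{m}\psi_m(\log(1+f))$, expand $\psi_m$ and the convolution powers of $f$, and then use additivity of $\Tr_{nm/n}$ to merge the constraint $\Tr_{nm/n}(y)=x$ with $y=\bigoplus_i y_i$ into $x=\sum_i \Tr_{nm/n}(y_i)$. The paper presents this as a single chain of five equalities with no side commentary; you make the same moves but additionally flag the convergence of the sum over $m$.

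One caveat on that last point: you assert that boundedness of decomposition lengths of a fixed $x$ into nonzero $\oplus$-summands follows from torsion-freeness together with $\sigma$-finiteness of the fibres of $\oplus$. That implication is not obviously correct as stated (finiteness of $\oplus^{-1}(x)$ controls the number of two-term splittings, not directly the maximal length of iterated splittings, and without some cancellation or grading hypothesis one cannot immediately rule out arbitrarily long chains). The paper does not address this point either — it treats well-definedness of $\Log$ as given — so this is an imprecision in your extra discussion rather than a gap relative to the paper's proof of \eqref{logeq}, but the justification you offer should either be tightened (e.g.\ by invoking the grading present in the intended examples $S=\pi_0(\BM)$) or stated as an assumption.
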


\begin{proof}
\begin{align*}
\begin{split}
    \Log(1+f)(x)_n &= \sum_{m \geq 1} \frac{\mu(m)}{m} \psi_m(\log(1+f))(x)_n \\
    &= \sum_{m \geq 1} \frac{\mu(m)}{m} \sum_{y \in \Tr^{-1}(x)}\log(1+f)(y)_{nm} \\
    &= \sum_{m \geq 1} \frac{\mu(m)}{m} \sum_{y \in \Tr^{-1}(x)} \sum_{s\geq 1} \frac{(-1)^{s-1}}{s} f^s(y)_{nm} \\
    &= \sum_{m \geq 1} \frac{\mu(m)}{m} \sum_{y \in \Tr^{-1}(x)} \sum_{s\geq 1} \frac{(-1)^{s-1}}{s} \sum_{(y_1,\hdots,y_s) \in S_{nm}^s: y=y_1+\hdots+y_s} f(y_1)_{nm}\cdots f(y_s)_{nm} \\
    &= \sum_{m,s \geq 1} \frac{(-1)^{s-1} \mu(m)}{ms}\sum_{(y_1,\hdots,y_s) \in S_{nm}^s\colon \; x=\sum_{i=1}^s \Tr_{nm/n}(y_i)} f(y_1)_{nm} \cdots f(y_s)_{nm}
    \end{split}
\end{align*}
\end{proof}

As an immediate consequence we have 
\begin{corollary} Let $g \in \mcf(S)_0$ be a counting function satisfying the following two conditions for all $n,m \geq 1$, all $x_1,x_2 \in S_n$ and all $y \in S_{nm}$:
\begin{enumerate}
    \item $g(x_1+x_2)=g(x_1)g(x_2)$,
    \item $g(\Tr_{mn/n}(y))_{n}=g(y)_{nm}$.
\end{enumerate}
Then, \eqref{logeq} implies
\begin{equation}
    \Log(1+fg)=g\Log(1+f)
\end{equation}
for any $f \in \mcf(S)_0$.
\end{corollary}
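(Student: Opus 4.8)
The plan is to read the identity off directly from the explicit formula \eqref{logeq}, working one component at a time. I first fix the convention that $fg$ and $g\Log(1+f)$ denote the component-wise (pointwise) products, so that $(fg)(x)_n = f(x)_n\, g(x)_n$; since $f(0) = 0$ the function $fg$ again lies in $\mcf(S)_0$, so $\Log(1+fg)$ is defined, and $g\Log(1+f)$ lies in $\mcf(S)_0$ as well. It therefore suffices to establish, for every $x \in S$ and every $n \geq 1$, the scalar equality $\Log(1+fg)(x)_n = g(x)_n \cdot \Log(1+f)(x)_n$.

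The crucial observation is that hypotheses (i) and (ii) are tailored so that the \emph{$g$-part} of each summand in \eqref{logeq} is a constant. Precisely, I would record: for any tuple $(y_1,\dots,y_s) \in S_{nm}^s$ with $x = \sum_{i=1}^s \Tr_{nm/n}(y_i)$ --- exactly the tuples indexing the inner sum of \eqref{logeq} --- condition (ii) rewrites each factor as $g(y_i)_{nm} = g\big(\Tr_{nm/n}(y_i)\big)_n$, and then, as each $\Tr_{nm/n}(y_i)$ lies in $S_n$, iterating condition (i) gives
\[
\prod_{i=1}^s g(y_i)_{nm} \;=\; \prod_{i=1}^s g\big(\Tr_{nm/n}(y_i)\big)_n \;=\; g\Big(\sum_{i=1}^s \Tr_{nm/n}(y_i)\Big)_n \;=\; g(x)_n,
\]
a value independent of the chosen tuple and of the indices $m$ and $s$.

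Granting this, I would substitute $fg$ into \eqref{logeq}, split each summand via $(fg)(y_i)_{nm} = f(y_i)_{nm}\, g(y_i)_{nm}$, and use the displayed identity to factor $g(x)_n$ out of the inner sum for each fixed pair $(m,s)$. Since for a fixed $x$ only finitely many terms of the double sum are nonzero, the constant $g(x)_n$ then pulls out of the whole expression, and what remains is precisely $g(x)_n$ times the right-hand side of \eqref{logeq} applied to $f$, i.e.\ $g(x)_n\cdot\Log(1+f)(x)_n = (g\Log(1+f))(x)_n$. (When no admissible tuple exists both sides are $0$, so this case is harmless.)

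I do not expect a genuine obstacle: once \eqref{logeq} is in place, the corollary is pure bookkeeping, with (i)--(ii) designed for exactly this telescoping of the $g$-factors. The two points that want care are to keep the whole computation component-wise in $\CV$ with the pointwise product --- the analogous manipulation for the convolution product does not produce this identity --- and to note explicitly that each $\Tr_{nm/n}(y_i)$ lies in $S_n$, so that (i) may legitimately be iterated on these elements.
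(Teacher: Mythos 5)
Your proof is correct and is exactly the intended ``immediate consequence'' the paper has in mind: interpret the products pointwise, use (ii) and then iterated (i) to collapse $\prod_{i=1}^s g(y_i)_{nm}$ to $g(x)_n$, and factor this constant out of the (locally finite) double sum in \eqref{logeq}. Your remark that this only works for the pointwise product is well placed; the one stray issue is in the paper's own statement rather than your proof, since $g\in\mcf(S)_0$ together with (i) forces $g\equiv 0$, so the intended standing hypothesis is evidently $g(0)=1$ rather than $g(0)=0$ --- but your argument does not rely on the value $g(0)$ either way.
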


\subsection{$k$-linear stacks}\label{axms}

Following an idea of Artin--Zhang \cite{AZ01}, moduli stacks for finitely presented objects in an $R$-linear abelian category are studied by Alper--Halpern-Leistner--Heinloth in \cite[Section 7]{AHH23}. The abelian category is assumed to be co-complete and locally noetherian. No restrictions other than commutativity and unitality are imposed on the base ring $R$ (which is denoted $k$ in \emph{loc. cit.}). We fix such a co-complete category $\Bc$, and denote the stack of finitely presented objects in $\Bc$ by $\BM_{\Bc}$.

The definition of the stack $\BM_{\Bc}$ hinges on the insight of Artin--Zhang that in the context above, there is a natural (exterior) tensor product 
$$\otimes_R\colon \Mod(R) \times \Bc \to \Bc,$$
which allows one to define the notion of an $S$-family of objects in $\Bc$ for every commutative $R$-algebra $S$. We denote by $\Bc_S$ the $S$-linear abelian category obtained by this base-change procedure from $\Bc$. For an object $X\in \Bc$ we write $X_S$ for $X \otimes_R S$. Faithfully flat descent theory is satisfied in this general setting, which implies the statement that $\BM_{\Bc}$ is a stack. 

Henceforth, we assume that our base ring $R$ is a field $k$ with algebraic closure $\bar{k}$.
We also fix a full sub-category $\overline{\Cc} \subset \Bc^{\rm fp}_{\bar{k}}$, which we assume to be an extension-closed abelian sub-category of the category of finitely presented objects in $\Bc_{\bar{k}}$ defined over $\bar{k}$. 

We then obtain an abelian subcategory $\Cc \subset \Bc^{\rm fp}$, which we define to be the full sub-category of objects $X$ in $\Bc^{\rm fp}$ satisfying $X_{\bar{k}} \in \overline{\Cc}$.

Furthermore, it gives rise to a sub-stack $\BM = \BM_{\Cc} \hookrightarrow \BM_{\Bc}$. Here, an $S$-family of objects $X_S$ in $\Bc^{\rm fp}_S$ is assumed to belong to $\BM_{\Cc}(S)$ if and only if for every $z\colon \Spec \bar{k} \to S$ the base change $z^*X_S \in \overline{\Cc}$.

In applications, the objects in $\Cc$ are of principal interest and $\Bc$ plays an auxiliary role, as for instance, higher Ext-groups $\Ext^i_{\Bc}(X,Y)$ for two objects in $\Cc$ are defined with respect to the ambient category $\Bc$. This set-up is necessary to ensure that the higher Ext-groups reflect the geometry of interest.

\begin{example}
Let $\Bc$ be the category of sheaves of complex vector spaces on a connected topological manifold $X$. We denote by $\Cc$ the category $\Loc(X)$ of finite-dimensional complex local systems on $X$, that is, locally constant sheaves of finite-dimensional complex vector spaces on $X$. The category $\Cc$ is equivalent to $\Loc(B\pi_1(X))$, that is, the category of local systems on the classifying space of $\pi_1(X)$. In particular, one has $\Ext^i_{\Cc}(\underline{\Cb}_X,\underline{\Cb}_X) = H^i(B\pi_1(X),\Cb)$, whereas $\Ext^i_{\Bc}(\underline{\Cb}_X,\underline{\Cb}_X) = H^i(X,\Cb)$.
\end{example}

The higher Ext-groups $\Ext^i_{\Bc}$ satisfy the following flat base-change relation, which we record for later use.

\begin{lemma}\label{lemma:base-change}
Let $R \to S$ be a flat ring homomorphism and $\Bc$ be a co-complete and locally noetherian $R$-linear abelian category. There is a natural isomorphism $\Ext^i_{\Bc}(X,Y) \otimes_R S \simeq \Ext^i_{\Bc_S}(X_S,Y_S)$ for every pair of objects $X,Y \in \Bc$ and every $i \in \Nb$.
\end{lemma}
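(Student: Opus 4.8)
\textbf{Step 1: reduction to $\Ext$ inside $\Bc$.} The plan is first to eliminate the base-changed category $\Bc_S$. In the Artin--Zhang/AHH formalism, restriction of scalars $\mathrm{res}\colon \Bc_S \to \Bc$ is an exact functor which is right adjoint to base change $(-)_S = (-)\otimes_R S\colon \Bc \to \Bc_S$, and $(-)_S$ is itself exact because $S$ is $R$-flat. Hence $\mathrm{res}$, being the right adjoint of an exact functor, preserves injectives. I would then choose an injective resolution $Y_S \to J^\bullet$ in $\Bc_S$; the complex $\mathrm{res}\,J^\bullet$ is an injective resolution of $\mathrm{res}(Y_S) = Y\otimes_R S$ in $\Bc$, and the adjunction isomorphisms $\Hom_{\Bc_S}(X_S, J^j) \cong \Hom_\Bc(X, \mathrm{res}\,J^j)$ are natural in $j$. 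Passing to cohomology yields a natural isomorphism
\[ \Ext^i_{\Bc_S}(X_S, Y_S)\ \cong\ \Ext^i_\Bc\big(X,\, Y\otimes_R S\big), \]
so it remains to establish the base-change formula $\Ext^i_\Bc(X, Y\otimes_R S)\cong \Ext^i_\Bc(X,Y)\otimes_R S$ entirely inside $\Bc$.

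\textbf{Step 2: an injective resolution argument in $\Bc$.} Next I would fix an injective resolution $Y\to I^\bullet$ in $\Bc$. Since $S$ is $R$-flat, $Y\otimes_R S \to I^\bullet\otimes_R S$ is again a resolution, and its terms are again injective in $\Bc$: by Lazard's theorem $S$ is a filtered colimit $\varinjlim_\alpha R^{n_\alpha}$ of finite free $R$-modules, so $I^j\otimes_R S \cong \varinjlim_\alpha (I^j)^{\oplus n_\alpha}$ is a filtered colimit of injectives, and in the locally noetherian category $\Bc$ filtered colimits of injectives are injective. Therefore $\Ext^i_\Bc(X, Y\otimes_R S) = H^i\big(\Hom_\Bc(X, I^\bullet\otimes_R S)\big)$, while $\Ext^i_\Bc(X,Y)\otimes_R S = H^i\big(\Hom_\Bc(X, I^\bullet)\otimes_R S\big)$ because $-\otimes_R S$ is exact and hence commutes with cohomology. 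Thus the lemma reduces to checking that the natural map of complexes
\[ \Hom_\Bc(X, I^\bullet)\otimes_R S\ \longrightarrow\ \Hom_\Bc(X, I^\bullet\otimes_R S) \]
is a term-wise isomorphism.

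\textbf{Step 3: the main obstacle.} Via the Lazard presentation $I^j\otimes_R S \cong \varinjlim_\alpha (I^j)^{\oplus n_\alpha}$, the map above in degree $j$ is the comparison map $\varinjlim_\alpha \Hom_\Bc(X, (I^j)^{\oplus n_\alpha}) \to \Hom_\Bc(X, \varinjlim_\alpha (I^j)^{\oplus n_\alpha})$, so the whole statement comes down to $\Hom_\Bc(X,-)$ commuting with this filtered colimit. This is exactly where I expect the only genuine input to be needed: $\Hom_\Bc(X,-)$ commutes with filtered colimits precisely when $X$ is a finitely presented (equivalently, in the locally noetherian $\Bc$, noetherian) object. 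Since the higher Ext groups occurring in this paper are those of objects of $\Cc \subset \Bc^{\rm fp}$, this hypothesis holds in all the situations of interest, and the argument then closes; I would simply record that the base-change map is an isomorphism whenever $X$ is finitely presented. (A finiteness assumption on $X$ is genuinely necessary: already for $i=0$, $\Bc = \Mod(\Zb)$, $X = \Qb$, $Y = \Zb$ and $S = \Qb$ one has $\Hom_\Zb(\Qb,\Zb)\otimes_\Zb\Qb = 0 \neq \Qb = \Hom_\Zb(\Qb, \Zb\otimes_\Zb\Qb)$.) Everything else --- naturality of the identifications in $X$ and $Y$ and their compatibility with the differentials of $I^\bullet$ --- is formal, being assembled from the adjunction of Step 1 and the functoriality of Lazard's presentation.
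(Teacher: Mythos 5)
The paper's proof of this lemma is a one-line citation to \cite[Proposition C3.1(ii)]{AZ01}, so your argument is not "taking a different route" so much as reconstructing the standard proof that lies behind that reference: reduce via the (exact right-adjoint) restriction of scalars to a base-change statement inside $\Bc$, pass to an injective resolution, use Lazard plus the locally noetherian hypothesis to keep the terms injective after tensoring with $S$, and then invoke compactness of $X$ to commute $\Hom_{\Bc}(X,-)$ past the filtered colimit. That is indeed the skeleton of Artin--Zhang's proof, and each individual step as you state it is correct.

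The genuinely useful content in your write-up is Step~3: the statement as printed in the paper is false. The lemma quantifies over "every pair of objects $X,Y\in\Bc$", but your counterexample ($\Bc=\Mod(\Zb)$, $X=\Qb$, $Y=\Zb$, $S=\Qb$, $i=0$) shows the base-change map need not be an isomorphism without a finiteness hypothesis on $X$. The cited Artin--Zhang Proposition C3.1(ii) is stated for $X$ a noetherian object; the paper silently drops that hypothesis when it restates the result. Since the paper only ever applies the lemma to objects of $\Cc\subset\Bc^{\mathrm{fp}}$, nothing downstream is affected, but the lemma's hypotheses should read "for $X$ noetherian (equivalently, finitely presented) and $Y$ arbitrary", and your proof correctly isolates exactly where that hypothesis is used.
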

\begin{proof}
This is Proposition C3.1(ii) in \cite{AZ01}.
\end{proof}

Following Mozgovoy \cite{Mo19}, we will refer to $\BM=\BM_{\Cc}$ as a $k$-linear stack. Note that in \emph{loc. cit.} this refers to a sheaf of abelian categories on the small \'etale site of a field $k$. The general theory of tensor products and descent theory for $\Bc$ recalled above always yields a sheaf of abelian categories on the big \'etale site, which we restrict to the small \'etale site when citing results from \emph{loc. cit.} 

In addition to the assumptions fixed above we will work with categories satisfying the following:

\begin{enumerate}
    \item There exists a positive integer $\delta$ such that for any finite extension $k'/k$, the category $\Cc_{k'}$ is abelian of finite global dimension $\delta$, i.e., such that $\Ext^i(\;,\;)$ vanishes for $i>\delta$, and with finite-dimensional $\Ext^i$-spaces for $0 \leq i \leq \delta$.
        
\end{enumerate}
This  implies in particular, that $\Cc_{k'}$ is a Krull-Schmidt category. 

We will consider $\BM$ as a monoid with respect to the direct sum $\oplus$ in these abelian categories. The Krull-Schmidt property implies that $\oplus$ has finite fibres. 

Given extensions $k''/k'/k$ and $x \in \BM(k')$ we write $x_{k''}$ for the pullback of $x$ in $\BM(k'')$.

\begin{enumerate}[resume]
    \item\label{smst} The map 
    \begin{align*} (\cdot,\cdot): K_0(\Cc) \times K_0(\Cc) &\to \BZ \\ x,y &\mapsto \sum_{i=0}^\delta  (-1)^i\dim\Ext^i_{\Bc}(x,y)
        \end{align*}
        is symmetric.
\end{enumerate}

Notice that for any $x \in \BM_{\Bc}$ the integer $-(x,x)$ equals the Euler characteristic of the complex $R\Hom(x,x)[1]$, which is the tangent complex $T^{\bullet}_x\BM_{\Bc}$.
If $\Cc$ is hereditary, i.e., if $\delta \leq 1$, Axiom \ref{smst} is the central assumption in \cite{Me15}.

\subsection{$\BG_m$-rigidification and the gerbe function}\label{gmgr}

We continue with the setup from the previous subsection and assume in addition that the field $k$ is finite. For $x \in \BM$, the automorphism group scheme $\Aut(x)$ contains a distinguished copy of $\BG_m$ coming from scalar automorphisms. We write $\uAut(x)$ for the quotient of $\Aut(x)$ by this $\BG_m$ and $\CM$ for the corresponding $\BG_m$-rigidification of $\BM$. So $\BM \to \CM$ is a $\BG_m$-gerbe. By \cite[Def. 3.6]{GWZ20b} this $\BG_m$-gerbe induces a function
\begin{equation*}
    f_{\BM}\colon I_{\hat\mu}\CM(k)\to \BQ/\BZ,
\end{equation*}
whose definition we now recall: 

First, for any $r \geq 1$, there is a canonical identification of flat cohomology $H^2_{\rm fl}(B_k \mu_r,\BG_m) \cong \BZ/r \BZ$. Concretely this can be seen as in \cite[Lemma 3.4]{GWZ20b} as follows: any $\BG_m$-gerbe $\CG \in H^2_{\rm fl}(B_k \mu_r,\BG_m)$ becomes trivial after base change to $\bar k$. The pullback of any splitting $s \in \CG(B_{\bar k} \mu_r)$ under the Frobenius $\phi \in \Gal(\bar k / k)$ gives another splitting $\phi(s)$ which differs from $s$ by a $\BG_m$-torsor $\phi(s)-s \in H^1_{\rm fl}(B_{\bar k} \mu_r,\BG_m)\cong \Hom(\mu_r,\BG_m)(\bar k)\cong \BZ/r \BZ$. Since every class in $H^1_{\rm fl}(B_{\bar k} \mu_r,\BG_m)$ already comes from $H^1_{\rm fl}(B_k \mu_r,\BG_m)$ this element $\phi(s)/s$ is independent of the choice of $s$. This gives us the desired isomorphism
\begin{equation*}
    H^2_{\rm fl}(B_k \mu_r,\BG_m) \cong \BZ/r \BZ, \;\CG \mapsto \phi(s)-s.
\end{equation*}
    
Then any element $y \in I_{\hat\mu}\CM(k)$ can be represented by a morphism $\tilde y\colon B_k \mu_r \to \CM$ for some $r \geq 1$, and its image under $f_\BM$ is given by the element
\begin{equation*}
    f_\BM(y) \defeq \tilde y^*(\BM) \in H^2_{\rm fl}(B_k \mu_r,\BG_m) \cong \BZ/r\BZ \cong (\BQ/\BZ)[r] \subset \BQ/\BZ.
\end{equation*}
The fact that this does not depend on the choice of $r$ follows from the functoriality statement in \cite[Lemma 3.4]{GWZ20b}. We apply the same construction for any finite extension $k'/k$ to the pullback $\BM_{k'}$ to obtain a map $f_{\BM_{k'}}\colon I_{\hat\mu}\CM(k')\to \BQ/\BZ$.

Finally, we define the \textit{gerbe function}
\[ \alpha: I_{\hat{\mu}}\CM(\bar{k}) \rightarrow \prod_{n\geq 1}\BQ/\BZ,\]
as follows: Given $y \in I_{\hat\mu} \CM(\bar k)$ and $n\geq 1$, if $y \in I_{\hat\mu} \CM(k_n)$, then the component $\alpha(y)_n$ is equal to the image of $y$ under the map $$f_{\BM_{k_n}}\colon I_{\hat\mu}\CM(k_n) \to \BQ/\BZ,$$  and otherwise $\alpha(y)_n=0$.

\subsection{Plethystic logarithm of the shifted identity function}\label{pls}

In order to define our counting function of interest we need to introduce a square root $\BL^{\frac{1}{2}}$ of $\BL \in \CV$. We choose $\BL^{\frac{1}{2}}$ such that there exist constants $b_1,b_2 \in \BF_2$ such that for all $m \geq 1$:
\begin{equation*}
    \psi_m(\BL^{\frac{1}{2}})=(-1)^{b_1+b_2m}(\BL^{\frac{1}{2}})^m.
\end{equation*}
The shifted identity function we're interested in is defined as
\begin{align} \label{shid} \frac{\BL^{\frac{(,)}{2}}}{|\Aut(\cdot)|}\colon \BM &\rightarrow \CV \\
x &\mapsto \frac{\BL^{\frac{(x,x)}{2}}}{|\Aut(x)|}, \nonumber   
\end{align}
where $\BM$ is a $k$-linear stack $\BM$ as in Section \ref{axms}.

We will express the plethystic logarithm of $ \frac{\BL^{\frac{(,)}{2}}}{|\Aut(\cdot)|}$ in terms of $I_{\hat{\mu}}\CM$ and two functions on this space. The first is the weight function 
\begin{align*}\Bw\colon I_{\hat{\mu}}\CM(\bar{k}) &\rightarrow \BQ \\
(x,\phi) &\mapsto \sum_{i=0}^\delta (-1)^{i+1} w(\Ext^i(x,x)),
\end{align*}
where we consider $\Ext^i(x,x)$ as a $\hat{\mu}$-representation via $\phi$ and its weight $w(\Ext^i(x,x))$ is defined as in Definition \ref{weightf}(i). Notice that in the case when $\delta \leq 1$, $\CM$ is smooth and the weight function $w$ on $I_{\hat{\mu}}\CM$ defined in \ref{weightf}(ii) is related to $\Bw$ by $w = \Bw +1$, as $w$ takes the rigidification into account.

The second function is a slight modification of the gerbe function
\[ \alpha\colon  I_{\hat{\mu}}\CM(\bar{k}) \rightarrow \prod_{n\geq 1}\BQ/\BZ,\]
from Section \ref{gmgr}. First, associated with $\alpha$ is the order function
\[o_\alpha\colon  I_{\hat{\mu}}\CM(\bar{k}) \rightarrow \prod_{n\geq 1}\BN, \]
defined by 
\[o_\alpha(y)_n = \text{ order of } \alpha(y)_n.\]

If we write $y = (x,\phi)$ with $x\in \CM(\bar{k})$ and $\phi: \hat{\mu} \to \uAut(x)$, we will show in Lemma \ref{gerby} below that $o_\alpha(y)_n^2 |  (x,x)$ and using this we introduce the modified gerbe function

\[ \tilde\alpha: I_{\hat{\mu}}\CM(\bar{k}) \rightarrow \prod_{n\geq 1}\BQ/\BZ, \; (x,\phi) \mapsto (\alpha(x,\phi)_n+ \frac{b_1(x,x)}{o_\alpha(x,\phi)_n^2}\cdot \frac{1}{2})_{n\geq 1},\]

with $b_1$ as in the choice of $\BL^{\frac{1}{2}}$. Notice that 
\begin{equation}\label{modal} e^{2\pi i \tilde\alpha(x,\phi)_n} = (-1)^{\frac{b_1(x,x)}{o_\alpha(x,\phi)_n^2}}e^{2\pi i \alpha(x,\phi)_n}.\end{equation}

Finally for any subset of isomorphism classes $Z \subset I_{\hat{\mu}}\CM(\bar{k})$ we define the associated weighted point count $ \#^{\Bw,\tilde\alpha}Z \in \CV$ by the formula

\[  (\#^{\Bw,\tilde\alpha}Z)_n = \sum_{z \in Z(k_n)} \frac{e^{2\pi i \tilde\alpha(z)_n} \BL_n^{-\Bw(z)}}{|\Aut(z)(k_n)|}.\]

\begin{theorem}\label{plid} For any $x \in \BM$ the generating series
\[ \sum_{r \geq 1} \#^{\Bw,\tilde\alpha} I_{\mu_r}B\uAut(x) T^r \in \CV[[T]] \]
is a rational function in $T$ of degree $0$. The counting function $\MF \in \mcf(\BM)_0$ defined by 
\begin{equation}\label{limfor} \MF(x) = -(-1)^{b_2(x,x)}\BL^{\frac{-(x,x)-1}{2}}\lim_{T\to \infty}\sum_{r \geq 1} \#^{\Bw,\tilde\alpha} I_{\mu_r}B\uAut(x) T^r   \end{equation}
satisfies the following identity in $\mcf(\BM)$:

\begin{equation}\label{plelog}    
\frac{\MF}{\BL^{\frac{1}{2}} - \BL^{-\frac{1}{2}}} = \Log\left( \frac{\BL^{\frac{(,)}{2}}}{|\Aut(\cdot)|}  \right).\end{equation}
\end{theorem}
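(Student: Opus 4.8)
The plan is to reduce the identity \eqref{plelog} to the volume formula of Theorem \ref{inbase} applied to the coarse moduli space of a suitable chart on $\BM$, and then match both sides term by term in the $\lambda$-ring $\mcf(\BM)$. First I would fix $x \in \BM$ and work inside a local presentation: by the Krull--Schmidt property together with the deformation theory recalled in Section \ref{axms}, an \'etale neighbourhood of $x$ in $\BM$ (or rather in the rigidification $\CM$) is a linear quotient stack $[U/\uAut(x)]$ with $U$ smooth and $\uAut(x)$ a linear algebraic group; $S$-completeness of this chart follows from the symmetry axiom \ref{smst} (the tangent complex $R\Hom(x,x)[1]$ is self-dual up to a shift, so the stack is S-complete in the sense of \cite[Definition 3.38]{AHH23}), and Zariski-local triviality of the canonical bundle is again a consequence of the symmetric Euler pairing, which forces $K_\CM$ to be (locally) a square and hence trivial after adjusting signs. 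The point $x$ itself corresponds to the origin, whose fibre $\CM_x$ of the coarse map carries $I_{\mu_r}\CM_x(k) = I_{\mu_r} B\uAut(x)(k)$.

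The next step is to compute the admissible function attached to the gerbe $\BM \to \CM$ via Remark \ref{gerf}: the function $f_{\tilde\alpha}$ defined on $\CM(\Oc_F)^\sharp$ by the Hasse invariant of the pullback of the gerbe class factors through the specialization map $e$, with induced function on $I_{\hat\mu}\CM(k)$ being $y \mapsto e^{2\pi i \tilde\alpha(y)}$ — this is exactly \cite[Lemma 3.10]{GWZ20b} combined with the sign bookkeeping in \eqref{modal}, which is what the modification $\tilde\alpha$ is designed to absorb. Then Theorem \ref{inbase} applied to this admissible function and to the fibre $B(x)$ over the residue point yields that the generating series $\sum_{r\ge 1}\#^{\Bw,\tilde\alpha} I_{\mu_r}B\uAut(x) T^r$ is rational of degree $0$ and that its limit at $T=\infty$ computes (up to the normalizing factor $-(-1)^{b_2(x,x)}\BL^{-((x,x)+1)/2}$, which accounts for the weight shift $w = \Bw + 1$ from the rigidification and the choice of $\BL^{1/2}$ via $b_2$) the $p$-adic integral $\int_{B(x)} f_{\tilde\alpha}\,\mu_{orb}$. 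Thus $\MF(x)$ as defined in \eqref{limfor} equals $q_F^{?}\int_{B(x)} f_{\tilde\alpha}\,\mu_{orb}$ for the appropriate normalization, and in particular the series is rational of degree $0$ as claimed. One has to be slightly careful that the hypotheses (i)--(iii) preceding Theorem \ref{inbase} hold for the chart, in particular surjectivity of $\pi_\infty$, which here holds because the chart is S-complete and the relevant twisted points all arise from the adapted slice description; this is the step I would spell out most carefully.

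Finally, the identity \eqref{plelog} in $\mcf(\BM)$ is obtained by identifying $\MF/(\BL^{1/2}-\BL^{-1/2})$ with $\Log$ of the shifted identity function. The mechanism is the explicit plethystic logarithm formula \eqref{logeq} of the Proposition in Section \ref{pikls}: evaluating $\Log(\BL^{(,)/2}/|\Aut(\cdot)|)$ at $x$ produces a sum over chains of subobjects / over the stratification of $\BM$ by the monoid structure, which — after passing to the local chart at $x$ and using that convolution on $\mcf(\BM)$ corresponds to the stacky Hall-type product — reorganizes into a sum over $I_{\hat\mu}\CM_x$ weighted by $\Bw$ and $\tilde\alpha$. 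Concretely, the pole at $T=1$ of the Ehrhart-type series $E_P(T)$ from Theorem \ref{conhul} has residue governed by the Euler pairing, and dividing by $\BL^{1/2}-\BL^{-1/2}$ (the "$q-1$" of the scalar $\BG_m$) precisely cancels the rigidification factor, turning $\lim_{T\to\infty}$ into the Möbius-weighted sum appearing in $\Log$. I expect the main obstacle to be this last bookkeeping: matching the combinatorial expansion of $\Log$ (sums over $s$-tuples with a trace constraint, weighted by $\mu(m)/ms$) with the geometric sum over the cyclotomic inertia $I_{\hat\mu}\CM$ coming from $p$-adic integration, i.e. showing that the two $\lambda$-ring elements agree on every $\bar k$-point. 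This should follow from the Corollary after \eqref{logeq} (multiplicativity of $g = \BL^{(,)/2}$ under $\oplus$ and compatibility with traces, both consequences of additivity and symmetry of the Euler pairing), applied so as to pull the factor $\BL^{(,)/2}$ out of $\Log$, combined with the fact — to be checked — that the remaining $\Log(1/|\Aut(\cdot)|)$ is computed stratum-by-stratum by the twisted inertia, which is where Mozgovoy's framework \cite{Mo19} and Remark \ref{rmk:bij} enter.
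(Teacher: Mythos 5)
Your proposal inverts the logical structure of the paper: Theorem \ref{plid} is a purely combinatorial identity in the $\lambda$-ring $\mcf(\BM)$ over a finite field, proved in the paper without any reference to $p$-adic integration. The paper's argument parametrizes isomorphism classes in $I_{\mu_r}B\uAut(x)(k_n)$ by tuples $(m,s,d,(y_i)_i,(w_i)_i)$ with $(y_1,\hdots,y_s)\in\Tr^{-1}_{m,s}(x)$ and weights in the simplex $\Delta_{m,s}[r]$, shows each $\chi$ is counted $ms/|\pi_0(\chi)|$ times via the analysis of $\pi_0(\chi)$ in Lemmas \ref{Pi0Lemma1}--\ref{Pi0ChiDesc}, computes $\alpha(\chi)_n$ and $\Bw(\chi)$ explicitly in this parametrization (Lemma \ref{gerby} and the lemma after it), and obtains rationality and the $T\to\infty$ limit from the elementary Ehrhart computation of Lemma \ref{esch}, namely $\lim_{T\to\infty}\sum_r|\overline{\Delta_{m,s}[r]}|T^r=(-1)^{s-1}$. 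The resulting triple sum is then literally the formula \eqref{logeq} for $\Log$. None of this uses a mixed-characteristic local field, a moduli space map, $S$-completeness, or Theorem \ref{inbase}.

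The gap in your proposal is therefore twofold. First, invoking Theorem \ref{inbase} requires hypotheses (a birational adequate moduli space, surjectivity of $\pi_\infty$, Zariski-local triviality of $K_{\CM}$) that the present theorem does not assume and that, as the paper stresses in Section \ref{ii}, are \emph{not} automatic; your claim that $S$-completeness and triviality of $K_\CM$ ``follow from the symmetry axiom'' is not substantiated (the paper instead cites \cite[Prop.\ 3.44]{AHH23} for $S$-completeness of moduli of objects in abelian categories, and checks $(iv)$ case by case, e.g.\ Lemma \ref{conc}). Second, and more seriously, the step you defer as ``the main obstacle'' --- matching the combinatorial expansion of $\Log$ with the sum over $I_{\hat\mu}B\uAut(x)$ weighted by $\Bw,\tilde\alpha$ --- \emph{is} the proof: it is exactly the chain of parametrization, orbit-counting, and weight/Hasse-invariant computations described above, and it is not a consequence of the Corollary after \eqref{logeq} (that corollary only handles pulling out a multiplicative factor). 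Furthermore, even granting the $p$-adic setup, Theorem \ref{inbase} produces a series weighted by $w(y)$ rather than $\Bw(y)$ and without the sign $e^{2\pi i\tilde\alpha}$ built in, so it does not directly yield the quantity $\#^{\Bw,\tilde\alpha}I_{\mu_r}B\uAut(x)$; reconciling these is essentially Lemma \ref{intbps}, which the paper proves \emph{using} Theorem \ref{plid} --- going the other way would be circular.
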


The proof of this identity will be given in the following subsection.

\subsection{Proof of Theorem \ref{plid}}
We fix an integer $n\geq 1$ and an object $x \in \BM(k_n)=\BM(\bar{k})_n$, where we have used that $\BM$ is a $k$-linear stack and thus Remark \ref{rmk:bij} applies. For simplicity we'll write $\sigma$ for the Frobenius over $k_n$ for the rest of this section. 
\begin{definition}
    For an integers $m,s \geq 1$ we consider the trace map
    \begin{equation*}
        \Tr_{m,s}\colon \BM(k_{nm})^s \to M(k_n), \; (y_1,\hdots,y_s) \mapsto \sum_{i=1}^s \Tr_{nm/n}(y_i).
    \end{equation*}

    Given an element $y=(y_1,\hdots,y_s) \in \Tr_{m,s}^{-1}(x)$, the shift $(y_2,\hdots,y_s,\sigma(y_1))$ lies again in $\Tr^{-1}_{m,s}(x)$. We call the elements of $\Tr^{-1}_{m,s}(x)$ obtained by iterating this operation cyclic shifts of $y$.
\end{definition}

Let $\chi\colon \mu_r \to \uAut(x)$ be a homomorphism. 

\begin{lemma}
    The homomorphism $\chi_{\bar k}\colon  \mu_r \to \uAut(x)_{\bar k}$ can be lifted to a homomorphism $\tilde\chi\colon  \mu_r \to \Aut(x)_{\bar k}$.
\end{lemma}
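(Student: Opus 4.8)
The statement we must prove is a lifting problem for finite group scheme homomorphisms: given $\chi_{\bar k}\colon \mu_r \to \uAut(x)_{\bar k}$, lift it through the central extension $1 \to \BG_m \to \Aut(x)_{\bar k} \to \uAut(x)_{\bar k} \to 1$. I would first argue that $\Aut(x)_{\bar k}$ is an affine algebraic group over $\bar k$: by Axiom (i) of Section \ref{axms}, $\Cc_{\bar k}$ is Krull--Schmidt with finite-dimensional Hom-spaces, so $\Aut(x)$ is an open subscheme of the finite-dimensional affine space $\End(x)$, hence a (not necessarily connected, not necessarily reductive) affine algebraic group. The extension $\BG_m \to \Aut(x)_{\bar k} \to \uAut(x)_{\bar k}$ is a central extension of algebraic groups over the algebraically closed field $\bar k$.

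**Key steps.** The obstruction to lifting $\chi_{\bar k}$ lives in $H^2(\mu_r, \BG_m)$ computed in the appropriate (fppf) topology, since $\mu_r$ need not be étale when $r$ is divisible by $p = \operatorname{char}(k)$. So the heart of the argument is: (a) pull back the central extension along $\chi_{\bar k}$ to get a central extension $1 \to \BG_m \to E \to \mu_r \to 1$ of group schemes over $\bar k$; (b) show this extension splits. For (b) the cleanest route is to observe that $E$ is a commutative affine group scheme of finite type over $\bar k$ (it is an extension of the diagonalizable group $\mu_r$ by the diagonalizable group $\BG_m$, and the extension is central; one checks $E$ is commutative because commutators land in $\BG_m$ and, $\BG_m$ being central, give a bilinear alternating pairing $\mu_r \times \mu_r \to \BG_m$ which one shows is trivial — or one simply invokes that an extension of diagonalizable by diagonalizable is diagonalizable, e.g.\ because $\operatorname{Ext}^1$ in the category of affine commutative group schemes between diagonalizable groups vanishes after noting $\operatorname{Hom}(\mu_r,\BG_m)$ surjects appropriately). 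Concretely: the category of diagonalizable group schemes over $\bar k$ is anti-equivalent to finitely generated abelian groups, and in that world every extension $0 \to \BZ/r \to ? \to \BZ \to 0$ of the character groups splits since $\BZ$ is free. Dually the extension $E$ of $\mu_r$ by $\BG_m$ splits, giving a section $\mu_r \to E$, and composing with $E \to \Aut(x)_{\bar k}$ yields the desired lift $\tilde\chi$. Alternatively, and perhaps more in the spirit of the paper, one cites \cite[3.14]{Ziegler22} or the diagonalizability argument already used around Lemma \ref{diagdiag}: since $\mu_r$ is linearly reductive (its representation category is semisimple even in characteristic $p$), the map $\Aut(x) \to \uAut(x)$, being a $\BG_m$-torsor, admits a $\mu_r$-equivariant section after restricting to the image — equivalently the $\mu_r$-action on $\Aut(x)$ via $\chi$-conjugation combined with translation has a fixed point.

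**Main obstacle.** The subtlety is entirely the wildness: when $p \mid r$ we cannot reduce to the étale case and must genuinely work with $\mu_r$ as a (possibly non-smooth) diagonalizable group scheme, so the argument must be phrased fppf-cohomologically or via the Cartier-duality dictionary for diagonalizable groups rather than via discrete group cohomology. I expect the proof will be short once set up correctly: the right formulation is that $H^2_{\mathrm{fppf}}(\mu_r, \BG_m) = 0$ (which follows from $\mu_r$ diagonalizable: central extensions of a diagonalizable group by $\BG_m$ are classified by extensions of abelian groups of the character lattices, and $\widehat{\mu_r} = \BZ/r\BZ$ has trivial relevant $\operatorname{Ext}$ into the line — more precisely, commutative such extensions correspond to extensions $0\to\BZ\to L\to \BZ/r\to 0$ of abelian groups, all of which split since $\operatorname{Ext}^1_{\BZ}(\BZ/r,\BZ)$ need not vanish, but those are extensions of $\BZ/r$ by $\BZ$, i.e.\ of $\BG_m$ by $\mu_r$, not what we want — for $\BG_m$ by $\mu_r$ the character side gives $0 \to \BZ/r \to L \to \BZ \to 0$, which splits). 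So the one genuine check is that the pulled-back extension $E$ is commutative, i.e.\ the commutator pairing $\mu_r \times \mu_r \to \BG_m$ vanishes; this holds because $\operatorname{Hom}(\mu_r \otimes \mu_r, \BG_m)$, via Cartier duality, classifies bilinear maps $\BZ/r \times \BZ/r$ — wait, one must be careful, but in any case the alternating condition plus the structure forces triviality, or one simply restricts to a fixed faithful representation and computes directly. I would write it up using the diagonalizable-group dictionary, citing \cite{SGA3II} for the needed facts.
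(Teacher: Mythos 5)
Your approach is genuinely different from the paper's, though both reach the same conclusion. The paper's proof is two lines: by \cite[Prop.\ 3.15]{Ziegler22} the diagonalizable subgroup $\chi_{\bar k}(\mu_r)$ lies inside a maximal torus $T$ of $\uAut(x)_{\bar k}$; the preimage $T'$ of $T$ in $\Aut(x)_{\bar k}$ is an extension of the torus $T$ by $\BG_m$, hence itself a torus, and over $\bar k$ such an extension splits as $T' \cong T \times \BG_m$ (short exact sequences of split tori correspond to short exact sequences of finitely generated free abelian groups, which split). Composing with the splitting gives $\tilde\chi$. Your extension-theoretic route---pull back the central extension along $\chi_{\bar k}$ to get $1 \to \BG_m \to E \to \mu_r \to 1$, show $E$ is commutative, then diagonalizable, then compare character lattices---is also viable and in some ways more structural, but your write-up leaves two points dangling that the paper's torus argument sidesteps entirely. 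First, commutativity of $E$: the commutator descends to a biadditive alternating pairing $\mu_r \times \mu_r \to \BG_m$; such a pairing factors through the \'etale quotient $\mu_{r'}$ of $\mu_r$ (since $\underline{\Hom}(\mu_r,\BG_m)\cong \BZ/r$ is \'etale), and an alternating biadditive pairing on a cyclic group vanishes---this needs to be argued, not merely gestured at. Second, diagonalizability of $E$: you need the fact that a commutative extension of multiplicative-type groups over a field is again of multiplicative type (SGA 3, Exp.\ IX), which you assert but should cite explicitly. With those filled in, $X^*(E)$ sits in $0 \to \BZ/r \to X^*(E) \to \BZ \to 0$, which splits since $\BZ$ is projective, giving $E\cong\BG_m\times\mu_r$ and the desired section. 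The visible mid-paragraph course-correction in your last block would need to be cleaned away. The torus route is shorter precisely because it moves the lifting problem to a short exact sequence of tori \emph{before} pulling back, so that commutativity and diagonalizability are automatic rather than something to be proved about a pullback.
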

\begin{proof}
    By \cite[Prop. 3.15]{Ziegler22} the homomorphism $\chi_{\bar k}$ factors through a maximal torus $T$ of $\uAut(x)_{\bar k}$. If $T'$ is the preimage of $T$ in $\Aut(x)_{\bar k}$, then the projection $T'\to T$ has kernel $\BG_m$ and hence splits as $T'=T \times \BG_m$. Using this one obtains a lift $\tilde\chi$.
\end{proof}
Let $\tilde\chi\colon \mu_r \to \Aut(x)_{\bar k}$ be such a lift. This defines a weight decomposition 
\[x=\bigoplus_{w \in (\BQ/\BZ)[r]} x^w\] in $\BM(\bar k)$: The action of $\mu_r$ on $x$ via $\tilde\chi$ induces an action of $\mu_r$ on $\End(x)_{\bar k}$, and we let $\id=\sum_{w \in (\BQ/\BZ)[r]} \id^w$ be the associated decomposition of the identity element into homogeneous components. For each $w$, let $x^w \subset x$ be the image of $\id^w\colon x \to x$. Then the inclusions $x^w \into x$ and the morphisms $\id^w$ give mutually inverse isomorphisms $\oplus_w x^w \rightleftarrows x$ in $\BM(\bar k)$.

Since $x$ is defined over $k_n$, the Frobenius generator $\sigma$ of $\Gal(\bar k/ k_n)$ induces an identification $\sigma^*:x^\sigma \xrightarrow{\sim} x$. Under this identification, the homomorphism $\sigma^*(\tilde\chi)\colon \mu_r \to \Aut(x)^\sigma_{\bar k}$ is another lift of $\chi$ and so differs from $\tilde\chi$ by a homomorphism 
\begin{equation}\label{lll}\lambda\colon \mu_r \to \BG_m. \end{equation} So $\lambda$ is given by a number $w_\lambda \in (\BQ/\BZ)[r]$ for which $\sigma^*\colon x^\sigma \isoto x$ sends $(x^w)^\sigma$ to $x^{w+w_\lambda}$. 

We lift the weights appearing in the decomposition of $x$ to unique rational numbers $0 < w_1 < w_2 < \hdots < w_r \leq 1$ and let $x_i \defeq x^{w_i}$. We write $w_\lambda=d/m$ for coprime integers $d,m \geq 1$. Since the set of weights of $x$ is invariant by addition by $d/m$ and hence by $1/m$, we see that if we choose $s\geq 1$ such that $w_s \leq 1/m < w_{s+1}$, the $w_i$ are all of the form $w_j+k/m$ for some $1\leq j\leq s$ and some $1\leq k \leq m$. In particular $r=ms$. Then the isomorphism $\sigma^*\colon x^\sigma \to x$ sends $x_i^\sigma$ to $x_{i+ds \pmod {ms}}$ and so $(x_1,\hdots,x_s) \in \Tr_{m,s}^{-1}(x)$. 

Conversely, let $(y_1,\hdots,y_s) \in \Tr_{m,s}^{-1}(x)$ for given integers $m,s\geq 1$, let $w_1,\hdots,w_s$ be elements of $(\BQ/\BZ)[r]$ satisfying $0 < w_1 < w_2 < \hdots < w_s \leq 1/m$ and let $d$ be an integer coprime with $m$. Let $x \defeq \Tr_{m,s}(y_1,\hdots,y_s) \in \BM(k_n)$. Since $y_{k_{nm}}\cong x_{k_{nm}}$, by \cite[3.4]{Mo19} there exists an isomorphism $y\cong x$ over $k_n$.

Such an isomorphism $y \cong x$ over $k_n$ induces a homomorphism $\tilde \chi\colon \mu_r \to \Aut(x)_{\bar k}$, which acts on each factor $y_i^{\sigma^j}$ of $x$ with weight $w_i+\frac{jd}{m}$. By construction $\sigma^*(\tilde\chi)$ is equal to $\tilde\chi+\lambda$ for the homomorphism $\lambda\colon \mu_r\to \BG_m$ with weight $d/m$ and hence the composition $\chi$ of $\tilde\chi$ with the projection $\Aut(x)_{\bar k} \to \uAut(x)_{\bar k}$ is defined over $k_n$. A different choice of isomorphism $y \cong x$ over $k_n$ replaces the resulting $\tilde\chi$ by a $\Aut(x)(k_n)$-conjugate and hence does not change the isomorphism class of the resulting homomorphism $\chi \in I_{\mu_r}B\uAut(x)(k_n)$.

So in this way we can parametrize isomorphism classes in  $I_{\mu_r}B\uAut(x)(k_n)$ by integers $m,s,d \geq 1$ such that $(m,d)=1$, elements $(y_1,\hdots,y_s) \in \Tr^{-1}_{m,s}(x)$ and $r$-torsion elements $w_1,\hdots, w_s$ of the set
\begin{equation*}
    \Delta_{m,s}\defeq \{(w_1,\hdots,w_s) \in \BQ \mid 0< w_1 < w_2 < \hdots < w_s \leq 1/m\}.
\end{equation*}
Two homomorphisms $\tilde \chi\colon \mu_r \to \Aut(x)_{\bar k}$ give the same homomorphism $\chi$ if and only if they differ by an element of $\Hom(\mu_r, \BG_m) \cong \BZ/r \BZ$. In our parametrization, this amounts to replacing $(y_1,\hdots,y_s)$ by a cyclic shift $(y_k,\hdots, y_s, \sigma(y_1), \hdots, \sigma(y_{k-1}))$ and the weights $w_1,\hdots,w_s$ by the weights $w_i+w \pmod{ 1/m}$ for some $w \in (\BQ/\BZ)[r]$ accordingly. This defines an action of $(\BQ/\BZ)[r]$ on $\Delta_{m,s}[r]$ and we denote the resulting quotient by $\overline{\Delta_{m,s}[r]}$.

Since $H^1(k_n,\BG_m)=0$, the homomorphism $\Aut(x)(k_n) \to \uAut(x)(k_n)$ is surjective. Hence if two different choices of homomorphism $\tilde\chi\colon \mu_r \to \Aut(x)$ yield isomorphic objects $\chi \in I_{\mu_r}B\uAut(x)(k_n)$, then up to a cyclic shift of the weights $(w_i)_i$ as above the homomorphisms $\tilde \chi$ are already conjugate over $k_n$ and so come from different choices of isomorphism $x\cong \Tr_{m,s}(y_1,\hdots,y_2)$. This shows that the only ambiguity in our parametrization of the isomorphism classes in $I_{\mu_r}B\uAut(x)(k_n)$ is described by the above cyclic shifts.

    Next we wish to determine the group $\Cent_{\uAut(x)}(\chi)$ in terms of this parametrisation, since this is exactly the automorphism group of the pair $(x,\chi)\in I_{\mu_r}\CM$. The projection $\pi\colon \Aut(x) \to \uAut(x)$ restricts to a homomorphism $\Cent_{\Aut(x)_{\bar k}}(\tilde \chi) \to \Cent_{\uAut(x)}(\chi)$. We denote by $\Cent_{\uAut(x)}(\chi)^\circ \subset \Cent_{\uAut(x)}(\chi)$ the connected component of the identity and by $$\pi_0(\chi) \defeq \Cent_{\uAut(x)}(\chi)/\Cent_{\uAut(x)}(\chi)^\circ$$ the group of connected components of the centralizer of $\chi$.

    \begin{lemma} \label{Pi0Lemma1}
    \begin{enumerate}
        \item  The group scheme $\Cent_{\Aut(x)_{\bar k}}(\tilde \chi)$ is connected and surjects onto $\Cent_{\uAut(x)}(\chi)^\circ$.
   \item The finite group scheme $\pi_0(\chi)$ is constant over $k_n$.
        \item $|\Cent_{\uAut(x)}(\chi)(k_n)|=|\pi_0(\chi)| \prod_{i=1}^s |\Aut(y_i)(k_{nm})|/|\BG_m(k_n)|$
    \end{enumerate}       
    \end{lemma}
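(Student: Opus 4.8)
The plan is to describe the centralizers group-theoretically over $\bar{k}$, extract the obstruction to lifting centralizing elements along $\pi$, and then count $k_n$-points using the vanishing of the relevant cohomology over the finite field $k_n$.

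\textbf{Parts (i) and (ii).} Recall that $\Aut(x)$ is the group of units of the finite-dimensional $k_n$-algebra $\End(x)$, and that the unit group of a finite-dimensional algebra over a field is a dense open subscheme of an affine space; hence $\Aut(x)_{\bar{k}}$ is connected. Conjugation via $\tilde\chi$ equips $\End(x)_{\bar{k}}$ with a $\BZ/r\BZ$-grading whose degree-zero piece is the ring $\bigoplus_w \End(x^w)$ of $\mu_r$-equivariant endomorphisms, so that $\Cent_{\Aut(x)_{\bar{k}}}(\tilde\chi) = \bigl(\bigoplus_w \End(x^w)\bigr)^\times = \prod_w \Aut(x^w)_{\bar{k}}$, which is connected as a product of unit groups of finite-dimensional algebras. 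To obtain the obstruction homomorphism, consider the $\BG_m$-central extension $P := \pi^{-1}(\image\chi) \subseteq \Aut(x)$, a group scheme over $k_n$. Any $\bar g \in \Cent_{\uAut(x)}(\chi)$ centralizes $\image\chi$, so conjugation by a lift of $\bar g$ preserves $P$ and induces an automorphism of the extension $1 \to \BG_m \to P \to \image\chi \to 1$ that is the identity on $\BG_m$ and on $\image\chi \cong \mu_r$; this automorphism does not depend on the lift and therefore gives a homomorphism $c\colon \Cent_{\uAut(x)}(\chi) \to \Hhom(\mu_r,\BG_m) = \underline{\BZ/r\BZ}$ over $k_n$. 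An element of $\ker c$ is, over $\bar{k}$, one whose lifts centralize $\tilde\chi$; hence $\ker(c)_{\bar{k}} = \pi\bigl(\Cent_{\Aut(x)_{\bar{k}}}(\tilde\chi)\bigr)$, a connected closed subgroup of finite index in $\Cent_{\uAut(x)_{\bar{k}}}(\chi)$ and therefore equal to its identity component. This gives (i). For (ii), $\pi_0(\chi)$ now embeds via $c$ into the constant group scheme $\underline{\BZ/r\BZ}$ over $k_n$, and a closed subgroup scheme of a constant finite group scheme over a field is constant.

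\textbf{Part (iii).} Write $C = \Cent_{\uAut(x)}(\chi)$ and let $C^\circ$ be its identity component, so $C^\circ_{\bar{k}} = (\prod_w \Aut(x^w)_{\bar{k}})/\BG_m$ by (i). By the construction of the weight decomposition, $\sigma$ permutes the set of weights $w$ by $w \mapsto w + w_\lambda$, and since $w_\lambda = d/m$ with $\gcd(d,m) = 1$ this permutation splits into $s$ orbits, each of length $m$, with representatives the isomorphism classes $y_1,\dots,y_s$, each defined over $k_{nm}$. Consequently the $\sigma$-twisted $k_n$-structure on $\prod_w \Aut(x^w)_{\bar{k}}$ is that of the Weil restriction $\widetilde C := \prod_{i=1}^s \Res_{k_{nm}/k_n}\Aut(y_i)$, and we get a central extension $1 \to \BG_m \to \widetilde C \to C^\circ \to 1$ over $k_n$. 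Taking $k_n$-points, Hilbert 90 for $\BG_m$ gives $|C^\circ(k_n)| = |\widetilde C(k_n)|/|\BG_m(k_n)| = \prod_{i=1}^s |\Aut(y_i)(k_{nm})| / |\BG_m(k_n)|$. Moreover $H^1(k_n,\widetilde C) = \prod_i H^1(k_{nm},\Aut(y_i)) = \ast$ by Shapiro's lemma and the generalized Hilbert 90 for unit groups of finite-dimensional algebras, while $H^2(k_n,\BG_m) = \mathrm{Br}(k_n) = 0$ because $k_n$ is finite; hence $H^1(k_n,C^\circ) = \ast$. Feeding this into $1 \to C^\circ \to C \to \pi_0(\chi) \to 1$ together with the constancy of $\pi_0(\chi)$ from (ii) yields $|C(k_n)| = |\pi_0(\chi)|\cdot|C^\circ(k_n)|$, which is the asserted formula.

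\textbf{Main obstacle.} The two points requiring care are: (a) constructing $c$ over $k_n$ despite $\tilde\chi$ being defined only over $\bar{k}$ --- handled by replacing $\tilde\chi$ with the $k_n$-rational central extension $P = \pi^{-1}(\image\chi)$ and working with automorphisms of $P$; and (b) identifying the $\sigma$-twisted form of $\prod_w \Aut(x^w)$ with $\widetilde C$, which amounts to carefully matching the $\sigma$-orbit structure of the weight spaces $x^w$ with the parametrisation $(y_1,\dots,y_s) \in \Tr_{m,s}^{-1}(x)$ set up before the lemma. Granting these, the point count is a routine assembly of Hilbert 90, Shapiro's lemma, the triviality of $\mathrm{Br}(k_n)$, and the vanishing of $H^1$ for unit groups of finite-dimensional algebras over fields.
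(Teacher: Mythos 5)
Your proof is correct, but it takes a genuinely different route from the paper's, and the comparison is worth spelling out.

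For part (i), the paper reduces to the structure theorem that $\Aut(x)_{\bar k}$ is an extension of a product of $\GL$'s by a unipotent group and cites \cite[A.8.10]{CGP} for the Lie-algebra identification of the centralizers, from which surjectivity of $\tilde\pi$ follows on dimension grounds. You instead exploit that $\Aut(x)$ is the unit group of the finite-dimensional algebra $\End(x)$ and compute $\Cent_{\Aut(x)_{\bar k}}(\tilde\chi) = \bigl(\bigoplus_w\End(x^w)\bigr)^\times = \prod_w\Aut(x^w)_{\bar k}$ directly; this is more elementary and self-contained. Your surjectivity argument (the image of $\pi$ restricted to the centralizer is the kernel of $c$, hence connected of finite index, hence the identity component) avoids the Lie-algebra step entirely. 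For (ii), the paper argues that each component of $\Cent_{\uAut(x)}(\chi)$ is a torsor under $\Cent_{\uAut(x)}(\chi)^\circ$ and appeals to Lang's theorem; you instead construct the obstruction homomorphism $c\colon\Cent_{\uAut(x)}(\chi)\to\underline{\Hom}(\mu_r,\BG_m)\cong\underline{\BZ/r\BZ}$ over $k_n$ and note that a finite subgroup scheme of a constant finite group scheme is constant. This is clean, and in fact your construction of $c$ anticipates and reproves the injection $\iota$ of the paper's Lemma~\ref{Pi0ChiDesc}(i), so you've effectively merged the two lemmas. For (iii) both you and the paper are really doing the same Weil-restriction count of $k_n$-points via the twisted $\sigma$-action on the weight decomposition, but you spell it out more. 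One note: invoking Shapiro's lemma plus generalized Hilbert 90 for unit groups plus vanishing of $\mathrm{Br}(k_n)$ to conclude $H^1(k_n,C^\circ)=\ast$ is heavier than necessary — Lang's theorem applies directly to the smooth connected affine group $C^\circ$ over the finite field $k_n$ (and this is in fact the content of (ii) as the paper intended to use it). Finally, you have correctly placed $|\BG_m(k_n)|$ in the denominator after applying Hilbert 90 to the central extension $1\to\BG_m\to\widetilde C\to C^\circ\to 1$, matching the statement of the lemma; the paper's displayed identity in the proof of (iii) appears to contain a sign/typo there, writing a product where a quotient is meant.
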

\begin{proof}
    (i) A decomposition of $x$ into indecomposable elements in $\BM(\bar k)$ identifies $\Aut(x)_{\bar k}$ with an extension of a product of copies of the group $\GL_{n,\bar k}$ (for varying $n$) by a unipotent group. Hence the claim about the centralizer of $\tilde\chi$ follows from the fact that the centralizer of every homomorphism $\mu_r \to \GL_n$ is smooth and connected.

    Hence $\pi$ factors through a morphism $\tilde\pi\colon \Cent_{\Aut(x)_{\bar k}}(\tilde\chi) \to \Cent_{\uAut(x)}(\chi)^\circ$. By \cite[A.8.10]{CGP} the Lie algebra of $\Cent_{\Aut(x)_{\bar k}}(\tilde\chi)$ (resp. $\Cent_{\uAut(x)}(\chi)^\circ$) is equal to the subspace of the Lie algebra of $\Aut(x)_{\bar k}$ (resp. $\uAut(x)_{\bar k}$) fixed by $\mu_r$ under the natural $\mu_r$-action via $\tilde\chi$ (resp. $\chi$) and the adjoint action. This implies that $\tilde\chi$ induces a surjective map of Lie algebras. Since both these centralizers are smooth and connected this implies that $\tilde\pi$ is surjective.
    
    (ii) Every connected component of $\Cent_{\uAut(x)(\chi)}$ is a torsor under $\Cent_{\uAut(x)}(\chi)^\circ$ and hence has a $k_n$-point by Lang's theorem.

    (iii) We start with the identity
    \begin{equation*}
        |\Cent_{\uAut(x)}(\chi)(k_n)|=|\pi_0(\chi)||\Cent_{\uAut(x)}(\chi)^\circ(k_{n})|.
    \end{equation*}
        Since $\tilde\chi^\sigma$ differs from $\tilde\chi$ by a central cocharacter, the centralizer $\Cent_{\Aut(x)}(\tilde\chi)$ canonically descends to $k_n$. Since we chose our isomorphism $x\cong \Tr_{m,s}(y_1,\hdots,y_s)$ over $k_n$, the surjection $\Cent_{\Aut(x)}(\tilde\chi) \to \Cent_{\uAut(x)}(\chi)$ descends to a homomorphism over $k_n$ with kernel $\BG_m$. Using this we find
        \begin{equation*}
            |\Cent_{\uAut(x)}(\chi)^\circ(k_{n})|=|\BG_m(k_n)||\Cent_{\Aut(x)}(\tilde\chi)(k_{nm})^\sigma|=|\BG_m(k_n)| \prod_i |\Aut(y_i)(k_{nm})|,
        \end{equation*}
        which concludes the proof.
\end{proof}

     It remains to describe $\pi_0(\chi)$:

    \begin{lemma} \label{Pi0ChiDesc}
        \begin{enumerate}
            \item Sending a class $g \Cent_{\uAut(x)}(\chi)^\circ \in \pi_0(\chi)$ to the homomorphism ${}^{\tilde{g}}{\tilde\chi} - \tilde\chi \colon \mu_r \to \BG_m$ associated to a lift $\tilde g \in \Aut(x)(\bar k)$ of $g$ gives a well-defined injection
            \begin{equation*}
                \iota\colon \pi_0(\chi) \into \Hom(\mu_r, \BG_m).
            \end{equation*}
            \item A homomorphism $\tau\colon \mu_r \to \BG_m$ is in the image of $\iota$ if and only if $\tilde\chi+\tau$ is $\Aut(x)(\bar k)$-conjugate to $\lambda$.
                 
        \end{enumerate}
    \end{lemma}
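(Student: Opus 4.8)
The plan is to prove both parts by direct computations with lifts along the central extension $1 \to \BG_m \to \Aut(x)_{\bar k} \xrightarrow{\pi} \uAut(x)_{\bar k} \to 1$ coming from the $\BG_m$-rigidification, the only input beyond bare group theory being Lemma \ref{Pi0Lemma1}(i).

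For part (i), I would first note that since $\bar k$ is algebraically closed, $\pi$ is surjective on $\bar k$-points, so every $g \in \Cent_{\uAut(x)}(\chi)(\bar k)$ has a lift $\tilde g \in \Aut(x)(\bar k)$. Because $g$ centralises $\chi$, the conjugate ${}^{\tilde g}\tilde\chi$ is again a lift of $\chi$ through $\pi$, hence ${}^{\tilde g}\tilde\chi$ and $\tilde\chi$ agree after composition with $\pi$ and therefore differ by a homomorphism into $\ker\pi = \BG_m$, which we take as the definition of $\iota(g)$. Two lifts of $g$ differ by a central element $z \in \BG_m(\bar k)$, and ${}^{z\tilde g}\tilde\chi = {}^{\tilde g}\tilde\chi$, so $\iota(g)$ depends only on $g$. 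If $g$ lies in the identity component $\Cent_{\uAut(x)}(\chi)^\circ(\bar k)$, Lemma \ref{Pi0Lemma1}(i) lets us choose the lift $\tilde g$ inside $\Cent_{\Aut(x)_{\bar k}}(\tilde\chi)$, so that ${}^{\tilde g}\tilde\chi = \tilde\chi$ and $\iota(g) = 0$; hence $\iota$ factors through $\pi_0(\chi)$. The homomorphism property is the identity
\[ {}^{\tilde g_1 \tilde g_2}\tilde\chi = {}^{\tilde g_1}\bigl(\tilde\chi + \iota(g_2)\bigr) = {}^{\tilde g_1}\tilde\chi + \iota(g_2) = \tilde\chi + \iota(g_1) + \iota(g_2), \]
where the middle step uses that $\iota(g_2)$ takes values in the central $\BG_m$. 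For injectivity, $\iota(g) = 0$ means that $\tilde g$ centralises $\tilde\chi$; since $\Cent_{\Aut(x)_{\bar k}}(\tilde\chi)$ is connected by Lemma \ref{Pi0Lemma1}(i), its image under $\pi$ is a connected subgroup of $\Cent_{\uAut(x)}(\chi)$ containing $g$, hence lies in $\Cent_{\uAut(x)}(\chi)^\circ$, so $g$ is trivial in $\pi_0(\chi)$.

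For part (ii), unravelling the definition of $\iota$ shows that a homomorphism $\tau$ lies in its image precisely when there is some $\tilde g \in \Aut(x)(\bar k)$ with ${}^{\tilde g}\tilde\chi = \tilde\chi + \tau$ whose class $\pi(\tilde g)$ moreover centralises $\chi$. The point is that this centralising condition is redundant: applying $\pi$ to ${}^{\tilde g}\tilde\chi = \tilde\chi + \tau$ and using that $\pi$ kills the central homomorphism $\tau$ yields ${}^{\pi(\tilde g)}\chi = \chi$ automatically. Hence $\tau$ is in the image of $\iota$ if and only if $\tilde\chi + \tau$ is $\Aut(x)(\bar k)$-conjugate to $\tilde\chi$, which is the assertion.

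None of these steps is genuinely hard; the one place where something must be invoked rather than checked by hand is the comparison of $\Cent_{\uAut(x)}(\chi)$ with $\pi_0(\chi)$ — both for well-definedness (replacing an arbitrary lift of an element of the identity component by one that centralises $\tilde\chi$) and for injectivity (knowing $\Cent_{\Aut(x)_{\bar k}}(\tilde\chi)$ is connected), and this is exactly the content of Lemma \ref{Pi0Lemma1}(i). I therefore expect the real work to have been done in the previous lemma, with this statement being a short formal consequence; the only subtlety to state carefully is the interplay between conjugation and the centrality of the kernel $\BG_m$.
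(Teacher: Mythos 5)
Your proof is correct and matches the paper's argument almost step for step: well-definedness via centrality of $\ker\pi = \BG_m$ and Lemma \ref{Pi0Lemma1}(i), the same computation for the homomorphism property, and injectivity again from the surjectivity/connectedness in Lemma \ref{Pi0Lemma1}(i). You also correctly read the conclusion of part (ii) as ``$\Aut(x)(\bar k)$-conjugate to $\tilde\chi$'' (the paper's stated $\lambda$ is a typo, as its own proof confirms), and your extra observation that the centralizing condition on $\pi(\tilde g)$ is automatic is a small but genuine clarification of the paper's terser ``and conversely.''
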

\begin{proof}
    (i) A different lift $\tilde g$ differs from the given one by an element in the center of $\Aut(x)$ and so doesn't change the homomorphism ${}^{\tilde g} \tilde \chi - \chi$. Since every element of $\Cent_{\uAut(x)}(\chi)^\circ \in \pi_0(\chi)(\bar k)$ can be lifted to $\Cent_{\Aut(x)_{\bar k}}(\tilde \chi)$ by Lemma \ref{Pi0Lemma1}, the choice of $g$ also doesn't affect this homomorphism. Furthermore, by construction the composition of ${}^{\tilde g}\tilde\chi-\tilde\chi$ with $\pi\colon \Aut(x) \to \uAut(x)$ is the trivial homomorphism, and so ${}^{\tilde g}\tilde\chi-\tilde\chi$ factors through $\BG_m$ as claimed. So $\iota$ is well-defined.

    The fact that
    \begin{equation*}
        {}^{\tilde g \tilde h}\tilde\chi - \tilde \chi={}^{\tilde g}({}^{\tilde h}\tilde\chi - \tilde\chi) + ({}^{\tilde g}\tilde\chi - \tilde \chi)=({}^{\tilde h}\tilde\chi - \tilde\chi) + ({}^{\tilde g}\tilde\chi - \tilde \chi)
    \end{equation*}
    shows that $\iota$ is a homomorphism.

    Finally if $\iota(\chi)$ is trivial, then $\tilde g$ centralizes $\tilde\chi$ and hence $g \in \Cent_{\Aut(x)}(\bar k)$ by Lemma \ref{Pi0Lemma1}. This shows that $\iota$ is injective.

    (ii) If $\tilde\chi+\tau={}^{\tilde g}\tilde\chi$ for some $\tilde g \in \Aut(x)(\bar k)$ with image $\bar g$ in $\pi_0(\chi)(\bar k)$, then $\tau=\iota(\bar g)$, and conversely.

\end{proof}

This implies:
\begin{lemma}
    In our parametrization of the isomorphism classes of $I_{\mu_r}B\uAut(x)(k_n)$, each $\chi \colon \mu_r \to \uAut(x)$ appears $ms/|\pi_0(\chi)|$ times.
\end{lemma}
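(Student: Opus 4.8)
The plan is to read the assertion as an orbit--stabiliser count. By the discussion preceding the statement, the parametrising map --- sending a tuple consisting of $(m,s,d)$ with $ms=r$ and $(d,m)=1$, an element $(y_1,\dots,y_s)\in\Tr_{m,s}^{-1}(x)$ and weights $(w_1,\dots,w_s)\in\Delta_{m,s}[r]$, to an isomorphism class in $I_{\mu_r}B\uAut(x)(k_n)$ --- is surjective, and its fibres are exactly the orbits of the cyclic--shift action, which is an action of the group $G:=\Hom(\mu_r,\BG_m)\cong\BZ/ms\BZ$ by which two lifts $\tilde\chi\colon\mu_r\to\Aut(x)_{\bar k}$ of a fixed $\chi$ can differ. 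Fixing $\chi$, a lift $\tilde\chi$, and the associated tuple $\mathbf t$, it therefore suffices to show that the stabiliser $G_{\mathbf t}\subset G$ has order $|\pi_0(\chi)|$; the orbit--stabiliser theorem then gives the orbit size $|G|/|G_{\mathbf t}|=ms/|\pi_0(\chi)|$.

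To compute $G_{\mathbf t}$, I would work with the weight grading $x=\bigoplus_v x^v$ determined by $\tilde\chi$ over $\bar k$. Its support $V\subset(\BQ/\BZ)[r]$ is a union of full cosets of $\tfrac1m\BZ/\BZ$: for each occurring class the relations $x^{w_i+ld/m}\cong y_i^{\sigma^l}$, $l=0,\dots,m-1$, coming from $\sigma^*\colon x^\sigma\isoto x$ together with $(d,m)=1$, force all $m$ weights of the coset to occur. For $\tau\in G$ of weight $w_\tau$ the lift $\tilde\chi+\tau$ induces the shifted grading $x^v_{\tilde\chi+\tau}=x^{v-w_\tau}$, and I claim that $\tau\in G_{\mathbf t}$ if and only if $\tilde\chi+\tau$ is $\Aut(x)(\bar k)$-conjugate to $\tilde\chi$. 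For ``$\Leftarrow$'' a conjugating automorphism $\tilde g$ carries the grading of $\tilde\chi$ to that of $\tilde\chi+\tau$ while preserving weights, which forces $V+w_\tau=V$ and $x^v\cong x^{v-w_\tau}$ for all $v$, so $\mathbf t$ is fixed. For ``$\Rightarrow$'' the equality of tuples forces $V+w_\tau=V$ (here one uses that $V$ is a union of full $\tfrac1m$-cosets), after which the $\sigma$-twist relations propagate isomorphisms $x^v\cong x^{v-w_\tau}$ from the small weights to all of $V$; summing a choice of these yields $\tilde g\in\Aut(x)(\bar k)$ with ${}^{\tilde g}\tilde\chi=\tilde\chi+\tau$.

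Any such $\tilde g$ has image $\bar g\in\uAut(x)$ centralising $\chi$, since both ${}^{\tilde g}\tilde\chi$ and $\tilde\chi+\tau$ lift $\chi$. Hence by Lemma \ref{Pi0ChiDesc}(ii) the set of $\tau$ with $\tilde\chi+\tau$ conjugate to $\tilde\chi$ is exactly the image of the map $\iota\colon\pi_0(\chi)\to G$ of Lemma \ref{Pi0ChiDesc}(i), and $\tau=\iota(\bar g)$. Since $\iota$ is injective, $G_{\mathbf t}=\operatorname{im}(\iota)$ has order $|\pi_0(\chi)|$, and the orbit of $\mathbf t$ has size $ms/|\pi_0(\chi)|$, which is the assertion.

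The main obstacle is the ``$\Rightarrow$'' direction of the claim about $G_{\mathbf t}$: one has to track carefully the interplay between the full weight set $V$, its image modulo $\tfrac1m\BZ$, and the Frobenius twists $x^{w_i+ld/m}\cong y_i^{\sigma^l}$ relating the graded pieces, both to deduce $V+w_\tau=V$ from the mere equality of tuples and to assemble the conjugating automorphism; once this is in place, the remaining ingredients (the already established parametrisation, Lemma \ref{Pi0ChiDesc}, and orbit--stabiliser) are formal.
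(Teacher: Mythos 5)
Your proof takes the same route as the paper's: an orbit--stabiliser count for the $\Hom(\mu_r,\BG_m)\cong\BZ/ms\BZ$ cyclic-shift action on tuples, with the stabiliser identified via Lemma~\ref{Pi0ChiDesc} as the image of $\iota\colon\pi_0(\chi)\hookrightarrow\Hom(\mu_r,\BG_m)$ using the criterion that $\tau$ stabilises the tuple if and only if $\tilde\chi+\tau$ is $\Aut(x)(\bar k)$-conjugate to $\tilde\chi$. The paper states this equivalence in one sentence; you supply the (correct) justification by tracking the shifted weight decomposition $x=\oplus_v x^v$ and the Frobenius-twist relations, and you also quietly correct the evident typo in Lemma~\ref{Pi0ChiDesc}(ii), where ``conjugate to $\lambda$'' should read ``conjugate to $\tilde\chi$''. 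So this is the same argument with the omitted verification filled in.
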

\begin{proof}
    As noted above, the only ambiguity in our parametrization comes from lifts $\tilde\chi'$ of $\chi$ differing from a given lift $\tilde\chi$ by a homomorphism $\tau\colon \mu_r \to \BG_m$, for which the resulting element of $\Tr^{-1}_{m,s}(x)$ is a cyclic shift of $(y_1,\hdots,y_r)$, with the weights $(w_1,\hdots,w_r)$ shifted accordingly and increased by the weight of $\tau$. This cyclic shift is equal to the original datum if and only if $\tilde\chi+\tau$ is $\Aut(x)(\bar k)$-conjugate to $\tilde\chi$. So by Lemma \ref{Pi0ChiDesc} each $\chi$ appears in our parametrisation $ms/|\pi_0(\chi)|$ times.
\end{proof}

 The following describes the Hasse invariant in terms of our parametrisation:
\begin{lemma}\label{gerby}
    The Hasse invariant of the gerbe $\alpha$ at $\chi$ is given by $\alpha(\chi)_n=w_\lambda = \frac{d}{m} \in \BQ/\BZ$, with $\lambda$ as in \eqref{lll}.
    
    If $x= \oplus_i x_i$ is the decomposition of $x$ into  indecomposables, where $x_i$ has splitting field of degree $d_i$, then the order of $\alpha(\chi)_n$ for any $\chi\colon \mu_r \to \uAut(x)$ divides the gcd of the $d_i$. 
\end{lemma}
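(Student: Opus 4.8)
\emph{Proof plan.} The plan is to unwind the definition of the gerbe function from Section~\ref{gmgr}. For the first assertion, recall that $\alpha(\chi)_n$ is the image of $(x,\chi)$ under $f_{\BM_{k_n}}$, i.e. the class in $H^2_{\mathrm{fl}}(B_{k_n}\mu_r,\BG_m)\cong\BZ/r\BZ$ of the $\BG_m$-gerbe $\tilde y^{*}(\BM)$, where $\tilde y\colon B_{k_n}\mu_r\to\CM$ represents $(x,\chi)$. This gerbe classifies lifts of $\tilde y$ to $\BM$, and the chosen lift $\tilde\chi\colon\mu_r\to\Aut(x)_{\bar k}$ of $\chi$ provides a splitting $s$ over $\bar k$, namely the $\mu_r$-equivariant object $(x_{\bar k},\tilde\chi)$. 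Using that $x$ is $k_n$-rational to identify $x^{\sigma}$ with $x$, the Frobenius translate $\phi(s)$ is the splitting $(x_{\bar k},\sigma^{*}\tilde\chi)$. Two such splittings of the same gerbe over $B_{\bar k}\mu_r$ differ by the $\BG_m$-torsor through which the homomorphisms $\sigma^{*}\tilde\chi$ and $\tilde\chi$ — both lifting $\chi$ modulo scalars — differ, i.e. precisely by $\lambda$ of \eqref{lll}; under $H^1_{\mathrm{fl}}(B_{\bar k}\mu_r,\BG_m)=\Hom(\mu_r,\BG_m)(\bar k)=(\BQ/\BZ)[r]$ this is $w_\lambda=d/m$. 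Since the isomorphism $H^2_{\mathrm{fl}}\cong\BZ/r\BZ$ is built from exactly this recipe, I get $\alpha(\chi)_n=w_\lambda=d/m$.

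For the second assertion, note first that since $(d,m)=1$ the element $d/m$ has order exactly $m$ in $\BQ/\BZ$, so it suffices to show $m\mid d_i$ for each $i$. Because $\sigma^{*}\lambda=\lambda$ (the sheaf $\Hom(\mu_r,\BG_m)$ being constant), iterating $\sigma^{*}\tilde\chi=\tilde\chi\cdot\lambda$ gives $(\sigma^{m})^{*}\tilde\chi=\tilde\chi\cdot\lambda^{m}=\tilde\chi$, so the lift $\tilde\chi$, together with its weight decomposition, is already defined over $k_{nm}$, yielding a decomposition $x_{k_{nm}}=\bigoplus_w (x_{k_{nm}})^{w}$ in $\BM(k_{nm})$. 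I would then fix $i$ and decompose $(x_i)_{k_{nm}}$ into indecomposables over $k_{nm}$; since $x_i$ is indecomposable over $k_n$, the group $\Gal(k_{nm}/k_n)$ acts transitively on this finite set of summands, so its generator $\sigma$ runs through them in a single cycle whose length $N$ divides both $m$ and $d_i$. On the other hand, the compatibility recorded in the proof above — that $\sigma^{*}$ carries $(x^{w})^{\sigma}$ to $x^{w+w_\lambda}$ — says that $\sigma$ raises the weight of each of these summands by the \emph{same} amount $w_\lambda$; running once around the cycle therefore returns to the starting summand with weight raised by $N w_\lambda$, so $N w_\lambda=0$ in $\BQ/\BZ$, whence $m=\ord(w_\lambda)$ divides $N$. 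Together with $N\mid m$ this forces $N=m$, and then $m=N\mid d_i$; taking the $\gcd$ over $i$ finishes the proof. (One could equally read this off the parametrisation established above, in which $x\cong\bigoplus_i \Tr_{nm/n}(y_i)$ exhibits $x$ as a sum of objects induced from $k_{nm}$.)

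The first part is essentially bookkeeping with the definitions. The real work is in the second: one must carefully reconcile the $\mu_r$-weight grading — which is only rational over $k_{nm}$ — with the decomposition of $x$ into indecomposables over $k_n$, and verify that the Galois generator permutes the indecomposable $k_{nm}$-summands of each $(x_i)_{k_{nm}}$ transitively while shifting \emph{all} of their weights by the single character $w_\lambda$. This combines the fact that $\sigma^{*}\tilde\chi$ and $\tilde\chi$ differ by a central homomorphism with the Galois-descent dictionary relating $k_n$-indecomposables to $\Gal$-orbits of $k_{nm}$-indecomposables; making this interplay precise, rather than any computation, is the main obstacle.
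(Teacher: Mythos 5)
Your treatment of the first assertion is correct and coincides with the paper's: a section of $\chi^*\BM$ over $B_{\bar{k}}\mu_r$ is precisely a lift $\tilde\chi\colon\mu_r\to\Aut(x)_{\bar{k}}$, its Frobenius translate is $\sigma^*\tilde\chi$, and $\alpha(\chi)_n=\phi(s)-s$ is the scalar cocharacter $\lambda$, i.e.\ $w_\lambda=d/m$.

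For the second assertion you locate the right mechanism (iterate $\sigma^*\tilde\chi=\tilde\chi\cdot\lambda$ around a Frobenius cycle), but the step where you assign a single $\mu_r$-weight to each $k_{nm}$-indecomposable summand $y_j$ of $(x_i)_{k_{nm}}$ has a gap. The action of $\mu_r$ on $x$ via $\tilde\chi$ is by automorphisms; these preserve the isotypic components of the Krull--Schmidt decomposition of $x$ but do not preserve the chosen subobject $x_i$, nor an individual $y_j$. When the multiplicity $m_i$ is greater than one, the $\mu_r$-weights sitting over an isomorphism class $[y_j]$ form a multiset of size $m_i$, not a single number, and going once around the length-$N$ Frobenius cycle yields only that this multiset is invariant under translation by $Nw_\lambda$. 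That alone does not force $Nw_\lambda=0$: a finite multiset in $\BQ/\BZ$ can be invariant under a nonzero shift (e.g.\ $\{0,\tfrac12\}$ under $+\tfrac12$). What rescues the argument is that $\sigma^*\tilde\chi=\tilde\chi\cdot\lambda$ is a pointwise, not merely multiset-level, identity. A precise route — and the reading I would give the paper's terse ``follows from the definition of $w_\lambda$'' — is to conjugate $\tilde\chi$ into a maximal torus $T\subset\Aut(x)$ defined over $k_n$ (possible since the centralizer of $\tilde\chi(\mu_r)\cdot\BG_m$ is Galois-stable), project $\tilde\chi$ to the factor of the reductive quotient of $\Aut(x)$ attached to $x_i$, whose maximal torus splits over $k_{nd_i}$ so that $\sigma^{d_i}$ acts trivially on its cocharacter lattice, and then apply $1+\sigma+\cdots+\sigma^{d_i-1}$ to $\lambda=(\sigma-1)\tilde\chi$: since $\lambda$ is a $\sigma$-fixed scalar cocharacter this gives $d_i\lambda=(\sigma^{d_i}-1)\tilde\chi=0$. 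This also shows the detour through $k_{nm}$ is unnecessary; one may argue directly over $\bar k$.
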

\begin{proof}
    By definition, the element $\alpha(\chi)_n$ is obtained by pulling back $\BM \to \CM$ along $\chi\colon B_k \mu_r \to \CM$, choosing a section $s \in \chi^*\BM(B_{\bar k} \mu_r)$ and considering the object
    \begin{equation*}
        \alpha(\chi)_n=\phi(s)-s \in H^1(B_{\bar k} \mu_r,\BG_m)\cong (\BQ/\BZ)[r].
    \end{equation*}
    But in our situation, such a section $s$ is the same as a lift $\lambda\colon \mu_r \to \Aut(x)_{\bar k}$, and $\phi(s)-s$ is given by the homomorphism $\lambda\colon \mu_r \to \BG_m$ which corresponds to $w_\lambda = \frac{d}{m}$ in $\BQ/\BZ$.

    The fact that the order of $\alpha(\chi)_n$ has to divide each $d_i$ follows from the definition of $w_\lambda$. 
\end{proof}
The following describes the weight in terms of our parametrisation:
\begin{lemma} Let $x =\Tr_{m,s} (y_1,\hdots,y_s)$ be the decomposition corresponding to $\tilde\chi: \mu_r \to \uAut(x)$. Then we have
\[ \Bw(\chi) + \frac{1}{2}(x,x)= -\frac{m}{2} \sum_{1\leq i\leq s} (y_i,y_i).\]

\end{lemma}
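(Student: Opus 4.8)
The plan is to compute $\Bw(\chi)$ and $(x,x)$ through the weight decomposition $x=\bigoplus_a x^a$ of $x$ determined by the lift $\tilde\chi\colon\mu_r\to\Aut(x)_{\bar k}$, and then to play the trivial-character contributions off against the symmetry of the Euler pairing. Throughout I write, for $u\in\BQ/\BZ$, $\widehat u\in(0,1]\cap\BQ$ for its representative under the identification of Definition \ref{weightf}(i); thus $\widehat 0=1$, while $\widehat u+\widehat{-u}=1$ whenever $u\neq 0$.

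First I would pass to the weight spaces. By functoriality of $\Ext^i$ there is a $\mu_r$-equivariant decomposition $\Ext^i(x,x)=\bigoplus_{a,b}\Ext^i(x^a,x^b)$, the sum running over pairs of weights $a,b$ of $x$, on which $\mu_r$ (acting via $\tilde\chi$ by conjugation, equivalently via $\chi$, since the central $\BG_m\subset\Aut(x)$ acts trivially on all $\Ext$-groups) acts on the $(a,b)$-summand through the single character $b-a$. Setting $e^i_{a,b}=\dim\Ext^i(x^a,x^b)$ and $e_{a,b}=\sum_i(-1)^i e^i_{a,b}=(x^a,x^b)$, one reads off $w(\Ext^i(x,x))=\sum_{a,b}\widehat{(b-a)}\,e^i_{a,b}$, and hence
\[
\Bw(\chi)=-\sum_{a,b}\widehat{(b-a)}\,e_{a,b},\qquad (x,x)=\sum_{a,b}e_{a,b},
\]
so that
\[
\Bw(\chi)+\tfrac12(x,x)=\sum_{a,b}\Bigl(\tfrac12-\widehat{(b-a)}\Bigr)e_{a,b}.
\]

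Next I would invoke the symmetry $e_{a,b}=e_{b,a}$ coming from Axiom \ref{smst}. Pairing the $(a,b)$ and $(b,a)$ contributions for $a\neq b$, the combined coefficient is $1-\bigl(\widehat{(b-a)}+\widehat{(a-b)}\bigr)=0$, so all off-diagonal terms cancel; only the diagonal terms $a=b$ survive, each with coefficient $\tfrac12-\widehat 0=-\tfrac12$, whence $\Bw(\chi)+\tfrac12(x,x)=-\tfrac12\sum_a(x^a,x^a)$. It then remains to identify $\sum_a(x^a,x^a)$ with $m\sum_{i=1}^s(y_i,y_i)$. For this I would use that, by construction, $\bigoplus_{i=1}^s\bigoplus_{j=0}^{m-1}y_i^{\sigma^j}=\Tr_{m,s}(y_1,\dots,y_s)=x$ is precisely the weight decomposition of $x$ (the Frobenius shifts weights by $w_\lambda=d/m$ and, as $\gcd(d,m)=1$, permutes cyclically the $m$ weight spaces in each $\langle 1/m\rangle$-orbit, so that the summands $y_i^{\sigma^j}$ run through the weight spaces $x^a$). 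Hence
\[
\sum_a(x^a,x^a)=\sum_{i=1}^s\sum_{j=0}^{m-1}(y_i^{\sigma^j},y_i^{\sigma^j})=m\sum_{i=1}^s(y_i,y_i),
\]
the last equality because the Euler pairing is invariant under the Frobenius twist $z\mapsto z^\sigma$, which is an instance of Lemma \ref{lemma:base-change} for the flat base change $\sigma\colon\bar k\to\bar k$. Combining with the previous step gives $\Bw(\chi)+\tfrac12(x,x)=-\tfrac m2\sum_{i=1}^s(y_i,y_i)$, as asserted.

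The only step that needs genuine care is the first one: one must check precisely that the $\mu_r$-action on $\Ext^i(x^a,x^b)$ is \emph{purely} of weight $b-a$ and that $w(\cdot)$ is additive over this grading, getting the trivial-character contribution right via the convention $\widehat 0=1$. It is exactly this bookkeeping that yields the term $-\tfrac12\sum_a(x^a,x^a)$ and hence, after the Frobenius-orbit count, the coefficient $-\tfrac m2$ in the identity; everything else is formal manipulation of the two displayed identities together with $\widehat u+\widehat{-u}=1$.
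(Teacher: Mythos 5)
Your proof is correct and follows essentially the same route as the paper's: decompose $\Ext^i(x,x)$ along the $\tilde\chi$-weight grading, compute $\Bw(\chi)+\tfrac12(x,x)=\sum_{a,b}\bigl(\tfrac12-\widehat{(b-a)}\bigr)(x^a,x^b)$, use symmetry of the Euler pairing together with $\widehat u+\widehat{-u}=1$ (for $u\neq 0$) to cancel the off-diagonal terms, and then identify the surviving diagonal sum $-\tfrac12\sum_a(x^a,x^a)$ with $-\tfrac m2\sum_i(y_i,y_i)$ via Frobenius-invariance of the pairing. The paper states the cancellation more tersely ("its weight ... is exactly the negative of the one on ..."), but your explicit bookkeeping with $\widehat 0=1$ and the pairing of $(a,b)$ with $(b,a)$ is the same argument spelled out.
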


\begin{proof} We work over $\bar{k}$ since the weight is insensitive to base change. For every $0\leq i \leq \delta$ we have 
\[\Ext^i(x,x) = \ \bigoplus_{1\leq i_1, i_2 \leq s, 1\leq j_1, j_2\leq m} \Ext^i(y_{i_1}^{\sigma^{j_1}},y_{i_2}^{\sigma^{j_2}}) .   \]

Now, the $\mu_r$-action on each $\Ext^i(y_i^{\sigma^j},y_i^{\sigma^j})$ is trivial, while its weight on each $\Ext^i(y_{i_1}^{\sigma^{j_1}},y_{i_2}^{\sigma^{j_2}}) $ with $(i_1,j_1) \neq (i_2,j_2)$ is exactly  the negative of the one on $ \Ext^i(y_{i_2}^{\sigma^{j_2}},y_{i_1}^{\sigma^{j_1}})$. Using that $(\cdot,\cdot)$ is bilinear and by assumption also symmetric, we deduce 
\[ \Bw(\chi) +  \frac{1}{2}(x,x) = -\frac{1}{2}\sum_{1\leq i\leq s, 1\leq j\leq m}(y_i^{\sigma^j},y_i^{\sigma^j}) = -\frac{m}{2} \sum_{1\leq i\leq s} (y_i,y_i).  \]
\end{proof}

    Altogether we find the following formula for $\MF(x)_n$:
    \begin{align*}
        (&\#^{\Bw,\tilde\alpha} I_{\mu_r}B\uAut(x))_n  \\ &= \sum_{m,s \geq 1} \sum_{(d,m)=1} \sum_{(y_1,\hdots,y_s) \in \Tr^{-1}_{m,s}(x)}  |\overline{\Delta_{m,s}[r]}|\frac{|\pi_0(\chi)|}{ms} (-1)^{\frac{b_1(x,x)}{m^2}}e^{2\pi i \frac{d}{m}}  \frac{(\BL_n-1)\BL_n^{\frac{1}{2}(x,x)+\frac{m}{2}\sum_{i=1}^s (y_i,y_i)}}{|\pi_0(\chi)|\prod_{i=1}^s |\Aut(y_i)(k_{nm)}|}  \\
        &= \BL_n^{\frac{1}{2}(x,x)} \sum_{m,s \geq 1}    \sum_{(d,m)=1} \sum_{(y_1,\hdots,y_s) \in \Tr^{-1}_{m,s}(x)} \\
        & \hspace{100pt}(-1)^{(b_1+b_2 m)\sum_{i=1}^s(y_i,y_i)} |\overline{\Delta_{m,s}[r]}|\frac{1}{ms} (-1)^{\frac{b_1(x,x)}{m^2}}e^{2\pi i \frac{d}{m}}  \frac{(\BL_n-1)\BL_{nm}^{\frac{1}{2}\sum_{i=1}^s (y_i,y_i)}}{\prod_{i=1}^s |\Aut(y_i)(k_{nm)}|}  \\
        &= (-1)^{b_2(x,x)} \BL_n^{\frac{1}{2}(x,x)} \sum_{m,s \geq 1}    \sum_{(d,m)=1} \sum_{(y_1,\hdots,y_s) \in \Tr^{-1}_{m,s}(x)}  |\overline{\Delta_{m,s}[r]}|\frac{1}{ms} e^{2\pi i \frac{d}{m}}  \frac{(\BL_n-1)\BL_{nm}^{\frac{1}{2}\sum_{i=1}^s (y_i,y_i)}}{\prod_{i=1}^s |\Aut(y_i)(k_{nm)}|} 
    \end{align*} 

Here in the first equality we also used \eqref{modal}. Notice that the only dependence on $d$ is in the factor $e^{2\pi i \frac{d}{m}}$ and hence the sum over all $d$ coprime with $m$ can be replaced by a factor $\mu(m)$. This gives

    \begin{multline}\label{muli}
       (-1)^{b_2(x,x)} \frac{\BL_n^{-\frac{1}{2}(x,x)}}{\BL_n-1} (\#^{\Bw,\tilde\alpha} I_{\mu_r}B\uAut(x))_n  \\ =
          \sum_{m,s \geq 1}  \frac{\mu(m)}{ms} \sum_{(y_1,\hdots,y_s) \in \Tr^{-1}_{m,s}(x)}  |\overline{\Delta_{m,s}[r]}|  \frac{\BL_{nm}^{\frac{1}{2}\sum_{i=1}^s (y_i,y_i)}}{\prod_{i=1}^s |\Aut(y_i)(k_{nm)}|} 
    \end{multline}

Notice that since $x$ is fixed, both sums on the right hand side of \eqref{muli} are finite and the only dependence on $r$ comes from $|\overline{\Delta_{m,s}[r]}|$. So by Lemma \ref{esch} below we deduce rationality and the formula

    \begin{multline*}
       \MF(x) =  -(-1)^{b_2(x,x)} \frac{\BL_n^{\frac{-(x,x)-1}{2}}}{\BL^{\frac{1}{2}}_n-\BL^{-\frac{1}{2}}_n} \lim_{T\to \infty}\sum_{r \geq 1}  \#^{\Bw,\tilde\alpha} I_{\mu_r}B\uAut(x) T^r  \\
       = \sum_{m,s \geq 1} \frac{\mu(m)}{ms} \sum_{(y_1,\hdots,y_s) \in \Tr^{-1}_{m,s}(x)}  \frac{\BL_{nm}^{\frac{1}{2}\sum_{i=1}^s (y_i,y_i)}}{\prod_{i=1}^s |\Aut(y_i)(k_{nm)}|} \lim_{T\to \infty}\sum_{r \geq 1}  |\overline{\Delta_{m,s}[r]}| T^r\\
        = \sum_{m,s \geq 1}(-1)^{s-1} \frac{\mu(m)}{ms} \sum_{(y_1,\hdots,y_s) \in \Tr^{-1}_{m,s}(x)}  \frac{\BL_{nm}^{\frac{1}{2}\sum_{i=1}^s (y_i,y_i)}}{\prod_{i=1}^s |\Aut(y_i)(k_{nm)}|}.
    \end{multline*}

Theorem \ref{plid} now follows directly from \eqref{logeq}.

    \begin{lemma}\label{esch}
The series $\sum_{r \geq 1} |\overline{\Delta_{m,s}[r]}| T^r$ is a rational function in $T$ of degree $0$ and
       \[ \lim_{T\to \infty}\sum_{r \geq 1} |\overline{\Delta_{m,s}[r]}| T^r=(-1)^{s-1}.\]
    \end{lemma}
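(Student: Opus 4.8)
The plan is to reduce the statement to an explicit Ehrhart-type generating-function computation. First I would unwind the definition of $\overline{\Delta_{m,s}[r]}$ into a closed form. Tracing through the cyclic-shift action used to form the quotient (and the $\sigma$-twist built into it), one identifies $\overline{\Delta_{m,s}[r]}$ with the set of $r$-torsion tuples $0<w_1<\dots<w_s<\tfrac1m$; writing $q=\lceil r/m\rceil-1$ for the number of fractions $\tfrac1r,\tfrac2r,\dots$ lying strictly below $\tfrac1m$, this gives $|\overline{\Delta_{m,s}[r]}|=\binom{q}{s}$, i.e.\ $|\overline{\Delta_{m,s}[r]}|$ is the Ehrhart counting function, at denominator $r$, of the fully open $s$-dimensional rational simplex $\Delta^\circ=\{0<w_1<\dots<w_s<\tfrac1m\}$.

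Given this, the series is computed directly. Since $q$ is constant on each run of $m$ consecutive values of $r$,
\[
\sum_{r\ge1}|\overline{\Delta_{m,s}[r]}|\,T^r=(T+T^2+\dots+T^m)\sum_{q\ge0}\binom{q}{s}T^{mq}=\frac{T(1-T^m)}{1-T}\cdot\frac{T^{ms}}{(1-T^m)^{s+1}}=\frac{T^{ms+1}}{(1-T)(1-T^m)^s}.
\]
Numerator and denominator both have degree $ms+1$, so this rational function has degree $0$; the leading coefficient of $(1-T)(1-T^m)^s$ is $(-1)^{s+1}$ and that of the numerator is $1$, so the limit at $T=\infty$ is $(-1)^{s+1}=(-1)^{s-1}$.

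Alternatively, and more conceptually, one need not compute the series at all: $|\overline{\Delta_{m,s}[r]}|$ is the Ehrhart quasi-polynomial of the rational polytope $\Delta^\circ$, so rationality and degree $0$ are immediate from Stanley's theorem (the same result cited in the proof of Lemma~\ref{locco}), and Stanley's reciprocity relates $\sum_{r\ge1}|\overline{\Delta_{m,s}[r]}|\,T^r$ to the Ehrhart series of the closed simplex $\overline{\Delta^\circ}$ by $T\mapsto 1/T$ up to the sign $(-1)^{\dim\Delta^\circ+1}=(-1)^{s+1}$; since the Ehrhart series of a nonempty rational polytope has value $1$ at $T=0$, evaluating at $T=\infty$ again yields $(-1)^{s+1}=(-1)^{s-1}$.

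The hard part is entirely in the first paragraph: one must check that, after passing to the quotient, the count is that of the \emph{open} $s$-simplex, rather than that of the half-open polytope $\Delta_{m,s}$ (whose Ehrhart function has constant term $0$ by inclusion–exclusion over its facets, which would force the limit to be $0$) or that of its naive set of rotation-orbits. This is precisely where the sign $(-1)^{s-1}$, as opposed to $(-1)^s$ or $0$, is pinned down; everything after it is routine.
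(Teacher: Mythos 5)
Your plan breaks at the first, crucial step, and the break is fatal. The paper's proof identifies the class of $(w_1,\dots,w_s)$ in $\overline{\Delta_{m,s}[r]}$ with the $(s-1)$-tuple of consecutive differences $d_i = w_{i+1}-w_i$, i.e.\ with the $\tfrac1r\BZ$-points of the \emph{$(s-1)$-dimensional} open polytope $\{(d_i): d_i>0,\ \sum_i d_i < 1/m\}$, and then appeals to [HL15, Lemma~8.5.2] to evaluate the limit as an Euler characteristic. You instead identify $\overline{\Delta_{m,s}[r]}$ with the $\tfrac1r\BZ$-points of the \emph{$s$-dimensional} open simplex $\{0<w_1<\dots<w_s<1/m\}$. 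These are not the same count, and the discrepancy is exactly one dimension.

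Your identification is false, and one can see this immediately for $s=1$. There $\Delta_{1,1}[r]=\tfrac1r\BZ\cap(0,1]$, and the translation action of $(\BQ/\BZ)[r]$ (add $w$, reduce mod $1/m=1$) is free and transitive, so $|\overline{\Delta_{1,1}[r]}|=1$ for every $r$. Your formula $\binom{q}{s}$ instead gives $q=\lceil r/m\rceil-1$, which is not $1$ in general. (One can also extract $|\overline{\Delta_{1,1}[r]}|=1$ directly from \eqref{muli} by testing it on a simple object $x$ with $\uAut(x)=\{1\}$, for which only $m=s=1$ contributes.) The conceptual mistake is that the quotient by the overall shift removes one degree of freedom: $\overline{\Delta_{m,s}[r]}$ is governed by a rational polytope of dimension $s-1$, not $s$. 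You explicitly flag the identification as ``the hard part'' and then assert it without verification; it is precisely there that the argument fails.

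The rest of your proof — the closed-form Ehrhart series for the open $s$-simplex, the degree count, and both routes (direct computation and Stanley reciprocity) — is correct \emph{for the simplex you wrote down}, but that simplex is not $\overline{\Delta_{m,s}[r]}$. Note also that applying your own (correct) Ehrhart/reciprocity arithmetic to the correct $(s-1)$-dimensional polytope of differences gives $(-1)^{(s-1)+1}=(-1)^s$, not $(-1)^{s-1}$; your route arrives at $(-1)^{s-1}$ only because of the spurious extra dimension. There appears to be a sign subtlety in the statement and surrounding computation that the paper's proof (which also passes through an Euler-characteristic identity) must be read against, but in any case your proof cannot be repaired by adjusting the later arithmetic: the underlying polytope identification has to be the one via the differences $d_i$, and you would need to actually verify it by tracing the group action, which you do not do.
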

\begin{proof}
    The class of an element $(w_1,\hdots,w_s) \in \Delta_{m,s}[r]$ in $\overline{\Delta_{m,s}[r]}$ is completely described by the differences $d_i=w_{i+1}-w_i$ for $1\leq i \leq s-1$, subject to the condition that $\sum_{i=1}^{s-1}d_i < 1/m$. So by \cite[Lemma 8.5.2]{HL15} the limit $ \lim_{T\to \infty}\sum_{r \geq 1} |\overline{\Delta_{m,s}[r]}| T^r$ computes the compactly supported Euler characteristic of 
    \begin{equation*}
        \{(d_i) \in (0,1)^{s-1} \mid \sum_i d_i < 1/m \},
    \end{equation*}
    which is equal to $(-1)^{s-1}$.
\end{proof}

\section{Applications to smooth linear stacks}\label{msds}
\subsection{Integrals and intersection cohomology}\label{ii}
Throughout this section we let $R$ be a finitely generated $\BZ$-algebra and $T = \Spec(R)$. 

We continue to consider an $R$-linear category $\Cc$ as in Section \ref{axms} and assume further that $\Cc$ is hereditary i.e. $\delta = 1$. The vanishing of the higher $\Ext$-groups implies in particular that $ \BM_\Cc$ is smooth. We consider an open substack $\BM \subset \BM_\Cc$ stable under extensions. 

In order to apply our theory we need to assume that $\BM$ admits a reasonable notion of moduli space.

\begin{enumerate}
\item In the decomposition 
\[\BM = \sqcup_{\gamma \in \pi_0(\BM)} \BM_\gamma,\]
each $\BM_\gamma$ is a finite type $R$-linear quotient stack.

\item For each $\gamma \in \pi_0(\BM)$ there exists an adequate moduli space $\tilde{\pi}_\gamma \colon \BM_\gamma \to \M_\gamma$.
\end{enumerate}

We write $\CM_\Cc = \sqcup_{\gamma \in  \pi_0(\BM)} \CM_\gamma$ for the $\BG_m$-rigidification along the scalar automorphisms and $\pi_\gamma\colon \CM_\gamma \to \M_\gamma$ for the induced morphisms. We write $(\gamma,\gamma)$ for the value of the Euler pairing on $\BM_\gamma$. By definition we have  $(\gamma,\gamma) = - \dim_R \BM_\gamma$. The set of connected components $\pi_0(\BM)$ inherits a commutative monoid structure from the direct sum operation $\oplus$.
For any $\phi:S \to T$ we write $\pi_{\gamma,\phi} \colon \CM_{\gamma,\phi} \to \M_{\gamma,\phi}$ for the base change to $S$.

In order to apply our integration theory we also need Conditions $(i) - (iii)$ from Section \ref{iob} to be verified. While $S$-completeness follows from \cite[Proposition 3.44]{AHH23} the other two conditions are not automatically satisfied.  

Fix a $\gamma \in \pi_0(\BM)$ such that there exists an open-dense subspace $W \subset \M_\gamma$ such that $\CM_\gamma \times_{\M_\gamma} W \to W$ is an equivalence.  Let $F$ be a mixed characteristic local field containing no $p$-power roots of $1$ and $\Oc_F \subset F$ its ring of integers. Then for any morphism $\phi: \Oo_F \to T$ we further assume:
\begin{enumerate}

\item[(iii)] The map 
\[ \pi_{\infty}\colon \CM_{\gamma,\phi}(\Dinfty)^\sharp \to \M_{\gamma,\phi}(\Oc_F)^\sharp\]
is surjective.
\item[(iv)] The canonical bundle of $\CM_{\gamma}$ is Zariski-locally trivial.     
\end{enumerate}

Under these assumptions, Theorem \ref{inbase} equips $\M_{\gamma,\phi}(\Oc_F)^\sharp$ with a canonical measure $\mu_{can}$ and furthermore, rigidification $\BM_\gamma \to \CM_\gamma$ equips $\M_{\gamma,\phi}(\Oc_F)^\sharp$ with a modified gerbe function $\tilde{\alpha}$ as in Section \ref{pls} and Remark \ref{gerf}.

\begin{theorem}\label{icint} There exists an open subscheme $T_\gamma \subset T$ such that for every morphism $\phi: \Spec(\Oo_F) \to T_\gamma$, where $F$ is a mixed characteristic local field containing no $p$-power roots of $1$ and $\Oc_F \subset F$ its ring of integers, and every $x \in \M_{\gamma,\phi}(k_F)$ we have

\begin{equation}\label{eqicint} \Tr(\Fr,\left(\CIC_{\M_{\gamma,\phi}}\right)_x) = - q_F^{-(\gamma,\gamma)+1} \int_{B(x)} e^{2\pi i\tilde{\alpha}} \ d\mu_{can}.  \end{equation}
Here $k_F$ denote the residue field of $F$, $q_F$ its cardinality and
 $B(x) = \{y \in \M_{\gamma,\phi}(\Oc_F)^\sharp \ |\ y_{|\Spec(k)} = x \}$.
\end{theorem}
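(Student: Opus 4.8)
The plan is to deduce Theorem~\ref{icint} from Theorem~\ref{plid} (the plethystic logarithm formula) together with the known identification of intersection cohomology with BPS cohomology in the hereditary symmetric case. First I would set up the two sides as counting functions on the monoid $\pi_0(\BM)$. On the geometric side, applying Theorem~\ref{inbase} (with the admissible function $f_{\tilde\alpha}$ coming from the gerbe $\BM_\gamma \to \CM_\gamma$ as in Remark~\ref{gerf}, modified by the sign twist of Section~\ref{pls}) expresses $\int_{B(x)} e^{2\pi i \tilde\alpha}\,d\mu_{can}$ as $-\lim_{T\to\infty} Q_{x,f}(T)$, where $Q_{x,f}$ is the rationalization of a generating series over $\bigsqcup_r I_{\mu_r}\CM_{\gamma,x}(k)$ weighted by $q^{-w(y)}/|\Aut(y)(k)|$. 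The key point is that, up to the scalar factor $q_F^{-(\gamma,\gamma)+1}$ and the sign $(-1)^{b_2(\gamma,\gamma)}$, this is exactly the object computed by the limit formula \eqref{limfor}: the weight $w$ on $I_{\hat\mu}\CM$ relates to $\Bw$ by $w = \Bw + 1$ (as noted after the definition of $\Bw$, since $\delta\le 1$), and $e^{2\pi i\tilde\alpha}$ matches the modified gerbe function, so that $\int_{B(x)} e^{2\pi i\tilde\alpha}\,d\mu_{can}$ recovers $-q_F^{(\gamma,\gamma)-1}\cdot(-1)^{b_2(\gamma,\gamma)}\cdot(\#^{\Bw,\tilde\alpha}\text{-series})$, hence $-q_F^{-(\gamma,\gamma)+1}\int_{B(x)}e^{2\pi i\tilde\alpha}\,d\mu_{can}$ equals the value at $x$ of the counting function $\MF$ defined by \eqref{limfor}.

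Next I would invoke Theorem~\ref{plid}: $\MF$ satisfies $\MF/(\BL^{1/2}-\BL^{-1/2}) = \Log\big(\BL^{(\cdot,\cdot)/2}/|\Aut(\cdot)|\big)$ in $\mcf(\BM)$. The function $\BL^{(x,x)/2}/|\Aut(x)| = \BL^{-\dim_R\BM_\gamma/2}/|\Aut(x)|$ is, up to the normalizing power of $\BL$, the stacky point count $|\BM_\gamma(k_n)|$; this is the "total" side of the DT/BPS wall-crossing identity. By the cohomological integrality / BPS theorem of Meinhardt--Reineke and Davison--Meinhardt (\cite{Me15,DM20}, cited after Theorem~\ref{thm:main2}) specialized to a hereditary category with symmetric Euler form, the plethystic logarithm of the generating series of stacky point counts of $\BM_\gamma$ is the generating series of (twisted) Poincaré polynomials of BPS cohomology, and in this geometric situation the BPS cohomology of $\BM_\gamma$ is the intersection cohomology $IH^*(\M_\gamma)$. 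Concretely, matching Frobenius traces, $\MF(x)_1 = (\BL^{1/2}-\BL^{-1/2})\cdot \Log(\cdots)(x)_1$ evaluates to $(q_F^{1/2}-q_F^{-1/2})$ times the stalkwise BPS class, which by the support/stalk description of the BPS sheaf equals $\Tr(\Fr, (\CIC_{\M_{\gamma,\phi}})_x)$ after accounting for the shift; the factors $\BL^{1/2}-\BL^{-1/2}$ and $q_F^{-(\gamma,\gamma)+1}=q_F^{\dim_R\BM_\gamma+1}$ are precisely the normalizations that make the identity \eqref{eqicint} come out on the nose.

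The last ingredient is producing the open subscheme $T_\gamma\subset T$: one needs $F$-points $\phi\colon \Spec\Oc_F \to T_\gamma$ for which the hypotheses of Theorem~\ref{inbase} hold in the fibre --- $S$-completeness (automatic from \cite[Prop.~3.44]{AHH23}), surjectivity of $\pi_\infty$ (condition (iii)), Zariski-local triviality of $K_{\CM_\gamma}$ (condition (iv)), and the comparison of $\ell$-adic intersection cohomology sheaves under specialization (smooth and proper base change / constructibility of $\CIC$ in families, after possibly shrinking $T$). I would also need the BPS--$IH$ comparison to be compatible with reduction mod $p$, which again holds after shrinking $T$ by spreading out the $\BC$-statement.

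The main obstacle, I expect, is bookkeeping the normalization constants and signs: carefully tracking the interplay between the weight shift $w = \Bw+1$, the rigidification factor $|\BG_m(k)| = q_F-1 = \BL_1 - 1$, the half-integer powers of $\BL$ appearing in $\MF$ and in $\BL^{1/2}-\BL^{-1/2}$, and the degree shift built into the convention $\CIC = \IC[-\dim]$, so that the final scalar is exactly $-q_F^{-(\gamma,\gamma)+1}$ and the sign $(-1)^{b_2(\gamma,\gamma)}$ cancels. A secondary technical point is ensuring that the abstract identity in the $\lambda$-ring $\mcf(\BM)$, once evaluated at the single point $x\in\BM(k_F)$ and its Frobenius twists, genuinely reproduces the \emph{stalk} of $\CIC_{\M_{\gamma,\phi}}$ at $x$ rather than merely a global or generic invariant --- this is where the fact that $\mcf$ detects equality on all $\bar k$-points (Remark after the definition of $\mcf$) and the support-theorem structure of the BPS sheaf are both essential.
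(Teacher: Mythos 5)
Your proposal takes essentially the same route as the paper: the paper's proof factors through a BPS-invariant intermediary, proving (i) Lemma~\ref{intbps}, that $\BPS_{\gamma,\overline\phi}(x)=(-1)^{(\gamma,\gamma)}q_F^{(-(\gamma,\gamma)+1)/2}\int_{B(x)}e^{2\pi i\tilde\alpha}\,d\mu_{can}$, via Theorem~\ref{plid} and Theorem~\ref{inbase}, and (ii) Proposition~\ref{bpsic}, that $\Tr(\Fr,(\CIC_{\M_{\gamma,\phi}})_x)=(-1)^{-(\gamma,\gamma)+1}q^{(-(\gamma,\gamma)+1)/2}\BPS_{\gamma,\phi}(x)$ after shrinking to an open $T_\gamma\subset T$, spreading out the Meinhardt/Davison--Meinhardt identification of $\BPS$ with $\CIC$ over $\BC$; combining the two gives the statement. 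One imprecision worth flagging: you write that the normalized integral ``equals the value at $x$ of the counting function $\MF$'', but $\MF$ lives on $\BM_\gamma$ while $x\in\M_{\gamma,\phi}(k_F)$ --- the correct object is the pushforward $\pi_!\MF(x)=\sum_{y\in\pi^{-1}(x)}\MF(y)=\BPS_{\gamma,\overline\phi}(x)$, which is what reconciles the fibre-sum in Theorem~\ref{inbase} (over $I_{\mu_r}(\CM_\gamma)_x$) with the per-object series in \eqref{limfor} (over $I_{\mu_r}B\uAut(y)$). Your second paragraph slightly overreaches in going directly from the $\lambda$-ring identity to the stalk of $\CIC$; in the paper this step is not a one-line trace evaluation but the separate Proposition~\ref{bpsic}, whose proof uses the relative t-structure of \cite{HS23} to spread out the cohomological Hall algebra morphism realising the $\CIC\cong\BPS$-sheaf comparison. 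Otherwise the normalizations and signs you track are consistent with the paper.
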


The proof of Theorem \ref{icint} proceeds by showing that both sides satisfy a recursive relation coming from Donaldson-Thomas theory and identifies both sides of \eqref{eqicint} with the BPS-invariants of $\BM_\gamma$. 

Following  \cite[Section 6.5]{Me15} we define the motivic BPS-invariants of $\BM$ as a class $\BPS$ in the relative localized Grothendieck ring of varieties $K_0(\Var/\M)[\BL^{-1/2},(\BL^N-1)^{-1} : N \geq 1]$ over $\M$, defined by the equation 

\begin{equation}\label{groexp}[\tilde{\pi}_!\BL^{\frac{(\cdot,\cdot)}{2}}] = \Sym \left( \frac{\BPS}{\BL^{1/2}-\BL^{-1/2}}  \right), \end{equation}
where $\BL^{\frac{(\cdot,\cdot)}{2}}$ is the constant function given on each connected component $\BM_{\gamma}$ by $\BL^{- \dim \BM_{\beta,\chi}/2} = \BL^{(\gamma,\gamma)/2}$. 

The convention in \cite{Me15} for the square root $\BL^{1/2}$ is specified by the requirement $\sigma_m (\BL^{1/2}) = 0$ for all $m\geq 2$, which is equivalent to 
\begin{equation}\label{sqconv} \psi_m(\BL^{1/2}) = (-1)^{m+1} (\BL^{1/2})^m. \end{equation}

We write $\BPS_\gamma$ for the restriction of $\BPS$ to $\M_\gamma$. Then, as in \cite[Appendix]{MR2453601} we obtain for every finite field $k$ and $\overline{\phi}: k \to \Spec(T)$ a counting function $\BPS_{\gamma,\overline{\phi}}\in \mcf(\M_{\gamma,\overline{\phi}})$ by counting points in the fibers of the motivic class $\BPS_{\gamma,\overline{\phi}}$. Notice that the convention \eqref{sqconv} for the square root $\BL^{1/2}$ corresponds to $b_0 = b_1 = 1$ in the notation of Section \ref{pls}. 

The following lemma relates the right-hand side of \eqref{eqicint} to these BPS-invariants:

\begin{lemma}\label{intbps} Let $\phi: \Spec(\Oo_F) \to T$, where $F$ is a mixed characteristic local field containing no $p$-power roots of $1$, and $\overline{\phi}:\Spec(k_F) \to T$ its restriction to the residue field. Then we have for every $x \in \M_{\gamma,\overline{\phi}}(k_F)$ the equality

\begin{equation}\label{intid} \BPS_{\gamma,\overline{\phi}}(x) =  (-1)^{(\gamma,\gamma)}q_F^{\frac{-(\gamma,\gamma)+1}{2}} \int_{B(x)} e^{2\pi i\tilde{\alpha}} \ d\mu_{can}. \end{equation}
\end{lemma}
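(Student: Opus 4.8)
The plan is to combine the plethystic identity of Theorem \ref{plid} with the $p$-adic integration formula of Theorem \ref{inbase}, matching up both sides along the $\lambda$-ring $\mcf$ of counting functions on $\pi_0(\BM)$. First I would apply Theorem \ref{inbase} to the quotient stack $\CM_{\gamma,\phi} \to \M_{\gamma,\phi}$, with the admissible function $f = e^{2\pi i\tilde\alpha}$ from Remark \ref{gerf} (adjusted by the $b_1$-correction as in Section \ref{pls}, which by \eqref{modal} differs from the plain gerbe function by a sign controlled by $(\gamma,\gamma)$). This expresses $\int_{B(x)} e^{2\pi i\tilde\alpha}\,d\mu_{can}$ as $-\lim_{T\to\infty} Q_{x,f}(T)$, where $Q_{x,f}(T) = \sum_{r\geq 1}\sum_{y\in I_{\mu_r}\CM_{\gamma,\phi,x}(k_F)} \overline{f}(y)\frac{q_F^{-w(y)}}{|\Aut(y)(k_F)|}T^r$. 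Because $\CM_\gamma$ is smooth ($\delta\le 1$), the identification $w = \Bw + 1$ recorded in Section \ref{pls} lets me rewrite this generating series, up to the overall factor $q_F^{-1}$, precisely as the weighted point count $\sum_r \#^{\Bw,\tilde\alpha} I_{\mu_r}B\uAut(x)\,T^r$ evaluated at $q = q_F$, identifying $\BL_n$ with $q_F^n$ and choosing the root $\BL^{1/2}$ to specialize to $q_F^{1/2}$ with $b_1 = b_2 = 1$ as dictated by convention \eqref{sqconv}.

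Next I would invoke Theorem \ref{plid}: its limit formula \eqref{limfor} identifies $-(-1)^{(\gamma,\gamma)}\BL^{\frac{-(\gamma,\gamma)-1}{2}}\lim_{T\to\infty}\sum_r \#^{\Bw,\tilde\alpha} I_{\mu_r}B\uAut(x)\,T^r$ with the value at $x$ of the counting function $\MF\in\mcf(\BM)_0$ satisfying $\MF/(\BL^{1/2}-\BL^{-1/2}) = \Log\big(\BL^{\frac{(,)}{2}}/|\Aut(\cdot)|\big)$. Specializing the identity $[\tilde\pi_!\BL^{\frac{(\cdot,\cdot)}{2}}] = \Sym(\BPS/(\BL^{1/2}-\BL^{-1/2}))$ from \eqref{groexp} to point counts over $k_F$ (using the count-of-fibres functor $K_0(\Var/\M)\to\mcf(\M)$, compatible with $\Sym$ and $\Log$) and comparing with \eqref{plelog}, I would conclude that $\MF$ pushes forward to (the counting function of) $\BPS_{\gamma,\overline\phi}$: both are the plethystic logarithm of the same class. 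Tracking constants — the factor $q_F^{-1}$ from $w = \Bw+1$, the sign $(-1)^{(\gamma,\gamma)}$ and the power $q_F^{(-(\gamma,\gamma)+1)/2}$ coming from $\BL^{\frac{-(\gamma,\gamma)-1}{2}}$ in \eqref{limfor} together with the $b_1$-twist relating $\tilde\alpha$ to $\alpha$ — then yields exactly \eqref{intid}.

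The main obstacle I anticipate is the careful bookkeeping of the normalizations and sign conventions across the three sources: the weight shift $w = \Bw + 1$ (valid only because rigidification by $\BG_m$ in the smooth hereditary case contributes exactly one to the weight), the choice and Adams-operation behavior of $\BL^{1/2}$ (Theorem \ref{plid} uses $b_1, b_2\in\BF_2$ while \eqref{sqconv} pins down $b_0 = b_1 = 1$, so I must check these are consistent and that the $\tilde\alpha$ versus $\alpha$ discrepancy \eqref{modal} is absorbed correctly), and the precise power of $q_F$ relating $\dim_R\BM_\gamma = -(\gamma,\gamma)$, $\dim\M_\gamma = -(\gamma,\gamma)-1$, and the $q_F^{-(\gamma,\gamma)+1}$ appearing in \eqref{eqicint}. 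A secondary point requiring care is that Remark \ref{rmk:bij} and \cite[Lemmas 3.4, 3.5]{Mo19} are needed to know that point-counting on $\CM_{\gamma,\overline\phi}$ and on the inertia stack $I_{\mu_r}B\uAut(x)$ genuinely computes the counting functions in $\mcf$, so that the $\lambda$-ring identity from Theorem \ref{plid} transfers to the arithmetic side fibrewise over each $x\in\M_{\gamma,\overline\phi}(k_F)$. Once these normalizations are nailed down, the proof is a direct comparison of the two limit formulas.
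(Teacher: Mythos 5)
Your proposal is correct and follows essentially the same route as the paper: identify $\BPS_{\gamma,\overline\phi}$ with the pushforward $\pi_!\MF$ by comparing the defining plethystic relation \eqref{groexp} with \eqref{plelog} at the level of counting functions, then evaluate $\pi_!\MF(x)$ via the integration formula of Theorem \ref{inbase} using $w = \Bw + 1$ and the convention $b_1 = b_2 = 1$ of \eqref{sqconv}; the paper merely presents the two steps in the opposite order. Your flagged concern about the inconsistent labels ``$b_0 = b_1 = 1$'' in the paper is a genuine typo for $b_1 = b_2 = 1$, and your bookkeeping of the $q_F^{-1}$ shift and the sign $(-1)^{(\gamma,\gamma)}$ matches the paper's computation.
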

\begin{proof} First  we deduce from Theorem \ref{plid} 
\[\BPS_{\gamma,\overline{\phi}} = \pi_! \MF,\]
with $\MF$ defined as in \eqref{limfor} for $x' \in \BM_{\gamma,\overline{\phi}}(k_F)$ by
\[\MF(x) = -(-1)^{(x',x')}\BL^{\frac{-(x',x')-1}{2}}\lim_{T\to \infty}\sum_{r \geq 1} \#^{\Bw,\tilde\alpha} I_{\mu_r}B\uAut(x) T^r.\]
By all the assumption we made at the beginning of this section we may now apply Theorem \ref{inbase} to get for any $x \in \M_{\gamma,\overline{\phi}}(k_F)$ 
\[ \pi_!\MF(x) = (-1)^{(\gamma,\gamma)}\BL^{\frac{-(\gamma,\gamma)+1}{2}}\int_{B(x)} e^{2\pi i\tilde{\alpha}} \ d\mu_{can}.\]
\end{proof}

In order to relate $\BPS$ to intersection cohomology we first consider the base change to $\BC$. Then \cite[Theorem 5.6]{Me15} shows that the image of $\BPS_{\gamma,\BC}$ in the Grothendieck ring of mixed Hodge modules is given by the class of the (normalized) intersection complex $\CIC_{\M_{\gamma,\BC}}(\frac{(\gamma,\gamma)-1}{2})$, where $(\cdot)$ denotes the Tate twist and $\CIC_{\M_{\gamma,\BC}}$ is normalized such that its restriction to the smooth locus is given by $\BQ[-(\gamma,\gamma)+1]$. For the purpose of Proposition \ref{bpsic} below we will ignore Hodge structures and work instead with constructible $\BQ_\ell$-sheaves.

\begin{proposition}\label{bpsic} There exists an open subscheme $T_\gamma \subset T$ such that for every finite field $k$ with cardinality $q$, every morphism $\Spec(k) \to T_\gamma$ and every $ x \in \M_{\gamma,\phi}(k)$ we have

\[\Tr(\Fr,\left(\CIC_{\M_{\gamma,\phi}}\right)_x) = (-1)^{-(\gamma,\gamma)+1} q^{\frac{-(\gamma,\gamma)+1}{2}} \BPS_{\gamma,\phi}(x).  \]
\end{proposition}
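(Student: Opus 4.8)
The statement compares the Frobenius trace on the normalized intersection complex of $\M_{\gamma,\phi}$ with the point-count of the motivic $\BPS$-class. The plan is to deduce this from the characteristic-zero identification of \cite[Theorem 5.6]{Me15}, which states that over $\BC$ the image of $\BPS_{\gamma,\BC}$ in the Grothendieck group of mixed Hodge modules equals the class of $\CIC_{\M_{\gamma,\BC}}\big(\tfrac{(\gamma,\gamma)-1}{2}\big)$, together with a spreading-out and comparison argument.

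\emph{Step 1: Spreading out.} First I would choose, over the finitely generated $\BZ$-algebra $R$ with $T=\Spec(R)$, a model for $\M_\gamma$ and for the motivic class $\BPS_\gamma \in K_0(\Var/\M_\gamma)[\BL^{-1/2},(\BL^N-1)^{-1}]$, which exists by the assumptions of this section. Shrinking $T$ to a dense open subscheme $T_\gamma$, I may assume that $\M_\gamma \to T_\gamma$ is flat of finite type and that the intersection complex $\CIC_{\M_\gamma/T_\gamma}$ (normalized so that its restriction to the relative smooth locus is $\BQ_\ell[-(\gamma,\gamma)+1]$) is defined as a relative perverse sheaf whose formation commutes with base change to any geometric point of $T_\gamma$; this uses generic base-change for intersection cohomology (e.g.\ via the decomposition theorem applied to a resolution, or the compatibility of IC with the six operations over a Noetherian base). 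Over $T_\gamma$ I likewise arrange that the $\BPS$ class is represented by a relatively nice family of varieties over $\M_\gamma$.

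\emph{Step 2: From motives to sheaves.} By \cite[Appendix]{MR2453601} (the realization of a motivic class as a virtual $\ell$-adic complex / ``function-sheaf'' dictionary), after possibly further shrinking $T_\gamma$, the class $\BPS_\gamma$ corresponds to a virtual $\ell$-adic complex $K_\gamma$ on $\M_\gamma$ over $T_\gamma$ whose fibrewise trace-of-Frobenius function is exactly the counting function $\BPS_{\gamma,\phi}$ (this is where the choice of square root $\BL^{1/2}$ via $b_0=b_1=1$, i.e.\ $\psi_m(\BL^{1/2})=(-1)^{m+1}(\BL^{1/2})^m$, must be matched with the corresponding half-Tate twist on the sheaf side). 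The content of \cite[Theorem 5.6]{Me15}, applied to the geometric generic fibre over $\BC$ and using rigidity / constructibility to propagate over $T_\gamma$, identifies $K_\gamma$ with the shifted, twisted intersection complex: namely $K_\gamma \simeq \CIC_{\M_\gamma}\big(\tfrac{(\gamma,\gamma)-1}{2}\big)[-(\gamma,\gamma)+1]$ (up to the sign and shift bookkeeping encoded in the normalization $\CIC|_{\M_\gamma^{\mathrm{sm}}}=\BQ_\ell[-(\gamma,\gamma)+1]$).

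\emph{Step 3: Taking Frobenius traces.} For $\phi\colon\Spec(k)\to T_\gamma$ with $|k|=q$ and $x\in\M_{\gamma,\phi}(k)$, the Grothendieck--Lefschetz trace and the compatibility of $\CIC$ with base change from Step 1 give $\Tr\big(\Fr,(\CIC_{\M_{\gamma,\phi}})_x\big)$ on one side; on the other, the function-sheaf dictionary gives $\BPS_{\gamma,\phi}(x)$ as the trace on $(K_\gamma)_x$. Comparing via the isomorphism of Step 2, the shift $[-(\gamma,\gamma)+1]$ contributes the sign $(-1)^{-(\gamma,\gamma)+1}$ and the half-Tate twist $\big(\tfrac{(\gamma,\gamma)-1}{2}\big)$ contributes the factor $q^{\frac{-(\gamma,\gamma)+1}{2}}$, yielding
\[
\Tr\big(\Fr,(\CIC_{\M_{\gamma,\phi}})_x\big) = (-1)^{-(\gamma,\gamma)+1}\, q^{\frac{-(\gamma,\gamma)+1}{2}}\, \BPS_{\gamma,\phi}(x),
\]
as claimed.

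\emph{Main obstacle.} The delicate point is Step 1 together with the propagation in Step 2: ensuring that the identification of $\BPS_\gamma$ with the intersection complex, which \cite{Me15} establishes over $\BC$ using mixed Hodge module technology, spreads out to an actual isomorphism of relative $\ell$-adic complexes over a dense open $T_\gamma\subset T$ whose formation commutes with base change to \emph{every} residue field (including small and mixed characteristic). This requires generic base-change and constructibility statements for intersection complexes over a general Noetherian base, and care that no ``bad primes'' destroy the comparison; the twist/shift bookkeeping in Step 3 is routine once the sign conventions of Section \ref{pls} and \cite{Me15} are aligned, but it must be done carefully to land on the exact constants in the statement.
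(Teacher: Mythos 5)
The high-level strategy (spread out a characteristic-zero identification of $\BPS$ with the normalized intersection complex due to \cite[Theorem 5.6]{Me15}, then compare Frobenius traces) overlaps with the paper's, but there is a genuine gap in the crucial spreading-out step, and the paper handles it quite differently.

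In Step~2 you assert that ``rigidity / constructibility'' allows you to propagate the identification $K_\gamma \simeq \CIC_{\M_\gamma}(\frac{(\gamma,\gamma)-1}{2})[-(\gamma,\gamma)+1]$ from $\BC$ to a dense open $T_\gamma \subset T$. But \cite[Theorem 5.6]{Me15} gives only an \emph{equality of classes} in a Grothendieck group of mixed Hodge modules, not an isomorphism of complexes. An equality in a Grothendieck group does not determine objects up to isomorphism, so there is nothing to ``rigidify'' or ``spread out'' --- a Grothendieck-group identity over one geometric fibre (over $\BC$) does not by itself transfer to $\ell$-adic complexes over other residue fields. This is precisely the obstruction the paper must work to overcome, and it is the reason the actual argument is structured differently.

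The paper's route is: (1) establish the indecomposable case as a base case; (2) rather than proving $\BPS = \CIC$ directly, show that $\Tr(\Fr,\CIC_{\M_{\gamma,\phi}})$ satisfies the defining recursion \eqref{groexp} that uniquely characterizes $\BPS$; (3) to spread the recursion out, \emph{first} upgrade the Grothendieck-group equality over $\BC$ to an honest isomorphism in the derived category using \cite[Theorem C]{DM20}, whose construction is geometric (approximation of $\tilde\pi$ by proper maps, the decomposition theorem, and cohomological Hall algebra--style pull-push maps built from the $\BG_m$-gerbe over the stable locus); (4) use the relative perverse t-structure of \cite{HS23} (specifically their Theorem 1.10) to extend the comparison morphism over a dense open $T' \subset T$ and deduce it is an isomorphism over some $T_\gamma$; (5) only then take Frobenius traces. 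Your proposal is missing both the inductive/recursive structure and the actual mechanism (an explicit geometric isomorphism of complexes, as opposed to a virtual equality) that makes spreading out legitimate. Without those ingredients, Step~2 does not go through.

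One smaller point: the citation of \cite[Appendix]{MR2453601} is used in the paper to pass from motivic classes to counting functions (i.e.\ to define $\BPS_{\gamma,\phi}$ as a counting function via point counts on representatives), not to produce a virtual $\ell$-adic complex realizing the motivic class whose fibrewise trace is the counting function; the latter is not what that appendix gives you and would itself require an argument.
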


\begin{proof} If $\gamma$ cannot be decomposed non-trivially in the mononoid $(\pi_0(\Mc),\oplus)$, neither can any object in $\BM_{\gamma}$. Thus $\M_\gamma \cong \CM_\gamma$ is smooth and both sides of the equality are equal to $(-1)^{-(\gamma,\gamma)+1}$. For the left hand side this follows since $\CIC_{\M_{\gamma,\phi}} = \BQ_\ell[-(\gamma,\gamma)+1]$ and for the right hand by definition \eqref{groexp}.

In general, we show that $\Tr(\Fr,\CIC_{\M_{\gamma,\phi}}) $ satisfies the same defining equation \eqref{groexp} as $\BPS_{\gamma,\phi}$. Restricted to $\M_{\gamma,\phi}$ this reads as 

\[ \BL^{-(\gamma,\gamma)/2}\tilde{\pi}_{!}\frac{1}{|\Aut(\cdot)|} = \sum_{\substack{\alpha_i,\gamma_i\\ \sum_i \alpha_i \gamma_i = \gamma}} \prod_i \Sym_{\alpha_i}\left(\frac{ \BPS_{\gamma_i}}{\BL^{1/2}-\BL^{-1/2}} \right) . \]

It is convenient rewrite this as follows: 
\begin{equation}\label{finrec} \tilde{\pi}_{!}\frac{1}{|\Aut(\cdot)|} = \sum_{\substack{\alpha_i,\gamma_i\\ \sum_i \alpha_i \gamma_i =\gamma}} \BL^{\frac{1}{2}\left((\gamma,\gamma) - \sum_i \alpha_i (\gamma_i,\gamma_i)  \right)}  \prod_i \Sym_{\alpha_i}\left(\frac{\BL^{\frac{(\gamma_i,\gamma_i)-1}{2}} \BPS_{\gamma_i,x}}{\BL-1} \right) . \end{equation}

Notice that $(\gamma,\gamma) - \sum_i \alpha_i (\gamma_i,\gamma_i)  $ is even and thus by induction we deduce that $\BL^{\frac{(\gamma_i,\gamma_i)-1}{2}} \BPS_{\gamma_i,\phi}$ is a counting function not involving any roots of $\BL$. This allows us to avoid discussing the spreading out of $\BL^{1/2}$ below. 
 
For the class of the intersection complex in the Grothendieck ring of constructible $\BQ_\ell$-sheaves we have an analogous formula to \eqref{finrec} over $\BC$ by \cite[Theorem 5.6]{Me15}, see also \cite[Theorem 10.2.7]{BDNKP25}:

\begin{equation} \tilde{\pi}_{!}[\BQ_{\ell,\BM_{\gamma,\BC}}]= \sum_{\substack{\alpha_i,\gamma_i\\ \sum_i \alpha_i\gamma_i = \gamma}} \BL^{\frac{1}{2}\left((\gamma,\gamma) - \sum_i \alpha_i (\gamma_i,\gamma_i) \right)}  \prod_i \Sym_{\alpha_i}\left(\frac{ (-1)^{-(\gamma_i,\gamma_i)+1}\BL^{(\gamma_i,\gamma_i)-1}[\CIC_{\M_{\gamma_i,\BC}}]}{\BL-1} \right) . \end{equation}

 One can further upgrade this equality to an isomorphism in the derived category $D^{\geq}(\M_{\gamma,\BC},\BQ_\ell)$ by an explicit isomorphism as in \cite[Theorem C]{DM20}. As will be explained below, this isomorphism is geometric in nature and thus spreads out over some open $T_\gamma \subset T$. The construction of the isomorphism in \textit{loc. cit.} relies on the fact, that $\tilde \pi$ can be approximated by proper maps and thus the decomposition theorem applies. Since furthermore $\tilde \pi$ restricted to the open dense locus of stable sheaves is a $\BG_m$-gerbe, one obtains for every $\gamma$ a morphism
\[ \CIC_{\M_{\gamma,\BC}}[(\gamma,\gamma)-1] \otimes H^*(B\BG_m) \to  \tilde{\pi}_* \BQ_{\ell,\BM_{\gamma,\BC}}.  \]
Using the relative t-structure on $D^{\geq}(\M_{(\beta,\chi),T},\BQ_\ell)$ constructed in \cite{HS23} and in particular Theorem 1.10 in \textit{loc. cit.} we can extend this morphism to 
\[ \CIC_{\M_{\gamma,T'}}[(\gamma,\gamma)-1] \otimes H^*(B\BG_m) \to  \tilde{\pi}_* \BQ_{\ell,\BM_{\gamma,T'}},  \]
for some dense open $T' \subset T$. From there one may use the usual pull-push operations used in the construction of cohomological Hall algebras to obtain a morphism over $T'$ which is the aforementioned isomorphism over $\BC$. Hence we deduce that we have an isomorphism over some $T_\gamma \subset T'$ and passing to the trace of Frobenius we deduce the proposition.
\end{proof}

Putting Lemma \ref{intbps} and Proposition \ref{bpsic} together, we get Theorem \ref{icint}.

\subsection{Moduli of sheaves on del Pezzo surfaces}

In this final section we apply our integration theory to moduli of 1-dimensional sheaves on a del Pezzo surface. In particular we give a new proof of a theorem of Maulik-Shen \cite{MS20} on $\chi$-independence of BPS-invariants.

 Let $S_{\BC}$ be a complex del Pezzo surface, $\beta_\BC$ an ample, base-point free curve class on $S_\BC$ and $\chi \in \BZ$. We write $\BM_{\beta,\chi,\BC}$ for the moduli stack of pure $1$-dimensional Gieseker-semistable sheaves on $S$ with support of class $\beta_\BC$ and Euler-characteristic $\chi$. Here, semistability is with respect to a fixed polarization $L_\BC$ and the slope function
\[\mu(\CF) = \frac{\chi(\CF)}{c_1(\CF) \cdot L_\BC}.\]
For $\tau \in \BQ$, we also write $\BM_{\tau,\BC}$ for the disjoint union of the moduli stacks $\BM_{\beta,\chi,\BC}$ with fixed slope $\tau$. 

Choosing a spreading out $S$ of $S_\BC$ over $T=\Spec(R)$ for a finitely generated $\BZ$-algebra $R$, together with its polarization $L$ and curve class $\beta$, we obtain a family $\BM_{\tau}/T$ and GIT-quotients $\tilde{\pi}_{\beta,\chi}: \BM_{\beta,\chi} \to \M_{\beta,\chi}$ satisfying conditions $(i)$ and $(ii)$ above, see for example \cite[Section 2.1]{COW21}.

\begin{lemma}\label{conc} For any pair $(\beta,\chi)$ the rigidification $\pi_{\beta,\chi}:\CM_{\beta,\chi} \to \M_{\beta,\chi}$ is generically an equivalence. Furthermore for any non-archimedean local field $F$ and any morphism $\phi:\Spec(\Oo_F) \to T$ the pullback $\pi_{\beta,\chi,\phi}\colon \CM_{\beta,\chi,\phi} \to \M_{\beta,\chi,\phi}$  satisfies conditions $(iii) - (iv)$ from Section \ref{ii}.
\end{lemma}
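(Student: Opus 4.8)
<br>

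The plan is to verify each of the four conditions in turn for the moduli stack $\CM_{\beta,\chi}$ of $1$-dimensional semistable sheaves, reducing everything to standard facts about moduli of sheaves on surfaces and to the general results established earlier in the paper.

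\textbf{Generic equivalence and condition on $W$.} First I would exhibit the open-dense locus $W\subset\M_{\beta,\chi}$: since $\beta$ is primitive for generic $(\beta,\chi)$, or more generally since $\gcd$-type conditions place the stable locus $\M_{\beta,\chi}^{\mathrm{st}}$ densely inside $\M_{\beta,\chi}$ (the semistable-but-not-stable locus has strictly smaller dimension whenever it is nonempty, by the usual dimension estimate for Jordan--H\"older strata), the preimage $\CM_{\beta,\chi}\times_{\M_{\beta,\chi}} \M_{\beta,\chi}^{\mathrm{st}}$ is the rigidified stable locus, which is an algebraic space equivalent to $\M_{\beta,\chi}^{\mathrm{st}}$ because stable sheaves have automorphism group exactly $\BG_m$, killed by rigidification. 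If the stable locus happens to be empty (which can only happen in special strata), then $\beta$ decomposes in $\pi_0$ and one is in the trivial case already handled in the proof of Proposition \ref{bpsic}; otherwise we take $W=\M_{\beta,\chi}^{\mathrm{st}}$. This gives condition needed to be in Situation \ref{sit:good}, and simultaneously gives the generic-equivalence claim.

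\textbf{Condition (iv): Zariski-local triviality of $K_{\CM_{\beta,\chi}}$.} Here I would invoke the symplectic-type structure of these moduli stacks. For $1$-dimensional sheaves on a surface with $K_S$ anti-effective (del Pezzo), the moduli stack $\BM_{\beta,\chi}$ is smooth, and its canonical bundle is computed from the (relative) cotangent complex $R\Hom(\CF,\CF)[1]$; by Serre duality on $S$ twisted by $K_S$ together with the symmetry of the Euler pairing (Axiom \ref{smst}, which holds in the hereditary/CY$3$-type situation of interest), the determinant of the tangent complex is canonically trivial, or at worst a pullback from $T$ of a line bundle which is Zariski-locally trivial after shrinking $T$. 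Passing to the rigidification changes $\omega$ only by a twist by a power of the relative tangent line of $B\BG_m$, which is again trivial. The cleanest route is to cite \cite[Section 2.1]{COW21} or the analogous computation there, where exactly this triviality was established for the same moduli spaces; I would simply point to that and note the rigidification step.

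\textbf{Condition (iii): surjectivity of $\pi_\infty$.} This is the step I expect to be the main obstacle, and the one requiring the building machinery of Section \ref{tpb}. By Corollary \ref{PiImageBounded} it suffices to show that every $x\in\M_{\beta,\chi,\phi}(\Oc_F)^\sharp$ lifts to some twisted point $\CM_{\beta,\chi,\phi}(\Dr)^\sharp$ for some $r$; equivalently, given a family of semistable sheaves over $F$ which is stable (so lies in $W(F)$) and whose image in $\M$ extends to $\Oc_F$, one must produce an $r$-twisted extension of the \emph{sheaf} over $\mathbb{D}_F^{1/r}$. Concretely one takes a sheaf $\mathcal{F}_F$ over $S_F$ with the given invariants, chooses any coherent extension $\mathcal{F}$ over $S_{\Oc_F}$ (flat over $\Oc_F$, possible by taking the saturation of an arbitrary extension), and must semistabilise the special fibre: running the Langton-type semistable-reduction algorithm modifies $\mathcal{F}$ along the special fibre and terminates after finitely many steps, at the cost of passing to a ramified extension $F(\varpi^{1/r})$, yielding a $\mu_r$-equivariant (hence $\mathbb{D}_F^{1/r}$-) family whose special fibre is semistable; since $x$ was assumed to lie over a point of $\M_{\beta,\chi,\phi}(\Oc_F)$, the resulting $S$-point of $\M$ agrees with $x$ and so the twisted point maps to $x$ under $\pi_r$. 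The technical care needed is (a) checking that Langton's construction is compatible with the $\mu_r$-action one introduces on the ramified base, so that it descends to the root stack $\mathbb{D}_F^{1/r}$, and (b) that the construction lands in the $\sharp$-locus, i.e. that one may arrange the generic fibre to be stable — which follows since the generic point of $x$ was chosen in $W(F)$ to begin with. I would cite the relevant semistable-reduction statement (as in the treatment of the Hitchin case in \cite{GWZ20b} or the del Pezzo case in \cite{COW21}) and indicate that the same argument applies verbatim here, the only new input being the passage through the root stack, which is exactly what the formalism of Section \ref{tpb} is designed to accommodate.

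Putting these together verifies that $\CM_{\beta,\chi,\phi}$ satisfies all of $(i)$--$(iv)$, so the hypotheses of Theorem \ref{icint} are met, which is what the lemma asserts.
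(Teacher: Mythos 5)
Your verification of condition (iii) has a genuine gap. You begin the argument by saying ``one takes a sheaf $\mathcal{F}_F$ over $S_F$ with the given invariants'' and then extends and runs Langton. But a point $x_F\in W(F)\subset\CM_{\beta,\chi}(F)$ is a point of the \emph{rigidified} stack; the map $\BM_{\beta,\chi}\to\CM_{\beta,\chi}$ is a $\BG_m$-gerbe, and the obstruction to lifting $x_F$ to an actual sheaf in $\BM_{\beta,\chi}(F)$ is a class in $H^2(F,\BG_m)\cong\BQ/\BZ$ which is generally nonzero (exactly as in the classical non-existence of universal families when $\gcd$ conditions fail). So the starting object of your argument need not exist, and Langton alone cannot fix this: semistable reduction does not require a ramified base change, so the root stack $\mathbb{D}_F^{1/r}$ is not entering for the reason you suggest. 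The paper's proof proceeds differently: it identifies the obstruction as a torsion Brauer class, extends it to a $\BG_m$-gerbe $\beta$ on $\Dr$ for suitable $r$, interprets $x_F$ as a $1\boxtimes\beta$-twisted sheaf on the generic fibre of $S\times_{\Oc_F}\Dr$, extends this twisted sheaf across the special fibre via (a variant of) Lieblich's extension theorem, and \emph{then} runs a $\mu_r$-equivariant Langton algorithm on the pullback to a cyclic extension $\Oc_L$ trivialising $\beta$. The entire point of working over $\Dr$ is to kill the Brauer obstruction, not to accommodate ramification in semistable reduction. Without this step your argument only covers the case where the obstruction happens to vanish.

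On condition (iv), your intermediate justification is also off even though the conclusion and citation are fine. The claim ``by Serre duality together with the symmetry of the Euler pairing the determinant of the tangent complex is canonically trivial'' is the CY3 argument and does not apply to $1$-dimensional sheaves on a del Pezzo surface (the moduli stack is smooth but not symplectic). The paper instead restricts to the open locus $U$ of line bundles on curves, where the canonical bundle is Zariski-locally trivial, and uses the codimension $\geq 2$ bound on $\BM_{\beta,\chi}\setminus U$ from \cite[Proposition 2.2.1]{COW21} to extend trivialisations by Hartogs. Since you also cite \cite{COW21}, the conclusion is salvaged, but the stated mechanism is wrong.

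Your treatment of generic equivalence is fine in substance; the paper uses the stable line-bundle locus rather than the full stable locus, but either gives density of the schematic locus and that is all Situation \ref{sit:good} requires.
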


\begin{proof}

By \cite[Proposition 2.2.1]{COW21} the complement of the open locus $U\subset \BM_{\beta,\chi}$ of line bundles supported on curves in $S$ has codimension $\geq 2$. Since the canonical bundle restricted to $U$ is Zariski-locally trivial, these local trivialisations extend to all of $\BM_{\beta,\chi}$ and thus $(iv)$ is satisfied. Furthermore $\pi_{\beta,\chi}$ is an equivalence if we further restrict to the locus of stable line-bundles.

It remains to prove the lifting property (iv). Fix $x \in X_{\beta,\chi}(\Oc_F)^\sharp$. We may consider its generic fibre point $x_F$ as a point in $\CM_{\beta,\chi}(F)$. The obstruction of lifting $x_F$ to $\BM_{\beta,\chi}$ is given by the pullback of the $\BG_m$-gerbe $\BM_{\beta,\chi} \to \CM_{\beta,\chi}$ along $x_F$, which is an element in $\alpha_x \in H^2(F,\BG_m) \cong \BQ/\BZ$ and thus in particular torsion. Thus, there exists $r \geq 1$ such that $\alpha_x$ extends to a gerbe $\beta$ on $\Dr$. The element $x_F$ corresponds to an $1 \boxtimes \beta$-twisted sheaf $\F_{\eta}$ on the generic fibre of $S \times_{\Oc_F} \Dr$. A similar argument to the one used in \cite[Lemma 3.1.3.1]{Lieblich_2008} shows that $\F_{\eta}$ can be extended to a twisted coherent sheaf $\F$ on $S \otimes_{\Oc_F} \Dr$ (\emph{loc. cit.} only considers twisted sheaves on schemes, but the extension to the tame stack appearing above is straightforward). 

Pulling back along a cyclic extension $\Spec \Oc_L \to \Dr$ which trivialises $\beta$ we obtain a family of coherent sheaves $\F$ on $S \times_{\Oc_F} \Oc_L$ satisfying a twisted equivariance condition corresponding to the gerbe $\beta$. We apply Langton's algorithm to this family to obtain a semistable extension satisfying the same equivariance condition, which amounts to a morphism $\Dr \to \Mc_{\beta,\chi}$ extending $x_F$.
\end{proof}

We thus have established all properties of $\BM_{\tau}\to T$ in order to apply the results of Section \ref{ii}. First we deduce an arithmetic $\chi$-independence. For this consider the Hilbert-Chow morphism 
\[h_{\beta,\chi}: \M_{\beta,\chi} \to B=\BP H^0(S,\CO_S(\beta)),\] 
sending a $1$-dimensional sheaf to its fitting support. Then, we have the following:

\begin{theorem}\label{thm:chi-ind}For any finite field $k$ and any $\overline{\phi}: k \to T$ the counting function $h_{\beta,\chi!}\BPS_{\beta,\chi,\overline{\phi}}$ on $B_{\overline{\phi}}$ is independent of $\chi$, i.e.,
\[h_{\beta,\chi!}\BPS_{\beta,\chi,\overline{\phi}} = h_{\beta,\chi'!}\BPS_{\beta,\chi',\overline{\phi}},  \]
for any two $\chi,\chi' \in \BZ$.    
\end{theorem}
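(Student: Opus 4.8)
The plan is to reduce the statement about del Pezzo surfaces to the $p$-adic integral formula of Theorem \ref{icint} (equivalently to Lemma \ref{intbps}), and then to exploit a symmetry of the $p$-adic side which is manifestly $\chi$-independent. Concretely, by Lemma \ref{conc} all the hypotheses of Section \ref{ii} are satisfied for the family $\BM_\tau / T$, so for a suitable open $T_{\beta,\chi} \subset T$, any mixed-characteristic local field $F$ containing no $p$-power roots of unity with $\Spec(\Oc_F) \to T_{\beta,\chi}$, and any $x \in \M_{\beta,\chi,\overline\phi}(k_F)$, Lemma \ref{intbps} gives
\[
\BPS_{\beta,\chi,\overline\phi}(x) = (-1)^{(\gamma,\gamma)} q_F^{\frac{-(\gamma,\gamma)+1}{2}} \int_{B(x)} e^{2\pi i \tilde\alpha}\, d\mu_{can},
\]
where $\gamma = (\beta,\chi)$ and $-(\gamma,\gamma) = \dim \BM_{\beta,\chi}$. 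Pushing forward along $h_{\beta,\chi}$, the value $(h_{\beta,\chi!}\BPS_{\beta,\chi,\overline\phi})(b)$ at a point $b \in B_{\overline\phi}(k_F)$ becomes, up to the explicit prefactor, the integral of $e^{2\pi i \tilde\alpha}$ over the fibre of $h_{\beta,\chi}^{\Oc_F}$ over a lift of $b$, i.e. over the locus in $\M_{\beta,\chi,\phi}(\Oc_F)^\sharp$ of sheaves whose Fitting support is the spectral curve determined by $b$. So it suffices to produce, for fixed $\beta$ and any two values $\chi,\chi'$, a measure-preserving bijection between these two fibre loci which matches the gerbe functions $\tilde\alpha$; the prefactors $(-1)^{(\gamma,\gamma)} q_F^{(-(\gamma,\gamma)+1)/2}$ agree for $\chi$ and $\chi'$ because $\dim \BM_{\beta,\chi}$ depends only on $\beta$ (this is the standard computation $\dim \M_{\beta,\chi} = \beta^2 + 1$, independent of $\chi$, for a del Pezzo with $-K_S$ sufficiently positive, and in any case $(\gamma,\gamma) = -\dim\BM_{\beta,\chi}$ with the dimension being $\chi$-independent by deformation theory).

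The bijection comes from the geometry of compactified Jacobians. Over the open locus $B^{\rm int} \subset B$ parametrizing integral curves in the class $\beta$, the moduli space $\M_{\beta,\chi}$ is the relative compactified Jacobian $\overline{\Pic}^\chi_{\CC/B^{\rm int}}$, and tensoring with a line bundle of appropriate degree on the universal curve, together with the autoduality/Abel--Jacobi structure, identifies $\overline{\Pic}^\chi$ with $\overline{\Pic}^{\chi'}$ over $B^{\rm int}$ whenever a line bundle of degree $\chi - \chi'$ exists fibrewise — which one arranges, as in \cite{COW21} following the $p$-adic strategy for Hausel--Thaddeus, by working with the relative degree and twisting by the torsor of such line bundles; where $\beta$ is primitive this is literally an isomorphism of the relevant open substacks, and in general one uses the same spreading-out and twist manoeuvre. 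Crucially, this identification is defined over $\Oc_F$ (after shrinking $T_{\beta,\chi}$), it is compatible with the Hilbert--Chow maps to $B$, and — because translation by a line bundle induces a canonical isomorphism of the automorphism gerbes on the compactified Jacobians — it matches the $\BG_m$-gerbes $\BM_{\beta,\chi} \to \CM_{\beta,\chi}$, hence the functions $f_\alpha$ and therefore $\tilde\alpha$. Finally, an isomorphism of smooth $\Oc_F$-stacks with Zariski-locally trivial canonical bundle over a common base automatically matches the canonical measures $\mu_{can}$ of Theorem \ref{inbase}, since those are characterised intrinsically in terms of $I_{\hat\mu}$ and the weight function. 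Putting these together yields $(h_{\beta,\chi!}\BPS_{\beta,\chi,\overline\phi})(b) = (h_{\beta,\chi'!}\BPS_{\beta,\chi',\overline\phi})(b)$ for all $b$ over all residue fields $k_F$ of such $F$. Since $B_{\overline\phi}(k_F)$ exhausts the $\overline{\BF_q}$-points as $F$ ranges over the unramified extensions of a fixed $F$, and both sides are counting functions in $\mcf(B_{\overline\phi})$, equality on all these points gives equality of counting functions, which is exactly the assertion.

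One technical point that needs care: the identification of $\M_{\beta,\chi}$ with a compactified Jacobian and the tensoring-by-a-line-bundle argument only work cleanly over the integral-curve locus $B^{\rm int}$, so one must argue that the complement $B \setminus B^{\rm int}$ does not contribute. Here two observations help. First, $\beta$ was assumed base-point free, so the generic member of $|\beta|$ is smooth and $B^{\rm int}$ is open dense; and for the $p$-adic integral only the $\sharp$-locus matters, which by definition sits over the good (stable, generically representable) part of the base. Second, for the \emph{counting-function} statement one may, if needed, stratify $B$ and handle non-integral curves by the same translation trick applied to the relevant product-of-Jacobians strata, exactly as in the decomposition underlying \eqref{finrec}; because $\BPS$ is defined via the plethystic/wall-crossing formalism which is itself compatible with such translations, the $\chi$-independence propagates. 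I expect this reduction to the integral locus, and the bookkeeping of which open subscheme $T_{\beta,\chi} \subset T$ one must pass to so that all of Lemma \ref{conc}, the compactified-Jacobian identification, and the integrality of $\BL^{((\gamma,\gamma)-1)/2}\BPS$ hold simultaneously, to be the main obstacle; the gerbe-compatibility and measure-compatibility of the translation isomorphism, while essential, are formal once the geometric isomorphism over $\Oc_F$ is in hand.
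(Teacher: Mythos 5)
Your proposal correctly identifies the first move of the paper's proof: reduce $\chi$-independence of $h_{\beta,\chi!}\BPS_{\beta,\chi,\overline\phi}$ to $\chi$-independence of the fibrewise $p$-adic integral $\int_{h_{\beta,\chi,\phi}^{-1}B(x)} e^{2\pi i\tilde\alpha}\,d\mu_{orb}$ via Lemma \ref{intbps}, lifting $\overline\phi$ to a Witt-vector point $\phi\colon\Oc_F\to T$; and the observation that the prefactor depends only on $(\gamma,\gamma)=-\beta^2$ is indeed the correct first step. But from there the paths diverge, and yours has a real gap.

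The paper does \emph{not} prove the $\chi$-independence of the $p$-adic integral from scratch; it cites \cite[Theorem 1.2.2]{COW21}, which asserts that $\int_{h_{\beta,\chi,\phi}^{-1}B(x)} e^{2\pi i g_\beta\alpha}\,d\mu_{orb}$ is $\chi$-independent, with the crucial factor $g_\beta = \tfrac{1}{2}\beta(\beta+K_S)+1$ (the arithmetic genus) inserted. The remaining work in the paper is a nontrivial computation showing $e^{2\pi i g_\beta\alpha(y)_n}$ and $e^{2\pi i\tilde\alpha(y)_n}$ differ by the \emph{constant} $(-1)^{\beta^2}$, using Lemma \ref{gerby} on the divisibility of the order of $\alpha$ by the multiplicity of $\beta$. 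Your proof elides this entirely: you assert that translation by a line bundle ``matches the $\BG_m$-gerbes $\BM_{\beta,\chi}\to\CM_{\beta,\chi}$, hence the functions $f_\alpha$ and therefore $\tilde\alpha$,'' and call the gerbe-compatibility ``formal once the geometric isomorphism over $\Oc_F$ is in hand.'' That is precisely what is \emph{not} true. In the Hausel--Thaddeus and $1$-dimensional-sheaf settings the tensoring isomorphism does not preserve the gerbe function on the nose; the Brauer class transforms nontrivially and the correct $\chi$-invariant quantity is the integral against $e^{2\pi i g_\beta\alpha}$, which is why the genus factor appears in \cite{COW21} and why the present paper needs the separate comparison of $\tilde\alpha$ with $g_\beta\alpha$. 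Without that step your argument proves at best $\chi$-independence of some auxiliary integral, not of $\int e^{2\pi i\tilde\alpha}\,d\mu_{orb}$, and hence does not give the theorem. The Jacobian-translation and measure-compatibility parts of your sketch are in the right spirit (they recapitulate the ideas behind the cited theorem), but the gerbe bookkeeping is the heart of the matter and cannot be waved away as formal.
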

\begin{proof} First we notice that for any $x \in \BM_{\beta,\chi}$ 
\[(x,x) = - \dim\BM_{\beta,\chi} = -\beta^2, \]
by \cite[Lemma 2.5]{MS20}, in particular the pairing is independent of $\chi$.

Now let $\Oo_F$ be the ring of Witt-vectors of the finite field $k$. Then the quotient field $F$ contains no $p$-th roots of $1$ and $\overline{\phi}$ lifts to a $\phi:\Oo_F \to T$. By Lemma \ref{intbps} it is thus sufficient to show 

\[\int_{h_{\beta,\chi,\phi}^{-1}B(x)} e^{2\pi i\tilde{\alpha}} \ d\mu_{orb} = \int_{h_{\beta,\chi',\phi}^{-1}B(x)} e^{2\pi i\tilde{\alpha}} \ d\mu_{orb}.  \]

Now, \cite[Theorem 1.2.2]{COW21} states that \[\int_{h_{\beta,\chi,\phi}^{-1}B(x)} e^{2\pi i g_\beta\alpha} \ d\mu_{orb}\]
is independent of $\chi$, where $g_\beta = \frac{1}{2} \beta(\beta +K_S) +1$ is the arithmetic genus of a curve in the linear system $|\beta|$. 

We claim that $e^{2\pi i g_\beta \alpha(x)_n}$ and $e^{2\pi i\tilde{\alpha}(x)_n}$ differ by the constant factor $(-1)^{\beta^2}$, from which the theorem follows.

To see this, write $\beta = r \cdot \gamma$ for $r \geq 1$ and $\gamma$ a reduced curve class. Then, the order of $\alpha$ restricted to $M_{\beta,\chi}$ divides $r$ by Lemma \ref{gerby}. In particular we have for any $y \in h_{\beta,\chi}^{-1}B(x)$
\[e^{2\pi i g_\beta \alpha(y)_n}= e^{2\pi i \alpha} e^{2\pi i \left(\frac{1}{2} r\gamma(r\gamma +K_S)\alpha(y)_n\right)} = e^{2\pi i \alpha} e^{2\pi i \left(\frac{1}{2} r(r-1)\gamma^2\alpha(y)_n\right)}= (-1)^{\frac{r(r-1) \gamma^2}{o_\alpha(y)_n}} e^{2\pi i \alpha(y)_n}. \]
For the second equality we used $\frac{1}{2} \gamma (\gamma +K_S) \in \BZ$ and $r \alpha(y)_n = 0 \in \BQ/\BZ$.
On the other hand by \eqref{modal}

\[e^{2\pi i\tilde{\alpha}(y)_n} = (-1)^{-\frac{r^2\gamma^2}{o_\alpha(y)_n^2}}e^{2\pi i \alpha(y)_n} = (-1)^{-\frac{r}{o_\alpha(y)_n}\gamma^2}e^{2\pi i \alpha(y)_n}   \]

Therefore, $e^{2\pi i g_\beta \alpha(y)_n}$ and $e^{2\pi i\tilde{\alpha}(y)_n}$ differ by the factor $(-1)^{\frac{r^2 \gamma^2}{o_\alpha(y)}} = (-1)^{\beta^2}$ as claimed.
\end{proof}

Finally, we use our arithmetic methods to recover the $\chi$-independence of sheaf-theoretic BPS-invariants for del Pezzo surfaces proven in \cite{MS20}. It should be noted, however, that the original argument yields an isomorphism compatible with perverse and Hodge filtrations at the same time. To the best of our knowledge, this cannot be obtained from point-counting methods, the principal obstructions being that there is currently no analogue of $p$-adic Hodge theory for intersection cohomology.

\begin{theorem}\cite[Theorem 0.1]{MS20}\label{mscor} For $k= \BC$ and $\chi,\chi' \in \BZ$ there exists an isomorphism of graded vector spaces between the intersection cohomology groups
\[ IH^*(\M_{\beta,\chi}) \cong IH^*(\M_{\beta,\chi'}),  \]
respecting the perverse filtrations. There also exists a possibly different isomorphism $IH^*(\M_{\beta,\chi}) \cong IH^*(\M_{\beta,\chi'})$ respecting the Hodge filtrations. 
\end{theorem}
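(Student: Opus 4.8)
The plan is to deduce Theorem~\ref{mscor} from the arithmetic $\chi$-independence established in Theorem~\ref{thm:chi-ind}, combined with the comparison between $\BPS$-invariants and intersection cohomology from Proposition~\ref{bpsic} (which identifies the counting function of $\BPS_{\gamma,\phi}$ with the trace of Frobenius on $\CIC_{\M_{\gamma,\phi}}$ up to an explicit sign and power of $q$). The strategy is the standard ``spreading out and point-counting'' argument: an equality of Frobenius traces over all sufficiently large finite fields, compatible with the relevant gradings, forces an isomorphism of the corresponding $\ell$-adic cohomology groups over $\BC$ by a weight and purity argument.

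The key steps, in order, would be as follows. First, fix the spreading out $S/T$, $\beta$, $L$ as in Lemma~\ref{conc}, so that for each $\chi$ we have the family $\M_{\beta,\chi} \to T$ with the Hilbert--Chow morphism $h_{\beta,\chi}\colon \M_{\beta,\chi} \to B$. By Theorem~\ref{thm:chi-ind}, after possibly shrinking $T$ to an open subscheme $T_{\beta}$ working for both $\chi$ and $\chi'$ (intersecting the $T_\gamma$ from Proposition~\ref{bpsic}), we have $h_{\beta,\chi!}\BPS_{\beta,\chi,\overline\phi} = h_{\beta,\chi'!}\BPS_{\beta,\chi',\overline\phi}$ for every finite field point $\overline\phi\colon \Spec(k) \to T_\beta$. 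Second, use Proposition~\ref{bpsic} together with $(\gamma,\gamma) = -\beta^2$ (which is $\chi$-independent, by \cite[Lemma 2.5]{MS20}) to translate this into the equality of functions on $B(k)$
\[
x \mapsto \Tr\bigl(\Fr_x, (Rh_{\beta,\chi,*}\CIC_{\M_{\beta,\chi}})_x\bigr) = x \mapsto \Tr\bigl(\Fr_x, (Rh_{\beta,\chi',*}\CIC_{\M_{\beta,\chi'}})_x\bigr),
\]
the signs and powers of $q$ on both sides being identical because $\beta^2$ does not depend on $\chi$. Third, since the decomposition theorem applies to the proper map $h_{\beta,\chi}$ (the moduli spaces are projective over $B$), the complexes $Rh_{\beta,\chi,*}\CIC_{\M_{\beta,\chi}}$ are semisimple and defined over $T_\beta$; taking global sections (i.e.\ pushing to $\Spec T_\beta$ and then to a point) and invoking Chebotarev/Laumon's results on the existence and uniqueness of ``companions'', the equality of trace functions over all finite fields of $T_\beta$ upgrades to an isomorphism of the total cohomology as graded objects (graded by cohomological degree, and after taking $\Fr$-semisimplification, by weight). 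Fourth, to obtain the perverse-filtration-compatible statement, repeat this argument level-by-level along the perverse Leray filtration of $h_{\beta,\chi}$ on $B$: the $\chi$-independence of $h_{\beta,\chi!}\BPS_{\beta,\chi}$ as a function on \emph{all} of $B$ (not merely its integral) means the equality holds stalk-wise on $B$, hence the perverse cohomology sheaves ${}^p\!R^i h_{\beta,\chi,*}\CIC$ have the same trace functions and are therefore isomorphic as perverse sheaves on $B_{\overline{\mathbb F}_q}$, which after spreading back to $\BC$ gives the perverse-filtration-respecting isomorphism on $IH^*$. Fifth and finally, for the Hodge-filtration statement we invoke \cite[Theorem 5.6]{Me15} over $\BC$ directly: the class $[\BPS_{\beta,\chi,\BC}]$ in the Grothendieck group of mixed Hodge modules equals $[\CIC_{\M_{\beta,\chi,\BC}}]$ up to a Tate twist depending only on $\beta^2$, so the equality $h_{\beta,\chi!}\BPS_{\beta,\chi,\BC} = h_{\beta,\chi'!}\BPS_{\beta,\chi',\BC}$ in that Grothendieck group (which is what the uniformly-in-$q$ version of Theorem~\ref{thm:chi-ind} yields, once one knows $\chi$-independence motivically --- here one must either extract a motivic statement or argue that point-counts over all finite fields determine the Hodge-theoretic class for these particular spaces) forces an equality of the associated-graded pieces of the Hodge filtration, and semisimplicity of pure Hodge modules then produces the isomorphism.

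I expect the main obstacle to be the passage from ``equal trace of Frobenius over all finite fields of an open $T_\beta \subset T$'' to ``isomorphic as graded vector spaces over $\BC$,'' carried out \emph{compatibly with the perverse filtration}. The point-counting input genuinely only sees the semisimplification of the Galois (or Hodge) action, so one must argue that $IH^*$ of these moduli spaces is pure --- which follows from the decomposition theorem applied to $h_{\beta,\chi}$ and purity of $\CIC$ --- and then that a pure object over $\BC$ with prescribed Frobenius trace function over $T_\beta$ is determined up to isomorphism by that function together with its weight grading. For the perverse filtration this is the statement that the perverse cohomology sheaves of $Rh_{\beta,\chi,*}\CIC$ are pure perverse sheaves on $B$ determined by their trace functions, which is where one leans on the full strength of Beilinson--Bernstein--Deligne--Gabber together with Laumon's companion theorem; the $\chi$-independence of \emph{each} perverse cohomology sheaf (rather than just of the alternating sum) is exactly what the function-level --- as opposed to integral-level --- conclusion of Theorem~\ref{thm:chi-ind} buys us. The Hodge-filtration version is slightly more delicate since $p$-adic integration is insensitive to Hodge structures (as the authors note in the introduction and the remark preceding Theorem~\ref{mscor}); here one does not get the Hodge-compatible isomorphism from the arithmetic side at all but rather re-imports it from \cite[Theorem 5.6]{Me15} once the motivic/numerical $\chi$-independence of the $\BPS$-classes is in hand, which is why the theorem only asserts ``a possibly different isomorphism'' respecting the Hodge filtration.
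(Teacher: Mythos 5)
Your proposal follows essentially the same path the paper takes: arithmetic $\chi$-independence from Theorem~\ref{thm:chi-ind}, translation into equality of Frobenius traces on $Rh_{\beta,\chi,*}\CIC_{\M_{\beta,\chi}}$ via Proposition~\ref{bpsic} and the fact that $(\gamma,\gamma)=-\beta^2$ is $\chi$-independent, then purity of the pushforward (Deligne applied to the pure complex $\CIC$) and semisimplicity (decomposition theorem), and finally an argument that the trace function determines the isomorphism class of a pure semisimple complex to upgrade to an isomorphism in the derived category, from which the perverse-filtration statement drops out by applying perverse truncations and taking hypercohomology. One minor overreach: the paper uses Chebotarev density directly here; companion-type results (Laumon) are not needed when both complexes live over a common base $T_\beta$ and are both semisimple and pure. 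The one point you explicitly leave as a fork --- ``one must either extract a motivic statement or argue that point-counts over all finite fields determine the Hodge-theoretic class'' --- is precisely where the paper cites Katz's theorem \cite[Theorem 6.1.2]{MR2453601}, which converts matching point-counts over all finite-field fibres of a spreading-out into equality of E-polynomials of the motivic classes; since $IH^*$ is pure, equality of E-polynomials together with the cohomological grading then yields an isomorphism of Hodge structures. The paper also addresses a subtlety you omit: by an inductive argument it shows that $\BL^{\frac{(\gamma_i,\gamma_i)-1}{2}}\BPS_{\gamma_i}$ involves no fractional powers of $\BL$, so Katz's theorem can be applied without needing an extension to rings with $\BL^{1/2}$ adjoined.
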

\begin{proof}

We will infer first the statement from Theorem \ref{thm:chi-ind} and Proposition \ref{bpsic} using the function-sheaf dictionary and Chebotarev Density. Choosing a spreading out over $T$ as above, we see from  Proposition \ref{bpsic} that there exists a $T_{\beta,\chi} \subset T$ such that for every morphism $\phi:\Spec(k) \to T_{\beta,\chi}$ we have

\[\Tr(\Fr,\CIC_{\M_{\beta,\chi,\phi}}) = (-1)^{\beta^2+1} \BL^{\frac{\beta^2+1}{2}} \BPS_{\beta,\chi,\phi},  \]
and similarly for $\chi'$.

Now, purity of the complex of lisse sheaves $\CIC_{\M_{\beta,\chi,\phi}}$ is shown in \cite[Corollaire 5.3.4]{beilinson2018faisceaux}. Applying Deligne's \cite[Proposition 6.2.6]{deligne1980conjecture} to this pure complex we obtain purity of the derived pushforward of intersection cohomology $R(h_{\beta,\chi})_*\CIC_{\M_{\beta,\chi,\phi}}$ and similarly for $\chi'$. Furthermore, we know from the Decomposition Theorem that the derived pushforward is semi-simple.
The functions associated to these derived pushforwards agree by virtue of $\chi$-independence of $h_{\beta,\chi!}\BPS_{\beta,\chi,\phi}$, Theorem \ref{thm:chi-ind}.

From Chebotarev's Density Theorem we infer that the isomorphism class of $R(h_{\beta,\chi})_*\CIC_{\M_{\beta,\chi,\BC}}$ in the derived category is independent of $\chi$. In particular, there exists an isomorphism in the derived category
$$R(h_{\beta,\chi})_*\CIC_{\M_{\beta,\chi,\BC}} \simeq R(h_{\beta,\chi'})_*\CIC_{\M_{\beta,\chi',\BC}}.$$
Applying the perverse truncation functors and derived global sections we obtain an isomorphism
\begin{eqnarray*}\image \big(\BH^*(B,\tau^p_{\leq i}R(h_{\beta,\chi})_*\CIC_{\M_{\beta,\chi,\BC}}) \to \BH^*(B,R(h_{\beta,\chi})_*\CIC_{\M_{\beta,\chi,\BC}})\big) \cong \\ \cong \image \big(\BH^*(B,\tau^p_{\leq i}R(h_{\beta,\chi'})_*\CIC_{\M_{\beta,\chi',\BC}}) \to \BH^*(B,R(h_{\beta,\chi'})_*\CIC_{\M_{\beta,\chi',\BC}})\big).\end{eqnarray*}
In particular, there exists an isomorphism of intersection cohomology $IH^*(\M_{\beta,\chi,\BC}) \cong IH^*(\M_{\beta,\chi',\BC})$
respecting the perverse filtration.

To obtain the statement about the Hodge filtration we argue in a similar way. By purity it is enough to show that the weight-polynomials of the Hodge structures $IH^*(\M_{\beta,\chi,\BC})$ and $ IH^*(\M_{\beta,\chi',\BC})$ agree. By \cite[Theorem 5.6]{Me15} these agree up to a prefactor with the weight (or E-) polynomial of the motivic classes $s_{\chi!}\BPS_{\beta,\chi,\BC}$ and $s_{\chi'!}\BPS_{\beta,\chi',\BC}$, where $s_{\chi!}$ and $s_{\chi'!}$ denote the structure morphisms to $\Spec(\BC)$. Their equality now follows from Katz's theorem \cite[Theorem 6.1.2]{MR2453601} together with the $\chi$-independence of the counting functions, Theorem \ref{thm:chi-ind}. Here we may argue as in the proof of Proposition \ref{bpsic} to see that $\BPS_{\beta,\chi,\BC}$ is in the image of 
\[K_0(\Var/\M_{\beta,\chi,\BC})[\BL^{-1},(\BL^N-1)^{-1} : N \geq 1] \to K_0(\Var/\M_{\beta,\chi,\BC})[\BL^{-1/2},(\BL^N-1)^{-1} : N \geq 1],\]
to avoid discussing an extension of Katz's theorem to fractional powers of $\BL$.
\end{proof}

As in \cite{MS20} one can obtain an analogous result for meromorphic Higgs bundles.

\begin{rmk}\label{KL} The results of of Section \ref{ii} also apply to other moduli problems, for example to moduli of semi-stable vector bundles of fixed slope on a smooth projective curve. In particular if  $\M_{2,0}(C)$ denotes the moduli space of semi-stable rank $2$, degree $0$ vector bundles on a genus $g$ curve $C$ it was already observed in \cite[Remark 6.2]{KL04}, that the stringy $E$-polynomial of $\M_{2,0}$ agrees with its intersection $E$-polynomial if $g$ is even. Now, the $p$-adic analogue of the stringy $E$-polynomial of a variety is its volume and thus this matches our computations as $\dim \BM_{2,0}= 4(g-1)$, from which one can deduce that $e^{2\pi i\tilde{\alpha}} \equiv 1$ on $\M_{2,0}$. If $g$ is odd, $e^{2\pi i\tilde{\alpha}}$ is non-constant and indeed the authors in \textit{loc. cit.} also observe that in this case stringy and intersection $E$-polynomial are different. 
\end{rmk}

\bibliographystyle{amsalpha}
\bibliography{master}
\end{document}